\documentclass[11pt,reqno]{amsproc}
\usepackage[margin=1.0in]{geometry}
\usepackage[english]{babel}
\usepackage[utf8]{inputenc}
\usepackage{ulem}
\usepackage{amsmath,amssymb,amsthm, comment, mathtools}
\usepackage{hyperref,xcolor, amsfonts, amsbsy}
\usepackage{enumitem}
\usepackage{genealogytree}
\usepackage{mathrsfs}
\usepackage{cancel}

\newcommand{\R}{\mathbb{R}}

\newcommand{\N}{\mathcal{N}}
\newcommand{\Z}{\mathbb{Z}}

\renewcommand{\L}{\mathcal{L}}
\newcommand{\M}{\mathcal{M}}

\newcommand{\ve}{\varepsilon}
\newcommand{\rmd}{{\rm d}}

\usepackage[mathscr]{euscript}

\newcommand{\sff}{{\mathrm{I\!I}}}
\newcommand{\PP}{\mathbf{P}}

\newcommand{\hess}{{\rm Hess}}

\newcommand{\vr}{{\varrho}}

\DeclareMathOperator*{\wlim}{\mathsf{w}^*-lim}

\newcommand{\eee}{equation}
\newcommand{\be}{\begin{\eee}}
	\newcommand{\ee}{\end{\eee}}

\let\div\relax 

\newcommand{\red}[1]{\textcolor{red}{#1}}

\DeclareMathOperator{\div}{div}
\DeclareMathOperator{\grad}{grad}

\numberwithin{equation}{section}
\newtheorem{lemma}{Lemma}[section]
\newtheorem{prop}[lemma]{Proposition}
\newtheorem{theorem}[lemma]{Theorem}
\newtheorem{cor}[lemma]{Corollary}
\newtheorem{assump}[lemma]{Assumption}

\theoremstyle{definition}
\newtheorem{remark}[lemma]{Remark}
\newtheorem{defi}[lemma]{Definition}

\newtheorem{example}[lemma]{Example}

\makeatletter
\DeclareFontFamily{OMX}{MnSymbolE}{}
\DeclareSymbolFont{MnLargeSymbols}{OMX}{MnSymbolE}{m}{n}
\SetSymbolFont{MnLargeSymbols}{bold}{OMX}{MnSymbolE}{b}{n}
\DeclareFontShape{OMX}{MnSymbolE}{m}{n}{
    <-6>  MnSymbolE5
   <6-7>  MnSymbolE6
   <7-8>  MnSymbolE7
   <8-9>  MnSymbolE8
   <9-10> MnSymbolE9
  <10-12> MnSymbolE10
  <12->   MnSymbolE12
}{}
\DeclareFontShape{OMX}{MnSymbolE}{b}{n}{
    <-6>  MnSymbolE-Bold5
   <6-7>  MnSymbolE-Bold6
   <7-8>  MnSymbolE-Bold7
   <8-9>  MnSymbolE-Bold8
   <9-10> MnSymbolE-Bold9
  <10-12> MnSymbolE-Bold10
  <12->   MnSymbolE-Bold12
}{}

\let\llangle\@undefined
\let\rrangle\@undefined
\DeclareMathDelimiter{\llangle}{\mathopen}%
                     {MnLargeSymbols}{'164}{MnLargeSymbols}{'164}
\DeclareMathDelimiter{\rrangle}{\mathclose}%
                     {MnLargeSymbols}{'171}{MnLargeSymbols}{'171}
\makeatother

\mathtoolsset{showonlyrefs,showmanualtags}

\title{Geometry of strong forces in continuum mechanics}

\author{Theodore D. Drivas}
\address{Department of Mathematics, Stony Brook University, Stony Brook, NY, 11790}
\email{tdrivas@math.stonybrook.edu}

\author{Daniil  Glukhovskiy}
\address{Department of Mathematics, Stony Brook University, Stony Brook, NY, 11790}
\email{daniil.glukhovskiy@stonybrook.edu }

\begin{document}

\begin{abstract}
Consider a material point in finite dimensions moving  under the influence of a  potential force according to Newton's laws.  Suppose the potential energy function is a generalized well,  strictly convex transverse to a smooth submanifold  $\M$ on which it is minimal.   If the potential is steep, one expects the particle will oscillate rapidly about this submanifold  and, if the initial displacement is not too great, should approximately move along $\M$  as if it were ideally constrained (e.g. geodesic).  This expectation is true if the initial conditions are very well prepared but may fail otherwise --  additional potential forces determined by how the Hessian of the potential varies along $\M$ may be present \cite{RU57,T80}. The origin of this force is that the transversal motion acts as a simple harmonic oscillator with a slowly varying frequency, which approximately conserves action, not energy. In this work, we regard continuum mechanical systems such as the elastic thread or  compressible fluid as material points moving in an infinite dimensional space according to Newton's laws for appropriate potential energy functionals.   We show how to arrive at ideally constrained systems such as the inextensible thread and incompressible fluid as a limit of a strong potential force, computing also corrections to the naive predictions when the data is not very well prepared.  For example, for the thread we find a resistance to bending emerge from a strong resistance to compression/expansion.  For the fluid, the effective incompressible dynamics may be driven by a remnant acoustical wavefield.  Both of these emergent features are nonlinear and non-local. Finally, we give examples of some limits for which the naive models robustly hold because the additional force is trivial. These include the homogeneous incompressible Euler, anelastic Euler, as well as the lake and great lake equations.  
\end{abstract}

\vspace*{0mm}
	\maketitle

	\section{Introduction}

	Many physical systems regularly encountered in mechanics are modeled by Newtonian motion of a particle subject to some constraints (e.g., forced to live in a particular region of configuration space).  The prototypical example is the mathematical pendulum: its motion can be viewed as a free motion in a gravitational field constrained to live on a $S^1 \subset \R^2$. In such systems, the equations of motion are usually axiomatically defined by d'Alembert's principle: a reaction force of the constraint is supplied at each instant of time in a direction purely normal to the constraint, and should be no more or less than is required to thwart the acceleration off the surface.
	
	In more detail, d'Alembert's principle says that the equations of motion of a particle under the influence of conservative force field with potential $W$ and constrained to live on given submanifold $\M$ of a (possibly infinite dimensional) Euclidean/Hilbert enveloping space $H$ equipped with an inner product $\langle \cdot ,  \cdot  \rangle$  are given by
		\begin{align} \label{dalembert}
		\ddot{X}(t)+ \nabla W(X(t)) &\in N_{{X(t)}} \M\\
		{X}(t) &\in  \M
	\end{align}
	where $N_{X} \M$ is the normal space to $\M$ at the point $X$, that is the orthogonal complement of the tangent space with respect to the inner product $\langle \cdot ,  \cdot  \rangle$, and the gradient $\nabla$ is computed with this scalar product.  See discussion in \cite[Chapter 21]{A13}.	 Note that, given an ``inertia operator" $M: H \to H$, a self-adjoint positive definite operator, bounded and invertible, one may define the inner product $\langle \cdot ,  \cdot  \rangle_M := \langle M \cdot ,  \cdot  \rangle_{H}$, where $ \langle  \cdot ,  \cdot  \rangle_{H}$ is the standard inner product on $H$.  Such an inner product allows to relate \eqref{dalembert} to the familiar Newton equation, where $M$ represents the ``mass" of the particle. Indeed, in this case  $\nabla_M f =M^{-1} \nabla_Hf$ where $\nabla_M$ is the gradient with respect to  $\langle \cdot ,  \cdot  \rangle_M$ inner product, and $\nabla_H$ to  $\langle \cdot ,  \cdot  \rangle_H$.    From hereon, we keep the inner product structure general and, in examples, specify the relevant inertia operator.

	In most physical examples we are aware of, the constraint is an idealization, and a more truthful representation is given by the unconstrained system that is instead subject to additional strong ``constraining" forces. For example, one might think that a more truthful representation of the rigid pendulum is rather given by the mass attached to the pivot by a very stiff spring.
	
	\begin{figure}[h!]\centering \label{cartoon-fig}
		\includegraphics[width=.52\columnwidth]{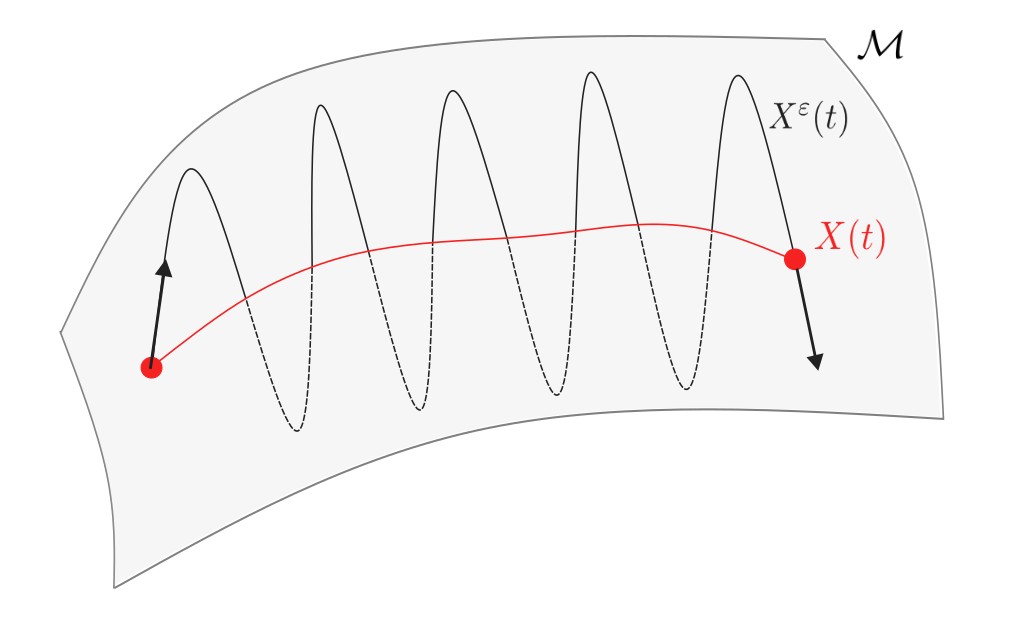}
		\caption{\textbf{Black}: trajectory of the strongly forced system, oscillating around manifold $M$. \textbf{\red{Red}}: trajectory of the constrained model given by d'Alembert's principle.}
	\end{figure}
	
	It is thus reasonable to ask whether the  axiomatically defined d'Alembert motion can be derived from the more fundamental Newton's laws by some sort of a limiting procedure. Specifically, one usually considers taking the true system to be given, for some potential $U$, by
\be\label{strong0}
\begin{cases}
	\ddot{X}^\ve(t) =  -\nabla W(X^\ve(t)) - \tfrac{1}{\ve^2} \nabla U(X^\ve(t)),\\
	X^\ve(0) = X_0^\ve \xrightarrow{\ve\to 0}X_0 \in \M,\\
	\dot{X}^\ve(0) = V^\ve_0 \xrightarrow{\ve\to 0}V_0 \in \mathbb{R}^d.
\end{cases}
\ee
We assume, as in the example of the spring pendulum, that $\M= \{ x : U(x)=0\}$, and is a non-degenerate minimum set for $U$, e.g. $\hess\  U_x(v,v)\geq c_0\|v\|^2$ for some $c_0>0$ holding for all $x\in \M$ and $v\in N_x \M$.

The first mathematical theorem justifying d'Alembert's principle in this context was proven by Rubin and Ungar in \cite{RU57}. There, the authors prove that if initial velocity is \textit{tangential}, that is 
$
V_0 \in T_{X_0}\M,
$ 
then the solutions of \eqref{strong0} converge to solutions of \eqref{dalembert} as $\ve \to 0$. The authors of \cite{RU57} proceed to discuss the non-tangential initial data case, and note that, at least if $\M$ has codimension one, the convergence might fail - and instead the solutions of \eqref{strong0} will converge, as $\ve \to 0$ to the system given by d'Alembert's principle subject to \textit{additional potential force}. This force arises from averaging of oscillations around the constraint. 

A more general theorem for arbitrary codimension was proven by Takens in \cite{T80}, and will be discussed in more detail in the next section. Assuming some non-resonance conditions, the limit system turns out to be given by
\be \label{takenslimit}
\begin{cases} 
	\ddot{X}(t)+ \nabla W(X(t)) + \nabla V(X(t)) \in N_{X(t)} \M\\
	{X}(t) \in  \M
\end{cases}
\ee
where the additional potential is defined by
\be \label{hompot0}
V(X) = \sum c_i \sqrt{\lambda_i(X)},
\ee 
where $\lambda_i(X)$ are eigenvalues of the operator $\hess_X U\big|_{N_X \M}$, and $c_i$ are nonnegative constants that depend on the normal components of initial data. We call potential $V$ given by \eqref{hompot0} the \textit{homogenized potential of} $U$, and the system \eqref{takenslimit} the \textit{Takens limit system}.  

In this work, we are concerned with the application of Takens' theorem to infinite-dimensional systems that arise in continuum mechanics.  Similar to Archimedes's Mechanical Method, our results should be regarded as predictions for the governing equations in this singular limit -- a heuristic method to derive them -- rather than rigorous theorems.  To prove the corresponding theorem in a given context, one must contend with PDE issues which require delicate functional analysis. The predictions thus might fail in the most general settings.\footnote{As V.I. Arnold writes in \cite{A13} about the averaging principle for perturbations of Hamiltonian systems, \textit{``We note that this principle is neither an axiom, nor a definition, but rather a physical proposition, i.e., a vaguely formulated and, strictly speaking, untrue assertion. Such assertions are often fruitful sources of mathematical theorems."}} Thus, we do not make any claims regarding the convergence of considered systems to their Takens limits, and are being intentionally vague regarding the meanings of infinite sums appearing in homogenized potentials and related places. However, rigorous validations of our predictions exist for various model problems (for example, the incompressible  \cite{E77,MS03} and anelastic \cite{M07} limits in fluid dynamics) and, to our knowledge, all such results agree with our theory.  

We now list predictions for several classical systems. Our first example of an infinite-dimensional constrained continuum mechanical model is the equation of inextensible thread. It is described by 
\be\label{inextthread0}
\begin{cases}
		\ddot{X} = \frac{1}{\vr_0}\nabla_a(\sigma \partial_a X),\\
		|\partial_a X| = 1.
\end{cases}
\ee 
Here $X$ is a time-dependent curve on a fixed Riemannian manifold parametrized by $a \in S^1$ and $\vr_0: S^1 \to \mathbb{R}^+$ is a fixed mass density distribution. As derived in \cite{S,P12}, there is an interpretation of \eqref{inextthread0} as constrained motion, the configuration space being $\mathsf{SLoops}(M) := \{X \in C^{\infty}(S^1, M) \ | \ |\partial_aX| = 1 \}$. The corresponding unconstrained model is given by the hyperelastic extensible thread \cite{Al95}:
\be\label{extthread0}
\ddot{X} = \frac{1}{\vr_0}\nabla_a\bigg(\frac{W'(|\partial_a X|, a)\partial_a X}{|\partial_a X|}\bigg),
\ee  
where $W: \R \times S^1 \to \R$ is the physical parameter roughly corresponding to the stored internal energy per unit length (thought of a large in magnitude $W\sim {1}/{\ve^2}$). Taking the constrained (`inextensible') limit of \eqref{extthread0}, we derive the correction that should appear in equations \eqref{inextthread} if one wants to consider ill-prepared initial data. Takens limit system is given, for flat $M$ (we only do it here to highlight the structure, and general manifold $M$ is considered in Theorem \ref{threadthm}), by
\be
\begin{cases}
	\ddot{X} =  \frac{1}{\vr_0}\nabla_a(\sigma \partial_a X) - \frac{1}{\vr_0}\nabla_a\nabla_a(K \nabla_a\partial_a X)\\
	|\partial_a X| = 1\\
	K = \sum_i \frac{ c_i v_i^2}{\sqrt{\lambda_i}\vr_0}.
\end{cases}
\ee
Here $\lambda_i, v_i$ are appropriately normalized eigenpairs of \textit{tension operator}
\be 
W''(1, \cdot)\L =  W''(1, \cdot) \bigg(-\partial_a \bigg(\frac{1}{\vr_0}\partial_a\bigg) + \frac{1}{\vr_0}|\partial_a^2 X |^2\bigg),
\ee  
and $c_i$ are constants depending on initial data.
We note that the correction that appears in Takens limit system is given by the fourth-order bending-resistance term, of the type usually arising in Euler beam theories which arise from potentials that penalize large curvature.  In our framework, bending resistance is explained as an emergent phenomenon triggered by large initial deviations of the system.  Unlike the usual phenomenological theories, in this setting the effect is nonlocal and nonlinear, depending on the (evolving) configuration of thread.

The second example is given by incompressible Euler equations
\be
\begin{cases}
	\partial_t u +\nabla_u u = - \frac{\nabla p}{\vr}\\
	\partial_t \vr + \nabla_{u} \vr = 0\\
	\div u = 0.
\end{cases}
\ee 
Here $M$ is a smooth Riemannian manifold, and the unknowns are a time dependent vector field $u$ on $M$ and a pair of time-dependent scalar functions $p, \vr$ on $M$. Since the seminal work of Arnold \cite{A66,AK}, it is known that Euler equations admit, in Lagrangian coordinates, an interpretation as the free motion in the \textit{group of diffeomorphisms of} $M$, constrained to move on the subspace $\mathsf{SDiff} (M)$ consisting of those diffeomorphisms that preserve volume. The corresponding ``more truthful" unconstrained model is given by compressible Euler equations:
\be\label{compeuler0}
\begin{cases}
	\partial_t u + \nabla_u u = - \frac{1}{\ve^2} \frac{\nabla P}{\vr}\\
	\partial_t \vr + \div(\vr u) = 0\\
	\partial_t s +  \nabla_u s = 0\\
	
\end{cases}
\ee 
where $u$ is the velocity, $\vr$ mass density, $s$ entropy, $P = \widetilde{P}(\vr,s)$ is given by a thermodynamic equation of state, and
where $\ve$ is the Mach number. 
This fact was used by Ebin in \cite{E77} to prove the first result on the incompressible limit for Euler equations. 

In \cite{E77}, only well-prepared data -- equivalently, data tangential to
$\mathsf{SDiff}(M)$ -- is considered, and the unconstrained model is the
barotropic compressible Euler system. In Section 4, we treat this example
in greater generality. For nontangential data in the fully compressible,
inhomogeneous setting, the incompressible constrained model acquires an
additional forcing term, just as in the finite-dimensional mechanical
problem. The Takens limit system corresponding to \eqref{compeuler0} is
\be\label{takens-fluid0}
\begin{cases}
	\partial_t u + \nabla_u u = - \frac{\nabla p}{\vr}-  \frac{\div \Sigma}{\vr}\\
	\partial_t \vr + u\cdot\nabla \vr = 0,\\
	\div u = 0,	
\end{cases}
\ee 
where, as usual, $p$ is the pressure serving as a Lagrange multiplier to enforce the incompressibility constraint, and $\Sigma$ is a symmetric 2-tensor that we call \textit{acoustic stress}, given by
\be 
\Sigma = \sum_i \frac{c_i}{\sqrt{\lambda_i}} \frac{\nabla v_i \otimes \nabla v_i}{\vr},
\ee 
where $\lambda_i, v_i$ are appropriately normalized eigenpairs of \textit{acoustic wave operator}
\be 
\L = - \vr\partial_\vr \tilde{P} \div\left(\tfrac{1}{\vr}\nabla\right),
\ee  
and $c_i$ are adiabatic invariants that are explicitly computed from the initial data in Proposition \ref{EulerTakensPotential}. We note that entropy becomes the function of density, and its distribution is forced by the condition that $\tilde{P}(\vr,s) = {\rm const}$, which is the manifestation of constraint $X \in \M$.
The force we derive agrees with the one found in \cite{MS03}, although it is not cast in this form, where some of the functional-analytic details are carried out and the convergence of solutions of \eqref{compeuler0} to those of \eqref{takens-fluid0} under non-resonance conditions is proven in Sobolev topologies. 

In the remainder of the paper we analyze two  constrained models from geophysical fluid dynamics: that of anelastic equations (Section 5) and of lake/Great Lake equations (Section 6). For each model, we provide the description as constrained motion on the appropriate submanifold of $\mathsf{Diff}(M)$ (that seems to be new). We proceed to describe the corresponding unconstrained models (compressible Euler in the gravity field, and shallow water/Green-Naghdi equations respectively) as Newton equations on $\mathsf{Diff}(M)$. As such, we realize anelastic and lake/Great Lake models as limits of strong constrained force. In both cases, it turns out that the potential correction in the ill-prepared data regime vanishes, and the Takens limit system agrees with the usual constrained motion. This provides geometric reason for the robust applicability of these models.

	\section{Dynamics of particle under strongly constraining force: Takens theorem}
	
We study a mechanical system with a strong force which drives the solution toward submanifold $\M$ of $\R^d$.  More specifically, consider
\be\label{strong}
\begin{cases}
	\ddot{X}^\ve(t) =  -\nabla W(X^\ve(t)) - \tfrac{1}{\ve^2} \nabla U(X^\ve(t)),\\
	X^\ve(0) = X_0^\ve \xrightarrow{\ve\to 0} X_0 \in \M,\\
	\dot{X}^\ve(0) = V^\ve_0 \xrightarrow{\ve\to 0} V_0 \in \mathbb{R}^d.
\end{cases}
\ee
Again, we assume that $\M= \{ x : U(x)=0\}$, and is a non-degenerate minimum set for $U$, e.g. $\hess\  U_x(v,v)\geq c_0\|v\|^2$ for some $c_0>0$ holding for all $x\in \M$ and $v\in N_x \M$.

We remark that almost everything that we are going to say works in the case where $\R^d$ is replaced by a general Riemannian manifold, with the formulas modified accordingly (e.g. $\ddot{X}$ is being replaced by $\nabla_{\dot{X}} \dot{X}$). The Riemannian versions can be found in the book of Bornemann \cite{B98}, on which our work heavily bases. We however choose to present the results with ambient space being Euclidean, in order to focus the attention on mechanical mechanism and not differential-geometric complications.

The question of convergence (and non-convergence) of the solutions of this system to solutions of the ideal system constrained to submanifold $\M$ has been extensively studied (\cite{RU57}, \cite{E77}, \cite{T80}, \cite{G83}, \cite{BS97}).
We proceed to formulate a theorem originally due to Takens \cite{T80} with the extension due to Bornemann \cite{B98}. The following definitions and statement are taken from \cite{B98}.
\begin{defi}\label{constraining}
	Consider a smooth potential $U: \R^d \to \R$, and let $\M$ be a minimum set of $U$. We call $\M$ a \textit{nondegenerate critical submanifold} if it is a smooth submanifold of $\R^d$ and Hessian of $U$ is nondegenerate when restricted to normal spaces of $\M$. We say such  $U$ is \textit{constraining} to $\M$.
\end{defi} 
We remark that \eqref{strong} has solutions for all times if  $U$ is smooth and constraining to $\M$ and $W$ is bounded below.
\begin{defi}
	We say that a potential $U: \R^d \to \R$ is  \textit{constraining spectrally smooth} to nondegenerate submanifold $\M \subset \R^d$ if it is constraining to $\M$ and moreover the spectrum of $\hess U|_{N_X \M}$ can be arranged in a smooth way. Specifically, there exists a collection of smooth fields of projections onto mutually orthogonal subspaces $E_i(X)$, say $k$, of $N_X \M$:
	\be
	\mathbf{P}_i(X): T_X\R^d \to  E_i(X) \subset N_X \M 
	\ee  
	such that at any point $X \in \M$, the Hessian of $U$ can be written as 
	\be 
	\hess U_X|_{N_X \M} = \sum_{i=1}^k \lambda_i(X) \mathbf{P}_i(X).
	\ee 
\end{defi}
Since $\hess U_X|_{N_X \M}$ is a symmetric positive definite operator for each $X$, spectral theorem implies that such projections exist at each point. The nontrivial part of the above definition is the smoothness with respect to varying $X$; in particular, the dimensions of the $E_i$ are constant on $\M$ .
\begin{defi}
	Let $U$ be spectrally smooth constraining to $\M$, and let $\lambda_i: \M \to \R$ be the  eigenvalues of $\hess U_X|_{N_X \M}$. A set of \textit{resonances of order $m$} is given by 
	\begin{align}
	R_m := \bigg\{x \ \bigg|\ \sum_{i=1}^k \gamma_i\sqrt{\lambda_{i}(x)} = 0 
	\text{ for } \gamma_1,\dots,\gamma_k \in \Z \text{ with } \sum_{j=1}^k |\gamma_j| = m \bigg\}.
	\end{align} 
	A curve $X$ on $\M$ is called \textit{non-flatly resonant up to order} $m$ if it crosses $R_1, ..., R_m$ only transversally. 
\end{defi}
In talking about the limit of solutions to systems \eqref{strong}, one of course has to define the appropriate initial conditions. In this discussion, the correct notion is given by the requirement that the (appropriately shifted) energy is bounded uniformly in $\ve$. This is achieved by the following type of initial data:

\begin{defi}\label{initial_data}
	We call a family of initial data $(X_0^\ve, V_0^\ve) \in T\R^d$ for $\ve \to 0$ \textit{mildly ill-prepared} if there exist a point $X_0 \in \M$ and vectors $X_{0N} \in N_{X_0} \M$, $V_0 \in T_{X_0}\R^d$ such that
	\be 
	(X_0^\ve,V_0^\ve) \to (X_0, V_0),
	\ee 
	and 
	\be 
	\frac{\rmd}{\rmd \ve}\bigg|_{\ve = 0} X_0^\ve = X_{0N}
	\ee 
	We call data $(X_0^\ve, V_0^\ve)$ \textit{well-prepared}, if $X_{0N} = 0$ and $V_0 \in T_{X_0}\M$.
\end{defi}
Adding a constant to $U$ to make $U(X_0) = 0$ (which has no influence on the dynamics), we note that for this type of data, the energy of the system \eqref{strong} at initial time is given by:
\be 
E_0^\ve = \tfrac{1}{2}|V_0^\ve|^2 + W(X_0^\ve) + \tfrac{1}{\ve^2}U(X_0^\ve) = \tfrac{1}{2}|V_0|^2 + W(X_0) + \frac{1}{2}\hess U_{X_0} (X_{0N}, X_{0N}) + o(1).
\ee
As such, it stays $O(1)$ as $\ve$ is decreased to zero, giving meaning to being mildly ill-prepared.

We now have  all the concepts required to understand the strongly constraining limit. 
\begin{defi}[Takens Limit]
	Suppose  $U$ is constraining spectrally smoothly  to a submanifold $\M$. We call  $V:\M\to \mathbb{R}$ a \textit{homogenized potential of} $U$ if, for some $c\in [0,\infty)^k$,
	\be\label{hompot}
	V(x) = \sum_{i=1}^k c_i \sqrt{\lambda_{i}(x)}.
	\ee
	We define the  \textit{Takens limit system} of \eqref{strong} with data converging to $(X_0,V_0)$ with $X_0\in \M$ to  be
		\be \label{limitsystem}
	\begin{cases}
		\ddot{X}(t) + \nabla W(X(t)) + \nabla V(X(t)) \in N_X(t) \M,\\
		X(0) = X_0,\\
		\dot{X}(0) = \mathbf{P}_{T_{X_0} \M} V_0.
	\end{cases}
	\ee
\end{defi}

This notion is justified by the following theorem, the codimension one version of which was proven by Rubin and Ungar \cite{RU57}, the full statement for Euclidean ambient space by Takens \cite{T80}, and for general ambient manifold and treatment of transversal resonances by Bornemann \cite{B98}.

\begin{theorem}[Takens \cite{T80}, Bornemann \cite{B98}]\label{maintheorem}
	Consider potential $U$ that is constraining spectrally smooth to  a nondegenerate submanifold $\M \subset \R^d$.
	Consider a sequence $\ve \to 0$, and a mildly ill-prepared sequence of initial data $(X_0^\ve, V_0^\ve)$. Let $(X_0,X_{0N}, V_0) \in \M \times N_{X_0} \M \times T_{X_0}\R^d$ be as in definition
	\ref{initial_data}.
	For each $\ve$, let $X^\ve$ be a solution of \eqref{strong} with initial conditions $X_0^\ve, V_0^\ve$. Define constants $c_i$  by 
	\be \label{constantsci}
	c_i = \frac{|\mathbf{P}_i (X_0) V_0|^2 + \lambda_i(X_0)|\mathbf{P}_i (X_0) X_{0N}|^2}{2\sqrt{\lambda_i(X_0)}},
	\ee 
	where $V_0, X_0, X_{0N}$ are from the definition of mildly ill-prepared data \ref{initial_data}.
	
	Let $X$ be the solution of the {Takens limit system} with homogenized potential $V: \M \to \R$ given in \eqref{hompot} constructed from $c_i$ as above.
	If $X$ is non-flatly resonant up to order 3, then
	\be
	X^\ve \to X \ \text{in} \  C^\alpha([0,T])
	\ee
	for all $\alpha \in(0,1)$ and any $T < \infty$ for which the solutions of \eqref{strong} and \eqref{limitsystem} exist.
\end{theorem}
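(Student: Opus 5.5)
The plan follows Bornemann's homogenization route: energy bounds, weak compactness, then identification of the weak limits of the quadratic normal energies through adiabatic invariance. Throughout, $T$ is a time for which both \eqref{strong} and \eqref{limitsystem} have solutions.

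\textbf{Step 1: a priori bounds.} Energy conservation for \eqref{strong} with mildly ill-prepared data gives $E^\ve(t)=E_0^\ve=O(1)$. Since $W$ is bounded below, this yields $|\dot X^\ve|\le C$ and $U(X^\ve)\le C\ve^2$. Nondegeneracy of $\M$ makes $U$ comparable to the squared distance to $\M$ in a tubular neighbourhood, so $\mathrm{dist}(X^\ve,\M)=O(\ve)$.

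Write $X^\ve=\pi(X^\ve)+\ve\,\eta^\ve$, where $\pi$ is the nearest-point projection and $\eta^\ve\in N_{\pi(X^\ve)}\M$ is bounded. The uniform Lipschitz bound and Arzel\`a--Ascoli give a subsequence with $X^\ve\to X$ in $C^\alpha([0,T])$ for all $\alpha<1$, with $X(t)\in\M$ and $\dot X^\ve\rightharpoonup^* \dot X$ in $L^\infty$.

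\textbf{Step 2: limit equation with an unknown force.} Test the equation against tangential fields. For smooth $\varphi(t)$, take $\langle \ddot X^\ve,\mathbf{P}_{T}(\pi X^\ve)\varphi\rangle$ and integrate by parts in time. The dangerous term $\ve^{-2}\nabla U(X^\ve)$ is expanded to second order about $\pi(X^\ve)$. Its tangential component is
$$\tfrac12\, \partial_x\hess U(\eta^\ve,\eta^\ve) + O(\ve),$$
where the correction uses that $\M$ is a critical manifold. A similar quadratic expression in the normal velocity $\dot\eta^\ve$ arises from the curvature of $\M$.

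Passing to the limit, $X$ satisfies \eqref{limitsystem} with $\nabla V$ replaced by a force $F$ that depends linearly on the weak-$*$ limits of the quadratic quantities $\mathbf{P}_i\eta^\ve\otimes\mathbf{P}_j\eta^\ve$ and $\mathbf{P}_i\dot\eta^\ve\otimes\mathbf{P}_j\dot\eta^\ve$. The limit $\dot X(0)=\mathbf{P}_{T_{X_0}\M}V_0$ follows because the normal part of the velocity oscillates on the fast scale and averages out.

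\textbf{Step 3: identifying the weak limits (the main obstacle).} On the fast time scale $t/\ve$, the normal dynamics decouple into harmonic oscillators, one for each eigenspace $E_i$, with slowly varying frequency $\omega_i=\sqrt{\lambda_i(X(t))}$. I would show three things.
\begin{enumerate}
\item The cross terms ($i\neq j$) have weak limit zero. This uses a stationary-phase/averaging argument: the phases $e^{\pm i(\omega_i\pm\omega_j)t/\ve}$ average to zero away from $R_2$.
\item Equipartition holds: the weak limits of $|\mathbf{P}_i\dot\eta^\ve|^2$ and $\lambda_i|\mathbf{P}_i\eta^\ve|^2$ agree and both equal $E_i(t)$.
\item The action $E_i/\omega_i$ is an adiabatic invariant. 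Concretely, $\frac{d}{dt}(E_i^\ve/\omega_i)\rightharpoonup 0$, where $E_i^\ve$ is the instantaneous oscillator energy.
\end{enumerate}
The derivative in item 3 involves cubic oscillatory terms with phases $\omega_i\pm\omega_j\pm\omega_l$, so its vanishing needs non-resonance up to order 3. Transversal crossing of $R_1,\dots,R_3$ makes the resonant set of measure zero in time, with integrable van der Corput-type contributions. I expect most of the work here: establishing uniform nonresonant averaging estimates and controlling the resonant crossings.

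Given item 3, evaluating at $t=0$ gives $E_i(t)=c_i\sqrt{\lambda_i(X(t))}$ with $c_i$ as in \eqref{constantsci}. Substituting this into $F$ from Step 2, the curvature and Hessian terms combine into exactly $\nabla\big(\sum_i c_i\sqrt{\lambda_i}\big)=\nabla V$ tangentially.

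\textbf{Step 4: conclusion.} The limit $X$ solves \eqref{limitsystem}, which has a unique solution. Hence every subsequence has the same limit, and the whole family converges in $C^\alpha([0,T])$.
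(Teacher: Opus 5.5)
Your skeleton matches the paper's proof in Appendix~B, which follows Bornemann:
\begin{itemize}
\item energy bounds and the splitting $X^\ve=\pi(X^\ve)+\ve X_N^\ve$;
\item a tangential equation whose only unknown force is $-\tfrac12\mathbf{P}_T\nabla^3U:(X_N^\ve\otimes X_N^\ve)$;
\item equipartition, vanishing of cross terms, and adiabatic invariance under order-3 non-resonance;
\item uniqueness of the limit system.
\end{itemize}

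The real difference is how you kill oscillatory terms in Step~3. You use explicit phases and non-stationary phase, with van der Corput at crossings. The paper never writes down a phase. It uses a ``weak virial'' device: a quantity that is $O(\ve)$ and has $O(1)$ time derivative has derivative tending weakly-$*$ to zero.
\begin{itemize}
\item Applied to $\langle\ve\dot X_N^\ve,\ve\mathbf{P}_i^\ve X_N^\ve\rangle$, it gives equipartition $\sigma_i=\lambda_i\eta_i$.
\item Applied to $\ve\dot X_N^\ve\otimes\ve X_N^\ve$, together with a symmetry argument, it gives $(\lambda_i-\lambda_j)\wlim(\mathbf{P}_iX_N^\ve\otimes\mathbf{P}_jX_N^\ve)=0$.
\item Applied to the four cubic tensors, it gives a linear system whose determinant on each block $E_i\otimes E_j\otimes E_k$ is a product of the combinations $\sqrt{\lambda_i}\pm\sqrt{\lambda_j}\pm\sqrt{\lambda_k}$. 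So the cubic term in $\tfrac{\rmd}{\rmd t}E_i^\ve$ vanishes weakly, and $\tfrac{\rmd}{\rmd t}E_i^\ve\rightharpoonup\tfrac12\dot\lambda_i\eta_i$.
\end{itemize}
That route is purely algebraic and needs no representation of the oscillations. Yours must first show that modulated amplitudes such as $A_i^\ve=e^{\mathrm{i}\phi_i^\ve/\ve}(\omega_i^\ve\mathbf{P}_i^\ve X_N^\ve+\mathrm{i}\,\ve\mathbf{P}_i^\ve\dot X_N^\ve)$ are equi-Lipschitz. Here $\phi_i^\ve=\int_0^t\omega_i(\pi X^\ve)$ must be built from $\pi(X^\ve)$, not from the limit, since $(\omega_i(\pi X^\ve)-\omega_i(X))/\ve$ is uncontrolled. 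In exchange, your route is quantitative and actually treats transversal crossings. The paper's appendix avoids crossings by assuming pointwise non-resonance and deferring to Bornemann.

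One step is missing, and both routes need it. Step~1 gives only $\dot X^\ve\rightharpoonup^*\dot X$. That is not enough to pass to the limit in two places:
\begin{itemize}
\item the term quadratic in the slow velocity that your integration by parts produces, $\langle\dot X^\ve,D\mathbf{P}_T[\tfrac{\rmd}{\rmd t}\pi(X^\ve)]\varphi\rangle$, which encodes $\sff(\dot X,\dot X)$;
\item products like $\dot\omega_i^\ve|\mathbf{P}_i^\ve X_N^\ve|^2$ in the adiabatic step.
\end{itemize}
The paper supplies the missing bound. Since $\hess U(X_N^\ve,\cdot)$ is normal, $\tfrac{\rmd^2}{\rmd t^2}\pi(X^\ve)=O(1)$, so $\pi(X^\ve)\to X$ in $C^1$. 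This also gives $\dot X(0)=\mathbf{P}_{T_{X_0}\M}V_0$ directly, as the limit of $D\pi(X_0^\ve)V_0^\ve$.

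Relatedly, the curvature of $\M$ contributes no term quadratic in the normal velocity.
\begin{itemize}
\item $D^2\pi$ vanishes on pairs of normal vectors.
\item The mixed term $2D^2\pi(\dot X_T^\ve\otimes\ve\dot X_N^\ve)$ is linear in the oscillation and drops out once $\dot X_T^\ve$ converges strongly.
\item $\sff(\dot X,\dot X)$ is the normal reaction, not part of $\nabla V$.
\end{itemize}
The tangential force $-\tfrac12\sum_i\eta_i\nabla^T\lambda_i$ comes entirely from the cubic term, via Hellmann--Feynman: $\mathbf{P}_i(\partial_\xi\hess U)\mathbf{P}_i=(\partial_\xi\lambda_i)\mathbf{P}_i$. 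Then $\eta_i=c_i/\sqrt{\lambda_i}$ turns it into $-\nabla^TV$. Note also that the bounded normal-velocity quantity is $\ve\dot\eta^\ve$, not $\dot\eta^\ve$; your quadratic quantities need that factor of $\ve$.

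Finally, the non-resonance hypothesis is imposed on the Takens solution, but Step~3 uses it for the not-yet-identified subsequential limit. Step~4 therefore needs a continuation argument. Both curves are $C^1$ and agree up to a maximal time $t^*$, so a transversal crossing at $t^*$ stays isolated for the limit, and the identification extends past $t^*$. The paper leaves this implicit as well.
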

\begin{remark}[Adiabatic Invariance]
	Constants $c_i$ have dimensions of energy over frequency, that is of \textit{action}. Their appearance is a manifestation of the phenomenon of \textit{adiabatic invariance} for a slowly modulated harmonic oscillator.  See \cite{A12, A14}.
\end{remark}
\begin{remark}[Topology of Convergence]
	In general for mildly ill-prepared data, while position converges uniformly, velocity converges only weakly, so $C^\alpha$ topology of convergence cannot be improved to $C^1$. However for very well-prepared data one can improve the topology of convergence, though no higher than $C^{2-}$ since the accelerations never converge even for very well prepared data.
\end{remark}
\begin{remark}[Damping]
	One can extend the theorem to damped Newton's equation \cite{B98},
	\be 
	\ddot{X}^\ve(t) + \alpha \dot{X}^\ve=  -\nabla W(X^\ve) - \tfrac{1}{\ve^2} \nabla U(X^\ve(t)).
	\ee 
	In this case the tangential component of the damping survives in Takens limit system, while homogenized potential is time dependent and decays exponentially
	\be
	V(x,t) = e^{-\alpha t}\sum_{i=1}^k c_i \sqrt{\lambda_{i}(x)}.
	\ee 
	Thus damping gradually suppresses the normal oscillations and restores the validity of d'Alembert's principle at long times.
\end{remark}
Here we provide an informal argument to justify the necessity and the explicit form of correcting force. While the full proof can be found in \cite{T80,B98} we present a version in Appendix \ref{adiabaticappend}.
\begin{proof}[Informal argument for the need of correcting force]
	We begin by adjusting $U$ by a constant to make $U = 0$ on $\M$, which we are free to do since it does not change the dynamics.
	
	It is plausible that the solution of \eqref{strong} can be split into the slow motion along $\M$ and fast transverse oscillations as in Figure \ref{cartoon-fig}:
	\be\label{splitting}
	X^\ve = X^\ve_\M + \ve X^\ve_N \quad \text{with}\quad X^\ve_\M = \pi(X^\ve),
	\ee
	where $\pi$ is a nearest point projection defined in a tubular neighborhood of $\M$.
	With this splitting, equation of energy conservation is written as
	\begin{align}
		E_0 &= \frac{1}{2}|\dot{X}^\ve|^2 + W(X^\ve) + \frac{1}{\ve^2}U(X^\ve)\\
		&= \bigg(\frac{1}{2}|\dot{X}_\M^\ve|^2 + W(X_\M^\ve)\bigg) + \bigg(\frac{1}{2}|\ve\dot{X}_{N}^\ve|^2  + \frac{1}{2}\hess_{X_\M} U (X^\ve_N,X^\ve_N)\bigg)  + O(\ve)\\
		&=: E^\ve_\M + E^\ve_N + O(\ve).
	\end{align}
	We used Taylor expansion of potential terms, and noted that the kinetic cross term is lower order:
	\be 
	\langle \dot{X}_\M^\ve, \ve \dot{X}_{N}^\ve \rangle = \langle \dot{X}_\M^\ve, \dot{X}^\ve - \dot{X}^\ve_\M\rangle  = \langle \dot{X}_\M^\ve, (I - D\pi(X^\ve))\dot{X}^\ve \rangle  = \langle \dot{X}_\M^\ve, (\mathbf{P}_{N_{X^\ve_\M}} + O(\ve))\dot{X}^\ve \rangle = O(\ve).
	\ee 
	Suppose that $X^\ve_\M$ converges strongly in $C^1$ to a limiting curve $X_\M \in \M$; then 
	\be 
	E_\M^\ve \to E_\M := \frac{1}{2} |\dot{X}_\M|^2 + W(X_\M).
	\ee
	On the other hand, we can plug in the ansatz \eqref{splitting} into \eqref{strong} and expand using Taylor formula (see Lemma \ref{lemeqns} for details) to check that the normal motion satisfies
	\be 
	\ve^2 \ddot{X}_N^\ve + \hess_{X_\M^\ve} U (X^\ve_N,\cdot) = O(\ve).
	\ee 
	If one believes that $X_\M^\ve$ is indeed slow (specifically, $\dot{X}_\M^\ve = O(1)$, i.e. no rapid oscillations in the tangential to $\M$ directions arise), then, upon rescaling time by $\tau = t/\ve$, this equation is the perturbation of a slowly modulated harmonic oscillator. We write it (in the case when minimum set is codimension one) as
	\be\label{slowlymod}
	\ddot{X}_N^\ve(\tau) +  \omega^2(X_\M^\ve) X^\ve_N(\tau) = O(\ve),
	\ee 
	where $\omega(X_\M^\ve) = \sqrt{U''(X_\M^\ve)}$ is varying slowly in $\tau$. It is well known (see, e.g. the classical book of Arnold \cite{A13}) that on the long time scale $\tau \sim 1/\ve$ (equivalently, $t \sim 1$), slowly modulated oscillator need not to conserve energy $E_N^\ve$. On the other hand, it approximately conserves the  \textit{adiabatic invariant} or \textit{action}, named $c$,  that is given as the ratio of energy and frequency 
	\be 
	c = \frac{E^\ve_N}{ \omega(X_\M^\ve)}.
	\ee 
	We remark that while in the case of unperturbed oscillator this conservation is classical, $O(\ve)$ forcing might potentially break it by introducing resonances. This can even happen for the constant frequency case -- for example it is easy to check that the solution of equation 
	\be
	\ddot{x} + \omega^2 x = \ve \cos (\omega \tau) 
	\ee 
	with zero initial data grows in magnitude like $\ve\tau$, so for $\tau \sim 1/\ve$ adiabatic invariant is of order 1, not 0. Nevertheless, with some work it is possible to rule out resonances in our case by carefully analyzing the forcing terms introduced in Taylor expansion. 
	
	In higher codimension, we diagonalize the Hessian matrix, and conclude that each eigencomponent $X_{Ni}$ satisfies the same one-dimensional equation \eqref{slowlymod}, with  
	\be 
	\omega_i^2 = \lambda_i(X_\M^\ve),
	\ee 
	where $\lambda_i(X_\M^\ve)$ are eigenvalues of $\hess_{X_\M^\ve} U\big|_{N_{X_\M^\ve} \M}$. We note that the analysis of resonances in this setting is much more delicate, as, e.g., another component with the same eigenvalue can introduce exactly the type of resonant forcing as in the toy example above. It turns out that excluding the resonances up to order three is sufficient to guarantee that it does not happen.
	
	Nevertheless, after diagonalization and exclusion of resonances, the previous codimension one argument applies to conclude that adiabatic invariants
	\be 
	c_i = \frac{E^\ve_{Ni}}{\sqrt{\lambda_i(X_\M^\ve)}},
	\ee 
	where, naturally,
	\be
	E_{Ni}^\ve = \frac{1}{2}|\ve\dot{X}_{Ni}^\ve|^2 + \frac{1}{2} \omega_i^2(X_{\M^\ve})|X_{Ni}^\ve|^2,
	\ee 
	are approximately conserved.
	
	With that, we return to the energy equation and take the $\ve \to 0$ limit to conclude that
	\begin{align}
		E_0 = E^\ve_\M + E^\ve_N + O(\ve) &= E^\ve_\M + \sum_i E^\ve_{Ni} + O(\ve)\\
		& = E^\ve_\M + \sum_i c_i \sqrt{\lambda_i(X_\M^\ve)} + O(\ve)\\
		\Longrightarrow E_0 &= \frac{1}{2}|\dot{X}_\M|^2 + W(X_\M) + \sum_i c_i \sqrt{\lambda_i(X_\M)}.
	\end{align}
	Thus, the limiting curve $X_\M$ does not satisfy the naive prediction of being a solution to Newton's equation with potential $W$ constrained to $\M$, since the energy conservation law is different. On the other hand, it is plausible that it might be a solution of Newton's equation with modified potential
	\be 
	W(X_\M) + \sum_i c_i \sqrt{\lambda_i(X_\M)}.
	\ee  
	This expectation holds rigorously, as proven in \cite{T80, B98}. We refer to these works for complete treatment, as well as to Appendix \ref{adiabaticappend} for an overview of main ingredients in the proof.
\end{proof}

\begin{example}[Double pendulum spring chain]\label{spring_example}

	Arguably,\footnote{Unfortunately, a single pendulum modeled by an exceedingly  stiff spring cannot serve as such an example for a simple reason: the ideal constraint space is a circle, and potential energy of the spring is symmetric about this set, so that the eigenvalues of the Hessian are constant on the constraint set and so the Takens corrective force is always  zero regardless of the preparedness of the initial data. Possibly the Huygens pendulum or the like would work.}  the simplest nontrivial example of the application of Theorem \ref{maintheorem} is given by a two-spring chain. Consider a mechanical system consisting of two unit point masses connected by a unit-length spring of stiffness $\tfrac{k_2}{\ve^2}$, with the first mass also connected by a unit-length spring of stiffness $\tfrac{k_1}{\ve^2}$ to the fixed pivot (see Figure \ref{figchain}). 
	
					\begin{figure}[h!]\centering
	\includegraphics[width=0.3\columnwidth]{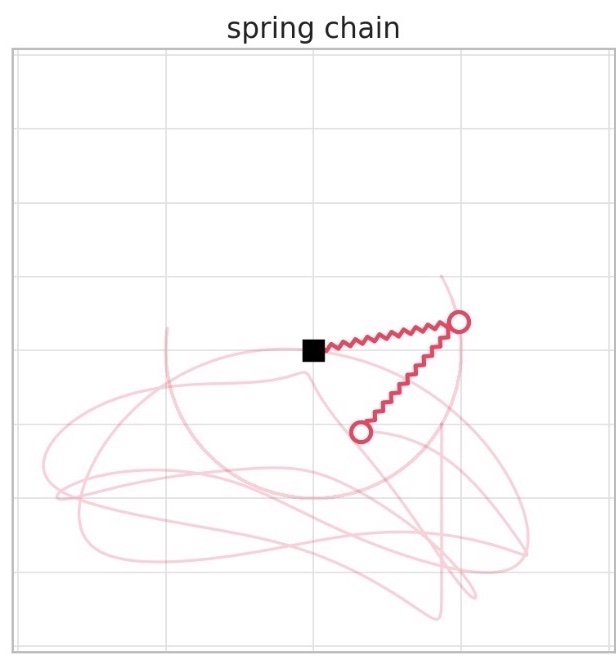} 
		\caption{Spring chain double pendulum with $\ve= 0.01$, $k_1=2$ and $k_2=5$.}\label{figchain}
	\end{figure}

	Viewed as the motion in $\R^4$ with coordinates 
$ 
	\mathbf{z} = (z_1, z_2) = (x_1, y_1, x_2, y_2),
$
	the equations of motion are given by Hooke's law 
	\be\label{spring_mass}
	\begin{cases} 
	\ddot{z}_1 &= -\frac{1}{\ve^2}k_1 (|z_1| -1)\frac{z_1}{|z_1|} - \frac{1}{\ve^2}k_2(|z_2 - z_1|-1)\frac{z_1-z_2}{|z_1 - z_2|}\\
	\ddot{z}_2 &= - \frac{1}{\ve^2}k_2(|z_2 - z_1|-1)\frac{z_2-z_1}{|z_1 - z_2|}
	\end{cases} .
	\ee
	They are Newton's equations with potential 
	\be 
	\tfrac{1}{\ve^2} U(\mathbf{z}) = \tfrac{1}{\ve^2}\tfrac{1}{2} \bigg(k_1 (|z_1| -1)^2 + k_2 (|z_2 - z_1|-1)^2\bigg),
	\ee 
	which is constraining, in the sense of Definition \ref{constraining}, to the torus $\M$ in $\R^4$ given by 
	\be 
	\M  = \{(z_1, z_2) \in \R^4 \ | \ |z_1| = 1, \ |z_2 - z_1| = 1\}.
	\ee 
	Geodesics on $\M$ are of course given by the trajectories of the ideal rigid double pendulum:
	\be\label{double_pend}
	\begin{cases} 
	\ddot{z}_1 &= - \lambda_1 z_1 - \lambda_2 (z_1 - z_2)\\
	\ddot{z}_2 &= - \lambda_2 (z_2 - z_1)\\
	|z_1| &= |z_1 - z_2|= 1
	\end{cases} .
	\ee
			\begin{figure}[h!]\centering
	\includegraphics[width=0.9\columnwidth]{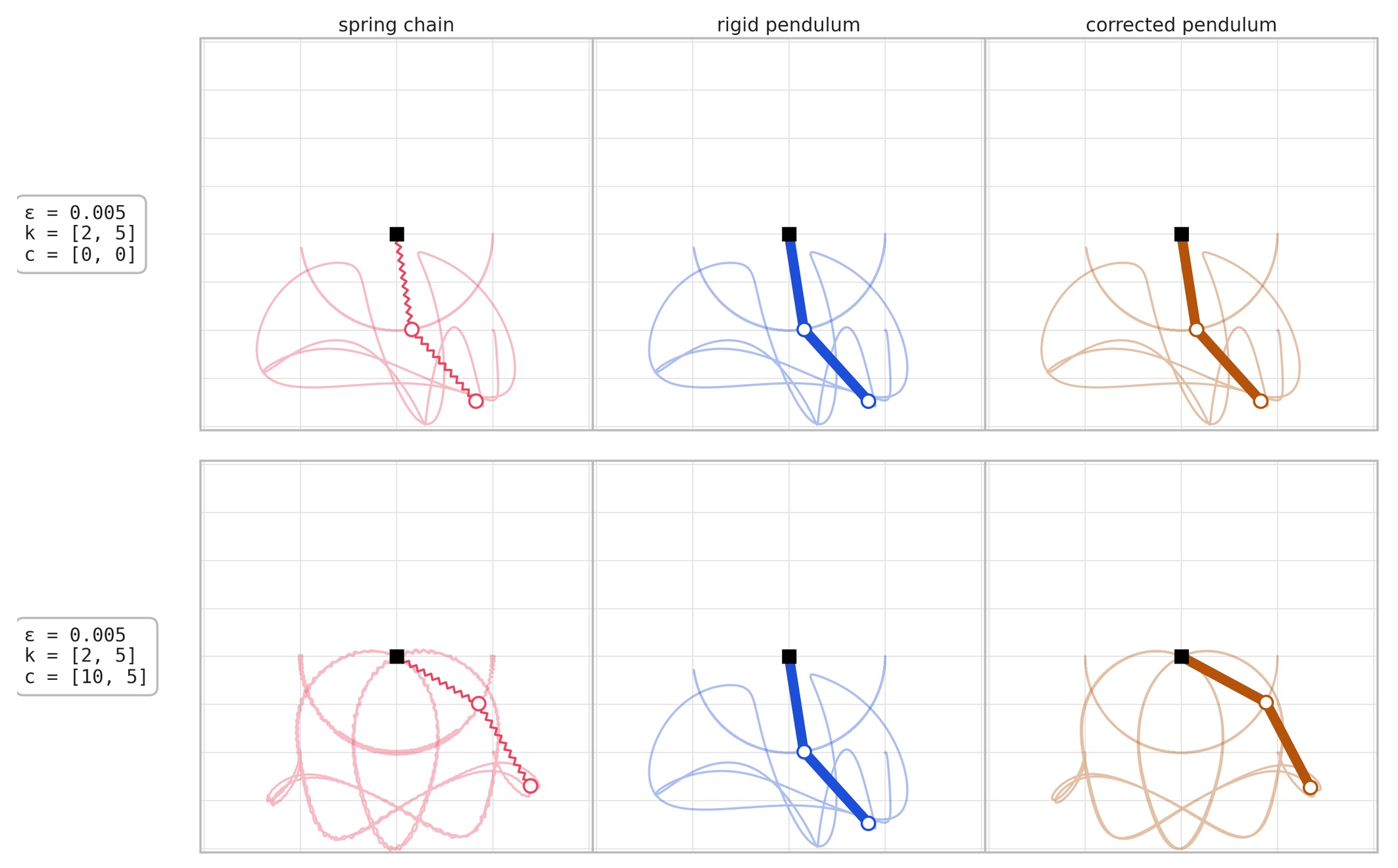} 
		\caption{Three systems: spring chain, rigid double pendulum, Takens system. Row 1: $c_+ =0$, $c_- = 0$, all three systems agree, d'Alembert's principle is justified.\\ Row 2: $c_+ \neq0$, $c_- \neq 0$, rigid pendulum fails to capture the dynamics. The Takens correction is necessary.}\label{figchain2}
	\end{figure}
	According to Theorem \ref{maintheorem}, the limit of solutions of \eqref{spring_mass} as stiffness $ \approx \tfrac{1}{\ve^2}$ increases is given by Takens system that we now derive explicitly. For the homogenized potential, we first compute that in the basis $(\nabla |z_1|, \nabla |z_2 - z_1|)$ of $N\M$, Hessian of $U$ is given by
	\be 
	\hess U|_{N_x \M}= \begin{pmatrix}
		k_1 & k_1 \cos\theta \\
		k_2\cos\theta & 2k_2
	\end{pmatrix} \quad \text{where} \qquad \theta = \theta(\mathbf{z}) = \arccos\langle z_1,z_1 - z_2\rangle.
	\ee 
	Consequently its eigenvalues are given by 
	\be
	\lambda_{\pm} = \tfrac{2k_2 + k_1 \pm \sqrt{(2k_2 - k_1)^2 + 4k_1k_2\cos^2\theta}}{2}.
	\ee 
	Resonance condition is thus an algebraic condition on $(k_1, k_2)$. Consider first a non-resonant situation; in this case with the initial conditions scaled as in Theorem \ref{maintheorem}, the limit of solutions of systems \eqref{spring_mass} as $\ve \to 0$ is given by the trajectory of a double pendulum subject to the force of additional potential
	\begin{align}
	V(\mathbf{z}) &= c_+ \sqrt{\tfrac{2k_2 + k_1 + \sqrt{(2k_2 - k_1)^2 + 4k_1k_2\cos^2\theta(\mathbf{z})}}{2}}  + c_-\sqrt{\tfrac{2k_2 + k_1 - \sqrt{(2k_2 - k_1)^2 + 4k_1k_2\cos^2\theta(\mathbf{z})}}{2}}.
	\end{align}

See Figure \ref{figchain2} for a comparison of the spring system to the double pendulum given by d'Alembert's principle, and to Takens limit system for the well-prepared and ill-prepared data.  In the well-prepared case, it is clearly an unnecessary modification as there is no additional force ($c_\pm=0$).  In the ill-prepared case, since $c_\pm\neq 0$ the system is unlike the double pendulum, but the derived force due to homogenization captures this deviation.

	One observes a curious behavior of this system: if only the positive mode is excited (i.e. $c_+ >0$, $c_- = 0$), configurations with $\theta(\mathbf{z}) = 90^\circ$ are minima of $V$ and hence Lyapunov stable (modulo rigid rotations under which the system is invariant). On the other hand, if only the negative mode is excited, configurations in which $\theta(\mathbf{z}) = 0^\circ$ or $180^\circ$ are Lyapunov stable. See Figure \ref{figchain3} for a demonstration of these phenomena in the non-resonant regime.
			\begin{figure}[h!]\centering
		\includegraphics[width=0.9\columnwidth]{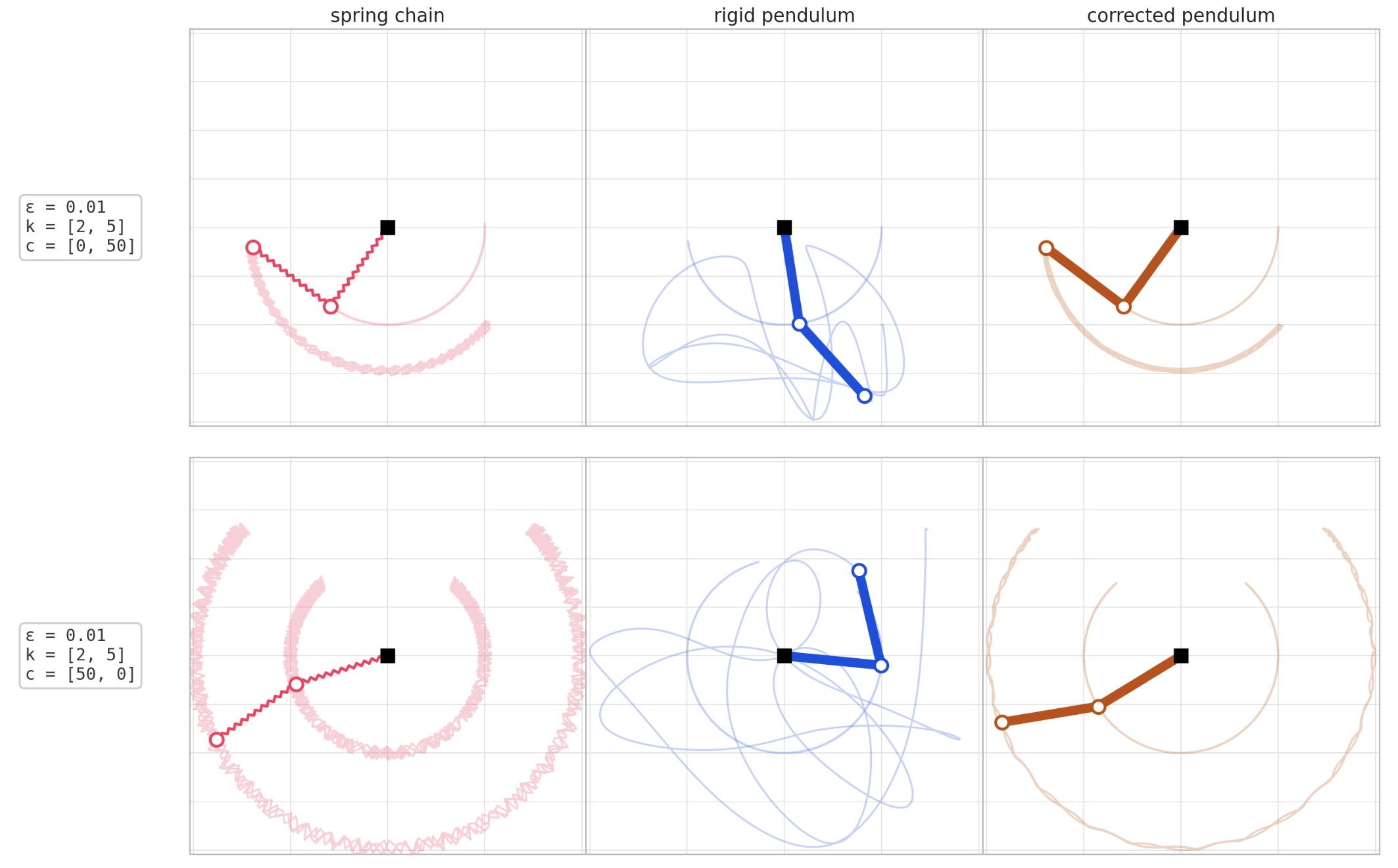} 
		\caption{Three systems: spring chain, rigid double pendulum, Takens system. Row 1: $c_+ >0$, $c_- = 0$. Rigid double pendulum fails to capture the dynamics of stabilization of right angle observed in spring system.\\
		Row 2: $c_+ =0$, $c_- > 0$. Rigid double pendulum fails to capture the dynamics of stabilization of $180^\circ$ angle observed in spring system. The Takens correction is necessary.}\label{figchain3}
	\end{figure}

We also comment on resonances that can happen. An example resonant configuration is shown in Figure \ref{figchain4}. With parameters $k_1 = 2, k_2 = 4$, the system comes to a resonance of order 3 every time  $\cos \theta(\mathbf{z}) = 0$ (in such configuration $\sqrt{\lambda_+} - 2\sqrt{\lambda_-} = 0$). This resonance is non-transversal, as can be seen from the explicit form of potential. This destroys the adiabatic invariance of action. Neither rigid pendulum nor Takens' system capture the limiting behavior, as can be seen in Figure \ref{figchain4}. In fact, we believe that the limiting behavior should be non-unique and might depend on a subsequence along which $\ve \to 0$, a phenomenon dubbed `Takens chaos' \cite{T80}.
			\begin{figure}[h!]\centering
	\includegraphics[width=0.9\columnwidth]{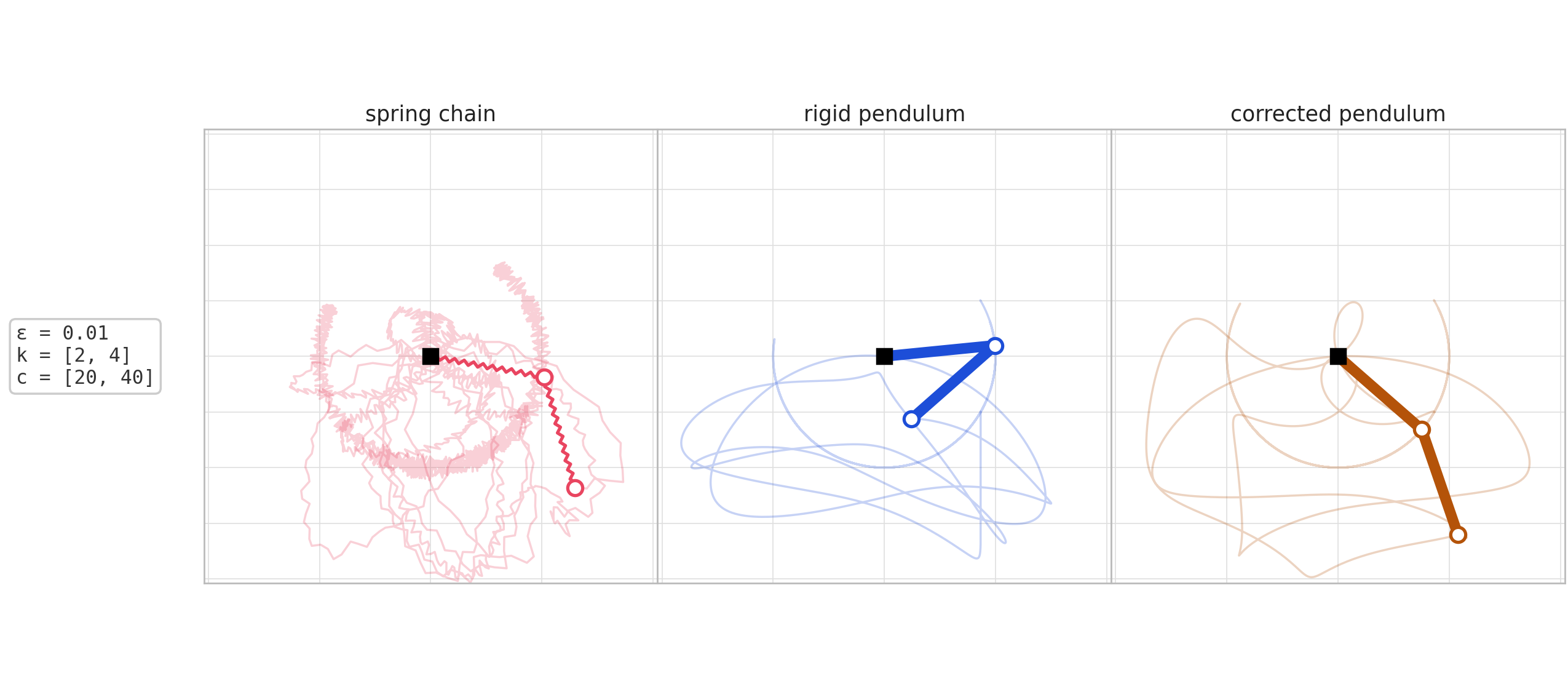} 
	\caption{Three systems: spring chain, rigid double pendulum, Takens system. $k_1 = 2$, $ k_2 = 4$, $c_+ >0$, $c_- > 0$. Non-transversal order three resonance happens at each right-angle configuration, both constrained systems fail to capture behavior of the spring system.}\label{figchain4}
\end{figure}
\end{example}

\newpage
	\section{Extensible Thread and inextensible limit}
	
	Our first continuum-mechanical example is the motion of a stiff elastic thread.  It is indeed a natural generalization of our spring pendulum -- the continuum thread can be viewed as the limit of a spring chain with infinitely many links of infinitesimal length.  Below we describe the continuum system directly, without appealing to this interpretation.
	
	Let $M$ be a compact Riemannian manifold.
	The equation of a hyperelastic extensible loop moving on $M$ is given by
	\be 
	\ddot{X} = \frac{1}{\ve^2}\frac{1}{\vr_0}\nabla_a\bigg(\frac{W'(|\partial_a X|,\cdot)\partial_a X}{|\partial_a X|}\bigg).
	\ee 
	Here $X$ is a time-dependent map $S^1 \to M$ representing the loop position, and $\vr_0: S^1 \to \R^+$, $W: \R \times S^1 \to \R$ are fixed functions with the meaning of density distribution and stored internal energy per unit length respectively. $1/\ve^2$ is a ``spring constant", defining the strength of resistance to extension. 
	Here and below without explicit mention loops are parameterized by $a \in S^1$, and $\partial_a X$ should be interpreted in this fixed parametrization. We will also take the conventions that 
	\be 
	\nabla_a = \nabla_{\partial_a X},
	\ee 
	and all accelerations are understood in the covariant sense
	\be 
	\ddot{X} := \nabla_{\dot{X}}\dot{X}.
	\ee 
	Note that since the metric of $\mathsf{Loops}(M)$ is the pointwise weighted $L^2$ metric, its Levi-Civita connection corresponds to the pointwise Levi-Civita connection of the metric on $M$, so the computations performed thereafter are unambiguous (\cite{P12, B18}).
	
	In Section \ref{extthread-sect}, we show that this equation can be derived as a Newton's equation on the space of loops on $M$; thus, it makes sense to find the corresponding Takens limit. 
		\begin{theorem}\label{threadthm}
		The homogenized potential of $U$ \eqref{threadpot} is given by 
		\be 
		V[X] = \sum c_i \sqrt{\lambda_i[X]},
		\ee 
		where $\lambda_i[X]$ are eigenvalues of the operator 
		\be
		W''(1,\cdot)\L_X = W''(1,\cdot)\bigg(-\partial_a\bigg(\frac{1}{\vr_0}\partial_a\bigg) + \frac{1}{\vr_0}|\nabla_a\partial_a X|^2\bigg).
		\ee
Suppose $\lambda_i$ are simple; then the Takens limit system of extensible thread equation is given by modified inextensible thread
		\be\label{takens-thread}
\begin{cases}
	\ddot{X} =   \frac{1}{\vr_0}\nabla_a(\sigma \partial_a X) - \frac{1}{\vr_0}\nabla_a\nabla_a(K \nabla_a\partial_a X) + \frac{K}{\vr_0} R(\partial_a X, \nabla_a \partial_a X)\partial_a X\\
	|\partial_a X| = 1
\end{cases}
\ee
		where $\sigma$ is determined to enforce inextensibility, $R$ is a curvature tensor of manifold $M$, and 
		\be 
		K = \sum_i \frac{ c_i v_i^2}{\sqrt{\lambda_i}\vr_0}
		\ee 
		where $v_i$ and $\lambda_i$ are $L^2_{1/W''(1,\cdot)}$ normalized eigenfunctions/values of operator $W''(1,\cdot)\L$
		and the $c_i$'s are constants computed from the data for extensible thread (see Proposition \ref{thread_ci}).
	\end{theorem}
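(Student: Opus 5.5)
The proof has three parts: compute the Hessian of $U$ normal to $\M=\mathsf{SLoops}(M)$, identify its eigenvalues, and then compute the first variation of $V[X]=\sum_i c_i\sqrt{\lambda_i[X]}$ along $\M$ and insert it into the Takens limit system \eqref{limitsystem}.

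\textbf{Setup and normal space.} I take the configuration space $\mathsf{Loops}(M)$ with the weighted metric $\langle Y,Z\rangle=\int \vr_0\langle Y,Z\rangle\,da$. The potential is $U[X]=\int W(|\partial_a X|,a)\,da$, normalized so that $W'(1,\cdot)=0$; then $\M=\{|\partial_aX|=1\}$ is its minimum set. The $L^2_{\vr_0}$ gradient is $-\frac{1}{\vr_0}\nabla_a\big(W'\partial_aX/|\partial_aX|\big)$, which reproduces the thread equation.

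For the tangent space, linearizing $|\partial_aX|^2=1$ gives $T_X\M=\{Y:\langle \nabla_aY,\partial_aX\rangle=0\}$. Integrating by parts in the weighted inner product, the normal space consists of fields
\[
Y_f=\frac{1}{\vr_0}\nabla_a(f\,\partial_aX),\qquad f\colon S^1\to\R .
\]

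\textbf{Normal Hessian as a Sturm--Liouville operator.} At $X\in\M$ the second variation is
\[
\hess U(Y,Y)=\int W''(1,a)\,\langle\nabla_aY,\partial_aX\rangle^2\,da ,
\]
because the term multiplying $W'(1,\cdot)$ vanishes. For $Y=Y_f$, using $|\partial_aX|=1$ and $\langle\nabla_a\partial_aX,\partial_aX\rangle=0$, I compute
\[
\langle\nabla_aY_f,\partial_aX\rangle=\partial_a\Big(\tfrac{1}{\vr_0}\partial_af\Big)-\tfrac{1}{\vr_0}|\nabla_a\partial_aX|^2 f=-\L_Xf .
\]
Similarly $\|Y_f\|^2=\int f\,\L_Xf\,da$. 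The generalized eigenproblem $\hess U(Y_f,\cdot)=\lambda\langle Y_f,\cdot\rangle$ on the normal space therefore becomes $W''\L_X f=\lambda f$. This is a self-adjoint problem in $L^2_{1/W''}$, which gives the stated spectrum and the normalization of the $v_i$. Theorem \ref{maintheorem} then yields the homogenized potential $V$ directly.

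\textbf{Variation of $\lambda_i$.} I use Hellmann--Feynman, which applies since the $\lambda_i$ are simple and the weight $1/W''$ does not depend on $X$. Only the zeroth-order coefficient of $\L_X$ depends on $X$, so for a tangential variation $Y\in T_X\M$
\[
\delta\lambda_i=\int \frac{v_i^2}{\vr_0}\,\delta|\nabla_a\partial_aX|^2\,da ,
\]
with the $v_i$ normalized in $L^2_{1/W''}$. Summing over $i$,
\[
\delta V=\sum_i\frac{c_i}{2\sqrt{\lambda_i}}\,\delta\lambda_i=\int \frac{K}{2}\,\delta|\nabla_a\partial_aX|^2\,da .
\]

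\textbf{Variation of the curvature term.} Commuting covariant derivatives gives
\[
\delta|\nabla_a\partial_aX|^2=2\langle\nabla_a\nabla_aY+R(Y,\partial_aX)\partial_aX,\ \nabla_a\partial_aX\rangle .
\]
I integrate by parts twice and use the symmetries of $R$ to move the curvature term onto $Y$, producing a term of the form $K\,R(\partial_aX,\nabla_a\partial_aX)\partial_aX$ (sign fixed by the convention for $R$). Dividing by $\vr_0$ converts $\delta V$ into the weighted gradient
\[
\nabla V=\frac{1}{\vr_0}\Big(\nabla_a\nabla_a(K\nabla_a\partial_aX)-K\,R(\partial_aX,\nabla_a\partial_aX)\partial_aX\Big).
\]

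\textbf{Takens limit system.} Inserting $\nabla V$ into \eqref{limitsystem}, the normal Lagrange multiplier has the form $\frac{1}{\vr_0}\nabla_a(\sigma\partial_aX)$ by the description of the normal space. This gives \eqref{takens-thread}. Any normal part of $\nabla V$ is absorbed into $\sigma$, so it does not matter that $\nabla V$ was computed using only tangential $Y$.

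\textbf{Main obstacle.} The delicate step is the normal-space and Hessian computation: the normal directions are parametrized by $f$ through a differential operator, so the metric induced on the normal space is $\int f\,\L_X f$ rather than $L^2$. One must check carefully that the eigenproblem is exactly $W''\L_X f=\lambda f$ with weight $1/W''$, which is what fixes the normalization of the $v_i$ entering $K$. The curvature sign bookkeeping in the last integration by parts is routine but error-prone.
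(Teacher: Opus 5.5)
Your proposal is correct and follows essentially the same route as the paper. Both compute the second variation $\int W''(1,\cdot)\langle\nabla_a\xi,\partial_aX\rangle^2$, restrict it to the normal fields $\frac{1}{\vr_0}\nabla_a(f\,\partial_aX)$ to reduce the normal eigenproblem to $W''(1,\cdot)\L_Xf=\lambda f$ in $L^2_{1/W''}$, apply weighted Hellmann--Feynman together with $\delta|\nabla_a\partial_aX|^2=2\langle\nabla_a\nabla_a\xi+R(\xi,\partial_aX)\partial_aX,\nabla_a\partial_aX\rangle$, and integrate by parts twice to assemble $K$. The only ingredient you leave implicit is the paper's Assumption~\ref{thread-assump}, which excludes unit-speed closed geodesics from $\mathsf{SLoops}(M)$; this makes $\L_X$ invertible and $f\mapsto\frac{1}{\vr_0}\nabla_a(f\,\partial_aX)$ injective, which your passage to $W''(1,\cdot)\L_Xf=\lambda f$ tacitly uses.
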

	\subsection{Extensible thread}\label{extthread-sect}
	Let $\mathsf{Loops}(M)$ be the Fréchet manifold of smooth immersions of $S^1$ in $M$. The tangent space to $\mathsf{Loops}(M)$ at point $X$ is given by infinitesimal deformations of parametrized loops in $M$, that is by vector fields along the loop $X$:
	\be 
	T_X \mathsf{Loops}(M) = \{\xi  \ | \ (X,\xi): S^1 \to TM \text{ with }  \xi(a) \in T_{X(a)}M\}.
	\ee  
	Given a smooth positive function $\vr_0$ on $S^1$, denote by $\mathsf{Loops}_{\vr_0}(M)$ the space $\mathsf{Loops}(M)$ endowed with the (weak) Riemannian metric given at every point $X$ by 
	\be 
	(\xi, \eta)_{X} = \int_{S^1} \langle \xi(a), \eta(a)\rangle \vr_0(a)\rmd a.
	\ee 
	
	This metric corresponds to the kinetic energy of an inhomogeneous thread with density $\vr_0$ located at $X(S^1)$.
	We now describe the (hyperelastic)\footnote{An elastic material is called hyperelastic if the stress only depends on strain $\partial_a X$ but not on the higher derivatives. In particular, such materials do not resist bending \cite{Al95}.} extensible thread equation as  Newton's law on $\mathsf{Loops}(M)$. 
	
	Let $W: \R \times S^1 \to \R$ be a given smooth function. The physical meaning of it is the following: for a thread parametrized in the reference state by $a \in S^1$, $W(\lambda, a)\rmd a$ is the internal energy stored in the piece of thread with reference coordinates $[a, a+\rmd a]$ if this piece of thread is stretched to a length $\lambda\rmd a $. The dependence of $W$ on $a$ represents nonhomogeneity of thread, either through the non-uniformity of material, or variations of density/cross section. We assume that \be 
	W(1, \cdot) = 0 \quad, W \geq 0, \quad W''> 0,
	\ee 
	where primes denote derivatives with respect to the first argument. These correspond respectively to assumptions that the non-extended thread does not possess internal energy, that stored energy is positive and that stress increases with extension. Note that from the stated convexity and positivity, we also have 
	\be 
	W'(1,\cdot) = 0,
	\ee 
	i.e. unstretched thread is free of stresses.
	
	Consider the potential  $U[X]$ corresponding to the total strain energy of the ``rubber band":
	\be\label{threadpot}
	U[X] = \int_{S^1} W(|\partial_a X(a)|, a)\rmd a
	\ee 
	Newton's law on $\mathsf{Loops}_{\vr_0}(M)$ with the conservative force field given by this potential provides the equation of motion of hyperelastic extensible thread. We check it below explicitly:
	\begin{lemma}
		The (functional) gradient of the potential is
		\be 
		\grad_{\vr_0}U[X] = - \frac{1}{\vr_0}\nabla_a\bigg(\frac{W'(|\partial_a X|,\cdot)\partial_a X}{|\partial_a X|}\bigg).
		\ee 
	\end{lemma}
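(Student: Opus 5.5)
This is a first-variation computation on $\mathsf{Loops}_{\vr_0}(M)$. I will take a smooth variation $X_s$ of $X$ with $\partial_s X_s|_{s=0}=\xi\in T_X\mathsf{Loops}(M)$, differentiate $U[X_s]$ at $s=0$, and integrate by parts on $S^1$. The goal is to bring the result into the form
\be
\frac{\rmd}{\rmd s}\Big|_{s=0}U[X_s] = (\grad_{\vr_0}U[X],\xi)_X = \int_{S^1}\langle \grad_{\vr_0}U[X],\xi\rangle\vr_0\,\rmd a .
\ee
The gradient can then be read off because the metric weight is $\vr_0$.

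\textbf{Step 1: vary the strain.} By the chain rule,
\be
\frac{\rmd}{\rmd s}U[X_s]=\int_{S^1} W'(|\partial_a X|,a)\,\partial_s|\partial_a X_s|\,\rmd a ,
\qquad
\partial_s|\partial_a X_s| = \frac{\langle \nabla_{\partial_s}\partial_a X_s,\partial_a X_s\rangle}{|\partial_a X_s|}.
\ee
The second identity uses metric compatibility of the Levi-Civita connection on $M$. To rewrite $\nabla_{\partial_s}\partial_a X$, I use the symmetry lemma: the coordinate fields $\partial_s,\partial_a$ on the parameter domain commute and the connection is torsion free, so $\nabla_{\partial_s}\partial_a X=\nabla_{\partial_a}\partial_s X=\nabla_a\xi$. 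This is the only step where the Riemannian structure really matters. I expect it to be the main (mild) obstacle, since one must be careful that all derivatives are covariant along the map $X$.

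\textbf{Step 2: integrate by parts.} Using metric compatibility once more,
\be
\Big\langle \nabla_a\xi,\tfrac{W'\partial_a X}{|\partial_a X|}\Big\rangle
= \partial_a\Big\langle \xi,\tfrac{W'\partial_a X}{|\partial_a X|}\Big\rangle - \Big\langle \xi,\nabla_a\Big(\tfrac{W'\partial_a X}{|\partial_a X|}\Big)\Big\rangle .
\ee
The total derivative integrates to zero over the closed loop $S^1$, so no boundary terms appear. This gives
\be
\frac{\rmd}{\rmd s}U=\int_{S^1}\Big\langle -\tfrac{1}{\vr_0}\nabla_a\Big(\tfrac{W'\partial_a X}{|\partial_a X|}\Big),\xi\Big\rangle\vr_0\,\rmd a .
\ee
I obtained the last form by multiplying and dividing by $\vr_0>0$.

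\textbf{Step 3: identify the gradient.} Since $\xi$ is arbitrary, the Riesz identification with respect to the weak metric $(\cdot,\cdot)_X$ yields the claimed formula. Uniqueness of the gradient holds because the weighted $L^2$ pairing is nondegenerate on smooth vector fields along $X$. The immersion assumption ensures $|\partial_a X|\neq0$, so every expression above is smooth.
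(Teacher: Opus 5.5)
Your proposal is correct and follows the paper's proof essentially line by line: you vary $|\partial_a X|$ using $\nabla_{\partial_s}\partial_a X=\nabla_a\xi$, integrate by parts over the closed loop, and multiply and divide by $\vr_0$ to read off the weighted gradient. Your explicit justifications, namely torsion-freeness for swapping the derivatives, the absence of boundary terms on $S^1$, and $|\partial_a X|\neq 0$ for immersions, are details the paper leaves implicit.
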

	\begin{proof}
		Consider a variation $X^\ve$ of loop $X$ satisfying 
$
		\frac{\rmd }{\rmd \ve}\big|_{\ve = 0} X^\ve = \xi. 
	$
		We compute the corresponding variation of $U$:
		\begin{align}
		\frac{\rmd }{\rmd \ve}\bigg|_{\ve = 0} U[X^\ve] &= \int_{S^1} W'(|\partial_a X(a)|,a)\frac{\rmd }{\rmd \ve}\bigg|_{\ve = 0} |\partial_a X^\ve(a)|\rmd a\\
		&= \int_{S^1} W'(|\partial_a X(a)|, a)\frac{\langle\partial_a X(a), \nabla_a\xi(a)\rangle}{|\partial_a X(a)|}\rmd a\\
		&= -\int_{S^1} \bigg\langle \nabla_a\bigg(W'(|\partial_a X(a)|,a)\frac{\partial_a X(a)}{|\partial_a X(a)|}\bigg), \xi(a)\bigg\rangle\rmd a\\
		&=-\int_{S^1} \bigg\langle \frac{1}{\vr_0(a)}\nabla_a\bigg(\frac{W'(|\partial_a X(a)|,a)\partial_a X(a)}{|\partial_a X(a)|}\bigg), \xi(a)\bigg\rangle\vr_0(a)\rmd a.
		\end{align}
	Claimed expression for the gradient follows. 
	\end{proof}
	With that, the postulated equation of motion  is written as
	\be 
	\ddot{X} = \frac{1}{\vr_0}\nabla_a\bigg(\frac{W'(|\partial_a X|, a)\partial_a X}{|\partial_a X|}\bigg).
	\ee  
	In the case of $M = \R^3$, this equation agrees with the ones usually studied, as derived, e.g. in \cite{Al95}. 
	\subsection{Inextensible thread}
	Consider submanifold  $\mathsf{SLoops}(M)$ of $\mathsf{Loops}(M)$ given by 
	\be 
	\mathsf{SLoops}(M) = \{X\in \mathsf{Loops}(M) \ | \ |\partial_a X(a)| = 1 \text{ for all } a \in S^1\}.
	\ee 
	
	Naturally, $\mathsf{SLoops}_{\vr_0}(M)$ will denote $\mathsf{SLoops}(M)$ endowed with metric $(\cdot, \cdot)_{\vr_0}$. The inextensible thread equation is the geodesic equation on $\mathsf{SLoops}_{\vr_0}(M)$.
	\begin{prop}\label{threadtangentnormal}
		The tangent and normal spaces to  $\mathsf{SLoops}_{\vr_0}(M)$ are given by
		\begin{align}
		 T_X \mathsf{SLoops}_{\vr_0}(M) &= \{\xi \in T_X \mathsf{Loops}_{\vr_0}(M) \ | \ \langle \nabla_a\xi, \partial_a X\rangle \equiv 0  \} \text{ and }\\
		 N_X \mathsf{SLoops}_{\vr_0}(M) &= \bigg\{\frac{1}{\vr_0}\nabla_a(\sigma \partial_a X) \ | \ \sigma \in C^\infty(S^1,\R) \bigg\}.
		\end{align}
	\end{prop}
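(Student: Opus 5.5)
The plan is to obtain both descriptions from the constraint map $\Phi: \mathsf{Loops}(M) \to C^\infty(S^1,\R)$, $\Phi[X](a) = \tfrac12|\partial_a X(a)|^2$. With this map, $\mathsf{SLoops}(M) = \Phi^{-1}(\tfrac12)$.

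\textbf{Tangent space.} Take a variation $X^\ve$ with $\frac{\rmd}{\rmd\ve}\big|_{\ve=0}X^\ve=\xi$. By the torsion-free symmetry lemma $\nabla_\ve \partial_a X^\ve = \nabla_a \partial_\ve X^\ve$, which was already used in the gradient computation of the preceding Lemma. Metric compatibility then gives
$$D\Phi_X\xi = \langle \nabla_a \xi, \partial_a X\rangle.$$
So $T_X\mathsf{SLoops}(M)=\ker D\Phi_X$, provided $\mathsf{SLoops}$ is a submanifold, i.e. $D\Phi_X$ is surjective onto $C^\infty(S^1)$.

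To check surjectivity, fix $f\in C^\infty(S^1)$ and seek $\xi = g\,\partial_a X$. Since $|\partial_a X|=1$ we have $\langle\nabla_a\partial_aX,\partial_aX\rangle=0$, so $D\Phi_X\xi = g'$. Only mean-zero $f$ are reached this way. To reach the constants, add the normal field $\xi=\nabla_a\partial_a X$ where the curvature $\kappa$ is nonzero; this gives $D\Phi_X\xi = -|\nabla_a\partial_aX|^2 \le 0$, which has nonzero mean unless the loop is a closed geodesic. I will treat surjectivity at such degenerate points formally, in keeping with the heuristic spirit the paper announces. Either way, the inclusion $\ker D\Phi_X \supseteq T_X\mathsf{SLoops}$ is immediate from differentiating the constraint.

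\textbf{Normal space.} In the metric $(\cdot,\cdot)_{\vr_0}$, $N_X$ is the orthogonal complement of $\ker D\Phi_X$, which is (the closure of) the range of the adjoint $D\Phi_X^*$. Compute the adjoint by integrating by parts on $S^1$, which has no boundary terms:
$$\int_{S^1}\sigma\langle\nabla_a\xi,\partial_aX\rangle\,\rmd a = -\int_{S^1}\Big\langle \tfrac{1}{\vr_0}\nabla_a(\sigma\partial_aX),\xi\Big\rangle\vr_0\,\rmd a.$$
This gives $D\Phi_X^*\sigma = -\frac{1}{\vr_0}\nabla_a(\sigma\partial_a X)$, which yields the inclusion $\supseteq$: every $\frac1{\vr_0}\nabla_a(\sigma\partial_aX)$ is orthogonal to $T_X$.

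For the reverse inclusion I will use an orthogonal decomposition. Given any $\eta$, solve
$$D\Phi_X\Big(\tfrac{1}{\vr_0}\nabla_a(\sigma\partial_aX)\Big) = D\Phi_X\eta$$
for $\sigma$. Then $\eta - \frac{1}{\vr_0}\nabla_a(\sigma\partial_aX)$ lies in $T_X$, so $\eta$ splits as tangent plus normal, and any $\eta\in N_X$ must equal $\frac1{\vr_0}\nabla_a(\sigma\partial_aX)$.

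Expanding the left side with $|\partial_aX|=1$, the operator to invert is
$$\sigma \mapsto \partial_a\Big(\tfrac{1}{\vr_0}\partial_a\sigma\Big) - \tfrac{1}{\vr_0}|\nabla_a\partial_aX|^2\sigma = -\L_X\sigma,$$
exactly the operator appearing in Theorem \ref{threadthm}.

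\textbf{Main obstacle.} The key step is showing $\L_X$ is invertible on $C^\infty(S^1)$. It is a self-adjoint elliptic Sturm--Liouville operator with nonnegative potential. The identity $\int \sigma\L_X\sigma = \int \frac{1}{\vr_0}\big((\sigma')^2+\kappa^2\sigma^2\big)$ shows its kernel is trivial unless $\kappa\equiv0$. Elliptic regularity and Fredholm theory then give smooth solvability. In the closed-geodesic case the kernel consists of the constants, and I will either exclude that case or interpret it formally, noting it matches the surjectivity caveat above.
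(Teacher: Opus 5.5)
Your proposal is correct and is essentially the paper's argument: differentiating the constraint gives the tangent condition, and the normal space comes from the $(\cdot,\cdot)_{\vr_0}$-orthogonal splitting $\xi = \eta + \frac{1}{\vr_0}\nabla_a(\sigma\partial_a X)$, which is obtained by solving $\L_X\sigma = -\langle \nabla_a\xi,\partial_a X\rangle$ using positivity of $\L_X$, with orthogonality from integrating by parts on $S^1$. Framing this through the constraint map $\Phi$, its adjoint, and a surjectivity check behind $T_X = \ker D\Phi_X$ is extra care the paper skips; conversely, the decomposition step need not be left formal at closed geodesics, because there $\int_{S^1}\langle\nabla_a\xi,\partial_aX\rangle\,\rmd a = -\int_{S^1}\langle\xi,\nabla_a\partial_aX\rangle\,\rmd a = 0$ makes the equation for $\sigma$ solvable, and the constant ambiguity in $\sigma$ disappears since $\nabla_a(c\,\partial_aX) = c\,\nabla_a\partial_aX = 0$ (the paper disposes of that case the same way, using the second point).
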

\begin{proof}
	Consider a curve $X^\ve \in \mathsf{SLoops}_{\vr_0}(M)$, with $X^0 = X$, and let $\xi$ be its derivative at $X$. For any point $a \in S^1$ we compute 
	\be
	0 = \frac{\rmd }{\rmd \ve}\bigg|_{\ve = 0} \frac{1}{2}|\partial_a X^\ve(a)|^2 = \langle \nabla_a \xi(a), \partial_a X(a)\rangle,
	\ee  
	which shows the first assertion.  To find the normal space we use the following lemma which is analogous to Helmholtz decomposition of vector fields. 
\begin{lemma}
	Any smooth vector field $\xi$ along $X \in \mathsf{SLoops}_{\vr_0}(M)$ can be uniquely written as 
	\be 
	\xi = \eta + \frac{1}{\vr_0}\nabla_a(\sigma \partial_a X)
	\ee 
	where $ \langle \nabla_a \eta, \partial_a X \rangle = 0$ and $\sigma \in C^\infty(S^1)$, 
	and the two terms are $(\cdot ,\cdot )_{\vr_0}$--orthogonal.
\end{lemma}
\begin{proof}
	We differentiate this ansatz along $X$, and take inner product with $\partial_a X$ to obtain
\be 
\langle \nabla_a \xi, \partial_a X \rangle = \bigg\langle \nabla_a\bigg(\frac{1}{\vr_0}\nabla_a(\sigma \partial_a X)\bigg), \partial_a X \bigg\rangle = \partial_a\bigg(\frac{1}{\vr_0}\partial_a\sigma\bigg) - \frac{1}{\vr_0}|\nabla_a\partial_a X|^2\sigma =: -\L_X \sigma.
\ee   
Suppose $X$ is not a geodesic, so the zeroth-order term is not identically zero. In this case operator $\L_X$ is positive and self-adjoint, so standard elliptic theory provides unique smooth $\sigma$ for any $\xi$.

Thus given $\xi$, the claimed decomposition is given by setting $\sigma = -\L_X^{-1}\langle \nabla_a \xi, \partial_a X \rangle$ and $\eta = \xi - \frac{1}{\vr_0}\nabla_a(\sigma \partial_a X)$. It is unique since $\sigma$ is uniquely determined by above.
The summands are orthogonal with respect to $\vr_0$ - weighted $L^2$ inner product by construction:
\begin{align}
	\bigg(\frac{1}{\vr_0}\nabla_a(\sigma \partial_a X), \eta\bigg)_{\vr_0} &= \int_{S^1} \bigg\langle \frac{1}{\vr_0}\nabla_a(\sigma \partial_a X), \eta\bigg\rangle \vr_0 \rmd a\\
	&=- \int_{S^1} \sigma\langle  \partial_a X, \nabla_a\eta \rangle  \rmd a =0.
\end{align}
In case $X$ is a geodesic, $\L_X$ has a kernel that consists of constant vector fields. Thus uniqueness on the level of $\sigma$ no longer strictly holds;  however note that in this case adding a constant to $\sigma$ does not change $\frac{1}{\vr_0}\nabla_a(\sigma \partial_a X)$, so on the level of claimed decomposition the uniqueness is restored. 
\end{proof}

Now let $\xi$ be any vector field along $X$. Applying the above lemma, we see to be orthogonal to $T_X \mathsf{SLoops}_{\vr_0}$ the first summand must be zero so we must have
$ 
 \xi = \frac{1}{\vr_0}\nabla_a(\sigma \partial_a X)
$
as claimed.
\end{proof}
With that, we can write inextensible thread equation by d'Alembert's principle:

\be\label{inextthread}
\left\{\begin{aligned} 
	\ddot{X} &\in N_X\mathsf{SLoops}_{\vr_0}(M) \\
	X &\in \mathsf{SLoops}_{\vr_0}(M)
\end{aligned}\right. \iff 
\left\{\begin{aligned}
	\ddot{X} &= \frac{1}{\vr_0}\nabla_a(\sigma \partial_a X)\\
	|\partial_a X| &= 1
\end{aligned}\right.
\ee
For completeness, we write down the equation satisfied by Lagrange multiplier $\sigma$ (the \textit{tension}).
\begin{prop}
	Let $X$ solve \eqref{inextthread}. Then $\sigma$ is recovered by solving the elliptic equation
	\be 
	\L_X \sigma = |\nabla_a \dot{X}|^2 + \langle R(\partial_a X,\dot{X})\partial_a X, \dot{X}\rangle,
	\ee 
	where the operator $\L_X$  is defined by
	\be\label{Lthread}
		\L_X := \bigg(-\partial_a\bigg(\frac{1}{\vr_0}\partial_a\bigg) + \frac{1}{\vr_0}|\nabla_a\partial_a X|^2\bigg).
	\ee
\end{prop}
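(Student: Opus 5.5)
The plan is to differentiate the constraint twice in time along a solution of \eqref{inextthread} and substitute the equation of motion. First, since $|\partial_a X|^2 \equiv 1$ for all $t$, differentiating once in time and using that the connection is torsion free ($\nabla_t \partial_a X = \nabla_a \dot X$) gives
\be
\langle \nabla_a \dot X, \partial_a X\rangle = 0,
\ee
which is just the statement $\dot X \in T_X\mathsf{SLoops}_{\vr_0}(M)$ from Proposition \ref{threadtangentnormal}. Differentiating again in $t$ yields
\be
\langle \nabla_t\nabla_a \dot X, \partial_a X\rangle + |\nabla_a \dot X|^2 = 0 .
\ee

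Next we commute covariant derivatives. We have $\nabla_t \nabla_a \dot X = \nabla_a \nabla_t \dot X + R(\dot X, \partial_a X)\dot X$, with the curvature sign convention fixed so that the final expression matches the stated one. Hence
\be
\langle \nabla_a \ddot X, \partial_a X\rangle = -|\nabla_a\dot X|^2 - \langle R(\dot X,\partial_a X)\dot X, \partial_a X\rangle .
\ee
By the symmetries of the Riemann tensor, the last term equals $-\langle R(\partial_a X,\dot X)\partial_a X, \dot X\rangle$ up to the chosen convention.

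Finally, we substitute $\ddot X = \frac{1}{\vr_0}\nabla_a(\sigma\partial_a X)$ and use the computation from the proof of the Helmholtz-type lemma:
\be
\big\langle \nabla_a\big(\tfrac{1}{\vr_0}\nabla_a(\sigma\partial_a X)\big), \partial_a X\big\rangle = -\L_X\sigma .
\ee
That computation expands $\nabla_a(\sigma\partial_aX)=\partial_a\sigma\,\partial_aX+\sigma\nabla_a\partial_aX$ and uses $\langle\nabla_a\partial_aX,\partial_aX\rangle=0$ together with $\langle\nabla_a\nabla_a\partial_aX,\partial_aX\rangle=-|\nabla_a\partial_aX|^2$. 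Equating the two expressions gives
\be
\L_X\sigma = |\nabla_a \dot X|^2 + \langle R(\partial_a X,\dot X)\partial_a X,\dot X\rangle .
\ee

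Ellipticity and solvability follow from the lemma: $\L_X$ is positive and self-adjoint when $X$ is not a geodesic. In the geodesic case $\sigma$ is determined only up to constants, which do not affect the force.

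The only delicate step is the curvature term. One must track the sign convention for $R$ and the commutation $[\nabla_t,\nabla_a]$ acting on $\dot X$ along the two-parameter map $(t,a)\mapsto X$. I would check this sign against the flat case, where the term vanishes, and against the round sphere, where constant-speed rotations give an explicit tension.
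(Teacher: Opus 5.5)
Your proof is correct and is essentially the paper's argument: differentiate $|\partial_a X|^2=1$ twice in time, commute $\nabla_t$ past $\nabla_a$ to produce the curvature term, substitute $\ddot X=\frac{1}{\vr_0}\nabla_a(\sigma\partial_a X)$, and use the arclength parametrization to identify $-\L_X\sigma$. Your hedging about the sign convention is unnecessary: with the standard convention $R(U,V)=\nabla_U\nabla_V-\nabla_V\nabla_U-\nabla_{[U,V]}$ (the one the paper implicitly uses later when varying $\nabla_a\partial_a X$), your commutation formula holds as written, and $\langle R(\dot X,\partial_a X)\dot X,\partial_a X\rangle=\langle R(\partial_a X,\dot X)\partial_a X,\dot X\rangle$ holds exactly by the two antisymmetries of $R$.
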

\begin{proof}
	We time-differentiate the constraint $|\partial_a X|^2 = 1$ twice to obtain
	\begin{align} 
	\langle \nabla_a\dot{X}, \partial_a X\rangle &= 0\\
	|\nabla_a \dot{X}|^2 + \langle \nabla_a\ddot{X}, \partial_a X\rangle +\langle R(\partial_a X,\dot{X})\partial_a X, \dot{X}\rangle&= 0.
	\end{align}
Plugging in $\ddot{X}$ from the equation, we obtain an equation for $\sigma$:
\be 
-\bigg\langle \nabla_a\bigg(\frac{1}{\vr_0}\nabla_a(\sigma \partial_a X)\bigg), \partial_a X\bigg\rangle  =  |\nabla_a \dot{X}|^2 + \langle R(\partial_a X,\dot{X})\partial_a X, \dot{X}\rangle
\ee  
Using the fact that $X$ is parametrized by arclength, left hand side is simplified to 
\begin{align}
	-\bigg\langle \nabla_a\bigg(\frac{1}{\vr_0}\nabla_a(\sigma\partial_a X)\bigg),\partial_a X\bigg\rangle & = 
	-\partial_a\frac{1}{\vr_0} \bigg\langle \nabla_a(\sigma\partial_a X),\partial_a X\bigg\rangle -\frac{1}{\vr_0} \bigg\langle \nabla_a\nabla_a(\sigma\partial_a X),\partial_a X\bigg\rangle\\
	&=-\partial_a\frac{1}{\vr_0} \partial_a \sigma -\frac{1}{\vr_0}\partial^2_a \sigma + \frac{1}{\vr_0}|\nabla_a\partial_a X|^2 \sigma\\
	&= -\partial_a\bigg(\frac{1}{\vr_0} \partial_a \sigma\bigg) + \frac{1}{\vr_0}|\nabla_a\partial_a X|^2 \sigma = \L_X \sigma.
\end{align} 
This completes the derivation.
\end{proof}
	For more information on the inextensible thread system, see the excellent works \cite{P11, P12, S}.

	\subsection{Inextensible limit: Takens system}
	We now consider making the extensible thread more and more inextensible by stiffening the potential $U$. To write down Takens limit system, we need a description of critical set $\M$ of the potential \eqref{threadpot}, as well as of $\hess_{\vr_0} U[X]$ on it. Before we proceed to it, we make a technical assumption:
	\begin{assump}\label{thread-assump}
		We assume that the length of the unstretched loop $\ell_0$ ($\ell_0 = 2\pi$ in our normalization) does not belong to the length spectrum of $M$.
	\end{assump}
	
	The reason for it is that we would like to avoid considering closed geodesics, and it is clear from the formula \eqref{threadgrad}, as well as the physical intuition, that they are critical points of $U$: a rubber band placed along a geodesic will stay still. However, generically they are isolated inside critical set $\M$ and differential-geometric constructions mostly do not make sense.
	With this assumption, we outline how to avoid closed geodesics in the discussion of Takens limit. 
	
	\begin{enumerate}
		\item  For the purpose of considering slightly extensible thread we can replace the configuration space $\mathsf{Loops}(M)$ by
		\be 
		\widetilde{\mathsf{Loops}}(M)_{\delta} := \mathsf{Loops}(M) \cap \{ U < \delta \},
		\ee 
		where $\delta$ is a sufficiently small constant. Due to energy conservation, for sufficiently stiff thread ($\ve^2 \ll \delta < 1$), solutions starting with mildly ill-prepared initial data will stay in $\widetilde{\mathsf{Loops}}(M)_{\delta}$, so restricting the configuration space does not affect the dynamics in this regime. 
		\item With that, we claim that critical set $\tilde{\M}_\delta$ of $U$ inside $\widetilde{\mathsf{Loops}}(M)_{\delta}$ is just given by $\mathsf{SLoops}(M)$. To see that we first observe that $\widetilde{\mathsf{Loops}}(M)_{\delta}$ is open in $\mathsf{Loops}(M)$, so $\tilde{\M}_\delta$ is a subset of $\M$. 
		\item Thus it is enough to show that there are no closed geodesics in $\widetilde{\mathsf{Loops}}(M)_{\delta}$. Indeed, since $W''(1,\cdot)$ is positive, we can control  
		\be 
		|\mathsf{Length}(X) - \ell_0|= \bigg|\int_{S^1} (|\partial_a X| - 1) \rmd a \bigg| \leq \sqrt{2\pi} \bigg(\int_{S^1} (|\partial_a X| - 1)^2\rmd a \bigg)^{1/2} \lesssim \sqrt{U[X]} = \sqrt{\delta}.
		\ee  
		Thus lengths of curves in $\widetilde{\mathsf{Loops}}(M)_{\delta}$ equal to $\ell_0$ up to a correction of order $\sqrt{\delta}$, so taking $\delta$ small enough and using that $\ell_0$ is not in the length spectrum of $M$ shows that there are no closed geodesics in $\widetilde{\mathsf{Loops}}(M)_{\delta}$.
	\end{enumerate} 
	This argument shows that while closed geodesics are critical points of $U$, assumption \ref{thread-assump} renders them dynamically inaccessible in the regime of consideration. Thus from now on we will consider $\widetilde{\mathsf{Loops}}(M)_{\delta}$ as our configuration space. We will also abuse the notation and remove tildes and subscripts $\delta$. We proceed to investigate critical set of $U$.
	\begin{lemma}
	The critical set  $\M$ of the potential \eqref{threadpot} is given by $\mathsf{SLoops}_{\vr_0}(M)$.
	\end{lemma}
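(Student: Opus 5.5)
I will characterize the critical points through the gradient computed in the Lemma above, using Assumption \ref{thread-assump}, in the restricted form $\widetilde{\mathsf{Loops}}(M)_\delta$, to exclude the spurious critical points. I read ``critical set'' as the minimum set of $U$, as in Definition \ref{constraining}, and also check that there are no other critical points in the restricted configuration space.

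The first step is the easy inclusion. If $|\partial_a X|\equiv 1$, then $W(|\partial_a X|,a)=W(1,a)=0$, so $U[X]=0=\min U$ because $W\ge 0$. Moreover $W'(1,\cdot)=0$, so the formula
\be
\grad_{\vr_0}U[X] = -\frac{1}{\vr_0}\nabla_a\bigg(\frac{W'(|\partial_a X|,\cdot)\partial_a X}{|\partial_a X|}\bigg)
\ee
gives $\grad_{\vr_0}U[X]=0$. Hence $\mathsf{SLoops}(M)\subset \M$.

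For the reverse inclusion, suppose $\grad_{\vr_0}U[X]=0$. Set $\tau(a):=W'(|\partial_a X|,a)$ and $T:=\partial_a X/|\partial_a X|$, so that $\nabla_a(\tau T)=0$.
\begin{enumerate}
\item Pair with $T$. Since $|T|=1$ we have $\langle \nabla_a T, T\rangle=0$, so $\partial_a \tau = 0$ and $\tau$ is a constant $\tau_0$.
\item If $\tau_0\neq 0$, then $\nabla_a T=0$, so $X$ is a (reparametrized) closed geodesic. Its length lies in the length spectrum of $M$.
\item On $\widetilde{\mathsf{Loops}}(M)_\delta$ the length estimate in item (3) of the discussion above shows that the length is $\ell_0+O(\sqrt\delta)$. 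Assumption \ref{thread-assump}, together with discreteness of the length spectrum near $\ell_0$ (from compactness of $M$), excludes this for small $\delta$.
\item Hence $\tau_0=0$, i.e. $W'(|\partial_a X|,a)=0$ pointwise. Since $W''>0$, $W'(\cdot,a)$ is strictly increasing with unique zero at $1$, so $|\partial_a X|\equiv 1$.
\end{enumerate}

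The main obstacle is the step involving $\tau_0\ne0$. Two points need care there. First, the length spectrum must be closed or discrete near $\ell_0$, so that a neighborhood of size $O(\sqrt\delta)$ avoids it. Second, strict convexity must be uniform in $a$, so that $U$ controls $\||\partial_aX|-1\|_{L^2}$; this is needed to make the bound $\lesssim\sqrt{U}$ rigorous.

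Finally, for nondegeneracy, to be used later, I would note that the Hessian on normal directions $\frac{1}{\vr_0}\nabla_a(\sigma\partial_aX)$ is computed from $\int W''(1,a)\langle\nabla_a\xi,\partial_aX\rangle^2\,\rmd a$. This equals $\int W''(1,\cdot)(\L_X\sigma)^2\,\rmd a$, which is positive because $\L_X$ is invertible when $X$ is not a geodesic, by the proof of Proposition \ref{threadtangentnormal}.
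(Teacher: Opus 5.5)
Your proof is correct and is essentially the paper's argument. You pair $\nabla_a(\tau T)=0$ with $T$ to get $\tau$ constant, rule out $\tau_0\neq 0$ (closed geodesics) via the restricted configuration space and Assumption \ref{thread-assump}, and conclude $|\partial_a X|\equiv 1$ from strict monotonicity of $W'(\cdot,a)$; you are slightly more explicit than the paper (it leaves the inclusion $\mathsf{SLoops}(M)\subset\M$ implicit and handles geodesic exclusion only in the preceding discussion), and for that exclusion you need only that the length spectrum of compact $M$ is closed, which always holds, not that it is discrete.
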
 
\begin{proof}
	Let $X \in \M$. By definition of $\M$, it means that
	\be\label{threadgrad}
	\grad_{\vr_0} U[X] = 0 \Leftrightarrow \nabla_a\bigg(\frac{W'(|\partial_a X(a)|, a)\partial_a X(a)}{|\partial_a X(a)|}\bigg) = 0.
	\ee
	First, we have 
	\begin{align}
	\frac{\rmd}{\rmd a} W'(|\partial_a X(a)|, a) &= \frac{\rmd}{\rmd a} \bigg\langle \frac{W'(|\partial_a X(a)|, a)\partial_a X(a)}{|\partial_a X(a)|}, \frac{\partial_a X(a)}{|\partial_a X(a)|}\bigg\rangle \\
	&=\bigg\langle \nabla_a\bigg(\frac{W'(|\partial_a X(a)|, a)\partial_a X(a)}{|\partial_a X(a)|}\bigg), \frac{\partial_a X(a)}{|\partial_a X(a)|}\bigg\rangle \\
	&\qquad\qquad+\bigg\langle \frac{W'(|\partial_a X(a)|, a)\partial_a X(a)}{|\partial_a X(a)|}, \nabla_a\bigg(\frac{\partial_a X(a)}{|\partial_a X(a)|}\bigg)\bigg\rangle \\
	&=\frac{1}{2}W'(|\partial_a X(a)|, a)\frac{\rmd}{\rmd a}\bigg|\frac{\partial_a X(a)}{|\partial_a X(a)|}\bigg|^2 = 0.
	\end{align}
	This implies that $W'(|\partial_a X(a)|, a) = {\rm const}$. Thus \eqref{threadgrad} implies that 
	\be 
	W'(|\partial_a X(a)|, a) \nabla_{a}\bigg(\frac{\partial_a X(a)}{|\partial_a X(a)|}\bigg) = 0.
	\ee 
	If the first term is a nonzero constant, this implies that $X$ is a geodesic (which we are not considering). Otherwise, due to convexity and nonnegativity of $W$
	\be 
	W'(|\partial_a X(a)|, a) = 0 \ \Rightarrow \ |\partial_a X(a)| = 1,
	\ee 
	so $X \in \mathsf{SLoops}_{\vr_0}(M).$
	\end{proof}
\begin{lemma}\label{hessthread}
	On $\mathsf{SLoops}_{\vr_0}(M)$ we have 
	\be
	\hess_{\vr_0} U[X](\xi, \cdot)=-\frac{1}{\vr_0}\nabla_a (W''(1, \cdot)\langle\partial_a X, \nabla_a\xi\rangle\partial_a X).
	\ee
	When restricted to $N_X\mathsf{SLoops}_{\vr_0}(M)$, $\hess_{\vr_0} U[X]$ is positive definite.
\end{lemma}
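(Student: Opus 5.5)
The plan is to compute the Hessian at a critical point $X\in\mathsf{SLoops}_{\vr_0}(M)$ by differentiating the gradient formula along a curve of loops, and then to show positivity on the normal space by integration by parts together with the Helmholtz-type decomposition lemma above.

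\textbf{Step 1: the Hessian formula.} At a critical point the Hessian is independent of connection choices. It equals the covariant derivative of $\grad_{\vr_0}U$ in the direction $\xi$, and any term containing the undifferentiated gradient vanishes. So I take a variation $X^s$ with $\partial_s X^s|_{s=0}=\xi$ and compute
\[
\nabla_s\Big|_{s=0}\Big(-\tfrac{1}{\vr_0}\nabla_a\Big(\tfrac{W'(|\partial_a X^s|,\cdot)\partial_a X^s}{|\partial_a X^s|}\Big)\Big).
\]
Commuting $\nabla_s$ with $\nabla_a$ produces a curvature term $R(\xi,\partial_a X)$ applied to $W'(|\partial_aX|)\partial_aX/|\partial_aX|$. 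This term vanishes because $W'(1,\cdot)=0$ on $\mathsf{SLoops}$. For the same reason, when $\nabla_s$ hits the vector $\partial_a X^s/|\partial_a X^s|$, the result is again multiplied by $W'(1,\cdot)=0$. The only surviving term is the derivative of the scalar,
\[
\tfrac{\rmd}{\rmd s}W'(|\partial_aX^s|,a)=W''(1,a)\,\langle\partial_aX,\nabla_a\xi\rangle ,
\]
using $\nabla_s\partial_aX=\nabla_a\xi$ and $|\partial_aX|=1$. This gives the claimed formula.

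\textbf{Step 2: positivity on the normal space.} Take $\xi=\frac{1}{\vr_0}\nabla_a(\sigma\partial_aX)$ in $N_X\mathsf{SLoops}_{\vr_0}(M)$, using Proposition \ref{threadtangentnormal}. Pairing with the $\vr_0$-weighted metric and integrating by parts on $S^1$ gives
\[
\hess_{\vr_0}U[X](\xi,\xi)=\int_{S^1}W''(1,a)\,\langle\partial_aX,\nabla_a\xi\rangle^2\,\rmd a\ \ge 0 .
\]
The computation in the decomposition lemma shows $\langle\nabla_a\xi,\partial_aX\rangle=-\L_X\sigma$, so
\[
\hess_{\vr_0}U[X](\xi,\xi)=\int W''(1,\cdot)(\L_X\sigma)^2 ,
\]
which is nonnegative since $W''>0$.

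\textbf{Step 3: definiteness.} If this quantity vanishes, then $\L_X\sigma=0$. Pairing with $\sigma$ gives
\[
\int\Big(\tfrac{1}{\vr_0}|\partial_a\sigma|^2+\tfrac{1}{\vr_0}|\nabla_a\partial_aX|^2\sigma^2\Big)=0 .
\]
Hence $\sigma$ is constant, and $|\nabla_a\partial_aX|^2\sigma^2\equiv0$. Since $X$ is not a closed geodesic (guaranteed on our configuration space by Assumption \ref{thread-assump}), $\nabla_a\partial_aX\not\equiv0$, so $\sigma=0$ and therefore $\xi=0$. As in the decomposition lemma, even if $\sigma$ were only determined up to a constant, $\xi$ itself would vanish.

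\textbf{Main obstacle.} The delicate point is the bookkeeping in Step 1: one must justify discarding all terms proportional to $W'$ (including the curvature commutator) and correctly identify the Hessian as a covariant derivative on a Fréchet manifold. Positivity is then only semi-definite coercivity in the weighted $H^{-1}$-type sense, not a uniform bound $c_0\|\xi\|^2$. This is acceptable given the paper's stated informality about infinite-dimensional issues.
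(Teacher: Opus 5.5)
Your proposal is correct and follows the paper's proof essentially step for step: a covariant variation of the gradient in which the curvature term and all terms multiplied by $W'(1,\cdot)$ drop out, then integration by parts and substitution of $\xi=\frac{1}{\vr_0}\nabla_a(\sigma\partial_aX)$ to obtain $\int W''(1,\cdot)(\L_X\sigma)^2$. Your Step 3 spells out the definiteness argument, including the geodesic case, which the paper only asserts.

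One correction to your closing remark: the normal Hessian is in fact uniformly coercive at each fixed $X$. Indeed $\|\xi\|_{\vr_0}^2=\int\sigma\L_X\sigma$, and $\int(\L_X\sigma)^2\ge\mu_0\int\sigma\L_X\sigma$, where $\mu_0>0$ is the lowest eigenvalue of $\L_X$. Hence $\hess_{\vr_0}U[X](\xi,\xi)\ge(\min W''(1,\cdot))\,\mu_0\,\|\xi\|_{\vr_0}^2$.
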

\begin{proof}
	Consider again a variation $X^\ve$ of $X$ with derivative $\xi$ at $X$. We then compute on $\mathsf{SLoops}_{\vr_0}(M)$
	\begin{align}
		\hess_{\vr_0} U[X](\xi, \cdot) &= \frac{\rmd }{\rmd \ve}\bigg|_{\ve = 0}\grad_{\vr_0} U[X^\ve] = - \frac{\rmd }{\rmd \ve}\bigg|_{\ve = 0}\bigg( \frac{1}{\vr_0}\nabla_a\bigg(\frac{W'(|\partial_a X^\ve|, \cdot)\partial_a X^\ve}{|\partial_a X^\ve|}\bigg)\bigg)\\
		&= -\frac{1}{\vr_0}\nabla_a\frac{\rmd }{\rmd \ve}\bigg|_{\ve = 0}\bigg(\frac{W'(|\partial_a X^\ve|, \cdot)\partial_a X^\ve}{|\partial_a X^\ve|}\bigg) -\frac{1}{\vr_0}R(\xi, \partial_a X)\frac{W'(|\partial_a X|, \cdot)\partial_a X}{|\partial_a X|}\\
		&=-\frac{1}{\vr_0}\nabla_a (W''(1, \cdot)\langle\partial_a X, \nabla_a\xi\rangle\partial_a X),
	\end{align}
	where all terms with $\ve-$derivative not hitting $W$ cancel from the fact that $W'(1,\cdot) = 0 $. Hence
	\begin{align}
		\hess_{\vr_0} U[X](\xi,\xi) &=-\int_{S^1} \bigg\langle\frac{1}{\vr_0}\nabla_a (W''(1, \cdot)\langle\partial_a X, \nabla_a\xi\rangle\partial_a X), \xi \bigg\rangle \vr_0  \rmd a\\
		&=\int_{S^1} W''(1,\cdot)\langle\nabla_a \xi, \partial_a X\rangle^2 \rmd a.
	\end{align}
Plugging in normal vector $\xi = \frac{1}{\vr_0}\nabla_a(\sigma \partial_a X)$ provided by Lemma \ref{threadtangentnormal}, we get
\be
	\hess_{\vr_0} U[X](\xi,\xi) =\int_{S^1} W''(1,\cdot)\bigg\langle\nabla_a \bigg(\frac{1}{\vr_0}\nabla_a(\sigma \partial_a X)\bigg), \partial_a X\bigg\rangle^2 \rmd a
	= \int_{S^1}W''(1,\cdot)(\L_X \sigma)^2 \rmd a,
\ee 
where $\L_{X}$ is again given by \eqref{Lthread}. 
We see that under our assumptions on $W$ and $X$, $\hess_{\vr_0} U[X]$ is positive restricted to $N_X\mathsf{SLoops}(M)$. If $X$ happens to be a closed geodesic, $\L_X$ has kernel consisting of constants -- but in that case $\frac{1}{\vr_0}\nabla_a(\sigma \partial_a X)$ is zero. 
\end{proof}

	\begin{prop}
		Eigenvalues of $\hess_{\vr_0} U[X]$ restricted to $N_X\mathsf{SLoops}_{\vr_0}(M)$ are the same as those of $W''(1, \cdot)\L_X$, where $\L_X$ is given by \eqref{Lthread}. Hence homogenized potential of $U$ is given by 
		\be\label{hompot-thread}
		V[X] = \sum_i c_i\sqrt{\lambda_i[X]},
		\ee 
		where $\lambda_i[X]$ are eigenvalues of $W''(1, \cdot)\L_X$.
	\end{prop}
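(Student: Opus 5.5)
Using Proposition \ref{threadtangentnormal}, I will conjugate the restricted Hessian, through the normal-space parametrization, to the operator $W''(1,\cdot)\L_X$ acting on tensions $\sigma$. Define
\be
\Phi_X:\sigma\mapsto \tfrac{1}{\vr_0}\nabla_a(\sigma\partial_a X),
\ee
which maps $C^\infty(S^1)$ onto $N_X\mathsf{SLoops}_{\vr_0}(M)$. It is injective because $X$ is not a closed geodesic (Assumption \ref{thread-assump}), so $\L_X$ has trivial kernel. The computation in the proof of the Helmholtz-type lemma gives the key identity
\be
\langle \nabla_a \Phi_X\sigma,\partial_a X\rangle=-\L_X\sigma .
\ee

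Next I apply the Hessian formula of Lemma \ref{hessthread} to $\xi=\Phi_X\sigma$. Substituting the identity above gives
\be
\hess_{\vr_0}U[X](\Phi_X\sigma,\cdot)=-\tfrac{1}{\vr_0}\nabla_a\big(W''(1,\cdot)(-\L_X\sigma)\,\partial_a X\big)=\Phi_X\big(W''(1,\cdot)\L_X\sigma\big).
\ee
So the Hessian maps $N_X\mathsf{SLoops}$ into itself, and under $\Phi_X$ it corresponds exactly to $W''(1,\cdot)\L_X$. Concretely, $\hess U\,\Phi_X=\Phi_X\,(W''\L_X)$.

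Because $\Phi_X$ is injective, the eigenvalue equation $\hess_{\vr_0}U[X]\xi=\lambda\xi$ with $\xi=\Phi_X\sigma$ holds if and only if $W''(1,\cdot)\L_X\sigma=\lambda\sigma$. The eigenvalues therefore coincide, and $\Phi_X$ maps eigenfunctions $v_i$ to eigenvectors $\Phi_X v_i$.

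For the spectral structure I check self-adjointness and positivity in the right weight. The pairing is
\be
(\Phi_X\sigma,\Phi_X\tau)_{\vr_0}=-\int\sigma\langle\partial_aX,\nabla_a\Phi_X\tau\rangle\,\rmd a=\int\sigma\,\L_X\tau\,\rmd a .
\ee
This shows that $W''\L_X$ is self-adjoint and positive in $L^2_{1/W''(1,\cdot)}$, which matches the normalization in Theorem \ref{threadthm}. By elliptic theory on $S^1$ it then has a discrete positive spectrum $\lambda_i[X]$. The homogenized potential \eqref{hompot-thread} follows directly from the definition \eqref{hompot}.

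The main obstacle is functional-analytic rather than algebraic. One must justify that the eigenvalues vary smoothly and can be arranged consistently, i.e.\ spectral smoothness, in this infinite-dimensional setting. I would handle this by assuming simplicity, as Theorem \ref{threadthm} does, and by taking the same formal stance on infinite sums that the paper adopts.
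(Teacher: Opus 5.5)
Your proposal is correct and follows essentially the same route as the paper. Both substitute the normal parametrization $\xi=\frac{1}{\vr_0}\nabla_a(\sigma\partial_a X)$ into the Hessian formula of Lemma \ref{hessthread}, use $\langle\nabla_a\xi,\partial_a X\rangle=-\L_X\sigma$, and read off $W''(1,\cdot)\L_X\sigma=\lambda\sigma$. Your injectivity argument (valid off closed geodesics) and your self-adjointness check in $L^2_{1/W''(1,\cdot)}$ spell out steps the paper leaves implicit.
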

	\begin{proof}
	Eigenpairs of the Hessian restricted to the normal space satisfy
					\begin{align}
					  \lambda \frac{1}{\vr_0}\nabla_a(\sigma\partial_a X)
&=
\hess_{\vr_0} U[X]\bigg(\frac{1}{\vr_0}\nabla_a(\sigma\partial_a X), \cdot \bigg) 		\\
&=			  -\frac{1}{\vr_0}\nabla_a\bigg(W''(1, \cdot) \bigg\langle\partial_a X, \nabla_a\bigg(\frac{1}{\vr_0}\nabla_a(\sigma\partial_a X)\bigg)\bigg\rangle\partial_a X\bigg) \\
&=  \frac{1}{\vr_0}\nabla_a\bigg(W''(1,\cdot)\L_X \sigma\partial_a X\bigg).
\end{align}
In the above, we used formula for Hessian from  Lemma \ref{hessthread}.
Therefore $\sigma$ must satisfy
			\begin{align}
			 W''(1,\cdot)\L_X \sigma &= \lambda\sigma,
		\end{align}
		completing the proof.
	\end{proof}
	We then compute the gradients of the eigenvalues ${\lambda_i}[X]$ of $W''(1,\cdot)\L_X$. 
	\begin{lemma} For a simple eigenvalue $\lambda_i$
		\be 
		\grad_{\vr_0}\lambda_i[X] = \frac{2}{\vr_0}\nabla_a\nabla_a\bigg(\frac{v_i^2}{\vr_0}\nabla_a\partial_a X\bigg) - \frac{2v_i^2}{\vr^2_0} R(\partial_a X, \nabla_a \partial_a X)\partial_a X,
		\ee 
		where $v_i$ are eigenfunctions of $W''(1,\cdot)\L_X$ normalized by the condition
		\be 
		\int \frac{|v_i|^2}{W''(1,\cdot)} \rmd a= 1.
		\ee 
	\end{lemma}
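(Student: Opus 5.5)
The plan is to apply a Hellmann--Feynman (first-order perturbation) argument to the operator $A_X := W''(1,\cdot)\L_X$, and then to identify the resulting linear functional of the variation $\xi$ with a $\vr_0$-weighted $L^2$ pairing. First I would note that $A_X$ is self-adjoint with respect to the weighted inner product $\int f g\, W''(1,\cdot)^{-1}\rmd a$. Indeed, $\int (A_X f) g/W''\,\rmd a = \int (\L_X f) g\,\rmd a$, and $\L_X$ is symmetric in unweighted $L^2(S^1)$.

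Take a variation $X^\ve$ with $\frac{\rmd}{\rmd\ve}|_{\ve=0}X^\ve=\xi$, and let $(\lambda_i^\ve, v_i^\ve)$ be the corresponding eigenpairs. Because $\lambda_i$ is simple, standard analytic perturbation theory makes the eigenpair differentiable in $\ve$. Differentiate $A_{X^\ve}v_i^\ve=\lambda_i^\ve v_i^\ve$ and pair with $v_i/W''$. The terms containing $\dot v_i$ cancel by self-adjointness, and the normalization $\int v_i^2/W''=1$ then gives
\be
\dot\lambda_i = \int_{S^1} v_i\,(\dot{\L}_X v_i)\,\rmd a = \int_{S^1}\frac{v_i^2}{\vr_0}\,\frac{\rmd}{\rmd\ve}\Big|_{\ve=0}|\nabla_a\partial_a X^\ve|^2\,\rmd a.
\ee
This uses that $W''(1,\cdot)$ and the operator $-\partial_a(\vr_0^{-1}\partial_a)$ do not depend on $X$; only the zeroth-order coefficient does.

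Next I would compute the variation of the curvature-squared term. Write $T=\partial_a X$ and $\kappa=\nabla_a\partial_a X$. Since the connection is torsion-free, $\nabla_\xi \partial_a X=\nabla_a\xi$. The commutator of covariant derivatives gives $\nabla_\xi\nabla_a = \nabla_a\nabla_\xi + R(\xi,T)$. Together these yield
\be
\frac{\rmd}{\rmd\ve}\Big|_{\ve=0}|\nabla_a\partial_aX^\ve|^2 = 2\langle \nabla_a\nabla_a\xi,\kappa\rangle + 2\langle R(\xi,T)T,\kappa\rangle .
\ee
For the first term I would integrate by parts twice on $S^1$, using metric compatibility of $\nabla_a$. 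This gives $\int 2\frac{v_i^2}{\vr_0}\langle\nabla_a\nabla_a\xi,\kappa\rangle\,\rmd a=\int\langle \xi, 2\nabla_a\nabla_a(\frac{v_i^2}{\vr_0}\kappa)\rangle\,\rmd a$. For the curvature term, pair symmetry and antisymmetry of the Riemann tensor give $\langle R(\xi,T)T,\kappa\rangle=\langle R(T,\kappa)\xi,T\rangle=-\langle R(T,\kappa)T,\xi\rangle$. Finally, I would write both integrals as $\int\langle\cdot,\xi\rangle\vr_0\,\rmd a$ by inserting a factor $\vr_0^{-1}$. Reading off the $\vr_0$-weighted gradient then produces exactly $\frac{2}{\vr_0}\nabla_a\nabla_a(\frac{v_i^2}{\vr_0}\kappa)-\frac{2v_i^2}{\vr_0^2}R(T,\kappa)T$.

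The main obstacle is bookkeeping rather than analysis. The sign of the curvature term depends on the convention used for $R$, which must match the one in the commutation formula used earlier in the paper; I would fix it by comparing with the time-differentiated constraint in the tension equation. I also need to make sure the formula for $\L_X$ is extended off $\mathsf{SLoops}$ in the natural way, i.e.\ by the same expression $|\nabla_a\partial_aX|^2$, so that the gradient is taken in $\mathsf{Loops}_{\vr_0}(M)$. Differentiability of $\lambda_i$ and $v_i$ in $X$ relies on simplicity, which is assumed.
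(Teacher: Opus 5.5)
Your proposal is correct and follows essentially the same route as the paper. Both use the weighted Hellmann--Feynman formula (Lemma \ref{HF1}) with $H=L^2(S^1,\rmd a/W''(1,\cdot))$, the variation $2\langle\nabla_a\nabla_a\xi+R(\xi,\partial_aX)\partial_aX,\nabla_a\partial_aX\rangle$ of the zeroth-order coefficient, two integrations by parts, and the Riemann symmetries to isolate $\xi$; you merely spell out the symmetry step and the final division by $\vr_0$, which the paper leaves implicit.
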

	\begin{proof}
		Consider a variation $X^\ve$ of $X$, and denote the objects corresponding to $X^\ve$ by the upper index $\ve$, e.g.
		\be 
		\L^{\ve} = \L_{X^\ve}, \quad W''(1,\cdot)\L^\ve v_i^\ve= \lambda_i^{\ve}v_i^\ve, \quad \int \frac{|v_i^\ve|^2}{W''(1,\cdot)} = 1.
		\ee 
		We will use the weighted Hellmann-Feynman formula \ref{HF1}. For that we first compute 
		\begin{align}
		\bigg(\frac{\rmd }{\rmd \ve}\bigg|_{\ve=0} W''(1,\cdot)\L^\ve \bigg)&= W''(1,\cdot)\frac{\rmd }{\rmd \ve}\bigg|_{\ve=0}\bigg( -\partial_a\bigg(\frac{1}{\vr_0} \partial_a\bigg) + \frac{1}{\vr_0}|\nabla_a\partial_a X^\ve|^2\bigg)\\
		&=  2\frac{W''(1,\cdot)}{\vr_0} \langle\nabla_a\nabla_a\xi + R(\xi,\partial_a X)\partial_a X,\nabla_a \partial_a X\rangle .
		\end{align}
	Hence, setting $A^\ve = W''(1,\cdot)\L^\ve$ and $H = L^2\big(S^1, \frac{\rmd a}{W''(1,\cdot)}\big)$ in lemma \ref{HF1} provides
		\begin{align}
		\frac{\rmd }{\rmd \ve}\bigg|_{\ve=0} \lambda^\ve_i &= \int v_i \bigg(\frac{\rmd }{\rmd \ve}\bigg|_{\ve=0} W''(1,\cdot)\L^\ve \bigg)v_i\frac{1}{W''(1,\cdot)} \\
		&= \int \frac{v_i^2}{\vr_0}2\langle\nabla_a\nabla_a\xi + R(\xi,\partial_a X)\partial_a X, \nabla_a\partial_a X\rangle\\
		&=\int \bigg\langle\xi, 2\nabla_a\nabla_a\bigg(\frac{v_i^2}{\vr_0}\nabla_a\partial_a X\bigg) - 2\frac{v_i^2}{\vr_0} R(\partial_a X, \nabla_a \partial_a X)\partial_a X \bigg\rangle.
		\end{align}
		The claimed expression for the gradient follows.
	\end{proof}
Consequently we get
	\begin{prop}\label{threadlimit}
		Gradient of the potential $V[X]$ is given by
		\be 
		\grad_{\vr_0} V[X] = \frac{1}{\vr_0}\nabla_a\nabla_a(K \nabla_a\partial_a X) -  \frac{K}{\vr_0} R(\partial_a X, \nabla_a \partial_a X)\partial_a X,
		\ee 
		where $K = \sum_i  \frac{c_i v_i^2}{\sqrt{\lambda_i}\vr_0}.$
		The Takens limit equation is written as 
		\be
		\begin{cases}
		\ddot{X} =   \frac{1}{\vr_0}\nabla_a(\sigma \partial_a X) - \frac{1}{\vr_0}\nabla_a\nabla_a(K \nabla_a\partial_a X) + \frac{K}{\vr_0} R(\partial_a X, \nabla_a \partial_a X)\partial_a X,\\
		|\partial_a X| = 1,\\
		K = \sum_i \frac{ c_i v_i^2}{\sqrt{\lambda_i}\vr_0}.
		\end{cases}
		\ee
	\end{prop}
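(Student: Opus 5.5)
The plan is to apply the chain rule to $V[X]=\sum_i c_i\sqrt{\lambda_i[X]}$, using the gradient formula for simple eigenvalues from the preceding Lemma, and then substitute the result into the Takens limit system \eqref{limitsystem}. The normal space is described by Proposition \ref{threadtangentnormal}.

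\textbf{Step 1: gradient of $V$.} Since each $\lambda_i$ is simple and positive on $\mathsf{SLoops}_{\vr_0}(M)$ (Lemma \ref{hessthread}), every $\sqrt{\lambda_i}$ is differentiable and
\[
\grad_{\vr_0}V[X]=\sum_i \frac{c_i}{2\sqrt{\lambda_i}}\,\grad_{\vr_0}\lambda_i[X].
\]
Substituting the previous Lemma, the factor $2$ cancels, and we get
\[
\grad_{\vr_0}V[X]=\frac{1}{\vr_0}\nabla_a\nabla_a\Big(\sum_i\frac{c_iv_i^2}{\sqrt{\lambda_i}\vr_0}\nabla_a\partial_aX\Big)-\sum_i\frac{c_iv_i^2}{\sqrt{\lambda_i}\vr_0^2}R(\partial_aX,\nabla_a\partial_aX)\partial_aX .
\]
Because the $c_i$ and the $\lambda_i$ are constant in $a$, the sum can be pulled inside $\nabla_a\nabla_a$. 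With $K=\sum_i c_iv_i^2/(\sqrt{\lambda_i}\vr_0)$ this is exactly the stated formula. The term-by-term interchange of the infinite sum with differentiation is taken formally, in keeping with the paper's stated convention.

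\textbf{Step 2: the limit system.} Here $W\equiv 0$, so \eqref{limitsystem} reads $\ddot X+\grad_{\vr_0}V[X]\in N_X\mathsf{SLoops}_{\vr_0}(M)$ with $X\in\mathsf{SLoops}_{\vr_0}(M)$. By Proposition \ref{threadtangentnormal}, elements of the normal space are exactly $\frac1{\vr_0}\nabla_a(\sigma\partial_aX)$ for some $\sigma\in C^\infty(S^1)$. This gives
\[
\ddot X=\tfrac1{\vr_0}\nabla_a(\sigma\partial_aX)-\grad_{\vr_0}V[X].
\]
Inserting Step 1 yields the claimed equation, including the curvature term with the opposite sign.

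\textbf{Step 3: determining $\sigma$.} The multiplier $\sigma$ is fixed by the inextensibility constraint $|\partial_aX|=1$, as in the computation for \eqref{inextthread}. Differentiating the constraint twice in time now gives an equation $\L_X\sigma=\dots$ with extra source terms coming from $\langle\nabla_a\grad V,\partial_aX\rangle$. This equation is uniquely solvable when $X$ is not a geodesic, which holds by Assumption \ref{thread-assump}.

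\textbf{Main obstacle.} The only delicate point is the justification of differentiability, namely the simplicity of the eigenvalues (which is assumed) and the normalization convention. The $L^2_{1/W''}$ normalization of $v_i$ must be the same one used in the preceding Lemma, so that the Hellmann--Feynman formula gives exactly $\grad\lambda_i$ with no extra factors. Beyond that, the argument is algebraic bookkeeping.
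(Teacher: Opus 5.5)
Your proposal is correct and follows the paper's own route. You apply the chain rule $\grad_{\vr_0}\sqrt{\lambda_i}=\frac{1}{2\sqrt{\lambda_i}}\grad_{\vr_0}\lambda_i$ to the preceding eigenvalue-gradient lemma, pull the sum inside $\nabla_a\nabla_a$ to form $K$, and then write the limit system in d'Alembert form using the normal space from Proposition \ref{threadtangentnormal}. Your Step 3 on determining $\sigma$ goes beyond what the statement requires but is harmless, and any normal component of $\grad_{\vr_0}V$ (for instance from how $\lambda_i$ is extended off $\mathsf{SLoops}_{\vr_0}(M)$) is likewise absorbed into the $\sigma$ term.
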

	\noindent To complete the description, we must compute adiabatic invariants $c_i$:
	\begin{prop}\label{thread_ci}
		Consider a family of mildly ill-prepared initial data $(X_0^\ve, V_0^\ve) \to (X_0, V_0)$ with
		\begin{align}
			X_0^0 &= X_0 \in \mathsf{SLoops}_{\vr_0}(M), \quad \frac{\rmd}{\rmd\ve}\bigg|_{\ve = 0} X_0^\ve = \frac{1}{\vr_0}\nabla_a(\varphi \partial_a X_0) =: X_{0N} \in N_{X_0}\mathsf{SLoops}_{\vr_0}(M)\\ 
			V_0^0 &= V_0 \in T_{X_0}\mathsf{Loops}_{\vr_0}(M)
		\end{align}
		Then the actions $c_i$ are explicitly computed as  
		\be 
		c_i = \frac{1}{2\lambda_i^{3/2}}\bigg(\int_{S^1} \langle\nabla_a V_0, \partial_a X_0 \rangle v_i \rmd a  \bigg)^2 + \frac{\lambda_i^{3/2}}{2}\bigg(\int_{S^1} \frac{\varphi v_i}{W''(1,\cdot)}  \rmd a \bigg)^2.
		\ee 
	\end{prop}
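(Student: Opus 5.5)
The plan is to evaluate formula \eqref{constantsci} of Theorem \ref{maintheorem} directly in the $(\cdot,\cdot)_{\vr_0}$ metric. This needs an explicit orthogonal eigenbasis of $\hess_{\vr_0}U[X_0]$ restricted to $N_{X_0}\mathsf{SLoops}_{\vr_0}(M)$ and the norm of each basis vector. For simple eigenvalues, $\mathbf{P}_i$ is then the orthogonal projection onto the line spanned by a single eigenvector $e_i$, so
\be
|\mathbf{P}_i V_0|^2 = \frac{(V_0,e_i)_{\vr_0}^2}{(e_i,e_i)_{\vr_0}},
\ee
and similarly for $X_{0N}$.

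\textbf{Step 1: the eigenbasis.} By the preceding proposition, the normal eigenvectors are $e_i := \frac{1}{\vr_0}\nabla_a(v_i\partial_a X_0)$, where $W''(1,\cdot)\L_{X_0} v_i = \lambda_i v_i$. The operator $W''(1,\cdot)\L_{X_0}$ is self-adjoint on $H=L^2\big(S^1,\frac{\rmd a}{W''(1,\cdot)}\big)$, because
\be
\big(W''\L\sigma,\tau\big)_H = \int_{S^1} \tau\,\L\sigma \,\rmd a
\ee
and $\L_{X_0}$ is symmetric in the unweighted $L^2$ product. Hence the $v_i$ can be chosen orthonormal in $H$, which is exactly the normalization in the statement.

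\textbf{Step 2: Gram matrix of the $e_i$.} Integrate by parts once, as in the orthogonality computation of the Helmholtz-type lemma, and then use the identity $-\langle\nabla_a(\frac{1}{\vr_0}\nabla_a(\sigma\partial_aX)),\partial_aX\rangle=\L_X\sigma$. This gives
\be
(e_i,e_j)_{\vr_0} = -\int_{S^1} v_i\Big\langle \partial_a X_0, \nabla_a e_j\Big\rangle \rmd a = \int_{S^1} v_i\,\L_{X_0} v_j\,\rmd a = \lambda_j\int_{S^1}\frac{v_iv_j}{W''(1,\cdot)}\rmd a = \lambda_i\delta_{ij}.
\ee
So the $e_i$ are mutually orthogonal and $|e_i|^2_{\vr_0}=\lambda_i$.

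\textbf{Step 3: the velocity term.} The same integration by parts gives
\be
(V_0,e_i)_{\vr_0} = -\int_{S^1} v_i\langle\nabla_a V_0,\partial_a X_0\rangle\rmd a.
\ee
Therefore
\be
\frac{|\mathbf{P}_iV_0|^2}{2\sqrt{\lambda_i}} = \frac{1}{2\lambda_i^{3/2}}\Big(\int_{S^1}\langle\nabla_aV_0,\partial_aX_0\rangle v_i\,\rmd a\Big)^2,
\ee
which is the first term of the claimed formula.

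\textbf{Step 4: the displacement term.} Expand $\varphi=\sum_j a_j v_j$ in $H$, with coefficients
\be
a_j=\int_{S^1}\frac{\varphi v_j}{W''(1,\cdot)}\,\rmd a.
\ee
The map $\sigma\mapsto\frac{1}{\vr_0}\nabla_a(\sigma\partial_aX_0)$ is linear, so $X_{0N}=\sum_j a_j e_j$. By Step 2 this gives $\mathbf{P}_iX_{0N}=a_ie_i$ and $|\mathbf{P}_iX_{0N}|^2=a_i^2\lambda_i$. Consequently
\be
\frac{\lambda_i|\mathbf{P}_iX_{0N}|^2}{2\sqrt{\lambda_i}}=\frac{\lambda_i^{3/2}}{2}a_i^2,
\ee
which is the second term.

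\textbf{Expected obstacle.} The only delicate points are functional-analytic, and in keeping with the paper's stated level of rigor I would only remark on them rather than prove them. The first is the completeness of $\{v_i\}$ in $H$, which follows from standard Sturm--Liouville theory since $\L_{X_0}$ is a positive elliptic operator on $S^1$ when $X_0$ is not a geodesic. The second is justifying the termwise expansion of $X_{0N}$, i.e.\ that the map $\sigma\mapsto e_\sigma$ is continuous from the relevant Sobolev space. Everything else reduces to the single integration-by-parts identity already used in the proof of the Helmholtz-type lemma.
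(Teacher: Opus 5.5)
Your proposal is correct and follows essentially the paper's own route: the rank-one projection formula $|\mathbf{P}_i Y|^2=(Y,e_i)_{\vr_0}^2/(e_i,e_i)_{\vr_0}$, the identity $(e_i,e_i)_{\vr_0}=\int_{S^1}v_i\,\L_{X_0}v_i\,\rmd a=\lambda_i$, and one integration by parts against $\partial_a X_0$. The only difference is your Step 4, where the paper avoids the eigenfunction expansion (and so also the completeness and termwise-convergence issues you flag) by treating $X_{0N}$ exactly as you treat $V_0$ in Step 3, which gives $(X_{0N},e_i)_{\vr_0}=\int_{S^1}\varphi\,\L_{X_0}v_i\,\rmd a=\lambda_i\int_{S^1}\frac{\varphi v_i}{W''(1,\cdot)}\rmd a$ directly from the symmetry of $\L_{X_0}$.
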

\begin{proof}
	We recall that 
	\be 
	c_i = \frac{|\mathbf{P}_i(X_0) V_0|^2 +  \lambda_i(X_0)|\mathbf{P}_i(X_0) X_{0N}|^2}{2\sqrt{\lambda_i(X_0)}},
	\ee 
	where $\mathbf{P}_i(X)$ are $L^2_{\vr_0}$ projectors onto eigenspaces of $\hess_{\vr_0} U_{X}.$ As computed above, $(\lambda, \frac{1}{\vr_0}\nabla_a(v \partial_a X))$ is an eigenpair of $\hess_{\vr_0} U \big|_{N_{X}\mathsf{SLoops}_{\vr_0}}(M)$ if and only if 
	$(\lambda, v)$ is an eigenpair of $W''(1,\cdot)\L_{X_0}$. 
	The action constants $c_i$ are thus given by 
	\begin{align}
		c_i &= \frac{|\mathbf{P}_i(X_0) V_0|^2 +  \lambda_i(X_0)|\mathbf{P}_i(X_0) X_{0N}|^2}{2\sqrt{\lambda_i(X_0)}}\\
		&=\frac{1}{2\sqrt{\lambda_i}}\frac{\big(\frac{1}{\vr_0}\nabla_a(v_i\partial_a X_0), V_0\big)^2_{\vr_0}}{ \|\frac{1}{\vr_0}\nabla_a(v_i\partial_a X_0)\|^2_{\vr_0}} + \frac{\sqrt{\lambda_i}}{2}\frac{\big(\frac{1}{\vr_0}\nabla_a(v_i\partial_a X_0), \frac{1}{\vr_0}\nabla_a(\varphi\partial_a X_0)\big)^2_{\vr_0}}{ \|\frac{1}{\vr_0}\nabla_a(v_i\partial_a X_0)\|^2_{\vr_0}}\\
		&=\frac{1}{2\sqrt{\lambda_i}}\frac{\big(\int_{S^1} \langle \nabla_a(v_i \partial_a X_0), V_0 \rangle\big)^2}{\int_{S^1} \frac{1}{\vr_0}\big|\nabla_a(v_i \partial_a X_0)\big|^2 } + \frac{\sqrt{\lambda_i}}{2}\frac{\big(\int_{S^1}\frac{1}{\vr_0} \langle \nabla_a(v_i \partial_a X_0), \nabla_a(\varphi\partial_a X_0) \rangle\big)^2}{\int_{S^1}\frac{1}{\vr_0} \big|\nabla_a(v_i\partial_a X_0)\big|^2 }\\
		&=\frac{1}{2\lambda_i^{3/2}}\bigg(\int_{S^1} \langle \nabla_a(v_i \partial_a X_0), V_0 \rangle\bigg)^2 + \frac{1}{2\lambda_i^{1/2}}\bigg(\int_{S^1} \frac{1}{\vr_0}\langle \nabla_a(v_i\partial_a X_0), \nabla_a(\varphi \partial_a X_0) \rangle\bigg)^2,
	\end{align}
	where in the last equality we used that 
	\be 
	\int_{S^1}\frac{1}{\vr_0}|\nabla_a(v_i \partial_a X_0)|^2 =  \int_{S^1} \L_{X_0} v_i v_i = \int_{S^1} \frac{W''(1,\cdot)\L_{X_0} v_i v_i}{W''(1,\cdot)} = \lambda_i.
	\ee 
	Continuing, 
	\begin{align}
		c_i &= \frac{1}{2\lambda_i^{3/2}}\bigg(\int_{S^1} \langle \nabla_a(v_i \partial_a X_0), V_0 \rangle\bigg)^2 + \frac{1}{2\lambda_i^{1/2}}\bigg(\int_{S^1} \frac{1}{\vr_0}\langle \nabla_a(v_i\partial_a X_0), \nabla_a(\varphi \partial_a X_0) \rangle\bigg)^2\\
		&= \frac{1}{2\lambda_i^{3/2}} \bigg(\int_{S^1} \langle\nabla_a V_0, \partial_a X_0\rangle v_i \bigg)^2 + \frac{1}{2\lambda_i^{1/2}}\bigg(\int_{S^1}  \L_{X_0} v_i  \varphi \bigg)^2\\
		&= \frac{1}{2\lambda_i^{3/2}} \bigg(\int_{S^1} \langle\nabla_a V_0, \partial_a X_0\rangle v_i \bigg)^2 + \frac{\lambda_i^{3/2}}{2}\bigg(\int_{S^1}  \frac{v_i  \varphi}{W''(1,\cdot)} \bigg)^2. 
	\end{align}
	This completes the derivation.
\end{proof}
 	We see the appearance of the fourth order term that is often encountered in beam theories, where bending is penalized from the onset. At first sight, this appears somewhat paradoxical in our framework, as we have explicitly taken hyperelastic model for the thread. However, one can regard our result as explaining the emergence of this effect as a result of persistent oscillations arising from initial conditions with significant internal energy.

\subsection{Stabilizing effect of Takens correction} Motivated by the nontrivial dynamics of the double spring pendulum in
Example~\ref{spring_example}, we investigate the second variation of the
homogenized potential \eqref{hompot-thread}. Our aim is to understand the
local effect of the Takens correction on perturbations of an inextensible
configuration. We note that while we could directly apply the general results of Appendix \ref{hessian-appendix}, the computation would be unnecessarily bulky; we thus specialize the proof and make use of various cancellations. To keep the formulas transparent, throughout the following calculations we assume
that the ambient manifold is flat and that the thread is homogeneous:
\be
\vr_0=1, \qquad W''(1,\cdot)=1, \qquad R=0.
\ee
The corresponding tension operator is
\be
\L_X=-\partial_a^2+|\partial_a^2 X|^2.
\ee
The general calculation is similar, but contains additional weights and
curvature terms.

\begin{prop}\label{thread_second_var}
	Suppose the manifold $M$ is flat and the thread is homogeneous, and every eigenvalue
	$\lambda_i$ for which $c_i\neq0$ is simple. Then Hessian of potential $V$ in the direction $\xi \in T_X \mathsf{SLoops}(M)$ is given by
	\begin{align}
	\hess_{X}^T V (\xi, \xi) &=\sum_i\bigg(\frac{c_i}{\sqrt{\lambda_i}} \bigg[ \sum_{j\neq i}\frac{4}{\lambda_i - \lambda_j}  \bigg(\int v_i v_j   \langle \partial_a^2 X, \partial_a^2 \xi\rangle\bigg)^2  + \int v_i^2 |\partial_a^2 \xi|^2\rmd a
\\
	&\qquad\qquad	-  \int \L^{-1}_{X} \langle \partial_a^3(v_i^2\partial_a^2 X), \partial_a X\rangle |\partial_a \xi|^2\rmd a -\frac{1}{\lambda_i} \bigg(\int v_i^2 \langle \partial_a^2 X, \partial_a^2 \xi\rangle\bigg)^2\rmd a\bigg]\bigg).
	\end{align} 
\end{prop}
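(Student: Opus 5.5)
The plan is to compute the tangential Hessian directly as a second derivative along a geodesic of $\mathsf{SLoops}(M)$. Fix $X\in\mathsf{SLoops}(M)$ and $\xi\in T_X\mathsf{SLoops}(M)$. Let $X^\ve$ be the geodesic of $\mathsf{SLoops}(M)$ with $X^0=X$ and $\frac{\rmd}{\rmd\ve}\big|_{0}X^\ve=\xi$. Since $M$ is flat, $\hess^T_X V(\xi,\xi)=\frac{\rmd^2}{\rmd\ve^2}\big|_{0}V[X^\ve]$. By the geodesic (d'Alembert) equation \eqref{inextthread}, with $\vr_0=1$, the second derivative is normal:
\be
\eta:=\frac{\rmd^2}{\rmd\ve^2}\Big|_{0}X^\ve=\partial_a(\sigma\partial_a X).
\ee
To identify $\sigma$, differentiate $|\partial_a X^\ve|^2=1$ twice in $\ve$. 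This gives $|\partial_a\xi|^2+\langle\partial_a\eta,\partial_a X\rangle=0$, which is $\L_X\sigma=|\partial_a\xi|^2$ by the same computation as in the tension equation (now with $R=0$). For each $i$ with $c_i\neq0$ I then use
\be
\frac{\rmd^2}{\rmd\ve^2}\sqrt{\lambda_i}=\frac{\ddot\lambda_i}{2\sqrt{\lambda_i}}-\frac{\dot\lambda_i^2}{4\lambda_i^{3/2}},
\ee
and reduce the problem to computing the first and second variations of the simple eigenvalue $\lambda_i$ of $\L_{X^\ve}=-\partial_a^2+|\partial_a^2X^\ve|^2$.

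Only the multiplication potential $q^\ve=|\partial_a^2X^\ve|^2$ depends on $\ve$. Its variations are
\be
\dot q=2\langle\partial_a^2X,\partial_a^2\xi\rangle,\qquad \ddot q=2|\partial_a^2\xi|^2+2\langle\partial_a^2X,\partial_a^2\eta\rangle.
\ee
The Hellmann--Feynman formula (Lemma \ref{HF1}) gives $\dot\lambda_i=2\int v_i^2\langle\partial_a^2X,\partial_a^2\xi\rangle$. Substituting into the second term above produces exactly the last bracketed term $-\frac{1}{\lambda_i}\big(\int v_i^2\langle\partial_a^2X,\partial_a^2\xi\rangle\big)^2$ after factoring out $c_i/\sqrt{\lambda_i}$.

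For $\ddot\lambda_i$ I will use standard second-order perturbation theory for a simple eigenvalue of a self-adjoint operator with compact resolvent. Differentiating $\L^\ve v_i^\ve=\lambda_i^\ve v_i^\ve$ twice, pairing with $v_i$, and expanding $\dot v_i$ in the orthonormal basis $\{v_j\}$ gives
\be
\ddot\lambda_i=\int\ddot q\,v_i^2+2\sum_{j\neq i}\frac{\big(\int\dot q\,v_iv_j\big)^2}{\lambda_i-\lambda_j}.
\ee
Dividing by $2\sqrt{\lambda_i}$, the sum becomes the term $\frac{4}{\lambda_i-\lambda_j}\big(\int v_iv_j\langle\partial_a^2X,\partial_a^2\xi\rangle\big)^2$. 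The $2|\partial_a^2\xi|^2$ part of $\ddot q$ gives $\int v_i^2|\partial_a^2\xi|^2$.

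The remaining piece, $\int v_i^2\langle\partial_a^2X,\partial_a^2\eta\rangle$, is where the constraint enters, and I expect it to be the main obstacle. The plan is to integrate by parts twice to get $\int\langle\partial_a^2(v_i^2\partial_a^2X),\partial_a(\sigma\partial_aX)\rangle$, then once more to get $-\int\sigma\langle\partial_a^3(v_i^2\partial_a^2X),\partial_aX\rangle$. Then I insert $\sigma=\L_X^{-1}|\partial_a\xi|^2$ and move $\L_X^{-1}$ across using self-adjointness. This yields $-\int\L_X^{-1}\langle\partial_a^3(v_i^2\partial_a^2X),\partial_aX\rangle\,|\partial_a\xi|^2$, the third term of the formula.

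Along the way I must check two points. First, the non-geodesic assumption makes $\L_X$ invertible, so $\sigma$ and $\L_X^{-1}$ are well defined. Second, the smoothness and simplicity of $\lambda_i$ along $X^\ve$ justify the perturbation formulas; eigenvalues with $c_i=0$ drop out and need no simplicity assumption. Summing over $i$ with weights $c_i$ then gives the claimed expression.
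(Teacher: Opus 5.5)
Your proof is correct, but it organizes the computation differently from the paper.

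The paper differentiates the tangential gradient $\frac{1}{\sqrt{\lambda_i}}\PP_{T_X\mathsf{SLoops}(M)}\partial_a^2(v_i^2\partial_a^2 X)$ along a variation with velocity $\xi$. This produces four pieces:
\begin{itemize}
\item the variation of $1/\sqrt{\lambda_i}$;
\item the variation of the tangent projector, of which only $\partial_a(\L_X^{-1}\langle\partial_a\cdot,\partial_aX\rangle\partial_a\xi)$ survives after pairing with $\xi$;
\item the variation of $v_i$, obtained from Lemma \ref{HF2};
\item the variation of $\partial_a^2X$.
\end{itemize}
This is the specialization of the general scheme of Appendix \ref{hessian-appendix}.

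You instead take a scalar second derivative along a geodesic of $\mathsf{SLoops}(M)$. In your version the curvature of the constraint enters through the acceleration $\partial_a(\sigma\partial_aX)$ with $\L_X\sigma=|\partial_a\xi|^2$. The eigenfunction variation is absorbed into the standard second-order perturbation formula for $\ddot\lambda_i$.

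The terms correspond one-to-one:
\begin{itemize}
\item your $\eta$-term equals the paper's projector term, since both are the pairing of the ambient gradient of $\sqrt{\lambda_i}$ with $\sff(\xi,\xi)$;
\item your sum over $j\neq i$ is the paper's $\delta v_i$ term;
\item the remaining two terms agree verbatim.
\end{itemize}

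Each route has its advantages. Yours avoids differentiating the projector and the inverse $\L_{X^\ve}^{-1}$ hidden inside it. It also exhibits the nonlocal term as the tension generated by the motion $\xi$: it is exactly the tension equation with $\dot X$ replaced by $\xi$. The paper's route uses only first-order information about the variation, and it parallels the computation for an arbitrary potential $U$.

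Two minor points:
\begin{itemize}
\item The identity $\hess^T_XV(\xi,\xi)=\frac{\rmd^2}{\rmd\ve^2}\big|_{0}V[X^\ve]$ holds because $X^\ve$ has vanishing intrinsic acceleration, not because $M$ is flat. Flatness is what gives $\L_X$ and $\ddot q$ the form you use.
\item Only the $2$-jet of the curve enters. So any curve in $\mathsf{SLoops}(M)$ with velocity $\xi$ and normal acceleration at $\ve=0$ suffices, which sidesteps the question of existence of geodesics for the inextensible thread.
\end{itemize}
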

\begin{proof}
	Recall that for a variation $X^\ve$ of $X$ with derivative $\xi$ at zero we have
	\begin{align}
	{\frac{\rmd}{\rmd \ve}\bigg|_{\ve=0}} \sqrt{\lambda_i [X^\ve]} &= \frac{1}{\sqrt{\lambda_i[X]}} \int  {v_i}^2 \langle \partial_a^2 X, \partial_a^2 \xi\rangle, \ \text{so} \ \grad^T\sqrt{\lambda_i[X^\ve]} = \frac{1}{\sqrt{\lambda_i[X]}}\PP_{T_{X}\mathsf{SLoops}(M)}\partial_a^2({v_i^\ve}^2\partial_a^2 X).
	\end{align}
	We compute
	\begin{align}
		\hess_X^T \sqrt{\lambda_i} (\xi, \xi) &= \bigg({\frac{\rmd}{\rmd \ve}\bigg|_{\ve=0}} \grad^T \sqrt{\lambda_i}[X^\ve], \xi\bigg)_{L^2} 
		={\frac{\rmd}{\rmd \ve}\bigg|_{\ve=0}}\frac{1}{\sqrt{\lambda_i[X]}} \bigg((\partial_a^2(v_i^2\partial_a^2 X)), \xi\bigg)_{L^2}\\
		&\qquad+\frac{1}{\sqrt{\lambda_i[X^\ve]}} \bigg(\bigg({\frac{\rmd}{\rmd \ve}\bigg|_{\ve=0}}\PP_{T_{X^\ve}\mathsf{SLoops}(M)}\bigg)(\partial_a^2(v_i^2\partial_a^2 X)), \xi\bigg)_{L^2}\\
		&\qquad\qquad+ 2 \frac{1}{\sqrt{\lambda_i[X]}}\bigg( \partial_a^2\bigg(v_i {\frac{\rmd}{\rmd \ve}\bigg|_{\ve=0}} v^\ve_i\partial_a^2 X^\ve\bigg), \xi\bigg)_{L^2}+ \frac{1}{\sqrt{\lambda_i[X]}}\bigg( \partial_a^2(v_i^2\partial_a^2 \xi), \xi\bigg)_{L^2}.
	\end{align}
	We compute each term separately. The first and fourth one are immediate:
	\begin{align}
		{\frac{\rmd}{\rmd \ve}\bigg|_{\ve=0}}\frac{1}{\sqrt{\lambda_i[X^\ve]}} \bigg((\partial_a^2(v_i^2\partial_a^2 X)), \xi\bigg)_{L^2} &= -\frac{1}{\lambda_i^{3/2}} \bigg(\int v_i^2 \langle \partial_a^2 X, \partial_a^2 \xi\rangle\bigg)^2 \\
		\frac{1}{\sqrt{\lambda_i[X]}}\bigg( \partial_a^2(v_i^2\partial_a^2 \xi), \xi\bigg)_{L^2} &= \frac{1}{\sqrt{\lambda_i}} \int v_i^2 |\partial_a^2 \xi|^2.
	\end{align}
To compute the second term we must vary the projector. We recall from the proof of Lemma \ref{threadtangentnormal}:
\be 
\PP_{T_{X^\ve}\mathsf{SLoops}(M)} = I - \PP_{N_{X^\ve}\mathsf{SLoops}(M)} = I + \partial_a (\L^{-1}_{X^\ve} \langle \partial_a \cdot, \partial_a X^\ve\rangle \partial_a X^\ve).
\ee 
Hence its derivative is computed as 
\begin{align} 
{\frac{\rmd}{\rmd \ve}\bigg|_{\ve=0}} \PP_{T_{X^\ve}\mathsf{SLoops}(M)} &= {\frac{\rmd}{\rmd \ve}\bigg|_{\ve=0}} \partial_a (\L^{-1}_{X^\ve} \langle \partial_a \cdot, \partial_a X^\ve\rangle \partial_a X^\ve)\\
&=  \partial_a\bigg({\frac{\rmd}{\rmd \ve}\bigg|_{\ve=0}}\bigg(\L^{-1}_{X^\ve} \langle \partial_a \cdot, \partial_a X^\ve\rangle\bigg) \partial_a X^\ve\bigg) + \partial_a (\L^{-1}_{X} \langle \partial_a \cdot, \partial_a X\rangle \partial_a \xi).
\end{align}
The range of the first term is normal to $\mathsf{SLoops}(M)$, so for any vector $\eta \in T_X \mathsf{Loops}(M)$ we have 
\be 
\PP_{T_{X}\mathsf{SLoops}(M)} \bigg({\frac{\rmd}{\rmd \ve}\bigg|_{\ve=0}} \PP_{T_{X^\ve}\mathsf{SLoops}(M)} \bigg)\eta = \PP_{T_{X}\mathsf{SLoops}(M)}\partial_a (\L^{-1}_{X} \langle \partial_a \eta, \partial_a X\rangle \partial_a \xi).
\ee 
Applying it to $\eta = \partial_a^2(v_i^2\partial_a^2 X)$, we compute that the second term is given by 
\begin{align}
\frac{1}{\sqrt{\lambda_i[X]}} \bigg(\bigg({\frac{\rmd}{\rmd \ve}\bigg|_{\ve=0}}\PP_{T_{X^\ve}\mathsf{SLoops}(M)}\bigg)&(\partial_a^2(v_i^2\partial_a^2 X)), \xi\bigg)_{L^2} = \frac{1}{\sqrt{\lambda_i[X]}} \bigg(\partial_a (\L^{-1}_{X} \langle \partial_a^3(v_i^2\partial_a^2 X), \partial_a X\rangle \partial_a \xi), \xi\bigg)_{L^2}\\
&= -\frac{1}{\sqrt{\lambda_i[X]}} \int \L^{-1}_{X} \langle \partial_a^3(v_i^2\partial_a^2 X), \partial_a X\rangle |\partial_a \xi|^2\rmd a
\end{align}

To compute the third term we invoke lemma \ref{HF2} by setting $A^\ve = \L^\ve$ and $H = L^2(S^1)$:
	\be 
	\frac{\rmd}{\rmd \ve }\bigg|_{\ve = 0} v_i^\ve = 	 \sum_{j\neq i} \frac{1}{\lambda_i - \lambda_j}\bigg(v_j,\frac{\rmd}{\rmd \ve}\bigg|_{\ve = 0}\L^\ve v_i\bigg)_{L^2} v_j = \sum_{j \neq i}\frac{2}{\lambda_i - \lambda_j} \bigg(\int \langle\partial_a^2 X, \partial_a^2 \xi \rangle v_i v_j\rmd a\bigg) v_j.
	\ee 
	With this variation in hand, the third term in the expression for $\hess_X \sqrt{\lambda_i}(\xi,\xi)$ is given by
	\be
	\frac{1}{\sqrt{\lambda_i}}\int 2 v_i{\frac{\rmd}{\rmd \ve}\bigg|_{\ve=0}} v_i^\ve \langle \partial_a^2 X, \partial_a^2 \xi\rangle\rmd a =  \frac{1}{\sqrt{\lambda_i}}\sum_{j\neq i}\frac{4}{\lambda_i - \lambda_j}  \bigg(\int v_i v_j   \langle \partial_a^2 X, \partial_a^2 \xi\rangle\rmd a\bigg)^2.
	\ee
	Combining, the claimed expression of $\hess^T_X V[X]$ follows.
\end{proof}
Let us examine the result of Proposition \ref{thread_second_var}. The local term
$
\int_{S^1}v_i^2|\partial_a^2\xi|^2\,\rmd a
$
is positive and gives the principal high-frequency contribution to the
quadratic form. This suggests a stabilizing
effect on short-wavelength perturbations, although the sign of the low
modes must be examined separately. We do this explicitly for the circular
loop below.
\begin{example}[Expansion of a  circular loop]
Consider a circular stationary solution
\begin{align}
X(a) &= (\cos a, \sin a), \qquad \dot{X}(a)=0.
\end{align}
The eigenpairs of the tension operator at this state are
\be
v_{i}(a) = \frac{1}{\sqrt{\pi}} \cos(ia), \quad v_0 = \frac{1}{\sqrt{2\pi}}, \quad v_{-i}(a) = \frac{1}{\sqrt{\pi}} \sin(ia) \quad \text{for} \ i > 0\\
\quad  \text{and} \quad \lambda_i = |i|^2 + 1.
\ee
Suppose only the zeroth mode being excited, i.e. $c_0 = C$ while  $c_i = 0 \ \text{for} \ i \neq 0$.  This corresponds to an initial uniform expansion of the stiff rubber band.
Although the nonzero eigenvalues have multiplicity two, the excited
ground-state eigenvalue $\lambda_0=1$ is simple. Therefore
Proposition \ref{thread_second_var} applies to this example (the computation of Hessian is independent of the convergence of the stiff thread to the corresponding Takens limit theorem, which we do not claim).
\end{example}

\begin{prop}\label{circleprop}
Consider mean-zero perturbations
	$\xi\in T_X\mathsf{SLoops}(\R^2)$ of a circular loop:
	\be
	f(a)=a_0+\sum_{i \geq 1} \big( a_i \cos(ia) + b_i \sin(ia) \big), \qquad \partial_a\xi=f X.
	\ee
	The Hessian in the direction of such a perturbation is
	\be
	\hess_X V(\xi,\xi) =
	\frac{C}{2}\sum_{i \geq 2} (i^2 - 4)(a_i^2 +b_i ^2) \geq 0.
	\ee
	In particular, the Hessian is strictly positive on perturbations
	satisfying $a_0=a_2=b_2=0$.
\end{prop}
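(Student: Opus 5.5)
The plan is to specialize Proposition \ref{thread_second_var} to the circle. Only the mode $i=0$ is excited, with $c_0=C$, $\lambda_0=1$ and $v_0=1/\sqrt{2\pi}$, so the Hessian is $C$ times the bracket for $i=0$. The remaining work is to evaluate the four terms of that bracket in Fourier variables.

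\emph{Geometric facts.} For $X=(\cos a,\sin a)$ we have $\partial_a^2X=-X$, $\partial_a^3X=-\partial_aX$ and $|\partial_a^2X|=1$. Hence $\L_X=-\partial_a^2+1$, and this operator is explicitly invertible on Fourier modes.

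\emph{Tangent vectors.} The tangency condition $\langle\partial_a\xi,\partial_aX\rangle=0$ says $\partial_a\xi$ is parallel to the normal $X$, so $\partial_a\xi=fX$. This justifies the parametrization in the statement. Periodicity of $\xi$ forces $\int fX=0$, i.e. $a_1=b_1=0$. Modding out translations, we may take $\xi$ mean-zero, and this is why the final sum starts at $i=2$. Differentiating once more gives
\be
\partial_a^2\xi=f'X+f\partial_aX,\qquad \langle\partial_a^2X,\partial_a^2\xi\rangle=-f',\qquad |\partial_a^2\xi|^2=f'^2+f^2,\qquad |\partial_a\xi|^2=f^2.
\ee

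\emph{The four terms,} evaluated with the constant $v_0$:
\begin{enumerate}
\item \emph{Eigenvector-variation sum.} Here $\lambda_0-\lambda_j=-j^2$, and $\int v_0v_j f'$ picks out $j b_j/\sqrt2$ (for cosines) and $j a_j/\sqrt2$ (for sines). Each mode therefore contributes $\frac{4}{-j^2}\cdot\frac{j^2}{2}(\cdot)^2$, and the total is $-2\sum_j(a_j^2+b_j^2)$.
\item \emph{Local term.} By Parseval, $\frac{1}{2\pi}\int(f'^2+f^2)=a_0^2+\tfrac12\sum_i(i^2+1)(a_i^2+b_i^2)$.
\item \emph{Projector-variation term.} First $\partial_a^3(v_0^2\partial_a^2X)=\partial_aX/(2\pi)$. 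Its inner product with $\partial_aX$ is the constant $1/(2\pi)$, and $\L_X^{-1}$ fixes constants since $\lambda_0=1$. The term is therefore $-\frac1{2\pi}\int f^2=-a_0^2-\tfrac12\sum_i(a_i^2+b_i^2)$.
\item \emph{Last term.} This is $-(\int v_0^2 f')^2=0$, because $f'$ integrates to zero.
\end{enumerate}

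\emph{Summing.} The $a_0^2$ contributions of terms 2 and 3 cancel. Mode by mode, the coefficient is $\tfrac12(i^2+1)-\tfrac12-2=\tfrac12(i^2-4)$, which gives the claimed formula $\frac{C}{2}\sum_{i\ge2}(i^2-4)(a_i^2+b_i^2)$. This is nonnegative for $i\ge2$ and strictly positive unless only $a_0,a_2,b_2$ are nonzero.

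\emph{Main obstacle.} The main care point is the bookkeeping in term 1. One has to get the sign of $\lambda_0-\lambda_j$ right and to remember that each nonzero eigenvalue is double, so both the cosine and the sine eigenfunctions contribute. Term 1 is also the only place where the simplicity hypothesis matters, and it matters only for $\lambda_0$, since Lemma \ref{HF2} is used only for $v_0$. A smaller check is that $\L_X^{-1}$ acting on a constant is well defined, which holds because the circle is not a geodesic. Finally, one should note that the $a_0$ mode (rotation/dilation direction) drops out exactly, consistent with it being a neutral direction.
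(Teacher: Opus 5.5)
Your proposal is correct and follows the paper's proof essentially step for step. You specialize Proposition~\ref{thread_second_var} to the single excited mode $i=0$, write $\partial_a\xi=fX$, use periodicity to kill $a_1,b_1$, and evaluate the four terms as $-2\sum_{i\ge2}(a_i^2+b_i^2)$, $-a_0^2-\tfrac12\sum_{i\ge2}(a_i^2+b_i^2)$, $a_0^2+\tfrac12\sum_{i\ge2}(1+i^2)(a_i^2+b_i^2)$ and $0$.

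Two small corrections to your side remarks. The neutral $a_0$ direction is $\xi=-a_0\partial_aX$, which is an infinitesimal rotation; a dilation is not tangent to $\mathsf{SLoops}$. Also, the sum starts at $i=2$ because periodicity of $\xi$ forces $a_1=b_1=0$. The mean-zero normalization of $\xi$ plays no role, since only $\partial_a\xi$ enters the Hessian.
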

\begin{proof}
	By using that $\xi \in T_X\mathsf{Sloops}(\R^2)$ as well as that $X$ is a circle, we set
	
	\[
	\partial_a \xi = f X   \Longrightarrow
	\left\{\begin{aligned}
		|\partial_a \xi|^2 &= f^2\\
		\langle\partial_a^2 \xi,\partial_a^2 X\rangle &= -f'\\
		|\partial^2_a \xi|^2 &= f^2 + f'^2
	\end{aligned}\right. .
	\]
	Note that since $\int \partial_a \xi \rmd a =0$ we have 
	$
	\int f(a) \cos a \rmd a = \int f(a) \sin a \rmd a = 0,
	$
	so the first Fourier modes of $f$ vanish, so
$
	f(a) = a_0 + \sum_{i\geq 2} \big(a_i \cos(i a) + b_i \sin(i a)\big).
$
	The Hessian of $V$ is given by 
	\begin{align}
	\hess^T_{X} V (\xi, \xi) &=\frac{C}{\sqrt{\lambda_0}} \bigg[ \sum_{j\neq 0}\frac{4}{\lambda_0 - \lambda_j}  \bigg(\int v_0 v_j   \langle \partial_a^2 X, \partial_a^2 \xi\rangle\bigg)^2 \\
	&\quad\qquad- \int \L^{-1}_{X} \langle \partial_a^3(v_0^2\partial_a^2 X), \partial_a X\rangle |\partial_a \xi|^2
	+ \int v_0^2 |\partial_a^2 \xi|^2 -\frac{1}{\lambda_0} \bigg(\int v_0^2 \langle \partial_a^2 X, \partial_a^2 \xi\rangle\bigg)^2\bigg].
\end{align} 
	The first term is computed using Fourier expansion of $f$:
	\begin{align}
		\sum_{i \neq 0}\frac{4}{\lambda_0 - \lambda_i}  \bigg(\int v_0 v_i   \langle \partial_a^2 X, \partial_a^2 \xi\rangle\rmd a\bigg)^2 &= -\sum_{i>0}\frac{4}{i^2}  \bigg(\bigg(\frac{1}{\sqrt{2}\pi}\int \cos(ia)    f'\bigg)^2 +  \bigg(\frac{1}{\sqrt{2}\pi}\int \sin(ia)    f'\bigg)^2\bigg) \\
		&=-\sum_{i>1}\frac{4}{i^2}\bigg( \bigg(\frac{b_i i}{\sqrt{2}}\bigg)^2 + \bigg(\frac{a_i i}{\sqrt{2}}\bigg)^2\bigg) = - 2\sum_{i>1} (a_i^2 + b_i^2).
	\end{align}
	The second term, using that $v_0 = \tfrac{1}{\sqrt{2\pi}}$, $\partial_a^3 (\tfrac{1}{2\pi} \partial_a^2 X) = \tfrac{1}{2\pi} \partial_a X$, and $\L^{-1}_{X}\tfrac{1}{2\pi} = \tfrac{1}{2\pi}$ is given by 
	\begin{align}
	- \int \L^{-1}_{X} \langle \partial_a^3(v_0^2\partial_a^2 X), \partial_a X\rangle |\partial_a \xi|^2 &= - \int \frac{1}{2\pi} f^2 = - \bigg(a_0^2 +\frac{1}{2}\sum_{i>1} (a_i^2 + b_i^2)\bigg).
	\end{align}
	The third term is 
	\be 
	\int v_0^2 |\partial_a^2 \xi|^2 \rmd a = \frac{1}{2\pi}\int( f^2 + f'^2)\rmd a = a_0^2 + \frac{1}{2}\sum_{i>1} (1 + i^2)(a_i^2 + b_i^2),
	\ee 
	The last term vanishes:
	\be 
	\frac{1}{\lambda_0} \bigg(\int v_0^2 \langle \partial_a^2 X, \partial_a^2 \xi\rangle\rmd a\bigg)^2 = \bigg(\frac{1}{2\pi}\int f'\rmd a\bigg)^2 = 0.
	\ee 
	Plugging the computed terms into the expression for $\hess_X V$,
	\begin{align}
		\hess_{X}^T V (\xi, \xi) &=C \bigg[- 2\sum_{i>1} (a_i^2 + b_i^2) - \bigg( a_0^2 +\frac{1}{2}\sum_{i>1} (a_i^2 + b_i^2)\bigg)+ \bigg( a_0^2 + \frac{1}{2}\sum_{i>1} (1 + i^2)(a_i^2 + b_i^2)\bigg)\bigg]\\
		&= \frac{C}{2}\sum_{i>1} (i^2 - 4)(a_i^2+b_i^2).
	\end{align}
	This completes the proof.
\end{proof}
	Proposition \ref{circleprop} indicates that  the circular loop is quadratically stable with respect to Fourier
	modes of order $i\geq3$. The infinitesimal rotational mode $a_0$ and the
	second Fourier modes remain neutral at quadratic order. Of course, for the inextensible thread without the Takens force, the circular configuration is completely unstable. See Figure \ref{threadfig}.   It would be interesting if this formal stability mechanism inherited from its origin as a stiff rubber band could be observed.

		\begin{figure}[h!]
	 \centering 
		\includegraphics[width=.13\columnwidth]{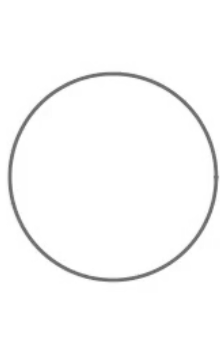}\hspace{5mm}
		\includegraphics[width=.13\columnwidth]{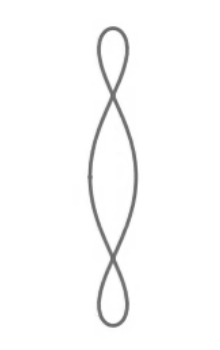}\hspace{5mm}
		\includegraphics[width=.13\columnwidth]{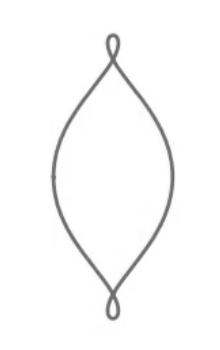}\hspace{7mm}
		\includegraphics[width=.13\columnwidth]{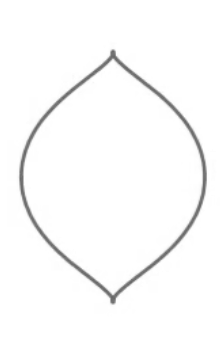}
		\caption{Numerical simulation of inextensible thread without the Takens correction.}\label{threadfig}
	\end{figure}

\section{Compressible Euler and incompressible limit}
Our next example is the prototype of all continuum media, fluid dynamics.
Recall the compressible Euler equations
	\be
	\begin{cases}
		\partial_t u + \nabla_u u = - \frac{1}{\ve^2} \frac{\nabla P}{\vr}\\
		\partial_t \vr + \div(\vr u) = 0\\
		\partial_t s +  \nabla_u s = 0\\
		P = \widetilde{P}(\vr,s)
	\end{cases}
	\ee 
	where $\ve$ is the Mach number. It is known (see e.g. \cite{KMM21, Sm79},  and also Section \ref{compeuler-sect} of the present paper), that it can be written as a Newton equation on $\mathsf{Diff}(M)$ with potential of the form $\frac{1}{\ve^2}U[X]$, with $U$ given by \eqref{comppot}. Thus it makes sense to compute the corresponding Takens limit system.
	
	\begin{theorem}\label{euler-theorem}
		The homogenized potential of $U$(\eqref{comppot}) is given by 
		\be 
		V[X] = \sum c_i \sqrt{\lambda_i[X]},
		\ee 
		where $\lambda_i[X]$ are non-zero eigenvalues of the acoustic wave operator 
		\be\label{awo}
		\L_X = -Q \div \bigg(\frac{1}{\vr} \nabla \bigg),
		\ee
		with $Q = \vr \partial_\vr \tilde{P}(\vr,s)$, and suppose $\lambda_i$ are simple. If $X: [0,T) \to \mathsf{Diff}(M)$ solves Takens limit system of compressible Euler, its Eulerian velocity $u = \dot{X} \circ X^{-1}$, mass density $\vr =  \vr_0/ \det(\nabla X) \circ X^{-1}$, and entropy $s =  s_0 \circ X^{-1}$ satisfy modified incompressible Euler equations
			\be\label{takens-euler}
	\begin{cases}
		\partial_t (\vr u) + \div(\vr u \otimes u) + \div \Sigma  + \nabla p= 0 ,\\
		\partial_t \vr + u\cdot\nabla \vr = 0,\\
		\partial_t s + u\cdot\nabla s = 0,\\
		\div u = 0.
	\end{cases}
	\ee 
	Here, $p$ is determined to enforce incompressibility, and $\Sigma$ is a symmetric 2-tensor (``acoustical stress"), given by
	\be\label{acousticalstress}
	\Sigma = \sum_i \frac{c_i}{\sqrt{\lambda_i}} \frac{\nabla v_i \otimes \nabla v_i}{\vr},
	\ee 
		where $v_i$ are $L^2_{1/Q}$ normalized eigenfunctions of $\L_X$ and $c_i$ are adiabatic invariants computed from the data for compressible Euler \eqref{cidef} as in \eqref{actions-fluid}.
	\end{theorem}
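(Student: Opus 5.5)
The proof will follow the template of the thread case (Section 3). We will first set up compressible Euler as a Newton equation on $\mathsf{Diff}(M)$, with the weighted metric $(\xi,\eta)_{\vr_0}=\int\langle\xi,\eta\rangle\vr_0\,\rmd a$ and the potential $U[X]=\int_M \vr\, e(\vr,s)$, where $e$ is the internal energy and $\partial_\vr e = \tilde P/\vr^2$. We then identify the minimum set $\M$ of $U$ and compute the Hessian of $U$ on the normal spaces. The homogenized potential and the Takens system then follow from Theorem \ref{maintheorem} and the definition \eqref{limitsystem}. After adjusting $U$ by a Lagrange multiplier (total mass and volume are fixed), the minimum set should be the set of configurations with $\tilde P(\vr,s)=\text{const}$. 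Inside an energy sublevel this is a smooth submanifold, and tangent vectors $\xi$ with $w=\xi\circ X^{-1}$ satisfy a linearized constant-pressure condition. Since entropy is transported, $\partial_\vr\tilde P\,\delta\vr + \partial_s\tilde P\,\delta s = 0$ with $\delta\vr=-\div(\vr w)$ and $\delta s=-w\cdot\nabla s$.

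On $\M$ we have $\nabla P = 0$. Using this, I expect the condition to reduce to $Q\,\div w=0$, that is, $\div w = 0$. By a Helmholtz-type decomposition, as in the lemma inside Proposition \ref{threadtangentnormal}, the normal space will be $\{\tfrac{1}{\vr}\nabla\phi\circ X\}$. We then compute the Hessian by differentiating $\grad U = \tfrac{1}{\vr}\nabla P\circ X$ along a variation. Since $\nabla P=0$ on $\M$, only the variation of $P$ survives, giving $\hess U(\xi,\cdot) = \tfrac1\vr\nabla(-Q\div w)$. For $w=\tfrac1\vr\nabla\phi$ this becomes $\tfrac1\vr\nabla(\L_X\phi)$, with $\L_X = -Q\div(\tfrac1\vr\nabla)$. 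Exactly as in the thread case, the eigenpairs of the restricted Hessian will then be $(\lambda,\tfrac1\vr\nabla v)$ with $\L_X v = \lambda v$. Since $\L_X$ is self-adjoint on $L^2_{1/Q}$, this identifies $V=\sum c_i\sqrt{\lambda_i}$. The zero eigenvalue (constants) has to be discarded, because it gives zero normal vectors.

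Next we compute $\grad\lambda_i$ by the weighted Hellmann--Feynman formula (Lemma \ref{HF1}) in $H=L^2_{1/Q}$. This gives $\delta\lambda_i=\int \tfrac{v_i}{Q}(\delta\L) v_i$. Writing it as a quadratic form, $\lambda_i=\int \tfrac{|\nabla v_i|^2}{\vr}$, should make this transparent. When we vary $X$ by a tangential (divergence-free) $w$, the terms are as follows. The change of $\vr$ along the flow is $\delta\vr=-w\cdot\nabla\vr$, and $\delta Q$ comes from the transport of $\vr$ and $s$. The eigenfunction is transported as $\delta v = -w\cdot\nabla v$ plus corrections that drop out by stationarity. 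The result should be
\[
\delta\lambda_i = -\int 2\,\frac{\nabla v_i\otimes\nabla v_i}{\vr}:\nabla w \,+\, (\text{terms } \propto \div w),
\]
and the terms proportional to $\div w$ vanish on tangent vectors. Thus the tangential gradient is $\tfrac{1}{\vr}\div\big(\tfrac{2\nabla v_i\otimes \nabla v_i}{\vr}\big)$ modulo gradients. With $\grad\sqrt{\lambda_i}=\grad\lambda_i/(2\sqrt{\lambda_i})$, this yields $\grad V = \tfrac{1}{\vr}\div\Sigma$ modulo normal directions, with $\Sigma$ as in \eqref{acousticalstress}.

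Finally, we pass to Eulerian variables. Writing $\ddot X=(\partial_t u+\nabla_u u)\circ X$ and the normal component as $\tfrac1\vr\nabla p$, equation \eqref{limitsystem} becomes $\vr(\partial_t u+\nabla_u u)+\div\Sigma+\nabla p=0$. Combined with $\div u=0$ and the transport of $\vr$, this is equivalent to the conservative form in \eqref{takens-euler}. The $c_i$ come from \eqref{constantsci} by projecting onto $\tfrac1\vr\nabla v_i$, analogously to Proposition \ref{thread_ci}.

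The main obstacle is the Hellmann--Feynman step. Both the operator coefficients ($Q$, $\vr$) and the weight $1/Q$ vary with $X$, and one must carefully show that every non-stress term either is a gradient or is proportional to $\div w$.
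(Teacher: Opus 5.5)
Your proposal is correct, and its skeleton matches the paper: Newton's equation on $\mathsf{Diff}_{\vr_0}(M)$, divergence-free tangent vectors to $\M$, normal Hessian $-\frac{1}{\vr}\nabla(Q\div w)\circ X$, eigenpairs $(\lambda,\frac{1}{\vr}\nabla v)$ with $\L_X v=\lambda v$, and the Eulerian rewriting.

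The genuine difference is in computing $\grad^T\lambda_i$. The paper applies Hellmann--Feynman to the varied operator for a general $\xi$. It obtains $\grad^T\lambda=-\PP_T\big(\frac{\lambda v^2}{\vr Q^2}\nabla Q+\nabla\frac{|\nabla v|^2}{\vr^2}\big)\circ X$, a force expressed through $K_1,K_2$. Only afterwards does it use the eigenfunction identity $-\div\big(\frac{\nabla v\otimes\nabla v}{\vr}\big)+\frac{\nabla|\nabla v|^2}{2\vr}=\frac{\lambda\nabla(v^2)}{2Q}$ to convert this into $-\frac{1}{\vr}\div\Sigma$ modulo gradients.

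You instead vary the Rayleigh quotient $\int\frac{|\nabla v|^2}{\vr}\big/\int\frac{v^2}{Q}$ along a divergence-free $w$, with the transported trial function $\delta v=-w\cdot\nabla v$. This is legitimate by stationarity of the quotient at an eigenfunction. Then $\vr$, $Q$ and $v$ are all merely advected, and $\delta\int\frac{v^2}{Q}=\int\frac{v^2}{Q}\div w=0$. The only non-advective contribution is $-2\int\frac{\nabla v\otimes\nabla v}{\vr}:\nabla w$. This does give $\grad^T\lambda_i=\PP_T\big(\frac{1}{\vr}\div\frac{2\nabla v_i\otimes\nabla v_i}{\vr}\big)\circ X$, which agrees with the paper's formula modulo $\frac{1}{\vr}\nabla(\cdot)$.

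Your route produces the symmetric stress directly and explains where it comes from: it is the strain variation of the Dirichlet form, and everything else is relabeling. So the step you flagged as the main obstacle is actually short. The paper's route yields the explicit $K_1,K_2$ form, which it needs for the comparison with M\'etivier--Schochet and for the ideal-gas remark where only $\nabla(K_2/\vr^2)$ survives.

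Two details to tighten.
\begin{enumerate}
\item The linearization of $\widetilde P(\vr,s)=\mathrm{const}$ is $-Q\div w=\delta C$, with $\delta C$ a spatial constant, not $0$, because the constant could a priori drift along $\M$. Integrating and using $\int_M\div w=0$ and $Q>0$ gives $\delta C=0$, as in the paper. No Lagrange multiplier is needed, since mass and volume are automatically preserved on $\mathsf{Diff}(M)$.
\item When you integrate $\int\frac{\nabla v_i\otimes\nabla v_i}{\vr}:\nabla w$ by parts, you drop the boundary term $\int_{\partial M}\frac{1}{\vr}\partial_n v_i\,\langle\nabla v_i,w\rangle$. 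This is justified because $\frac{1}{\vr}\nabla v_i$ must be tangent to $\partial M$, so the $v_i$ are Neumann eigenfunctions; you should say so.
\end{enumerate}
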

Note that since the initial state $(\vr_0, s_0)$ satisfies $\tilde{P}(\vr_0, s_0) = P = const$ and $P$ is transported, transport of entropy can be replaced with a diagnostic equation $\tilde{P}(\vr, s) = P$ that uniquely determines it.
	\begin{remark}[Barotropic fluids]\label{barotropic-rmk}
		In the case of barotropic flow incompressible limit singles out homogeneous mass density distribution (indeed, if $\vr_0$ is not uniform, pressure forces are going to blow up). This in turn results in the divergence of acoustic stress being a pure gradient, and thus it can be absorbed into pressure. This immediately follows from the expression of the Hessian in lemma \ref{hess-fluid}, as it only depends on $\vr$ and $Q = Q(\vr)$. Thus Takens limit of barotropic flow is just given by incompressible Euler equations, which explains the reason incompressible limit holds for the mildly ill-prepared data \cite{Abaro}.  
		
		In the presence of free boundary there might be an effect even in barotropic flow, although we do not pursue it here. A similar situation arises for odd viscosity in incompressible fluids \cite{ACG,GA}.
	\end{remark}
	The explicit expression of adiabatic invariants is given by the following proposition.
\begin{prop} 
	The actions $c_i$ are explicitly computed from the family of initial data by 
	\be \label{actions-fluid}
	c_i = \frac{1}{2\lambda_i^{3/2}} \bigg(\int_M v_i \div V_0 \bigg)^2 + \frac{\lambda_i^{3/2}}{2}\bigg(\int_M  \frac{v_i  \varphi}{Q} \bigg)^2.
	\ee 
\end{prop}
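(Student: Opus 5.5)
The plan mirrors the proof of Proposition \ref{thread_ci}. Start from the general formula \eqref{constantsci},
\[
c_i=\frac{|\PP_i(X_0)V_0|^2+\lambda_i|\PP_i(X_0)X_{0N}|^2}{2\sqrt{\lambda_i}},
\]
with $\PP_i$ the orthogonal projection, in the Lagrangian metric on $\mathsf{Diff}(M)$ (weighted by $\vr_0$, equivalently $L^2_\vr$ in Eulerian variables at $X_0$), onto the eigenline of $\hess U$ restricted to $N_{X_0}\M$. I will use the description of the normal space and the Hessian from Section \ref{compeuler-sect} and Lemma \ref{hess-fluid}. The normal space to the constraint submanifold consists of fields of the form $\frac{1}{\vr}\nabla\sigma$, and the Hessian acts on such fields as $\frac1\vr\nabla(\L_X\sigma)$, up to sign conventions. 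This is the analogue of the computation made for the thread. Hence $(\lambda,\frac1\vr\nabla v)$ is a normal eigenpair exactly when $\L_X v=\lambda v$, and the projection of any $W$ onto the $i$-th mode is
\[
\PP_iW=\frac{(W,\tfrac1\vr\nabla v_i)_{\vr}}{\|\tfrac1\vr\nabla v_i\|_\vr^2}\,\tfrac1\vr\nabla v_i .
\]
The initial normal displacement is written as $X_{0N}=\frac1\vr\nabla\varphi$, which fixes $\varphi$ as in \eqref{cidef}.

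Next, I compute the normalization. Integrating by parts,
\[
\|\tfrac1\vr\nabla v_i\|_\vr^2=\int\frac{|\nabla v_i|^2}{\vr}=-\int v_i\div\!\Big(\frac1\vr\nabla v_i\Big)=\int\frac{v_i\,\L_X v_i}{Q}=\lambda_i,
\]
using the $L^2_{1/Q}$ normalization of $v_i$. This also confirms that $\L_X$ is self-adjoint and positive on $L^2_{1/Q}$ modulo constants, which is why only the nonzero eigenvalues matter.

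Then I evaluate the two inner products:
\[
(V_0,\tfrac1\vr\nabla v_i)_\vr=\int\langle V_0,\nabla v_i\rangle=-\int v_i\div V_0,
\]
\[
(\tfrac1\vr\nabla\varphi,\tfrac1\vr\nabla v_i)_\vr=\int\frac{\langle\nabla\varphi,\nabla v_i\rangle}{\vr}=\int\frac{\varphi\,\L_Xv_i}{Q}=\lambda_i\int\frac{\varphi v_i}{Q}.
\]
Substituting into the formula for $c_i$ gives
\[
c_i=\frac{1}{2\sqrt{\lambda_i}}\,\frac{(\int v_i\div V_0)^2}{\lambda_i}+\frac{\sqrt{\lambda_i}}{2}\,\frac{\lambda_i^2(\int\varphi v_i/Q)^2}{\lambda_i},
\]
which is \eqref{actions-fluid}.

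The main obstacle is bookkeeping rather than analysis. The inner products must be taken consistently in Lagrangian versus Eulerian variables: the change of variables by $X_0$ turns $\vr_0\,\rmd a$ into $\vr\,\rmd x$. I also need to check that the Hessian on normal vectors indeed produces $\L_X$ with the factor $Q=\vr\partial_\vr\tilde P$ rather than just $\partial_\vr\tilde P$. I would verify this by linearizing $\frac1\vr\nabla P$ under a displacement $\frac1\vr\nabla\sigma$. Such a displacement gives $\delta\vr=-\div\nabla\sigma$, and entropy is transported, so $\delta s$ contributes nothing at first order. Therefore $\delta P=\partial_\vr\tilde P\,\delta\vr$, and the weights must be matched accordingly. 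Boundary terms in the integrations by parts vanish because of the tangency condition $V_0\cdot n$ and the Neumann condition on $v_i$ at $\partial M$.
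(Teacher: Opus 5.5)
Your proposal is correct and is essentially the paper's own computation: the same projector onto $\tfrac{1}{\vr_0}\nabla v_i$, the same identity $\int|\nabla v_i|^2/\vr_0=\int v_i\L v_i/Q=\lambda_i$, and the same two integrations by parts, $\int\langle\nabla v_i,V_0\rangle=-\int v_i\div V_0$ and $\int\langle\nabla v_i,\nabla\varphi\rangle/\vr_0=\lambda_i\int v_i\varphi/Q$.

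The one slip is in your optional sanity check of Lemma \ref{hess-fluid}. There $\delta s=-\langle\nabla s,\xi\rangle$ is not zero, because on $\M$ a non-uniform $\vr_0$ forces a non-uniform $s_0$ in order to keep $\tilde{P}$ constant. This term combines with $-\partial_\vr\tilde{P}\,\langle\xi,\nabla\vr\rangle$ into $-\langle\nabla P,\xi\rangle=0$, which is what leaves $\delta P=-Q\div\xi$. If you drop it, you get $\delta P=-\partial_\vr\tilde{P}\,\Delta\sigma$, and that is not $\L_X\sigma$ unless $\vr$ is uniform.
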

This equation was derived in \cite{MS03} from a different perspective. The result of \cite{MS03} can be stated with our conventions as follows:
\begin{theorem}[\cite{MS03}, Theorem 8.4]
Consider the family of mildly ill-prepared initial data $(u_0^\ve, \vr_0^\ve, s_0^\ve)$ in $H^s(\mathbb{T}^d)$, $s > 1 + d/2$, converging strongly in $H^s(\mathbb{T}^d)$ to $(u_0, \vr_0, s_0)$ as $\ve \to 0$. 
Let $T>0$, and  let $(u^\ve, \vr^\ve, s^\ve)$ be solutions on $[0,T]$ of \eqref{compeulereq} with this initial data. Suppose that the $(\vr^\ve, s^\ve)$ and the incompressible component of the velocity $u_{in}^\ve$ converge strongly to $(\vr, s, u)$ and that the limiting acoustic operator $\L_{\vr}$ has simple positive spectrum and is non-resonant at orders up to three almost everywhere in time.
Then $u, \vr, s$ satisfy \eqref{takens-euler} with action constants given by \eqref{actions-fluid} and initial data 
\be 
s_{t = 0} = s_0, \ \vr_{t = 0} = \vr_0, \ u_{t = 0} = u_0 + \frac{1}{\vr_0}\nabla \L_{\vr_0}^{-1}(\tilde{Q}(\vr_0, s_0)\div u_0).
\ee 
\end{theorem}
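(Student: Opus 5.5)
The plan is to follow the averaging strategy behind Theorem \ref{maintheorem}, carried out directly on the Eulerian system in the spirit of \cite{MS03}, rather than to invoke the finite-dimensional theorem. The argument has five steps.

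\textbf{Step 1: uniform estimates.} Since $s>1+d/2$, we write \eqref{compeulereq} as a symmetrizable hyperbolic system with a large skew-symmetric term of size $1/\ve$. We use the variables $(u,\pi^\ve,s)$, where $\pi^\ve=(P-\bar P)/\ve$ is the fast pressure fluctuation. The standard Métivier--Schochet energy estimates then give bounds on $(u^\ve,\pi^\ve,s^\ve)$ in $L^\infty(0,T;H^s)$ that are uniform in $\ve$. The time derivatives of $\vr^\ve$, $s^\ve$ and of the incompressible part $u^\ve_{in}$ are also uniformly bounded. This justifies the assumed strong convergence of $(\vr^\ve,s^\ve,u^\ve_{in})$ by Aubin--Lions. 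The acoustic part $(\pi^\ve,\nabla\psi^\ve)$, where $u^\ve=u^\ve_{in}+\nabla\psi^\ve/\vr^\ve$, converges only weakly.

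\textbf{Step 2: modal decomposition of the fast part.} We expand the fast part in the eigenbasis of $\L_{\vr^\ve}=-Q\div(\tfrac1{\vr}\nabla)$. The eigenfunctions $v_i^\ve$ are normalized in $L^2_{1/Q}$, and we write
\begin{equation}
\pi^\ve\sim\sum_i \Re\big(a_i^\ve(t)e^{i\theta_i^\ve(t)/\ve}\big)v_i^\ve,\qquad \dot\theta_i^\ve=\sqrt{\lambda_i^\ve}.
\end{equation}
Substituting into the equations gives ODEs for the slow amplitudes $a_i^\ve$. Their right-hand sides contain two kinds of terms:
\begin{enumerate}
\item non-oscillatory terms that reproduce the adiabatic law $\frac{\rmd}{\rmd t}\big(|a_i|^2/\sqrt{\lambda_i}\big)=0$, obtained from Hellmann--Feynman (Lemma \ref{HF1}) applied to the slowly varying operator;
\item oscillatory terms with phases $\sum_j\gamma_j\theta_j/\ve$, where $\sum|\gamma_j|\le 3$.
\end{enumerate}
The phase combinations of order at most three arise because the nonlinearity is quadratic and the action equation involves one additional factor. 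By the non-resonance hypothesis, these phases are stationary only on a null set of times. A stationary-phase and weak-convergence argument therefore shows that the oscillatory terms tend to $0$ weakly. Consequently $c_i=|a_i|^2/(2\sqrt{\lambda_i})$ is constant in the limit. Simplicity of the spectrum keeps $v_i^\ve$ and $\theta_i^\ve$ smooth in $t$.

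\textbf{Step 3: passage to the limit in the momentum equation.} We apply the weighted Leray projector adapted to $\div u=0$ to the momentum equation and pass to the limit. The only delicate term is the weak limit of the quadratic acoustic terms. These are $\div(\nabla\psi^\ve\otimes\nabla\psi^\ve/\vr)$ together with the second-order expansion of $\ve^{-2}\nabla P/\vr$, which is quadratic in $\pi^\ve$. Cross terms between distinct modes have phases $(\theta_i\pm\theta_j)/\ve$, and these vanish weakly by non-resonance of order two. The diagonal terms average to $\sum_i\frac{c_i}{\sqrt{\lambda_i}}\big(\nabla v_i\otimes\nabla v_i/\vr\big)$ plus a pure gradient. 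The gradient is absorbed into $p$, which yields $\div\Sigma$ as in \eqref{acousticalstress}. As a consistency check, the non-gradient part must agree with $\vr\,\grad V$ computed via Lemma \ref{HF1} in the proof of Theorem \ref{euler-theorem}. We will use that identity to organize the algebra.

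\textbf{Step 4: identification of the initial data.} At $t=0$, $u$ is the weighted Helmholtz projection of $u_0$. This produces $u_0+\frac1{\vr_0}\nabla\L_{\vr_0}^{-1}(\tilde Q\div u_0)$. The initial amplitudes come from projecting $\div V_0$ and $\varphi$ onto $v_i$, which reproduces \eqref{actions-fluid}.

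\textbf{Main obstacle.} The hard part is Step 2 together with the weak-limit computation in Step 3: showing that the cubic (order-three) interactions do not destroy the adiabatic invariants in the presence of variable, $\ve$-dependent eigenfunctions. Our first approach is a normal-form change of variables on the amplitudes. This removes the non-resonant oscillatory terms at the cost of $O(\ve)$ corrections, controlled by the uniform $H^s$ bounds. We then check the transversality assumption only on the resonant sets, where a van der Corput-type estimate gives the needed $o(1)$.
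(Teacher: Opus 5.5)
The paper does not prove this statement. It is quoted from \cite{MS03} as external support, and the authors state explicitly that they make no convergence claims of their own. The only convergence argument in the paper is the finite-dimensional one of Appendix~\ref{adiabaticappend}, and your route differs from it in method.

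\textbf{Comparison with Appendix~\ref{adiabaticappend}.} No phases or normal forms appear there. Equipartition, the vanishing of cross terms, and the vanishing of the cubic term $\ve\dot X_N\otimes X_N\otimes X_N$ are all obtained by differentiating $O(\ve)$ quantities and passing to weak-$*$ limits; the cubic one goes through the block system whose determinant is the product of the order-three resonance factors. The action law comes from the weak-$*$ limit of $\frac{\rmd}{\rmd t}E_i^\ve$, and the mean force is identified by Hellmann--Feynman. That mechanism uses non-resonance only through pointwise invertibility, so it suits an almost-everywhere hypothesis. Your amplitude/phase bookkeeping is more explicit but must track eigenbranches and small divisors. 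Your Step~3 does gain something: in conservative form the averaged acoustic stress is $\Sigma$ directly. A mode with $\pi$-amplitude $|a_i|$ has $\nabla\psi$-amplitude $|a_i|\nabla v_i/\sqrt{\lambda_i}$, and averaging gives $|a_i|^2/(2\lambda_i)=c_i/\sqrt{\lambda_i}$. This bypasses the rewriting in the proof of Theorem~\ref{euler-theorem}.

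\textbf{Step 1 overclaims.} Métivier--Schochet-type estimates close only on a time interval fixed by the size of the data. They do not give bounds on an arbitrary $[0,T]$ on which solutions merely exist. So uniform boundedness in $C([0,T];H^s)$ must be assumed, or $T$ shortened, and every later weak limit rests on it.

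\textbf{Step 2 is written as if there were finitely many modes.} Simplicity and a.e. non-resonance of the limit operator give no spectral gap or small-divisor bound uniform in the indices. Also, $\L_{\vr^\ve}$ inherits simple eigenvalues only for finitely many modes at a time. So neither termwise averaging of the infinite sums nor a global normal form is justified. You must truncate first. The quantity $\int|\nabla\pi^\ve|^2/\vr^\ve=\sum_i\lambda_i^\ve\big|(\pi^\ve,v_i^\ve)_{L^2_{1/Q^\ve}}\big|^2$ is uniformly bounded, and likewise for $\nabla\psi^\ve$. Hence modes above $N$ carry $O(\lambda_N^{-1})$ of the acoustic energy uniformly in $\ve$ and $t$.

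Your closing appeal to transversality and van der Corput is also off. Transversality is not a hypothesis, and it is not needed. Since $|a_i^\ve|^2/\sqrt{\lambda_i^\ve}$ has bounded time derivative, it suffices that this derivative tends to $0$ weakly-$*$. To see this, discard the set where $|\sum_j\gamma_j\sqrt{\lambda_j}|<\delta$, which has small measure by the a.e. hypothesis, and integrate by parts elsewhere. This is the same weak-limit mechanism as in Lemmas~\ref{weakquadratic} and~\ref{adiabatic}, and no normal form is required.

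\textbf{Step 3 mixes two forms of the momentum equation.} In $\partial_t(\vr^\ve u^\ve)+\div(\vr^\ve u^\ve\otimes u^\ve)+\ve^{-2}\nabla P^\ve=0$ the pressure is an exact gradient, removed by the ordinary Leray projector. The only acoustic contribution is then the weak limit of $\div(\nabla\psi^\ve\otimes\nabla\psi^\ve/\vr^\ve)$, which equals $\div\Sigma$ with no gradient remainder. The quadratic part $\frac{1}{2\vr Q}\nabla(\pi^\ve)^2$ of $\ve^{-2}\nabla P^\ve/\vr^\ve$ belongs to the non-conservative form, where it is paired with $u_{ac}\cdot\nabla u_{ac}$; this is how $F^{\rm MS}$ arises. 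Keeping both terms counts the same effect twice. It adds the spurious force $\frac{1}{\vr Q}\nabla K_1$, since the weak limit of $(\pi^\ve)^2$ is $2K_1$. This force is in general not of the form $\nabla\phi/\vr$; it is only when $Q$ is constant. Your consistency check against $\vr\,\grad V$ would catch this, but Step~3 as stated yields the wrong limit equation.
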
 
This theorem justifies the study of Takens system for the setting of incompressible limit. The nonresonance conditions seem to be unavoidable; it would be interesting to understand to what extent they are generically satisfied. A first step in this direction can be found in \cite{BDG}, where the authors perform a study of the resonances and provide a formal argument that for almost all initial data they do not occur. We also mention the work \cite{BDGL} where analogous study is performed for the viscous fluid.

\subsection{Compressible Euler as a Newton equation}\label{compeuler-sect}
	Let $M$ be compact Riemannian manifold. Let $\mathsf{Diff}(M)$ denote the group of smooth diffeomorphisms of $M$ that leave the boundary invariant. It can be given a Frechet manifold structure with the tangent space at $X$ being
\be 
T_{X}\mathsf{Diff}(M) = \{\zeta \circ X\ | \zeta \text{ is a smooth vector field on $M$, tangent to the boundary}\}.
\ee

Let $\vr_0 \in C^{\infty}(M)$ be a positive density. We denote by $\mathsf{Diff}_{\vr_0}(M)$ the space $\mathsf{Diff}(M)$ endowed with the (weak) Riemannian metric given by right-translating the $\vr_0 $-weighted $L^2$ inner product on the space of vector fields from $T_{\mathsf{id}}\mathsf{Diff}(M)$:
\be \label{rho_inner}
\big(\zeta \circ X, \eta \circ X\big)_{X} = \int_M \langle \zeta(X(a)), \eta(X(a))\rangle_{X(a)} \vr_0(a)\rmd a.
\ee

	We now describe the compressible Euler equations as Newton equations on $\mathsf{Diff}_{\vr_0}(M)$. Let $\tilde{e}(\vr, s)$ be a given smooth, positive function.
	Given two functions $\vr_0$ and $s_0$, define the potential function 
	\be\label{comppot}
	U[X]:= \int_M \tilde{e}\left(\vr_X(a), s_X(a)\right) \vr_X(a)\rmd a,
	\ee
	where $\vr_X$ and $s_X$ are functions on $M$ defined by 
	\be 
	\vr_X:= \frac{\vr_0}{\det \nabla X} \circ X^{-1}, \ \quad \ s_X := s_0\circ X^{-1}.
	\ee 
	In what follows we will often abuse the notation and omit the subscript $X$.
	
	The compressible Euler equations are described as an ODE by Newton's equations on $\mathsf{Diff}_{\vr_0}(M)$:
	\be\label{compeulergrad}
	\ddot{X} = - \grad_{\vr_0} U[X].
	\ee
	Here and throughout this section, acceleration is again the covariant one:
	\be 
	\ddot{X} := \nabla_{\dot{X}} \dot{X}.
	\ee 
	Since the metric of $\mathsf{Diff}_{\vr_0}(M)$ is the pointwise weighted $L^2$ metric, its Levi-Civita connection corresponds to the pointwise Levi-Civita connection of the metric on $M$, so the computations performed thereafter are unambiguous (see \cite{EM, BLR, B18}).
	
	We shall compute the gradient explicitly. For it, the following elementary lemma is useful.
	\begin{lemma}\label{rhosvar}
		Consider the variation $X^\ve$ of $X$ through $\mathsf{Diff}(M)$ satisfying 
		\be
		\frac{\rmd }{\rmd \ve }\bigg|_{\ve = 0} X^\ve = \xi \circ X \in T_X\mathsf{Diff}(M).
		\ee 
		The variations of $\vr_X$ and $s_X$ are given respectively by
		\be 
		\frac{\rmd }{\rmd \ve }\bigg|_{\ve = 0} \vr_{X^\ve} = -\div(\vr_X\xi), \ \quad \ \frac{\rmd }{\rmd \ve }\bigg|_{\ve = 0} s_{X^\ve} = - \langle \nabla s_X, \xi \rangle. 
		\ee 
	\end{lemma}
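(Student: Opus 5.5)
The plan is to reduce both formulas to the pointwise identity $\vr_{X^\ve}(X^\ve(a))\det\nabla X^\ve(a) = \vr_0(a)$ and the Lagrangian transport $s_{X^\ve}\circ X^\ve = s_0$, and then differentiate in $\ve$. The entropy formula is the easier of the two, so I would treat it first.

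\textbf{Entropy.} From $s_{X^\ve}(X^\ve(a)) = s_0(a)$ for all $\ve$, the chain rule at $\ve=0$ gives
\be
\Big(\frac{\rmd}{\rmd\ve}\Big|_{\ve=0} s_{X^\ve}\Big)(X(a)) + \langle \nabla s_X(X(a)), \xi(X(a))\rangle = 0 .
\ee
Since $X$ is a diffeomorphism, every point of $M$ has the form $X(a)$, so this yields $\frac{\rmd}{\rmd\ve}\big|_{\ve=0} s_{X^\ve} = -\langle\nabla s_X,\xi\rangle$ pointwise. Here $\nabla s_X$ is the Riemannian gradient, so no connection issues arise.

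\textbf{Density.} I would avoid computing the derivative of $\det\nabla X^\ve$ directly and use the weak (integral) characterization instead. For any test function $\phi\in C^\infty(M)$, the change of variables formula gives
\be
\int_M \phi\,\vr_{X^\ve}\,\rmd x = \int_M \phi(X^\ve(a))\,\vr_0(a)\,\rmd a .
\ee
Differentiating at $\ve=0$ turns the right side into $\int_M \langle\nabla\phi(X(a)),\xi(X(a))\rangle\vr_0(a)\,\rmd a$. Changing variables back with $y = X(a)$ turns this into $\int_M\langle\nabla\phi,\xi\rangle\vr_X\,\rmd y$. Integrating by parts then gives $-\int_M \phi\,\div(\vr_X\xi)$. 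The boundary term vanishes because $\xi$ is tangent to $\partial M$, as required by the definition of $T_X\mathsf{Diff}(M)$.

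Since $\phi$ is arbitrary, I conclude $\frac{\rmd}{\rmd\ve}\big|_{\ve=0}\vr_{X^\ve} = -\div(\vr_X\xi)$. To justify differentiating under the integral sign and identifying a weak derivative with a pointwise one, I would note that $\vr_{X^\ve}$ is smooth jointly in $(\ve,x)$ for a smooth variation.

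The only point requiring care, which I expect to be the main obstacle, is this smoothness issue together with the role of the boundary. Alternatively, I could run a direct Jacobi-formula argument: $\frac{\rmd}{\rmd\ve}\det\nabla X^\ve = (\div\xi)\circ X\,\det\nabla X$, combined with the chain rule as in the entropy case. This gives $\dot\vr\circ X + \langle\nabla\vr,\xi\rangle\circ X = -\vr\,\div\xi\circ X$, which is the same formula.
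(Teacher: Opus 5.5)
Your proof is correct, and it reaches the result by a more elementary route than the paper.

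\textbf{The paper's argument.} It writes $\vr_X\,\rmd a = X_*(\vr_0\,\rmd a)$ and $s_X = X_*s_0$. It then quotes that the first variation of a push-forward along $X^\ve$ with $\frac{\rmd}{\rmd\ve}X^\ve = \xi\circ X$ is $-\L_\xi$ applied to the pushed-forward object. It combines this with the identity $\L_\xi(\vr\,\rmd a) = \div(\vr\xi)\,\rmd a$.

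\textbf{How your version relates.} Your chain-rule computation for $s$ is exactly the unpacking of $-\L_\xi s_X = -\rmd s_X(\xi)$. Your test-function argument for $\vr$ is a duality proof of $\frac{\rmd}{\rmd\ve}\big|_{\ve=0}X^\ve_*(\vr_0\,\rmd a) = -\L_\xi(\vr_X\,\rmd a)$. In it, integration by parts replaces Cartan's formula for $\L_\xi$ acting on a top-degree form.

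\textbf{What each approach buys.} Yours makes the sign transparent. In the Lie-derivative formulation the sign relies on the convention that the variation is generated by left composition, $X^\ve \approx \Phi^\xi_\ve\circ X$ to first order. Yours also needs no differential-form machinery. The paper's version gains brevity and a manifestly pointwise, coordinate-free statement.

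\textbf{Two small remarks on your proof.}
\begin{itemize}
\item The boundary is not really an obstacle. Testing against $\phi$ compactly supported in the interior of $M$ already identifies the derivative there with no boundary term, and continuity extends the identity to $\partial M$.
\item The joint smoothness in $(\ve,x)$ that you need follows from the implicit function theorem applied to $X^\ve(a) = x$. This gives smoothness of $(\ve,x)\mapsto (X^\ve)^{-1}(x)$. The paper's proof and your Jacobi-formula alternative use the same fact tacitly.
\end{itemize}
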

\begin{proof}
	We have 
	\be 
	\vr_X \rmd a  = X_*(\vr_0 \rmd a), \quad s_X = X_*s_0,
	\ee 
	so the variations are given by
	\begin{align}
		&\frac{\rmd }{\rmd \ve }\bigg|_{\ve = 0} \vr_{X^\ve} \rmd a = -\L_{\xi} (\vr_X \rmd a) = -\div(\vr_X \xi)\rmd a \text{, and}\\
		&\frac{\rmd }{\rmd \ve }\bigg|_{\ve = 0} s_{X^\ve} = -\L_{\xi} s = -ds_X(\xi) = - \langle \nabla s_X, \xi \rangle.
	\end{align}
\end{proof}
With that, we compute the gradient of $U$.
\begin{lemma} Define the hydrodynamic pressure law by
	\be
	\widetilde{P}(\vr, s):= \vr^2 \partial_\vr \tilde{e} (\vr,s)
	\ee 
	Then
	\be
	\grad_{\vr_0} U[X]= \frac{\nabla P_X}{\vr} \circ X, \ \text{where} \ 
	P_X(a) = \widetilde{P}(\vr_X(a),s_X(a)),
	\ee
\end{lemma}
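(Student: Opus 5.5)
The plan is to compute the first variation of $U$ along a variation $X^\ve$ with $\frac{\rmd}{\rmd\ve}|_{\ve=0}X^\ve=\xi\circ X$, and then match it to the weighted inner product \eqref{rho_inner}. The first step is a change of variables that removes the dependence on $X$ from the integration measure. Write $\vr_X\,\rmd a = X_*(\vr_0\,\rmd a)$ and push the integral in \eqref{comppot} forward to Eulerian coordinates:
\be
U[X]=\int_M \tilde e(\vr_X,s_X)\,\vr_X\,\rmd x .
\ee
Here $\vr_X$ and $s_X$ are functions on $M$ and the measure $\rmd x$ is fixed. All the $X$-dependence now sits in $\vr_X$ and $s_X$, and their variations are given by Lemma \ref{rhosvar}.

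The second step is to differentiate under the integral and apply the chain rule:
\be
\frac{\rmd}{\rmd\ve}\Big|_{\ve=0}U[X^\ve]=\int_M\Big[(\tilde e+\vr\,\partial_\vr\tilde e)\,\delta\vr+\vr\,\partial_s\tilde e\,\delta s\Big]\rmd x ,
\ee
with $\delta\vr=-\div(\vr\xi)$ and $\delta s=-\langle\nabla s,\xi\rangle$. The first term can then be integrated by parts. Since $\xi$ is tangent to $\partial M$, no boundary terms appear, and it becomes $\int_M \vr\,\langle\nabla h,\xi\rangle\,\rmd x$, where $h:=\tilde e+\vr\partial_\vr\tilde e=\partial_\vr(\vr\tilde e)$ is the enthalpy. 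The second term is $-\int_M\vr\,\partial_s\tilde e\,\langle\nabla s,\xi\rangle\,\rmd x$.

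The third step, which I expect to be the only real content, is to recognize the sum of these two terms as a pressure gradient. This uses the thermodynamic Gibbs identity. Set $T:=\partial_s\tilde e$. Differentiating $h=\tilde e+P/\vr$ gives
\be
\nabla h=\partial_\vr\tilde e\,\nabla\vr+T\nabla s+\frac{\nabla P}{\vr}-\frac{P}{\vr^2}\nabla\vr .
\ee
Since $P=\vr^2\partial_\vr\tilde e$, the $\nabla\vr$ terms cancel, leaving $\nabla h-T\nabla s=\nabla P/\vr$. The total variation is therefore
\be
\int_M\langle\nabla P,\xi\rangle\,\rmd x=\int_M\Big\langle\frac{\nabla P}{\vr},\xi\Big\rangle\vr\,\rmd x .
\ee

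The last step converts back to Lagrangian form. Composing with $X$ and using $\vr_X\circ X\,\det\nabla X=\vr_0$, this integral equals $\big(\tfrac{\nabla P_X}{\vr}\circ X,\ \xi\circ X\big)_X$. Because $\xi$ is arbitrary among vector fields tangent to the boundary, and $\nabla P/\vr$ is itself tangent to the boundary or else is interpreted via the tangential projection, this identifies $\grad_{\vr_0}U[X]=\frac{\nabla P_X}{\vr}\circ X$.

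The main points to handle with care are the Gibbs cancellation in the third step and the boundary term. For the boundary term, I would remark that the vanishing of $\langle\xi,n\rangle$ on $\partial M$ removes the boundary integral in the integration by parts.
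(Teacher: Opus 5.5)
Your argument is correct and is essentially the paper's computation: both vary $U$ via Lemma \ref{rhosvar} and integrate by parts with no boundary term since $\xi$ is tangent to $\partial M$. The only difference is bookkeeping: you absorb the $\delta\vr$ terms into the enthalpy $h=\partial_\vr(\vr\tilde e)$ and cancel via $\nabla h-\partial_s\tilde e\,\nabla s=\nabla P/\vr$, whereas the paper expands $\div(\vr\xi)$, drops $\int_M \vr\langle\nabla e,\xi\rangle+e\div(\vr\xi)=0$, and integrates $-\int_M P\div\xi$ by parts. Your final hedge about a tangential projection is unnecessary: fields tangent to $\partial M$ are $L^2$-dense, so $\frac{\nabla P_X}{\vr}\circ X$ is already the unique representative, exactly as the paper takes it.
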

\begin{proof}
	Consider again the variation $X^\ve$ of $X$ satisfying 
	\be
	\frac{\rmd }{\rmd \ve }\bigg|_{\ve = 0} X^\ve = \xi \circ X \in T_X\mathsf{Diff}(M).
	\ee 
	Denote $e = e_X$ by
	\be 
	e(a) = \tilde{e}(\vr(a),s(a)).
	\ee 
	We compute the variation of $U$ as follows:
	\begin{align}
	\frac{\rmd }{\rmd \ve }\bigg|_{\ve = 0} U[X^\ve] &= \frac{\rmd }{\rmd \ve }\bigg|_{\ve = 0}\int_M \tilde{e}\left(\vr_{X^\ve}(a), s_{X^\ve}(a)\right) \vr_{X^\ve}(a)\rmd a\\
	&= - \int_M \partial_\vr \tilde{e} \div(\vr \xi) \vr + \partial_s \tilde{e} \langle \nabla s,\xi\rangle \vr + e \div(\vr \xi)\\
	&= - \int_M \partial_\vr \tilde{e} \langle \nabla \vr,\xi\rangle \vr +\partial_\vr \tilde{e} \div \xi \vr^2+ \partial_s \tilde{e} \langle \nabla s,\xi\rangle \vr + \tilde{e} \div(\vr \xi)\\
	&= - \int_M \langle \partial_\vr \tilde{e}\nabla \vr + \partial_s \tilde{e} \nabla s,\xi\rangle \vr +\partial_\vr \tilde{e} \div \xi \vr^2 + e \div(\vr \xi)\\
	&= - \int_M \langle \nabla e,\xi\rangle \vr + e \div (\vr\xi) - \int_M \partial_\vr \tilde{e} \div \xi \vr^2\\ 
	&= \int_M \bigg\langle \frac{\nabla P}{\vr},\xi \bigg\rangle \vr = \bigg(\frac{\nabla P}{\vr} \circ X,\xi \circ  X\bigg)_{\vr_0}.
	\end{align}
By definition of the gradient claim follows.
\end{proof}
With that we can write more familiar Eulerian form of the equation:
\begin{prop}\label{compeuler}
	Let $X \in \mathsf{Diff}_{\vr_0}(M)$ solve \eqref{compeulergrad}. Then 
	\be 
	u(a,t) = \dot{X}(X^{-1}(a,t),t), \quad s(a,t) = s_0(X^{-1}(a,t)),\quad \vr(a,t) = \frac{\vr_0(X^{-1}(a,t))}{\det\nabla X(X^{-1}(a,t))}
	\ee 
	satisfy the Eulerian form of compressible Euler equations:
	\be\label{compeulereq}
	\begin{cases}
		\partial_t u + \nabla_u u = - \frac{\nabla P}{\vr},\\
		\partial_t \vr + \div(\vr u) = 0,\\
		\partial_t s + \langle u,\nabla s \rangle = 0,\\
		P = \widetilde{P}(\vr,s).
	\end{cases}
	\ee 
\end{prop}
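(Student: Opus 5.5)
We need to show that $u=\dot X\circ X^{-1}$, $\vr=\vr_X$, $s=s_X$ satisfy \eqref{compeulereq}. The plan is to treat the kinematic equations (continuity and entropy transport) and the momentum equation separately.

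\textbf{Kinematic equations.} These follow directly from Lemma \ref{rhosvar}, applied along the time curve $t\mapsto X(t)$ in place of the variation parameter $\ve$. Since $\frac{\rmd}{\rmd t}X=\dot X=u\circ X$, the lemma with $\xi=u$ gives
\be
\partial_t\vr=-\div(\vr u),\qquad \partial_t s=-\langle\nabla s,u\rangle .
\ee
This is exactly the continuity equation and the entropy transport equation. Alternatively one can argue directly: $\vr\,\rmd a=X_*(\vr_0\,\rmd a)$ and $s=s_0\circ X^{-1}$, so the time derivatives are $-\L_u$ of these pushforwards.

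\textbf{Momentum equation.} Here we rewrite the covariant acceleration in Eulerian variables. Fix a Lagrangian label $a$ and differentiate the identity $\dot X(a,t)=u(X(a,t),t)$ covariantly along the curve $t\mapsto X(a,t)$ in $M$. The chain rule for the Levi-Civita connection along a curve gives
\be
\nabla_{\dot X}\dot X=(\partial_t u)\circ X+(\nabla_{u}u)\circ X .
\ee
Here $\partial_t u$ is the time derivative at a fixed Eulerian point, which is well defined because $u(\cdot,t)\in T_xM$ for fixed $x$. Since the metric on $\mathsf{Diff}_{\vr_0}(M)$ is the pointwise weighted $L^2$ metric, its covariant acceleration is this pointwise one, as remarked above \eqref{compeulergrad}.

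The previous lemma gives $\grad_{\vr_0}U[X]=\frac{\nabla P_X}{\vr}\circ X$. Inserting this into \eqref{compeulergrad} gives
\be
(\partial_t u+\nabla_u u)\circ X=-\frac{\nabla P}{\vr}\circ X .
\ee
Composing with $X^{-1}$, which is legitimate because $X$ is a diffeomorphism, yields the first equation of \eqref{compeulereq}. The constitutive relation $P=\widetilde P(\vr,s)$ holds by definition of $P_X$.

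\textbf{Main obstacle.} The only step needing care is the Eulerian expression for the covariant acceleration on a curved $M$. I would justify it in local coordinates, where the Christoffel term $\Gamma(\dot X,\dot X)=\Gamma(u,u)\circ X$ combines with $\partial_t u+u\cdot\partial u$ to form $\partial_t u+\nabla_u u$. Boundary tangency of $u$ is inherited from $T_X\mathsf{Diff}(M)$, and is not needed for the equations themselves.
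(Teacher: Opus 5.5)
Your proposal is correct and is essentially the paper's proof. In both, the momentum equation comes from the chain rule $\nabla_{\dot X}\dot X=(\partial_t u+\nabla_u u)\circ X$ together with $\grad_{\vr_0}U[X]=\frac{\nabla P}{\vr}\circ X$ and composition with $X^{-1}$, while the continuity and entropy equations come from Lemma \ref{rhosvar} with $(\ve,\xi)$ replaced by $(t,u)$. Your local-coordinate check of the Christoffel term is a detail the paper leaves implicit.
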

\begin{proof}
	It follows from the chain rule that 
	\begin{align}
		(\partial_t u)(X(a,t),t) + (\nabla_{u(X(a,t),t)}u)(X(a,t),t) &= \frac{D}{dt} (u (X(a,t),t)) \\
		&=  \frac{D}{dt}(\dot{X}(a,t)) = \ddot{X}(a,t) = -\frac{\nabla P(X(a,t),t)}{\vr(X(a,t),t)},
	\end{align}
	so evaluating at $(X^{-1}(a,t),t)$ gives first momentum equation.
	The mass and entropy conservation equations  follow similarly from replacing $(\ve, \xi)$ with $(t,u)$ in Lemma \ref{rhosvar}.
\end{proof}
\subsection{Incompressible Euler}

Consider submanifold $\mathsf{SDiff}(M)$ of $\mathsf{Diff}(M)$ consisting of volume-preserving diffeomorphisms of $M$:
\be 
\mathsf{SDiff}(M) = \{X \in \mathsf{Diff}(M) \ | \ X_*(\rmd a) = \rmd a\}.
\ee  
Denote by $\mathsf{SDiff}_{\vr_0}(M)$ $\mathsf{SDiff}(M)$ endowed with the $\vr_0$-weighted metric.
Incompressible Euler equation is given by the geodesic equation on $\mathsf{SDiff}_{\vr_0}(M)$.
\begin{prop}\label{tangentnormal}
	The following characterizations hold:
	\begin{itemize} 
		\item The tangent space of $\mathsf{SDiff}_{\vr_0}(M)$ at $X$ is given by 
		\be
		T_X \mathsf{SDiff}_{\vr_0}(M) = \bigg\{\xi\circ X \ \bigg| \ \div \xi = 0 \bigg\}.
		\ee 
		\item The normal space of $\mathsf{SDiff}_{\vr_0}(M)$ at $X$ is given by 
		\be
		N_X \mathsf{SDiff}_{\vr_0}(M) = \bigg\{\frac{\nabla p}{\vr}\circ X \ \bigg| \ p \in C^{\infty}(M, \R)\bigg\}.
		\ee 
	\end{itemize}
\end{prop}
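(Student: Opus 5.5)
We follow the pattern of Proposition \ref{threadtangentnormal} for the inextensible thread: first differentiate the constraint to get the tangent space, then use a Helmholtz-type orthogonal decomposition adapted to the weighted metric \eqref{rho_inner} to identify the normal space.

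\textbf{Tangent space.} Take a curve $X^\ve \in \mathsf{SDiff}(M)$ with $X^0 = X$ and $\frac{\rmd}{\rmd\ve}\big|_{\ve=0} X^\ve = \xi\circ X$. The constraint $X^\ve_*(\rmd a) = \rmd a$ holds for all $\ve$. Differentiating it as in Lemma \ref{rhosvar}, with $\vr$ replaced by the constant density $1$, gives
\[
0 = -\L_\xi(\rmd a) = -\div\xi\,\rmd a .
\]
Hence $\div\xi = 0$, so the tangent space is contained in the divergence-free fields. For the converse, any smooth divergence-free $\xi$ tangent to $\partial M$ generates a flow $\phi_\ve$ of volume-preserving diffeomorphisms. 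Then $\phi_\ve\circ X$ is a curve in $\mathsf{SDiff}(M)$ with velocity $\xi\circ X$.

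\textbf{Normal space: orthogonality.} Since $X$ preserves volume, the change of variables $b = X(a)$ turns \eqref{rho_inner} into
\[
(\zeta\circ X, \eta\circ X)_{X} = \int_M \langle \zeta, \eta\rangle\, \vr\, \rmd b,
\]
where $\vr = \vr_X = \vr_0\circ X^{-1}$. For any $p \in C^\infty(M,\R)$ and any $\eta$ with $\div\eta = 0$ and $\eta$ tangent to $\partial M$, integration by parts gives
\[
\int_M \Big\langle \frac{\nabla p}{\vr}, \eta\Big\rangle \vr = \int_M \langle \nabla p, \eta\rangle = -\int_M p\,\div\eta + \int_{\partial M} p\,\langle \eta, n\rangle = 0 .
\]
So every field $\frac{\nabla p}{\vr}\circ X$ is normal.

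\textbf{Normal space: exhaustion.} To show these are all normal vectors, we prove a weighted Hodge decomposition: every smooth field $\zeta$ can be written uniquely as
\[
\zeta = \eta + \frac{\nabla p}{\vr}, \qquad \div\eta = 0, \quad \eta\cdot n = 0 .
\]
Taking the divergence and the normal trace, $p$ must solve the Neumann problem
\[
\div\Big(\frac{1}{\vr}\nabla p\Big) = \div\zeta \ \text{ in } M, \qquad \frac{1}{\vr}\partial_n p = \zeta\cdot n \ \text{ on } \partial M .
\]
Since $\vr>0$ is smooth, this operator is uniformly elliptic, and the problem's compatibility condition holds by the divergence theorem. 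Standard elliptic theory then gives a smooth $p$, unique up to an additive constant, which does not change $\nabla p$. Setting $\eta = \zeta - \frac{\nabla p}{\vr}$ completes the decomposition. If $\zeta\circ X$ is orthogonal to every tangent vector, pairing it with its own $\eta$-component and using the orthogonality step gives $\|\eta\|^2 = 0$. Thus $\eta = 0$ and $\zeta = \frac{\nabla p}{\vr}$.

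The only nontrivial step is this elliptic Neumann problem together with its boundary compatibility. Everything else is differentiation of the constraint and integration by parts. As for the thread, we work in the smooth (Fréchet) category and do not address any further functional-analytic issues.
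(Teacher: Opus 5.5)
Your proof is correct and follows the paper's route: differentiate the volume constraint to get the tangent space, then use a Helmholtz-type decomposition to identify the normal space. The one difference is that the paper skips your weighted Neumann problem $\div(\vr^{-1}\nabla p)=\div\zeta$: it notes that $\vr\eta$ is $L^2$-orthogonal to all divergence-free fields and applies the ordinary Helmholtz decomposition to conclude $\vr\eta=\nabla p$, whereas you build the $\vr$-weighted decomposition directly and also check the reverse inclusions that the paper leaves implicit.
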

\begin{proof}
	Let $X^\ve$ be a curve in $\mathsf{SDiff}_{\vr_0}(M)$ through $X  = X^0$  with $\frac{\rmd}{\rmd \ve }\bigg|_{\ve = 0} X^\ve = \xi\circ X$. The condition $X^\ve \in \mathsf{SDiff}_{\vr_0}(M)$ implies that 
	\be
	0 = -\frac{\rmd}{\rmd \ve }\bigg|_{\ve = 0} X^\ve_* \rmd a  =\div \xi \rmd a,
	\ee 
	Thus $\xi$ has to be divergence-free. The same computation reversed shows that any divergence-free vector field provides an element of $T_X \mathsf{SDiff}_{\vr_0}(M)$.
	
	Let $\eta \circ X \in N_X \mathsf{SDiff}_{\vr_0}(M)$. For any tangent vector $\xi \circ X$ we must have 
	\be 
	0 = (\eta \circ X,  \xi \circ X)_{\vr_0} = \int_M \langle \eta\circ X , \xi \circ X\rangle \vr_0 =  \int_M \langle \vr \eta , \xi\rangle. 
	\ee 
	Thus $\vr\eta$ has to be $L^2$ orthogonal to all divergence-free vector fields, so by Helmholtz decomposition it has to be a gradient.
\end{proof}
With this description of the normal spaces we can write the geodesic equation on $\mathsf{SDiff}_{\vr_0}(M)$ by d'Alembert's principle
\be \label{incompeulergrad}
\left\{\begin{aligned} 
\ddot{X} &\in N_X\mathsf{SDiff}_{\vr_0}(M) \\
X &\in \mathsf{SDiff}_{\vr_0}(M)
\end{aligned}\right. \iff 
\left\{\begin{aligned}
\ddot{X} &= -\frac{\nabla p}{\vr}\circ X\\
\det\nabla X &\equiv 1
\end{aligned}\right.
\ee
We can now write the more familiar Eulerian form of the equations.
\begin{prop}
	Let $X\in \mathsf{SDiff}_{\vr_0}(M), P \in C^{\infty}(M, \R)$ solve \eqref{incompeulergrad}. Then 
	\be 
	u(a,t) = \dot{X}(X^{-1}(a,t),t), \quad \vr(a,t) = \vr_0(X^{-1}(a,t))
	\ee 
	satisfy the Eulerian form of incompressible Euler equations:
	\be
	\begin{cases}
		\partial_t u +\nabla_u u = - \frac{\nabla p}{\vr}\\
		\partial_t \vr + \div(\vr u) = 0\\
		\div u = 0
	\end{cases}
	\ee 
\end{prop}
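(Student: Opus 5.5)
The plan is to mirror the proof of Proposition \ref{compeuler}: translate the Lagrangian equation into Eulerian form via the chain rule, and then check separately that the density and the volume constraint behave as claimed. First, since $X \in \mathsf{SDiff}_{\vr_0}(M)$, differentiate $X \circ X^{-1} = \mathsf{id}$ in time. This gives $\partial_t (X^{-1}) = -\nabla X^{-1}\, u$ with $u = \dot X \circ X^{-1}$, i.e. the material derivative of $X^{-1}$ vanishes. Next, apply the chain rule exactly as in the proof of Proposition \ref{compeuler}:
\be
(\partial_t u + \nabla_u u)(X(a,t),t) = \frac{D}{dt}\big(u(X(a,t),t)\big) = \ddot X(a,t) = -\frac{\nabla p}{\vr}(X(a,t),t).
\ee
Evaluating at $(X^{-1}(a,t),t)$ yields the momentum equation. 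Here we use that in \eqref{incompeulergrad} the normal vector has the form $\frac{\nabla p}{\vr}\circ X$, where $\vr = \vr_0\circ X^{-1}$ is the Eulerian density, as given by Proposition \ref{tangentnormal}.

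Second, for the divergence condition, differentiate the constraint $X_*(\rmd a) = \rmd a$ in time. The computation in the proof of Proposition \ref{tangentnormal}, with $(\ve,\xi)$ replaced by $(t,u)$, shows that $\dot X\circ X^{-1} = u$ is tangent, so $\div u = 0$. Equivalently, this follows from Lemma \ref{rhosvar} applied to the constant density $1$.

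Third, for the mass equation, note that $\det \nabla X \equiv 1$, so the compressible definition $\vr_X = \frac{\vr_0}{\det\nabla X}\circ X^{-1}$ reduces to $\vr_0\circ X^{-1}$. Lemma \ref{rhosvar} with $(\ve,\xi)\to(t,u)$ then gives $\partial_t\vr = -\div(\vr u)$. Alternatively, transport of $\vr_0$ gives $\partial_t\vr + \langle u,\nabla\vr\rangle = 0$, and combined with $\div u=0$ this is the same equation.

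No step is a real obstacle; everything is a direct specialization of the compressible case. The only point needing care is the first step: the Lagrange multiplier must be written as $\nabla p/\vr$ with the Eulerian $\vr$ rather than $\vr_0$. This holds because the weighted metric \eqref{rho_inner} pulls back to $\int \langle \vr\eta,\xi\rangle$, which is exactly the computation in Proposition \ref{tangentnormal}.
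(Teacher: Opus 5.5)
Your proposal is correct and follows the paper's proof. The momentum and mass equations come from the chain rule and Lemma \ref{rhosvar} with $(\ve,\xi)\to(t,u)$, exactly as in Proposition \ref{compeuler}, and $\div u = 0$ is the Eulerian form of the condition $\dot X \in T_X\mathsf{SDiff}_{\vr_0}(M)$; your remark that the multiplier is $\nabla p/\vr$ with the Eulerian density $\vr=\vr_0\circ X^{-1}$ spells out a detail the paper leaves implicit.
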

\begin{proof}
	The equations satisfied by $u$ and $\vr$ follow from the chain rule in the same way as in the proof of \ref{compeuler}. The last equation is the Eulerian form of the condition $u \circ X \in T_X \mathsf{SDiff}_{\vr_0}(M)$.
\end{proof}

\subsection{Incompressible limit (Takens system): homogenized potential}
We now consider making the compressible fluid more and more incompressible by stiffening the potential $U$, i.e. replacing it by $\frac{1}{\ve^2} U$ and taking the limit as $\ve\to 0$. In order to take the limit, we must ensure that the force $-\frac{1}{\ve^2}\frac{\nabla P}{\vr}$ experienced by the fluid at time $t = 0$ does not blow up. We do so by considering the initial data satisfying $\tilde{P}(\vr_0, s_0) = {\rm const}$. For concreteness we take $X|_{t=0} = {\rm id}$.

Finite dimensional results of the previous sections suggests that the (non-resonant) limit is given by the corresponding Takens system, which we proceed to derive. Recall that it is given by the geodesic equation on the critical set $\mathcal{M}$ of $U$ containing $X|_{t = 0} = id$, subject to the additional forcing of the form $\grad_{\vr_0}V[X]$, where 
\be 
V[X] = \sum_i c_i \sqrt{\lambda_i[X]},
\ee 
and $\lambda_i[X]$ are eigenvalues of $\hess_{\vr_0} U[X]$ restricted to $N_X\mathcal{M}$. Thus we need to understand $\mathcal{M}$, as well as the Hessian of $U$; this is done in the following lemma.
\begin{lemma}
	Suppose the bulk modulus 
	\be 
	\widetilde{Q}(\vr,s) := \vr \partial_\vr \widetilde{P}(\vr, s)
	\ee 
	is positive. Then the path-connected component of identity of $\mathcal{M}$ is the path-connected component of identity of $\mathsf{SDiff}_{\vr_0}(M)$.
\end{lemma}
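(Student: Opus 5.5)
The plan is to characterize $\mathcal{M}$ directly from the gradient formula $\grad_{\vr_0} U[X] = \frac{\nabla P_X}{\vr}\circ X$. We may assume $M$ is connected, as is implicit in the statement. A point $X$ is critical iff $\nabla P_X \equiv 0$, i.e. iff $P_X = \widetilde{P}(\vr_X, s_X)$ is a constant $c$ on $M$. Recall the standing assumption on the initial data: $\widetilde{P}(\vr_0, s_0) \equiv \bar P$ is constant, so that $\mathsf{id} \in \mathcal{M}$. I will show two things: that $\mathsf{SDiff}_{\vr_0}(M) \subset \mathcal{M}$, and that every critical $X$ in fact lies in $\mathsf{SDiff}_{\vr_0}(M)$. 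The statement about path-connected components of the identity then follows at once, since $\mathcal{M}$ and $\mathsf{SDiff}_{\vr_0}(M)$ coincide as sets, with the subspace topology from $\mathsf{Diff}(M)$.

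\emph{The inclusion $\mathsf{SDiff} \subset \mathcal{M}$.} If $X_*(\rmd a) = \rmd a$, then $\det \nabla X \equiv 1$. Hence $\vr_X = \vr_0 \circ X^{-1}$ and $s_X = s_0\circ X^{-1}$. It follows that $P_X = \widetilde{P}(\vr_0, s_0)\circ X^{-1} \equiv \bar P$, so $\nabla P_X = 0$ and $X \in \mathcal{M}$.

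\emph{The converse: a critical $X$ lies in $\mathsf{SDiff}$.}
\begin{itemize}
\item Pull the identity $P_X \equiv c$ back by $X$ and write $J = \det\nabla X > 0$. This gives, for every Lagrangian label $a$,
\be
\widetilde{P}\big(\vr_0(a)/J(a),\, s_0(a)\big) = c.
\ee
\item Positivity of the bulk modulus $\widetilde{Q} = \vr\partial_\vr \widetilde{P}$ means that $\vr \mapsto \widetilde{P}(\vr, s)$ is strictly increasing for each fixed $s$. Let $R(c,s)$ be its inverse, defined on the range. Then $R$ is strictly increasing in $c$, and the relation above forces
\be
J(a) = \frac{\vr_0(a)}{R(c, s_0(a))}.
\ee
\item Since $X$ is a diffeomorphism of $M$ onto itself, $\int_M J \,\rmd a = \mathrm{vol}(M)$. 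The function $c \mapsto \int_M \vr_0/R(c,s_0)\,\rmd a$ is strictly decreasing, because $R$ is strictly increasing in $c$.
\item At $c = \bar P$ we have $R(\bar P, s_0) = \vr_0$, so this integral equals $\mathrm{vol}(M)$. By strict monotonicity, $c = \bar P$ is the only admissible constant.
\item Therefore $c = \bar P$, hence $R(c,s_0) = \vr_0$ and $J \equiv 1$. This says exactly that $X \in \mathsf{SDiff}_{\vr_0}(M)$.
\end{itemize}

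The main point needing care is the global invertibility of $\vr \mapsto \widetilde{P}(\vr,s)$ and the domain issue for $R$. For every critical $X$, the value $c$ is attained at each label $a$, so $c$ automatically lies in the range of $\widetilde{P}(\cdot, s_0(a))$ for all $a$; this is all that the monotonicity argument requires.

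A fallback is available if one prefers a more local statement. Restrict to a continuous path $X_t$ in $\mathcal{M}$ starting at $\mathsf{id}$, and note that $c(t)$ is continuous with $c(0) = \bar P$. The same volume argument then pins $c(t) \equiv \bar P$ along the path.
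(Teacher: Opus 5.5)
Your proof is correct, but it takes a different route from the paper's. The paper argues infinitesimally along a path $X^\ve$ in $\mathcal{M}$ through the identity. Differentiating $C(\ve)=\widetilde P(\vr_\ve,s_\ve)$ and using $\nabla P_\ve=0$ gives $\dot C(\ve)=-\widetilde Q\,\div\xi_\ve$. Integrating $\div\xi_\ve$ over $M$ then forces $\dot C=0$, so the pressure constant equals $C_0$ on the whole component. Only afterwards does the paper use monotonicity of $\mu\mapsto\widetilde P(\vr_0\mu,s_0)$ to conclude $\det\nabla X\equiv 1$.

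You reverse the order. Monotonicity first expresses $J$ through the unknown constant $c$. The global identity $\int_M J\,\rmd a=\mathrm{vol}(M)$, which is the finite version of $\int_M\div\xi=0$, then pins $c=\bar P$. In fact this needs only a pointwise comparison: $c>\bar P$ forces $J<1$ everywhere, and $c<\bar P$ forces $J>1$ everywhere.

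What your route gains:
\begin{itemize}
\item It needs no paths in the Fr\'echet manifold and no differentiation along them. The paper's argument tacitly uses $C^1$ paths, while path-connectedness only supplies continuous ones.
\item It proves the stronger statement that the entire critical set equals $\mathsf{SDiff}_{\vr_0}(M)$, so restricting to identity components is unnecessary. For orientation-reversing maps, read $J$ as $|\det\nabla X|$.
\end{itemize}

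What the paper's route gains: its infinitesimal computation $\frac{\rmd}{\rmd\ve}P=-Q\div\xi$ is the same linearization it reuses for the tangent spaces and the Hessian, so it fits directly into the later calculations.
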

\begin{proof} 
	We will abuse notation and denote by $\M$ (resp. $\mathsf{SDiff}_{\vr_0}(M)$) the path-connected component of identity of $\M$ (resp. $\mathsf{SDiff}_{\vr_0}(M)$). Recall that $\M$ consists of diffeomorphisms $ X \in \mathsf{Diff}_{\vr_0}(M)$ for which  
	\be 
	\nabla P \equiv 0 \ \Leftrightarrow \ P \text{ is spatially constant}
	\ee 
	We first claim that under our assumption this constant is the same for all elements of $\M$. To see this, let $X^\ve$ be a path in $\M$, with $\dot{X}^\ve = \xi_\ve \circ X^\ve$. Denoting $\vr_\ve = \vr_{X^\ve},$ $s_\ve = s_{X^\ve}$, we differentiate in $\ve$ the relation 
	\be 
	C(\ve) = \widetilde{P}(\vr_\ve, s_\ve).
	\ee 
	Using Lemma \ref{rhosvar}, we obtain
	\begin{align}
	\dot{C}(\ve) &= -\div(\vr_\ve\xi_\ve)\partial_\vr \widetilde{P}(\vr_\ve, s_\ve) -\langle\nabla s_\ve, \xi_\ve\rangle\partial_s \widetilde{P}(\vr_\ve, s_\ve)\\
		&=-\div\xi_\ve \vr_\ve \partial_\vr \widetilde{P}(\vr_\ve, s_\ve) -\langle \nabla \vr_\ve, \xi_\ve\rangle \partial_\vr \widetilde{P}(\vr_\ve, s_\ve) -\langle \nabla s_\ve, \xi_\ve \rangle \partial_s \widetilde{P}(\vr_\ve, s_\ve)\\
		&=-\div\xi_\ve \widetilde{Q}(\vr_\ve,s_\ve) - \langle \nabla P_\ve, \xi \rangle\\
		&= -\div\xi_\ve \widetilde{Q}(\vr_\ve,s_\ve).
	\end{align}
	Thus 
	\be 
	0 = \int_M \div\xi_\ve\rmd a = -\dot{C}(\ve) \int_M \frac{\rmd a}{\widetilde{Q}(\vr_\ve,s_\ve)}.
	\ee 
	Since we assumed that $\widetilde{Q}$ is positive, this forces $\dot{C}(\ve) = 0$ and consequently 
	\be 
	C(\ve) = C(0) =: C_0.
	\ee 
	With that, we proceed to proving the main claim.
	Defining
	\be 
	f(a,\mu) = \widetilde{P}(\vr_0(a) \mu, s_0(a)),
	\ee  
	the condition $X \in \mathcal{M}$ is rewritten as 
	\be 
	f\bigg(a, \frac{1}{\det \nabla X(a)}\bigg) = C_0 \ \text{for all } a.
	\ee 
	
	We regard it as the family of equations prescribing $\det \nabla X(a)$ for each $a$. Since we took initial configuration $(\vr_0,s_0)$ in such a way that $ {\rm id} \in \mathcal{M}$, $f(a, 1) = C_0$, so there is one solution $\det \nabla X(a) = 1$. To see that there are no more, we compute 
	\be 
	\partial_\mu f(a,\mu) = \vr_0(a) \partial_\vr \widetilde{P}(\vr_0(a) \mu, s_0(a)) = \frac{\widetilde{Q}(\vr_0(a) \mu, s_0(a))}{\mu} > 0,
	\ee 
	so $f(a,\cdot)$ is strictly increasing. Thus the only diffeomorphisms $X$ in $\M$ are those satisfying $\frac{1}{\det \nabla X(a)} \equiv 1$, that is the ones that preserve volume.  
\end{proof}
We proceed to investigate the Hessian of $U$ restricted to $\mathsf{SDiff}_{\vr_0}(M)$.
\begin{lemma}\label{hess-fluid}
	For the point $X \in \M$, the ambient Hessian of $U$ is given by 
	\be
	\hess_{X} U(\xi \circ X, \cdot) = - \frac{\nabla(Q\div\xi )}{\vr}\circ X,
	\ee
	where again
	\be 
	\widetilde{Q}(\vr, s) = \vr \partial_\vr \widetilde{P} (\vr, s), \ \quad \
	Q(a) = \widetilde{Q}(\vr(a),s(a)).
	\ee
	In particular, if $\widetilde{Q}>0$ it is positive definite on $N_X \mathsf{SDiff}_{\vr_0}(M)$.
\end{lemma}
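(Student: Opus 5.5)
We follow the pattern of Lemma \ref{hessthread}: differentiate the gradient formula $\grad_{\vr_0}U[X^\ve] = \frac{\nabla P_{X^\ve}}{\vr_{X^\ve}}\circ X^\ve$ along a variation $X^\ve$ with $\frac{\rmd}{\rmd\ve}|_{\ve=0}X^\ve = \xi\circ X$, at a point $X\in\M$. The Hessian at a critical point is then simply the covariant $\ve$-derivative of the gradient field. Because the metric is the pointwise weighted $L^2$ metric, this derivative is the pointwise Levi-Civita derivative along $X^\ve$.

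Write the gradient as $G^\ve = (\vr_\ve^{-1}\nabla P_\ve)\circ X^\ve$. Its covariant derivative splits into three kinds of terms.
\begin{itemize}
\item Terms in which the $\ve$-derivative falls on the composition, giving $\nabla_\xi\big(\vr^{-1}\nabla P\big)\circ X$.
\item Terms in which the derivative falls on $\vr_\ve^{-1}$.
\item The term $\vr^{-1}\nabla\big(\frac{\rmd}{\rmd\ve}P_\ve\big)\circ X$.
\end{itemize}
Since $X\in\M$, the pressure $P$ is spatially constant, so $\nabla P\equiv 0$. This kills every term in which $\nabla P$ appears undifferentiated in $\ve$: the first two kinds vanish, along with any curvature or connection corrections. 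Only the third term survives.

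To compute it, we use Lemma \ref{rhosvar} exactly as in the computation of $\dot C(\ve)$ in the preceding lemma:
\[
\frac{\rmd}{\rmd\ve}\Big|_{\ve=0}P_{X^\ve} = -\partial_\vr\tilde P\,\div(\vr\xi) - \partial_s\tilde P\langle\nabla s,\xi\rangle = -Q\div\xi - \langle\nabla P,\xi\rangle = -Q\div\xi.
\]
Substituting gives $\hess_X U(\xi\circ X,\cdot) = -\frac{\nabla(Q\div\xi)}{\vr}\circ X$, which is the claimed formula. One caveat: $\nabla_\xi$ of a vector field that vanishes identically is zero, so the covariant-versus-ordinary distinction is harmless here. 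This cancellation is the one point I would check carefully, including the boundary tangency of $\xi$. Relatedly, the step I expect to need the most care is the bookkeeping of the right-translated metric, since $\xi$ is Eulerian while the variation is Lagrangian. Working at a critical point sidesteps this, because every transport term multiplies $\nabla P=0$.

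For positivity, we pair the Hessian with itself on normal vectors, using $\vr\,\rmd a = X_*(\vr_0\,\rmd a)$:
\[
\hess_XU(\xi\circ X,\xi\circ X) = -\int_M \langle\nabla(Q\div\xi),\xi\rangle\,\rmd a = \int_M Q(\div\xi)^2\,\rmd a .
\]
The boundary term vanishes because $\xi$ is tangent to $\partial M$. By Proposition \ref{tangentnormal}, a normal vector has $\xi=\nabla p/\vr$, so the form equals $\int_M Q\big(\div(\vr^{-1}\nabla p)\big)^2 \ge 0$. Suppose this vanishes with $Q>0$. Then $\div(\vr^{-1}\nabla p)=0$ with the Neumann condition inherited from the tangency of $\xi$. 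Multiplying by $p$ and integrating gives $\int_M \vr^{-1}|\nabla p|^2 = 0$, so $\nabla p = 0$ and $\xi=0$. Hence the Hessian is positive definite on $N_X\mathsf{SDiff}_{\vr_0}(M)$.
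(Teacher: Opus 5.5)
Your proposal is correct and follows the paper's proof essentially step by step. You differentiate $\frac{\nabla P}{\vr}\circ X^\ve$ at a point where $\nabla P\equiv 0$, so only $\vr^{-1}\nabla\big(\frac{\rmd}{\rmd\ve}P_{X^\ve}\big)$ survives; you compute that variation as $-Q\div\xi$ via Lemma \ref{rhosvar}; and you then evaluate the quadratic form on normal vectors $\nabla p/\vr\circ X$ to obtain $\int_M Q\,(\div(\vr^{-1}\nabla p))^2$. Your additional Neumann-problem argument, showing this vanishes only when $\nabla p=0$, supplies the strict positivity that the paper simply asserts.
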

\begin{proof}
	With $X \in \M$, the action of $\hess_{\vr_0} U$ on $\xi \circ X \in T_X \mathsf{Diff}_{\vr_0}(M)$ is given by 
	\begin{align}
		\hess_{X} U (\xi \circ X, \cdot) &= \frac{\rmd}{\rmd \ve }\bigg|_{\ve = 0}\grad_{\vr_0} U[X^\ve]  = \frac{\rmd}{\rmd \ve }\bigg|_{\ve = 0}\bigg( \frac{\nabla P}{\vr}\bigg)\circ X\\
		&=\cancel{\xi\circ X\cdot \nabla\left(\frac{\nabla P}{\vr}\right)\circ X} +\frac{\nabla \frac{\rmd}{\rmd \ve }\bigg|_{\ve = 0} \widetilde{P}(\vr^\ve,s^\ve)}{\vr} \circ X- \cancel{\bigg(\frac{\rmd}{\rmd \ve }\bigg|_{\ve = 0}\vr^\ve\bigg)  \frac{\nabla P}{\vr^2}} \circ X\\
		&= -\frac{\nabla \Big(\div( \xi \vr) \partial_\vr \widetilde{P}(\vr,s) +\langle\nabla s,\xi \rangle\partial_s \widetilde{P}(\vr,s)\Big)}{\vr}\circ X\\
		&= -\frac{\nabla \Big(\cancel{\langle \nabla P, \xi\rangle} + Q \div \xi \Big)}{\vr}\circ X = - \frac{\nabla (Q \div \xi)}{\vr} \circ X.
	\end{align}
	From Proposition \ref{tangentnormal}, normal spaces of $\mathsf{SDiff}_{\vr_0}(M)$ are given by the $\nabla\varphi/\vr \circ X$. Plugging it in,
	\be 
	\hess_{X} U \bigg(\frac{\nabla\varphi}{\vr} \circ X, \frac{\nabla\varphi}{\vr}\circ X\bigg) = -\int_M \bigg\langle \frac{\nabla (Q \div\frac{\nabla\varphi}{\vr})}{\vr}, \frac{\nabla\varphi}{\vr}\bigg\rangle\vr = \int_M Q \bigg(\div \frac{\nabla\varphi}{\vr} \bigg)^2 > 0.
	\ee 
\end{proof}
With that we conclude that potential $U$ is constraining to $\mathsf{SDiff}_{\vr}(M)$. We proceed to investigate the behavior of oscillations around $\mathsf{SDiff}_{\vr}(M)$; from the Theorem \ref{maintheorem}, this requires analysis of the spectral decomposition of $\hess_{X} U \big|_{N_X\mathsf{SDiff}_{\vr_0}}(M)$.  From Proposition \ref{tangentnormal}, the eigenproblem for $\hess_{X} U \big|_{N_X\mathsf{SDiff}_{\vr_0}}(M)$ is written as  
\begin{align}
	-\frac{1}{\vr}\nabla \bigg(Q \div \bigg(\frac{1}{\vr}\nabla v\bigg)\bigg)\circ X &=  \lambda \frac{1}{\vr}\nabla v \circ X\qquad \iff \qquad  \L_X v = \lambda v + const,
\end{align}
where the acoustic wave operator \eqref{awo} is 
$\L_X  = - Q \div \bigg(\frac{1}{\vr} \nabla \cdot\bigg).$
Note that the constant in the eigenproblem does not affect the normal vector $\tfrac{1}{\vr}\nabla v \circ X$, so we can choose a representative $v$ for which the constant is zero. Analogously, zero eigenvalues of $\L_X$ correspond to zero eigenvectors of Hessian, and will be subsequently discarded. 
We thus arrive at proposition:
\begin{prop}\label{EulerTakensPotential}
	The Takens limit system for potential $U$ is given by incompressible Euler equation subject to additional potential force
	\be 
	V[X] = \sum c_i \sqrt{\lambda_i[X]},
	\ee 
	where 
	$\lambda_i[X]$ are non-zero eigenvalues of operator $\L_X$ and $c_i$ are constants given by the ill-preparedness of initial data
	\be \label{cidef}
	c_i = \frac{1}{2\lambda_i^{3/2}} \bigg(\int_M v_i \div V_0 \bigg)^2 + \frac{\lambda_i^{3/2}}{2}\bigg(\int_M  \frac{v_i  \varphi}{Q} \bigg)^2.
	\ee
\end{prop}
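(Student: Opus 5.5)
The plan is to reduce to Theorem \ref{maintheorem} exactly as in the thread case (Proposition \ref{thread_ci}). By the preceding lemmas, the identity component of the critical set $\M$ is $\mathsf{SDiff}_{\vr_0}(M)$, and $U$ is constraining to it, since $\hess_X U$ is positive on the normal space by Lemma \ref{hess-fluid}. It therefore remains to do two things. First, identify the spectral decomposition of $\hess_X U|_{N_X\mathsf{SDiff}_{\vr_0}(M)}$ in terms of $\L_X$. Second, evaluate the general action formula \eqref{constantsci} for the data $X_0=\mathrm{id}$, $X_{0N}=\frac{\nabla\varphi}{\vr_0}$, $V_0$.

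\textbf{Step 1: eigenvectors.} By Proposition \ref{tangentnormal} and the computation just before the statement, the eigenvectors of the restricted Hessian are $\frac{1}{\vr}\nabla v_i\circ X$, where $\L_X v_i=\lambda_i v_i$ and $\lambda_i\neq 0$. I would note that $\L_X$ is self-adjoint and nonnegative on $L^2_{1/Q}$: indeed $\int \frac{(\L_X v) w}{Q} = \int \frac{\langle\nabla v,\nabla w\rangle}{\vr}$, with no boundary terms because the normal fields are tangent to $\partial M$. Its kernel is the constants. Hence the eigenfunctions with $\lambda_i\neq0$ satisfy $\int v_i/Q=0$, and they span the normal space modulo constants. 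In the same way, $\hess_X U$ on normal vectors corresponds to $\L_X$ on $\sigma$, so the homogenized potential is $\sum c_i\sqrt{\lambda_i[X]}$.

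\textbf{Step 2: norms and projections.} I normalize $\int v_i^2/Q=1$. Then
\be
\Big\|\tfrac{\nabla v_i}{\vr}\circ X\Big\|_{\vr_0}^2=\int_M\frac{|\nabla v_i|^2}{\vr}=\int_M \frac{v_i\,\L_X v_i}{Q}=\lambda_i .
\ee
For the velocity component, integrating by parts with the boundary term vanishing gives
\be
\Big(\tfrac{\nabla v_i}{\vr},V_0\Big)_{\vr_0}=\int_M\langle\nabla v_i,V_0\rangle=-\int_M v_i\div V_0 ,
\ee
so $|\PP_iV_0|^2=\lambda_i^{-1}\big(\int v_i\div V_0\big)^2$. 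For the displacement component,
\be
\Big(\tfrac{\nabla v_i}{\vr},\tfrac{\nabla \varphi}{\vr}\Big)_{\vr_0}=\int_M\frac{\langle\nabla v_i,\nabla\varphi\rangle}{\vr}=\int_M\frac{\varphi\,\L_X v_i}{Q}=\lambda_i\int_M\frac{\varphi v_i}{Q},
\ee
so $|\PP_iX_{0N}|^2=\lambda_i\big(\int \varphi v_i/Q\big)^2$.

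\textbf{Step 3: evaluate the actions.} Substituting into $c_i=\frac{|\PP_iV_0|^2+\lambda_i|\PP_iX_{0N}|^2}{2\sqrt{\lambda_i}}$ gives the two terms in \eqref{cidef}. The powers come out as $\lambda_i^{-1}\cdot\lambda_i^{-1/2}=\lambda_i^{-3/2}$ for the velocity term and $\lambda_i\cdot\lambda_i\cdot\lambda_i^{-1/2}=\lambda_i^{3/2}$ for the displacement term.

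The main obstacle is bookkeeping rather than analysis. I must check that the additive constant in the eigenproblem $\L_Xv=\lambda v+\mathrm{const}$ can be removed consistently with the normalization. This works because $\int \L_X v/Q=0$ forces the constant to vanish when $\lambda\neq0$. I must also confirm that all boundary terms drop out, using tangency of $V_0$ and of the normal fields to $\partial M$. The infinite-dimensional applicability of Theorem \ref{maintheorem}, including simplicity and non-resonance, is taken as a standing hypothesis, in line with the paper's stated heuristic stance.
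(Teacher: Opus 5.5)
Your proposal is correct and follows essentially the same route as the paper: you identify the normal eigenvectors as $\frac{1}{\vr}\nabla v_i$ with $\L_X v_i=\lambda_i v_i$, normalize the projections using $\int_M |\nabla v_i|^2/\vr_0=\int_M v_i\,\L v_i/Q=\lambda_i$, and integrate by parts. Your minus sign in $\int_M\langle\nabla v_i,V_0\rangle=-\int_M v_i\div V_0$, which the paper omits, is immaterial because the term is squared. One small imprecision: $\int_M \L_X v/Q=0$ forces the additive constant to vanish only once you fix the representative with $\int_M v/Q=0$ (equivalently, shift $v$ by the constant divided by $\lambda$), which is exactly the paper's ``choose a representative'' step.
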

\begin{proof}[Computation of $c_i$]
	We take $X_0 = id$, $ X_{0N} = \frac{\nabla \varphi}{\vr_0}$. 
	We recall that 
	\be 
	c_i = \frac{|\mathbf{P}_i({\rm id}) V_0|^2 +  \lambda_i({\rm id})|\mathbf{P}_i({\rm id}) X_{0N}|^2}{2\sqrt{\lambda_i({\rm id})}},
	\ee 
	where $\mathbf{P}_i(X)$ are $L^2_{\vr_0}$ projectors onto eigenspaces of $\hess_{\vr_0} U_{X}.$ As computed above, $(\lambda, \frac{\nabla v}{\vr_0})$ is an eigenpair of $\hess_{\vr_0} U \big|_{N_{id}\mathsf{SDiff}_{\vr_0}}(M)$ if and only if 
	$(\lambda, v)$ is an eigenpair of $\L_{id}$. 
	The action constants $c_i$ are thus given by 
	\begin{align}
	c_i &= \frac{|\mathbf{P}_i({\rm id}) V_0|^2 +  \lambda_i({\rm id})|\mathbf{P}_i({\rm id}) X_{0N}|^2}{2\sqrt{\lambda_i({\rm id})}}=\frac{1}{2\sqrt{\lambda_i}}\int_M |\mathbf{P}_i V_0(a)|^2 \vr_0(a) \rmd a + \frac{\sqrt{\lambda_i}}{2}\int_M |\mathbf{P}_i X_{0N}(a)|^2 \vr_0(a)\rmd a\\
	&=\int_M\left[ \frac{1}{2\sqrt{\lambda_i}}\bigg|\frac{\int_M \langle \frac{\nabla v_i(b)}{\vr_0(b)}, V_0(b) \rangle \vr_0(b) \rmd b}{\int_M \big|\frac{\nabla v_i(b)}{\vr_0(b)}\big|^2 \vr_0(b) \rmd b} \frac{\nabla v_i(a)}{\vr_0(a)} \bigg|^2  + \frac{\sqrt{\lambda_i}}{2}\int_M \bigg|\frac{\int_M \langle \frac{\nabla v_i(b)}{\vr_0(b)}, \frac{\nabla \varphi(b)}{\vr_0(b)} \rangle \vr_0(b) \rmd b}{\int_M \big|\frac{\nabla v_i(b)}{\vr_0(b)}\big|^2 \vr_0(b) \rmd b} \frac{\nabla v_i(a)}{\vr_0(a)} \bigg|^2 \right]\vr_0(a) \rmd a\\
	&=\frac{1}{2\sqrt{\lambda_i}}\frac{(\int_M \langle \nabla v_i(b), V_0(b) \rangle\rmd b)^2}{\int_M \frac{|\nabla v_i(b)|^2}{\vr_0(b)}  \rmd b } + \frac{\sqrt{\lambda_i}}{2}\frac{(\int_M \langle \nabla v_i(b), \nabla \varphi(b) \rangle\frac{\rmd b}{\vr_0(b)})^2}{\int_M \frac{|\nabla v_i(b)|^2}{\vr_0(b)}  \rmd b }\\
	&= \frac{1}{2\lambda_i^{3/2}}\bigg(\int_M \langle \nabla v_i(b), V_0(b) \rangle\rmd b\bigg)^2 + \frac{1}{2\lambda_i^{1/2}}\bigg(\int_M \langle \nabla v_i(b), \nabla \varphi(b) \rangle\frac{\rmd b}{\vr_0(b)}\bigg)^2,
	\end{align}
where in the last equality we used that 
\be 
\int_M \frac{|\nabla v_i|^2}{\vr_0}\rmd a =  \int \frac{\L v_i v_i}{Q}\rmd a = \lambda_i.
\ee 
Continuing with the computation of the constants:
\begin{align}
		c_i &= \frac{1}{2\lambda_i^{3/2}}\bigg(\int_M \langle \nabla v_i(b), V_0(b) \rangle\rmd b\bigg)^2 + \frac{1}{2\lambda_i^{1/2}}\bigg(\int_M \langle \nabla v_i(b), \nabla \varphi(b) \rangle\frac{\rmd b}{\vr_0(b)}\bigg)^2\\
		&= \frac{1}{2\lambda_i^{3/2}} \bigg(\int_M v_i \div V_0 \bigg)^2 + \frac{1}{2\lambda_i^{1/2}}\bigg(\int_M  \frac{\L v_i  \varphi}{Q} \bigg)^2\\
		&= \frac{1}{2\lambda_i^{3/2}} \bigg(\int_M v_i \div V_0 \bigg)^2 + \frac{\lambda_i^{3/2}}{2}\bigg(\int_M  \frac{v_i  \varphi}{Q} \bigg)^2. 
\end{align}
This completes the derivation.
\end{proof}
\begin{remark} [Data of Métivier--Schochet]
	In \cite{MS03}, other variables are taken which results in the slightly different expression of adiabatic invariants. Authors of \cite{MS03} quantify ill-preparedness of initial data by the deviation $q_0$ of the physical pressure from constant:
	\be 
	\ve q_0 = \widetilde{P}(X_{0*}(\vr_0 \rmd a)/\rmd a, X_{0*} s_0) - \widetilde{P}(\vr_0, s_0).
	\ee 
	Denote by $F_\ve$ the flow of vector field $\nabla\varphi/\vr_0$ at time $\ve$. With that, going between the two formulations amounts to computing
	\begin{align}
	\ve q_0 &=  \widetilde{P}(F_{\ve*}(\vr_0 \rmd a)/\rmd a, F_{\ve*} s_0) - \widetilde{P}(\vr_0, s_0) + o(\ve)\\
	&=\widetilde{P}(\vr_0 - \ve \Delta \varphi, s_0 - \ve \langle\nabla\varphi,\nabla s_0\rangle/\vr_0) - \widetilde{P}(\vr_0, s_0) + o(\ve)\\
	&=-\ve(\partial_\vr \widetilde{P}(\vr_0, s_0)\Delta\varphi + \partial_s\widetilde{P}(\vr_0, s_0)\langle\nabla\varphi,\nabla s_0\rangle/\vr_0) + o(\ve)\\
	&=-\ve(\partial_\vr \widetilde{P}(\vr_0, s_0)\Delta\varphi - \partial_\vr\widetilde{P}(\vr_0, s_0)\langle\nabla\varphi,\nabla \vr_0\rangle)/\vr_0 + o(\ve)\\
	&=-\ve\vr_0\partial_\vr \widetilde{P}(\vr_0, s_0) \div(\nabla \varphi/\vr_0)+ o(\ve)\\
	&= \ve\L_{id}\varphi + o(\ve).
	\end{align}
	Thus in terms of $q_0$, the constants $c_i$ are written as
	\begin{align}
		c_i &= \frac{1}{2\lambda_i^{3/2}} \bigg(\int_M v_i \div V_0 \bigg)^2 + \frac{\lambda_i^{3/2}}{2}\bigg(\int_M  \frac{\L_{id}v_i  \varphi}{\lambda_i Q} \bigg)^2 \\
		&= \frac{1}{2\lambda_i^{3/2}} \bigg(\int_M v_i \div V_0 \bigg)^2 + \frac{1}{2\lambda_i^{1/2}}\bigg(\int_M  \frac{v_i  \L_{id}\varphi}{Q} \bigg)^2\\
		&= \frac{1}{2\lambda_i^{3/2}} \bigg(\int_M v_i \div V_0 \bigg)^2 + \frac{1}{2\lambda_i^{1/2}}\bigg(\int_M  \frac{v_i  q_0}{Q} \bigg)^2,
	\end{align}
	in agreement with the result in \cite{MS03}.
\end{remark}
\subsection{Incompressible limit: Computation of gradients of eigenvalues of $\hess_{\vr_0} U$  restricted to normal spaces}
In this section we compute the gradients of $\lambda_i[X]$ and consequently of $V[X]$.
We will consider a variation $X^\ve$ of $X$ satisfying  $\frac{\rmd}{\rmd \ve}\Big|_{\ve=0} X^\ve = \xi\circ X$, and denote as usual
\begin{align}
\vr^\ve = \tfrac{\vr_0 }{\det\nabla X^\ve} \circ {X^\ve}^{-1}, \ s^\ve= s_0 \circ {X^\ve}^{-1}, \ Q^\ve = \widetilde{Q}(\vr^\ve, s^\ve).
\end{align}
Additionally denote by $\mathcal{L}^\ve$ operator given by
\be 
\mathcal{L}^\ve = -Q^\ve\div\Big(\frac{1}{\vr^\ve}\nabla \Big),
\ee 
and by $v_i^\ve$ and $\lambda_i^\ve$ the corresponding eigenvalues and eigenvectors:
\be\label{eigenproblem}
\mathcal{L}^\ve v_i^\ve=\lambda_i^\ve v_i^\ve.
\ee
\begin{prop}\label{gradlambda}
	Let $\lambda = \lambda_i$ be an simple eigenvalue of $\L$ with eigenvector $v = v_i$. Then
	\be
	\grad^T_{\vr_0} \lambda [X]= - \PP_{T_{X}\mathsf{SDiff}(M)}\bigg(\frac{\lambda v^2}{ \vr Q^2}\nabla Q+\nabla\Big(\frac{|\nabla v|^2}{\vr^2}\Big) \bigg)\circ X,
	\ee
	where 
	$v$ is normalized by the condition 
	\be 
	\int{v^2}\frac{\rmd x}{Q} = 1.
	\ee 
\end{prop}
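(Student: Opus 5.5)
The plan is to compute the first variation of $\lambda$ along a tangent variation $X^\ve$ and identify it as an $L^2_{\vr_0}$ pairing with $\xi\circ X$. Here $X^\ve$ is a curve in $\mathsf{SDiff}_{\vr_0}(M)$ with $\frac{\rmd}{\rmd\ve}\big|_{\ve=0}X^\ve=\xi\circ X$ and $\div\xi=0$. The tangential gradient is then this pairing projected by $\PP_{T_X\mathsf{SDiff}(M)}$. Such a projection is needed because $\lambda$ is only defined on $\M$, or equivalently because we only test against divergence-free $\xi$.

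The subtlety is that $\L^\ve=-Q^\ve\div\big(\tfrac{1}{\vr^\ve}\nabla\,\cdot\big)$ is self-adjoint in $L^2(\rmd x/Q^\ve)$, and this Hilbert space itself moves with $\ve$. I would therefore not apply the Hellmann--Feynman lemma \ref{HF1} with a fixed $H$ naively. Instead I would use the Rayleigh quotient characterization. Integration by parts, with $\xi$ and the relevant fields tangent to $\partial M$, gives
\be
\lambda^\ve=\frac{\int_M |\nabla v^\ve|^2\,\frac{\rmd x}{\vr^\ve}}{\int_M (v^\ve)^2\,\frac{\rmd x}{Q^\ve}}.
\ee
Since $v$ is a critical point of the quotient at $\ve=0$ (normalized by $\int v^2/Q=1$), the variation of $v^\ve$ does not contribute. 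Only the explicit dependence on $\vr^\ve$ and $Q^\ve$ survives, giving
\be
\frac{\rmd}{\rmd\ve}\Big|_{\ve=0}\lambda^\ve=-\int_M \frac{|\nabla v|^2}{\vr^2}\,\delta\vr\,\rmd x+\lambda\int_M\frac{v^2}{Q^2}\,\delta Q\,\rmd x.
\ee
Next I would compute $\delta\vr$ and $\delta Q$ with Lemma \ref{rhosvar}. Because $\div\xi=0$, we have $\delta\vr=-\div(\vr\xi)=-\langle\nabla\vr,\xi\rangle$ and $\delta s=-\langle\nabla s,\xi\rangle$. Hence by the chain rule $\delta Q=\partial_\vr\widetilde Q\,\delta\vr+\partial_s\widetilde Q\,\delta s=-\langle\nabla Q,\xi\rangle$: both $Q$ and $\vr$ are simply transported.

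Substituting, the second term becomes $-\int \lambda \frac{v^2}{Q^2}\langle\nabla Q,\xi\rangle=-\int\big\langle\frac{\lambda v^2}{\vr Q^2}\nabla Q,\xi\big\rangle\vr$, which is already in $\vr$-weighted form. For the first term, $\int\frac{|\nabla v|^2}{\vr^2}\langle\nabla\vr,\xi\rangle=\int\frac{|\nabla v|^2}{\vr^2}\div(\vr\xi)$, again using $\div\xi=0$. Integrating by parts with $\xi$ tangent to the boundary, this equals $-\int\big\langle\nabla\big(\tfrac{|\nabla v|^2}{\vr^2}\big),\xi\big\rangle\vr$. Altogether
\be
\frac{\rmd}{\rmd\ve}\Big|_{\ve=0}\lambda^\ve=-\Big(\Big(\frac{\lambda v^2}{\vr Q^2}\nabla Q+\nabla\frac{|\nabla v|^2}{\vr^2}\Big)\circ X,\ \xi\circ X\Big)_{\vr_0}.
\ee
Because this holds for all tangent $\xi$, the tangential gradient is the $\PP_{T_X\mathsf{SDiff}(M)}$-projection of that field, which is the claim. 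The right-translation in \eqref{rho_inner} is what converts $\int\cdot\,\vr\,\rmd x$ into $(\cdot,\cdot)_{\vr_0}$.

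The main obstacle is justifying the first step. This needs simplicity of $\lambda$, so that $\lambda^\ve$ and $v^\ve$ are differentiable in $\ve$, and it needs the stationarity argument in the moving weighted space. The Rayleigh quotient handles the latter cleanly. Discarding the constant in the eigenproblem, which arises from the normal-space description, is harmless here, since $\lambda\neq0$ forces $\int v/Q=0$ and the quotient is unaffected.
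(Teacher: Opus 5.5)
Your proof is correct and reaches the same formula as the paper by a somewhat different route.

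\textbf{Comparison with the paper.} The paper applies Lemma \ref{HF1} with $A^\ve=\L^\ve$ on $L^2(\rmd x/Q^\ve)$. It differentiates the operator $\L^\ve$ along an arbitrary $\xi\circ X\in T_X\mathsf{Diff}(M)$, so it obtains the full ambient first variation. That variation contains an extra term $\frac{1}{\vr}\nabla\big(\frac{\vr\,\partial_\vr\widetilde Q\,\lambda v^2}{Q^2}\big)$, which is then dropped as normal to $\mathsf{SDiff}(M)$. You instead differentiate the Rayleigh quotient, so only the weights $1/\vr$ and $1/Q$ move. You also impose $\div\xi=0$ from the start, so $\vr$ and $Q$ are simply transported and the normal term never appears.

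\textbf{What each route buys.} Your computation is shorter. The paper's computation yields the genuine ambient gradient. Your representative differs from it by a normal field. That is harmless for the projected statement, but it would matter in a second-variation computation, where $\PP_T(\delta\PP_T)$ acts on the normal part (cf.\ Appendix \ref{hessian-appendix}).

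\textbf{Two small corrections.} First, your stated reason for avoiding Lemma \ref{HF1} is unfounded. That lemma is formulated for $\ve$-dependent inner products, and its proof only pairs quantities at the same $\ve$. In substance it is the same envelope identity you use. Second, the Rayleigh quotient is not invariant under adding constants to $v$, since the denominator changes. So the point of your last remark should be that you must take the representative with $\L v=\lambda v$ exactly. That representative has $\int v/Q=0$, and it is a critical point of the quotient on $H^1$ because $\partial_n v=0$.
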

\begin{proof}
	Applying \ref{HF1} with $H = H^2(M)$ endowed with $L^2_{1/Q^\ve}$-weighted inner products and $A^\ve = \L^\ve$ we obtain
		\be\label{dlambda}
		\frac{\rmd}{\rmd \ve}\bigg|_{\ve = 0} \lambda^\ve = \int v\bigg(\frac{\rmd}{\rmd \ve}\bigg|_{\ve = 0}\mathcal{L}^\ve\bigg)v \frac{\rmd x}{Q}.
		\ee
	With this identity in hand, we proceed taking the variation. To that end, we have 
	\begin{align}
		\frac{\rmd}{\rmd \ve}\bigg|_{\ve = 0} \mathcal{L}^\ve &= - \frac{\rmd}{\rmd \ve}\bigg|_{\ve = 0} Q^\ve \div\bigg(\frac{1}{\vr}\nabla\bigg) + Q \div\bigg(\frac{\rmd}{\rmd \ve}\bigg|_{\ve = 0}\vr^\ve \frac{1}{\vr^2} \nabla\bigg)\\
		&= - \bigg({\partial_\vr \widetilde{Q}}\frac{\rmd}{\rmd \ve}\bigg|_{\ve = 0} \vr^\ve + {\partial_s \widetilde{Q}} \frac{\rmd}{\rmd \ve}\bigg|_{\ve = 0} s^\ve\bigg)\div\bigg(\frac{1}{\vr}\nabla\bigg) + Q \div\bigg(\frac{\rmd}{\rmd \ve}\bigg|_{\ve = 0}\vr^\ve \frac{1}{\vr^2} \nabla\bigg)\\
		&= ({\partial_\vr \widetilde{Q}}\div(\vr \xi) + {\partial_s \widetilde{Q}} \langle\xi, \nabla s\rangle) \div\bigg(\frac{1}{\vr}\nabla\bigg) - Q \div\bigg(\div(\vr\xi) \frac{1}{\vr^2} \nabla\bigg)\\
		&= (\vr {\partial_\vr \widetilde{Q}}\div\xi + \langle\xi,\nabla Q\rangle)\div\bigg(\frac{1}{\vr}\nabla\bigg) - Q \div\bigg(\div(\vr\xi) \frac{1}{\vr^2} \nabla\bigg).
	\end{align}
	Applying it to $v$ and using \eqref{eigenproblem},
	\begin{align}
		\frac{\rmd}{\rmd \ve}\bigg|_{\ve = 0} \mathcal{L}^\ve v &= (\vr {\partial_\vr \widetilde{Q}}\div\xi + \langle\xi,\nabla Q\rangle)\div\bigg(\frac{1}{\vr}\nabla v\bigg) - Q \div\bigg(\div(\vr\xi) \frac{1}{\vr^2} \nabla v\bigg)\\
		&= -(\vr {\partial_\vr \widetilde{Q}}\div\xi + \langle\xi,\nabla Q\rangle)\frac{\lambda v}{Q} - Q \div\bigg(\div(\vr\xi) \frac{1}{\vr^2} \nabla v\bigg).
	\end{align}
	Hence formula \eqref{dlambda} provides
	\begin{align}
		\frac{\rmd}{\rmd \ve}\bigg|_{\ve = 0} \lambda^\ve&= \int -(\vr {\partial_\vr \widetilde{Q}}\div\xi + \langle\xi,\nabla Q\rangle)\frac{\lambda v^2}{Q^2} - v \div\bigg(\div(\vr\xi) \frac{1}{\vr^2} \nabla v\bigg) \rmd x\\
		&=\int \bigg\langle \xi,\nabla\frac{\vr {\partial_\vr \widetilde{Q}}\lambda v^2}{Q^2} - \frac{\lambda v^2}{Q^2}\nabla Q -\vr\nabla\Big(\frac{|\nabla v|^2}{\vr^2}\Big)\bigg\rangle  \rmd x\\
		&= \bigg(\xi \circ X, \bigg(\frac{1}{\vr}\nabla\frac{\vr {\partial_\vr \widetilde{Q}}\lambda v^2}{Q^2} - \frac{\lambda v^2}{ \vr Q^2}\nabla Q-\nabla\Big(\frac{|\nabla v|^2}{\vr^2}\Big) \bigg)\circ X\bigg)_{\vr_0}.
	\end{align}
	Since $\xi \circ X$ is tangential to $\mathsf{SDiff}(M)$ and the first term is normal to it, this concludes the proof.  
\end{proof}

\begin{cor}
	Gradient of potential $V$ is given by 
	\be
	\grad^T_{\vr_0} V[X] =  - \PP_{T_{X}\mathsf{SDiff}(M)}\bigg( \frac{K_1}{ \vr Q^2}\nabla Q+\nabla\Big(\frac{K_2}{\vr^2}\Big) \bigg)\circ X,
	\ee 
	where 
	\be 
	K_1 = \sum_i \frac{c_i}{2}\sqrt{\lambda_i}v_i^2 \quad\text{and}\quad K_2 = \sum_i \frac{c_i}{2\sqrt{\lambda_i}}|\nabla v_i|^2.
	\ee 
	Takens limit system written in Eulerian formulation is 
	\be\label{takens-fluid}
	\begin{cases}
		\partial_t u + u\cdot\nabla u = - \frac{\nabla p}{\vr} + \frac{K_1}{ \vr Q^2}\nabla Q+\nabla\Big(\frac{K_2}{\vr^2}\Big),\\
		\partial_t \vr + u\cdot\nabla \vr = 0,\\
		\partial_t s + u\cdot\nabla s = 0,\\
		\div u = 0,
	\end{cases}
	\ee 
	where $p$ is determined to enforce incompressibility. 
\end{cor}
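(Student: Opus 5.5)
The gradient formula follows from Proposition \ref{gradlambda} by the chain rule. Since $V[X]=\sum_i c_i\sqrt{\lambda_i[X]}$ with the $c_i$ fixed constants,
\be
\grad^T_{\vr_0}V=\sum_i \frac{c_i}{2\sqrt{\lambda_i}}\grad^T_{\vr_0}\lambda_i .
\ee
We insert the expression of Proposition \ref{gradlambda} into each term. The $\nabla Q$ term then carries the coefficient
\be
\frac{c_i}{2\sqrt{\lambda_i}}\,\lambda_i v_i^2=\frac{c_i}{2}\sqrt{\lambda_i}\,v_i^2 ,
\ee
which sums to $K_1$. The $\nabla(|\nabla v_i|^2/\vr^2)$ term carries $\frac{c_i}{2\sqrt{\lambda_i}}|\nabla v_i|^2$, and summing under the gradient gives $\nabla(K_2/\vr^2)$. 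All terms sit inside the common projector $\PP_{T_X\mathsf{SDiff}(M)}$, which is linear, so the sum passes through it. Simplicity of each $\lambda_i$ with $c_i\neq0$ is needed so that Proposition \ref{gradlambda} applies termwise. The infinite sums are treated formally, as the paper announces.

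For the Takens system, start from \eqref{limitsystem} with $W=0$: $\ddot X+\grad_{\vr_0}V\in N_X\mathsf{SDiff}_{\vr_0}(M)$. Decompose the ambient gradient of $V$ into tangential and normal parts; the normal part can be absorbed into the normal reaction. By Proposition \ref{tangentnormal}, the normal reaction has the form $-\frac{\nabla p}{\vr}\circ X$. This gives
\be
\ddot X=-\frac{\nabla p}{\vr}\circ X+\Big(\frac{K_1}{\vr Q^2}\nabla Q+\nabla\big(\tfrac{K_2}{\vr^2}\big)\Big)\circ X,
\ee
where $p$ now also absorbs the normal component of the forcing that the projector removed.

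To pass to Eulerian form, set $u=\dot X\circ X^{-1}$ and proceed as in Proposition \ref{compeuler}. The chain rule gives $(\partial_t u+\nabla_u u)\circ X=\ddot X$, which yields the momentum equation. The constraint $X\in\mathsf{SDiff}$ gives $\div u=0$ by Proposition \ref{tangentnormal}. Lemma \ref{rhosvar} with $(\ve,\xi)$ replaced by $(t,u)$ gives $\partial_t\vr+\div(\vr u)=0$, which reduces to transport because $\div u=0$, and also gives transport of $s$.

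The only delicate point is justifying the normal/tangential bookkeeping: showing that $\ddot X+\grad^T V\in N_X$ is equivalent to the stated system with some function $p$. This amounts to the Helmholtz-type characterization of the normal space in Proposition \ref{tangentnormal}: any vector field of the form $\frac1\vr\nabla(\cdot)$ differs from its tangential projection by an element $\frac{\nabla q}{\vr}$. The tangential projection of the forcing term therefore equals the full forcing term minus such an element. I expect no further obstacles beyond this.
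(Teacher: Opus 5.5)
Your proposal is correct and follows the same route as the paper. The paper applies Proposition~\ref{gradlambda} through the chain rule $\grad^T\sqrt{\lambda_i}=\frac{1}{2\sqrt{\lambda_i}}\grad^T\lambda_i$, then passes to Eulerian form as in Proposition~\ref{compeuler}, absorbing the normal part of the forcing into $p$ via Proposition~\ref{tangentnormal}; you spell out the coefficient and projection bookkeeping it leaves implicit. One wording fix in your last paragraph: the fact you need is that for an \emph{arbitrary} vector field $w$ one has $w-\PP_T w=\frac{\nabla q}{\vr}$ for some $q$, since the forcing $\frac{K_1}{\vr Q^2}\nabla Q+\nabla(K_2/\vr^2)$ is not itself of the form $\frac{1}{\vr}\nabla(\cdot)$.
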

\begin{proof}
	The formula for the gradient of the potential is immediate from lemma \ref{gradlambda}. The Eulerian form follows analogously to the proof of \ref{compeuler}.
\end{proof}
\begin{proof}[Proof of theorem \ref{euler-theorem}]
	We only need to rewrite the additional forcing term appearing in \eqref{takens-fluid} in divergence form. This is a simple calculation:
	\begin{align}
		\vr\Big(\frac{K_1}{ \vr Q^2}\nabla Q+\nabla\frac{K_2}{\vr^2}\Big) &= -K_1\nabla\frac{1}{Q}+\vr\nabla\frac{K_2}{\vr^2} \\
		&= -\nabla\frac{K_1}{Q} + \frac{1}{Q}\nabla K_1 + \nabla\frac{K_2}{\vr} - \frac{K_2}{\vr^2}\nabla \vr\\
		&= -\nabla\frac{K_1}{Q} + \frac{1}{Q}\nabla K_1 + \nabla\frac{K_2}{\vr} + K_2\nabla \frac{1}{\vr}\\
		&= \nabla\Big(2\frac{K_2}{\vr} -\frac{K_1}{Q}\Big) + \frac{1}{Q}\nabla K_1 - \frac{1}{\vr}\nabla K_2\\
		&= \nabla\Big(2\frac{K_2}{\vr} -\frac{K_1}{Q}\Big) + \sum_i \frac{c_i}{2\sqrt{\lambda_i}} \Big( \frac{\lambda_i \nabla v_i^2}{Q}  - \frac{\nabla|\nabla v_i|^2}{\vr} \Big).
	\end{align}
	The terms inside the sum are computed from the eigenfunction equations as follows. Multiplying them by $\nabla v_i$, dividing by $Q$ and integrating by parts, we obtain
	\be 
	- \div\Big(\frac{\nabla v_i \otimes \nabla v_i}{\vr}\Big) + \frac{\nabla |\nabla v_i|^2}{2\vr} = \frac{\lambda_i \nabla v_i^2}{2Q},
	\ee 
	so the forcing term is given by
	\be
	\vr\Big(\frac{K_1}{ \vr Q^2}\nabla Q+\nabla\frac{K_2}{\vr^2}\Big) = \nabla\Big(2\frac{K_2}{\vr} -\frac{K_1}{Q}\Big) - \sum_i \frac{c_i}{\sqrt{\lambda_i}}  \div\Big(\frac{\nabla v_i \otimes \nabla v_i}{\vr}\Big).
	\ee
	After absorbing the gradient terms into pressure, the full equations \eqref{takens-fluid} are then written in conservation form as 
	\be
	\begin{cases}
		\partial_t (\vr u) + \div(\vr u \otimes u) + \div \Sigma + \nabla p= 0 ,\\
		\partial_t \vr + u\cdot\nabla \vr = 0\\
		\partial_t s + u\cdot\nabla s = 0\\
		\div u = 0,
	\end{cases}
	\ee 
	with acoustical stress $\Sigma$ given by \eqref{acousticalstress}.
\end{proof} 
\begin{remark}[Force of Métivier--Schochet]
	The extra force was derived in \cite{MS03} by a different method. Translating to our notation, they write it as 
	\be 
	F^{\rm MS} =  \frac{1}{Q\vr}\nabla K_1 - \frac{1}{\vr^2}\nabla K_{2},
	\ee 
	whereas we have 
	\be
	F^{our} =\frac{K_1}{ \vr Q^2}\nabla Q+\nabla\Big(\frac{K_2}{\vr^2}\Big).
	\ee
	It is immediate that our expressions are equivalent as they differ by the term normal to $\mathsf{SDiff}_{\vr_0}(M)$:
	\begin{align}
		F^{MS} - F^{our} &= \frac{1}{Q\vr}\nabla K_1 - \frac{1}{\vr^2}\nabla K_{2} - \frac{K_1}{ \vr Q^2}\nabla Q-\nabla\Big(\frac{K_2}{\vr^2}\Big)= \frac{1}{\vr}\nabla\bigg(\frac{K_1}{Q} - \frac{2 K_2}{\vr}\bigg).
	\end{align}
\end{remark}
\begin{remark}[Ideal gas law]
	If pressure is given by the ideal gas law
	\be 
	\widetilde{P}(\vr, s) = \frac{1}{\gamma}\vr^\gamma e^{s},
	\ee 
	$\widetilde{Q}$ is computed to be
$
	\widetilde{Q} = \vr\,\partial_\vr \widetilde{P}= \gamma \widetilde{P},
$
	so $Q$ is spatially constant on $\M$.
	Thus the effective force is given by $
	\nabla\frac{K_2}{\vr^2}.$
\end{remark}
\begin{remark}
	Our result also applies to shallow water equations and thermal shallow water equations (also known as Ripa model, see \cite{HL} and references within) with flat bottom. They arise as rigid lid 2D approximations of 3D compressible Euler equations for thin layer of fluid. Mathematically, they have the form of compressible Euler with internal energy given  by 
	\be 
	e_{\rm SW}(\vr) = \frac{1}{2}\vr \quad \text{and} \quad e_{\rm TSW}(\vr,s)= \frac{1}{2}\vr\bigg(1 + \alpha s\bigg). 
	\ee 
	Unfortunately, physical meaning of $\vr$ is depth of the fluid layer, while $s$ is fluctuation of density. Here $\alpha$ is a constant stratification parameter.  Corresponding equations of state are given by 
	\be 
	\widetilde{P}_{\rm SW}(\vr) = \frac{1}{2}\vr^2 \quad \text{and} \quad \widetilde{P}_{\rm TSW}(\vr,s)=  \frac{1}{2}\vr^2\bigg(1 + \alpha s\bigg).
	\ee 
	In the shallow water case we conclude from Remark \ref{barotropic-rmk} that ``incompressible" (although in this case it is rather "rigid lid") limit results in just 2D Euler. For the thermal shallow water, 
	\be  
	Q = 2P, 
	\ee  
	similar to the ideal gas case. The force is thus again given by $	\nabla\frac{K_2}{\vr^2}$.
\end{remark}
\section{Anelastic limit}
Consider Newton's equations on $\mathsf{Diff}_{\vr_0}(M)$ subject to the potential 
\be
U[X] := \int_M \bigg[e(\vr,s)\vr + G(\vr, x )\bigg].
\ee   
The Eulerian form of the equation is the compressible Euler with the force given by $-\nabla \partial_\vr G$.
\be 
\begin{cases}\label{compeulergravity}
	\partial_t u + u\cdot\nabla u = -  \frac{\nabla P}{\vr} -\nabla \partial_\vr G\\
	\partial_t \vr + \div(\vr u) = 0\\
	\partial_t s + \nabla_u s = 0
\end{cases}
\ee 
We will consider  the geophysical context, where the considered domain is
\be 
M  = \Omega \times [0,h] \subset \R^3, \quad \Omega \subset \R^2
\ee 
and $G$ is the gravitational potential
\be 
G(\rho, x) = g \rho z.
\ee
As before, we are interested in the singular limit of stiffening the potential $U$, and would like to derive the limiting constrained model as well as Takens system of $U$. In this setting, the operation of rescaling $U$ by $1/\ve^2$ and taking $\ve \rightarrow 0$ is equivalent to sending Mach and Froude number to zero at the same rate. The result of this operation is summarized in the following theorem.
\begin{theorem} We have that
	\begin{itemize}
		\item  If the fluid is isentropic (that is $s_0$ is constant and we can treat $\tilde{P} = \tilde{P}(\vr)$), and the equation of state satisfies $\partial_{\vr} \tilde{P} > 0$, then potential $U$ is constraining to the submanifold  
		\be 
	\M = \bigg\{ X \in \mathsf{Diff}_{\vr_0}(M) \ | \ X_* \vr_0 = \bar{\vr}\bigg\},
	\ee
	where $\bar{\vr} = \bar{\vr}(z)$ is given as a unique solution of
	\be 
	\frac{\widetilde{P}'(\bar{\vr})}{\bar{\vr}}\partial_z \bar{\vr} = - g, \ \text{with} \  \int_M \bar{\vr} \rmd x = \int_M \vr_0\rmd x .
	\ee
	Eulerian form of geodesic equation on $\M$ is given by anelastic equations:
	\be 
	\begin{cases}
		\partial_t u + u\cdot\nabla u = -  \nabla p\\
		\div (\bar{\vr}(z) u) = 0.
	\end{cases}
	\ee
	\item  Suppose that the fluid is non-isentropic, the reference state is hydrostatic
	\be 
	\partial_z \widetilde{P}(\vr_0, s_0) = - g\vr_0,
	\ee
	and the equation of state satisfies
	\be
	\vr \partial_{\vr}\tilde{P} > 0, \ \partial_s \tilde{P} > 0. 
	\ee 
	Suppose additionally that the reference state has strong entropy variations ($\partial_z s_0 \neq 0$), and that the square of Brunt–Väisälä frequency is positive
	\be 
	\vr_0 = \vr_0(z), \ s_0 = s_0(z), \ N^2 := -g\bigg(\frac{\partial_z\vr_0}{\vr_0} + \frac{g}{\partial_\vr \tilde{P}(\vr_0, s_0)}\bigg) > 0.
	\ee 
	 Then potential $U$ is constraining to the identity component of the submanifold of area-preserving rearrangements of each horizontal slice:
	\be 
	\M = \bigg\{ X = (X_{\rm h}(x,y, z), z) \in \mathsf{Diff}_{\vr_0}(M) \ | \ X_{\rm{h} *} \rmd x\rmd y = \rmd x\rmd y \bigg\},
	\ee
	Eulerian form of geodesic equation on $\M$ is given by decoupled 2D Euler equations:
	\be 
	\begin{cases}
		\partial_t u_{\rm h} + u_{\rm h}\cdot\nabla_{\rm h} u_{\rm h} = -  \nabla_{\rm h} p\\
		\div_{\rm h} u_{\rm h} = 0\\
		u_z = 0\\
		\vr = \vr_0\\
		s = s_0
	\end{cases}.
	\ee
\end{itemize}
In both cases, homogenized potential of $U$ is constant on $\M$, so Takens limit systems agree with the geodesic equations on the corresponding critical submanifolds.
\end{theorem}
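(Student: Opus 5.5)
We follow the template of Section 4. First compute $\grad_{\vr_0} U$; then identify the critical set $\M$ and its tangent and normal spaces; next compute the Hessian on $\M$ restricted to $N_X\M$; finally check that its spectrum is the same at every point of $\M$. The last fact makes every homogenized potential $V=\sum c_i\sqrt{\lambda_i}$ constant on $\M$, so the Takens force $\grad^T V$ vanishes.

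\emph{Gradient and critical set.} Repeating the computation of Section \ref{compeuler-sect}, with the extra term $\int G(\vr,x)$ varied through Lemma \ref{rhosvar}, gives
\be
\grad_{\vr_0}U=\Big(\frac{\nabla P}{\vr}+\nabla\partial_\vr G\Big)\circ X .
\ee
Hence $X\in\M$ if and only if $\nabla P=-g\vr\, e_z$ (hydrostatic balance).

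In the isentropic case, $\nabla P=\widetilde P'(\vr)\nabla\vr$. The horizontal components force $\vr=\vr(z)$, and the vertical component is the stated ODE. Since $\widetilde P'>0$, the ODE has a one-parameter family of solutions, and mass conservation selects $\bar\vr$ uniquely. The argument is the monotonicity argument already used for $\mathsf{SDiff}$: a constant of integration along a path in $\M$ must be preserved.

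In the non-isentropic case, $X_*\vr_0=\vr$ and $X_*s_0=s$ must again be hydrostatic. I would show that $X$ maps level sets of $s_0$ to themselves. Assuming $\partial_z s_0\ne0$, and working along paths in the identity component, differentiation gives $\langle\xi,\nabla s\rangle=0$, that is, $\xi_z=0$. Then preservation of $\vr$ gives $\div_h\xi_h=0$. The tangent and normal spaces follow by Helmholtz decomposition: in the isentropic case $T_X\M=\{\div(\bar\vr\xi)=0\}$ with normal space $\{\nabla p\circ X\}$, and in the non-isentropic case $T_X\M$ consists of horizontal, horizontally divergence-free fields. Applying d'Alembert's principle as in \eqref{incompeulergrad} then yields the anelastic system and the layered 2D Euler system respectively.

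\emph{Hessian and its spectrum.} Differentiating the gradient along a normal direction $\xi\circ X$ gives the Hessian. The terms containing $\nabla P+g\vr e_z$ drop out on $\M$. What remains is a linear operator whose coefficients depend only on the Eulerian fields $\bar\vr(z)$ (respectively $\vr_0(z),s_0(z)$) and on $Q$, $g$, $\partial_s\widetilde P$.

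For example, in the isentropic case the formula is
\be
\hess U(\xi\circ X,\cdot)=-\nabla\Big(\frac{\widetilde P'(\bar\vr)}{\bar\vr}\div(\bar\vr\xi)\Big)\circ X,
\ee
and we restrict it to $\xi=\nabla\varphi/\bar\vr$-type normal fields. The non-isentropic case has an additional buoyancy term in $\xi_z$, with coefficient $N^2$. Positivity of $N^2$, $\partial_\vr\widetilde P$ and $\partial_s\widetilde P$ gives nondegeneracy on $N_X\M$.

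The crucial observation is that every $X\in\M$ produces the same Eulerian fields $\vr,s$. Right translation $\zeta\mapsto\zeta\circ X$ is an isometry from the $\vr$-weighted $L^2$ space of vector fields onto $T_X\mathsf{Diff}_{\vr_0}(M)$. It maps $N_{\rm id}\M$ onto $N_X\M$ and intertwines the two restricted Hessians. Therefore the eigenvalues $\lambda_i[X]$ do not depend on $X\in\M$, and $V$ is constant on $\M$.

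\emph{Main obstacle.} The delicate step is the non-isentropic critical-set identification. It requires showing that hydrostatic states reachable from the reference state are exactly horizontal rearrangements, which relies on $\partial_z s_0\ne0$ and the stratification conditions, together with checking that the normal Hessian is positive definite. Positivity should follow by writing the quadratic form as
\be
\int Q(\text{compression})^2+\int\vr_0N^2\,\xi_z^2\ \ (\text{up to weights}),
\ee
after completing the square using the hydrostatic relation. I would attempt that computation first.
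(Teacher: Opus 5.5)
\emph{Isentropic case: correct, with two slips.} Your overall route matches the paper's: gradient, hydrostatic critical set, normal Hessian, then constancy of its spectrum because all points of $\M$ carry the same Eulerian profiles. Your isentropic Hessian $-\nabla\big(\tfrac{\widetilde P'(\bar\vr)}{\bar\vr}\div(\bar\vr\xi)\big)\circ X$ is correct. It coincides with the paper's two-term formula once you use $\widetilde P'(\bar\vr)\partial_z\bar\vr=-g\bar\vr$, and it is tidier.

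\begin{itemize}
\item The normal fields are plain gradients $\nabla\varphi\circ X$, as you yourself said earlier. They are not $\nabla\varphi/\bar\vr$; that is the normal space of $\mathsf{SDiff}_{\vr_0}(M)$. With $\xi=\nabla\varphi$ your formula gives the quadratic form $\int_M\tfrac{\widetilde P'(\bar\vr)}{\bar\vr}\big(\div(\bar\vr\nabla\varphi)\big)^2\,\rmd x$. This is exactly the paper's $-g\int_M(\partial_z\bar\vr)^{-1}\big(\div(\bar\vr\nabla\varphi)\big)^2$, and it is manifestly positive.
\item Phrase the intertwining argument in Eulerian variables rather than through $N_{\mathrm{id}}\M$, since $\mathrm{id}\in\M$ only when $\vr_0=\bar\vr$.
\end{itemize}

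\emph{Non-isentropic case: a genuine gap.} The identification of $\M$ is what your constancy argument in the second case rests on, and it is not established. The step ``differentiation gives $\langle\xi,\nabla s\rangle=0$'' is circular: it assumes $s$ is unchanged along paths in $\M$, which is what must be proved.

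Hydrostatic balance only says that at every point of $\M$ the fields $P,\vr,s$ depend on $z$ alone (the last via $\partial_s\widetilde P>0$). Differentiating along a path in $\M$ therefore gives only that $\partial_z s\,\xi_z$ depends on $z$, i.e.\ $\xi_z=\xi_z(z)$. So the entropy condition by itself cannot exclude vertical restratifications $X=(X_{\rm h},X_z(z))$, which keep $\vr$ and $s$ stratified while changing their profiles.

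What rules them out is the linearized hydrostatic constraint
\[
0=-\nabla(Q\div\xi-\vr g\xi_z)-g\div(\vr\xi)e_z .
\]
\begin{itemize}
\item Its horizontal components give $Q\div\xi-\vr g\xi_z=F(z)$. Its vertical component then gives $g\div(\vr\xi)=-F'(z)$.
\item Hence $\vr\div_{\rm h}\xi_{\rm h}=\div(\vr\xi)-\partial_z(\vr\xi_z)$ depends only on $z$. Integrating over a slice, using tangency on $\partial\Omega$, gives $\div_{\rm h}\xi_{\rm h}=0$. This, rather than ``preservation of $\vr$'', is the source of the horizontal condition.
\item The vertical component now reduces to $\partial_z(Q\partial_z\xi_z)=0$, so $Q\partial_z\xi_z=C$. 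Tangency at $z=0$ and $z=h$ gives $\xi_z=C\int_0^z Q^{-1}$ with $C\int_0^h Q^{-1}=0$, and $Q>0$ forces $C=0$.
\end{itemize}

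Only after this ODE-and-boundary argument do you know three things. The identity component of $\M$ consists of slice-wise area-preserving maps. On all of it $(\vr,s)=(\vr_0,s_0)$. Hence the normal Hessian, and with it $V$, is the same at every point of $\M$.
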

It is interesting to contrast two scenarios: in the isentropic regime, we obtain anelastic equations which can be thought of as 3D incompressible Euler with modified incompressibility condition, while in the non-isentropic regime strong entropy stratification prevents vertical flow and results in just 2D Euler. Between these two regimes lie Boussinesq equations, which are obtained if one scales density/entropy stratification as $\vr_0, s_0 = {\rm const} + O(\ve^2)$. We do not pursue this route in detail here, as the Boussinesq buoyancy terms are non-singular and can be obtained in the usual way from the expansion of pressure.
\begin{proof}
Independent of the equation of state we have 
\be 
\grad_{\vr_0} U[X] = \bigg(\frac{\nabla P}{\vr} + \nabla \partial_\vr G\bigg)\circ X.
\ee 
The critical set $\M$ of $U$ is thus given by the force balance:
\be \label{anelasticforcebalance}
\nabla P = - \vr \nabla \partial_\vr G = - \vr g e_z.
\ee 
We now split the discussion into two cases, done in the next subsections.
\end{proof}
\subsection{Isentropic fluid}
We first consider barotropic fluid, with the equation of state given by $\widetilde{P} = \widetilde{P}(\vr)$. In this case, it is easy to check that the critical condition fixes elements of $\M$ to have a prescribed density stratification.
\begin{prop}\label{M-anelasticiso}
	For barotropic equation of state with $\widetilde{P}' > 0$, the connected component of critical set $\M$ of potential $U$ is given by
	\be
	\M = \bigg\{ X \in \mathsf{Diff}_{\vr_0}(M) \ | \ X_* \vr_0 = \bar{\vr}\bigg\},
	\ee
	where $\bar{\vr} = \bar{\vr}(z)$ is given as a unique solution of
	\be 
	\frac{\widetilde{P}'(\bar{\vr})}{\bar{\vr}}\partial_z \bar{\vr} = - g, \ \text{with} \  \int_M \bar{\vr} \rmd x = \int_M \vr_0 \rmd x
	\ee
	its tangent and normal spaces are given by
	\begin{align}
		T_X\M = \bigg\{\xi\circ X \ | \ \div(\bar{\vr}\xi) = 0\bigg\},\qquad N_X\M &= \bigg\{\nabla \varphi \circ X \ \bigg| \ \int_{M}\varphi  \rmd  x= 0\bigg\}.
	\end{align}
\end{prop}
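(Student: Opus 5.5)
The plan is to follow the template of the incompressible case (the lemma on the connected component of $\M$ and Proposition \ref{tangentnormal}), now with the force balance \eqref{anelasticforcebalance} in place of $\nabla P=0$.

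\emph{Step 1: the critical density.} For barotropic $\widetilde P$, the force balance $\nabla \widetilde P(\vr)=-g\vr e_z$ reads $\frac{\widetilde P'(\vr)}{\vr}\nabla\vr=-g e_z$. Introduce the enthalpy $h(\vr)=\int^\vr \widetilde P'(r)/r\,\rmd r$; it is strictly increasing because $\widetilde P'>0$. The balance then becomes $\nabla(h(\vr)+gz)=0$, so $h(\vr)=C-gz$. Hence $\vr=h^{-1}(C-gz)=:\bar\vr_C(z)$ depends only on $z$ and solves the stated ODE. The constant $C$ is fixed by conservation of total mass, $\int_M\vr_X=\int_M\vr_0$. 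Since $\bar\vr_C$ is strictly increasing in $C$, this constant is unique. As in the incompressible lemma, differentiating along a path in the critical set would also show $C$ is constant on the component, but mass conservation already gives this directly. Therefore $X\in\M$ if and only if $X_*\vr_0=\bar\vr$. Conversely, every such $X$ satisfies the balance, so it is critical.

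\emph{Step 2: tangent space.} Take a curve $X^\ve\subset\M$ with derivative $\xi\circ X$. By Lemma \ref{rhosvar}, $0=\frac{\rmd}{\rmd\ve}\vr_{X^\ve}=-\div(\bar\vr\xi)$. Conversely, for any field with $\div(\bar\vr\xi)=0$ and $\xi$ tangent to $\partial M$, the flow preserves the measure $\bar\vr\,\rmd x$, so the curve stays in $\M$. This gives $T_X\M$.

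\emph{Step 3: normal space.} A vector $\eta\circ X$ is normal exactly when $0=\int_M\langle\eta\circ X,\xi\circ X\rangle\vr_0=\int_M\langle\bar\vr\eta,\xi\rangle$ for every $\xi$ with $\div(\bar\vr\xi)=0$. Substitute $w=\bar\vr\xi$; as $\xi$ ranges over the tangent space, $w$ ranges over all divergence-free fields tangent to the boundary. The condition thus says that $\bar\vr\eta$ is $L^2$-orthogonal to every such $w$, and the Helmholtz decomposition forces $\bar\vr\eta=\nabla\varphi$.

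This seems to give normal vectors of the form $\nabla\varphi/\bar\vr$, which does not match the stated form $\nabla\varphi$. The source is the metric: in \eqref{rho_inner} the weight is $\vr_0(a)\rmd a$, and pushing it forward gives $\bar\vr\,\rmd x$. One then needs $\int\langle\eta,\xi\rangle\bar\vr=0$ for all $\xi$ with $\bar\vr\xi$ divergence free. Writing $\eta=\nabla\varphi$ yields $\int\langle\nabla\varphi,\bar\vr\xi\rangle=-\int\varphi\div(\bar\vr\xi)=0$, using the boundary tangency. So the correct bookkeeping is to set $w=\bar\vr\xi$ and pair $w$ against $\eta$, with no extra factor of $\bar\vr$. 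This shows $\nabla\varphi\circ X$ is normal. The reverse inclusion follows from the decomposition of $\eta$ into a gradient plus a divergence-free part, orthogonal in $L^2(\rmd x)$. The mean-zero normalization of $\varphi$ only removes the additive constant.

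The main obstacle is exactly this Step 3 bookkeeping: tracking precisely where the weight $\bar\vr$ sits when passing from Lagrangian to Eulerian variables, and justifying the Helmholtz decomposition on the manifold with boundary.
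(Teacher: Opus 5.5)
Your proposal is correct and follows the same route as the paper. The force balance \eqref{anelasticforcebalance} forces a $z$-stratified density whose integration constant is fixed by mass conservation, Lemma \ref{rhosvar} gives $T_X\M$, and the Helmholtz decomposition gives $N_X\M$; your enthalpy/monotonicity argument and the converse inclusions are extra detail the paper leaves implicit.

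One correction to your Step 3 commentary: the first-pass slip was not caused by the metric weight but by the substitution itself. Since $\int_M\langle\bar\vr\eta,\xi\rangle\,\rmd x=\int_M\langle\eta,w\rangle\,\rmd x$ with $w=\bar\vr\xi$, it is $\eta$ (not $\bar\vr\eta$) that must be $L^2(\rmd x)$-orthogonal to divergence-free fields, which gives exactly your corrected conclusion $\eta=\nabla\varphi$.
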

\begin{proof}
Force balance \eqref{anelasticforcebalance} implies that on $\M$ pressure and density are functions of $z$ only. This in turn implies the ODE for density stratification:  
\be \label{strat}
\vr = \bar{\vr}(z), \ \text{where } \bar{\vr} \text{ solves} \ \widetilde{P}'(\bar{\vr})\partial_z \bar{\vr} = - g \bar{\vr}.
\ee 
Since we assumed that $\widetilde{P}' > 0$, the solution is unique up to determination of integration constant. The fact that total mass $\int_M \vr\rmd x$ is preserved under the action of $\mathsf{Diff}(M)$ fixes it.

Tangent vectors to $\M$ are given by those $\xi \circ X$ that do not change the density stratification to the first order so, $\xi$ has to satisfy
$0 = -\div(\bar{\vr}\xi).$
It is then immediate from Helmholtz decomposition that normal vectors are given by gradients.
\end{proof}
In the usual way, we rewrite the geodesic equation on $\M$ in the Eulerian form: 
\be 
\begin{cases}\label{anelastic}
	\partial_t u + u\cdot\nabla u = -  \nabla p\\
	\div (\bar{\vr}(z) u) = 0.
\end{cases}
\ee
The justification of taking the limit from \eqref{compeulergravity} to \eqref{anelastic} is carried out from PDE perspective in \cite{M07}; here we identify the corresponding formal Takens system by computing the critical manifold and its normal Hessian. For our formal identification, we need to check that potential $U$ is constraining to $\M$, as well as compute the potential Takens correction.
\begin{prop}
	The Hessian of $U$ on $\M$ is given by
	\be
		\hess_{\vr_0} U_X (\xi\circ X, \cdot)= \bigg(- \frac{\nabla \big(\widetilde{P}'(\bar{\vr})\div(\bar{\vr}\xi)\big)}{\bar{\vr}} - \frac{g\div(\bar{\vr}\xi)e_z}{\bar{\vr}}\bigg)\circ X.
	\ee
	In particular, its eigenvalues are constant on $\M$ and when restricted to $N_X \M$ it is positive definite.
\end{prop}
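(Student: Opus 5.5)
We follow the template of Lemma \ref{hess-fluid}. Take a variation $X^\ve$ with $\frac{\rmd}{\rmd\ve}|_{\ve=0}X^\ve=\xi\circ X$ and differentiate the gradient
\be
\grad_{\vr_0}U[X^\ve]=\Big(\frac{\nabla \widetilde{P}(\vr^\ve)}{\vr^\ve}+g e_z\Big)\circ X^\ve .
\ee
Since $X\in\M$ is critical, the transport term $\xi\cdot\nabla(\ldots)$ vanishes, because the bracket is identically zero at $X$. The gravity term $ge_z$ has zero variation. By Lemma \ref{rhosvar}, $\frac{\rmd}{\rmd\ve}\vr^\ve=-\div(\bar\vr\xi)$, so the remaining terms are
\be
-\frac{\nabla(\widetilde{P}'(\bar\vr)\div(\bar\vr\xi))}{\bar\vr}+\frac{\div(\bar\vr\xi)}{\bar\vr^2}\nabla\widetilde{P}(\bar\vr).
\ee
Using the hydrostatic balance \eqref{strat}, $\nabla\widetilde{P}(\bar\vr)=-g\bar\vr e_z$, the second term becomes $-g\div(\bar\vr\xi)e_z/\bar\vr$. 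This gives the claimed formula.

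For positivity on $N_X\M$, take $\xi=\nabla\varphi$ as in Proposition \ref{M-anelasticiso} and set $\theta=\div(\bar\vr\nabla\varphi)$. Pair with $\nabla\varphi\circ X$ in the $\vr_0$-metric, which becomes $\bar\vr\,\rmd x$ after pushing forward. Integrating by parts, using that $\nabla\varphi$ is tangent to the boundary, gives
\be
\int \widetilde{P}'(\bar\vr)\,\theta\,\div(\nabla\varphi)\,\rmd x-\int g\,\theta\,\partial_z\varphi\,\rmd x .
\ee
Now use $\bar\vr\div\nabla\varphi=\theta-\partial_z\bar\vr\,\partial_z\varphi$ and $\widetilde{P}'(\bar\vr)\partial_z\bar\vr/\bar\vr=-g$. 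The cross terms then cancel exactly, leaving
\be
\int\frac{\widetilde{P}'(\bar\vr)}{\bar\vr}\,\theta^2\,\rmd x>0 .
\ee
This quantity vanishes only if $\theta=0$, that is, only if $\nabla\varphi$ is tangent to $\M$, which forces $\nabla\varphi=0$.

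For constancy of eigenvalues, the cleanest route is to recast the eigenproblem in Eulerian terms. An eigenvector is $\nabla v\circ X$ with
\be
-\frac{1}{\bar\vr}\nabla\big(\widetilde{P}'\,\div(\bar\vr\nabla v)\big)-\frac{g}{\bar\vr}\div(\bar\vr\nabla v)\,e_z=\lambda\nabla v .
\ee
The key observation is that $\bar\vr^{-1}\nabla(\widetilde{P}'\theta)+g\theta e_z/\bar\vr=\nabla\big(\widetilde{P}'\theta/\bar\vr\big)$, using $\nabla(1/\bar\vr)=-\nabla\bar\vr/\bar\vr^2$ together with the hydrostatic relation. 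So the equation is equivalent to the scalar problem
\be
-\frac{\widetilde{P}'(\bar\vr)}{\bar\vr}\,\div(\bar\vr\nabla v)=\lambda v+\text{const}.
\ee
Its coefficients depend only on $\bar\vr(z)$ and not on $X$. Hence the spectrum is the same at every point of $\M$, so $V$ is constant there.

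The main obstacle is keeping the push-forward and weight bookkeeping straight. In particular, the cancellation must be verified carefully, since any mismatch would leave a spurious $X$-dependence or sign-indefinite term.
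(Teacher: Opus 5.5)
Your proof is correct and follows essentially the same route as the paper. It uses the same differentiation of $\grad_{\vr_0}U$ at a critical point with the hydrostatic substitution, and the same integration by parts leading to $\int \frac{\widetilde{P}'(\bar{\vr})}{\bar{\vr}}\,\theta^2\,\rmd x$; this equals the paper's $-g\int \frac{\theta^2}{\partial_z\bar{\vr}}\,\rmd x$ via $\widetilde{P}'(\bar{\vr})\partial_z\bar{\vr}=-g\bar{\vr}$, and your eigenvalue argument rests on the same fact that the operator depends on $X$ only through $\bar{\vr}$. Your identity $\bar{\vr}^{-1}\nabla(\widetilde{P}'\theta)+g\theta e_z/\bar{\vr}=\nabla(\widetilde{P}'\theta/\bar{\vr})$ usefully makes explicit that the Hessian maps into gradients and reduces the spectral problem to a scalar one, and you also obtain strict (not merely nonnegative) positivity on $N_X\M$.
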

\begin{proof}
	The computation of Hessian is simple:
\begin{align}
\hess_{\vr_0} U_X (\xi\circ X, \cdot)&= \frac{\rmd}{\rmd\ve}\bigg|_{\ve = 0} \grad_{\vr_0} U[X^\ve] = \frac{\rmd}{\rmd\ve}\bigg|_{\ve = 0}\bigg(\frac{\nabla P}{\vr} + g e_z\bigg)\circ X\\
&= \bigg(- \frac{\nabla \big(\widetilde{P}'(\bar{\vr})\div(\bar{\vr}\xi)\big)}{\bar{\vr}} + \frac{\div(\bar{\vr}\xi)\nabla P}{\bar{\vr}^2}\bigg)\circ X\\
&= \bigg(- \frac{\nabla \big(\widetilde{P}'(\bar{\vr})\div(\bar{\vr}\xi)\big)}{\bar{\vr}} - \frac{g\div(\bar{\vr}\xi)e_z}{\bar{\vr}}\bigg)\circ X
\end{align}
It is then immediate that eigenvalues are the same as those of the operator 
\be 
\xi \rightarrowtail - \frac{\nabla \big(\widetilde{P}'(\bar{\vr})\div(\bar{\vr}\xi)\big)}{\bar{\vr}} - \frac{g\div(\bar{\vr}\xi)e_z}{\bar{\vr}}
\ee 
which is constant on $\M$.
With that we first compute that $\hess_{\vr_0} U_X$ is positive  definite when restricted to normal spaces to $\M$. Using characterization of normal vectors from Proposition \eqref{M-anelasticiso}, 
\begin{align}
\hess_{\vr_0} U_X(\nabla\varphi \circ X, \nabla\varphi \circ X) &= \int_{M} \bigg\langle- \frac{\nabla \big(\widetilde{P}'(\bar{\vr})\div(\bar{\vr}\nabla\varphi)\big)}{\bar{\vr}}  -\frac{g\div(\bar{\vr}\nabla\varphi)e_z}{\bar{\vr}},\nabla\varphi\bigg\rangle\bar{\vr}\\
&=\int_M P'(\bar{\vr})\Delta \varphi \div(\bar{\vr}\nabla\varphi) - g\div(\bar{\vr}\nabla\varphi)\partial_z \varphi\\
&= \int_M P'(\bar{\vr})\Delta \varphi \partial_z\bar{\vr} \partial_z \varphi + P'(\bar{\vr})\bar{\vr}(\Delta \varphi)^2- g\partial_z\bar{\vr}(\partial_z \varphi)^2 - g \bar{\vr}\Delta\varphi\partial_z\varphi\\
&=-g\int_M 2\bar{\vr}\Delta\varphi\partial_z \varphi + \frac{\bar{\vr}^2(\Delta\varphi)^2}{\partial_z \bar{\vr}} + \partial_z \bar{\vr} (\partial_z \varphi)^2 \\
&= -g \int_M\frac{1}{\partial_z\bar{\vr}}\bigg( \bar{\vr}\Delta\varphi + \partial_z \bar{\vr}\partial_z \varphi\bigg)^2 = -g \int_M\frac{1}{\partial_z\bar{\vr}}(\div(\bar{\vr}\nabla\varphi))^2 \geq 0.
\end{align}
A little more work shows in fact that the following coercive bound holds
\be
\hess_{\vr_0} U_X(\nabla\varphi \circ X, \nabla\varphi \circ X) \gtrsim \|\nabla \varphi \circ X\|_{L^2_{\vr_0}}^2>0.
\ee
This completes the proof.
\end{proof}
This shows that potential $U$ does indeed attract the solution towards $\M$. Moreover, the eigenvalues of operator $\hess_{\vr_0} U\big|_{\N_X \M}$ are independent of $X$, so the homogenized potential of $U$ is constant on $\M$. Hence the Takens limit system is also given by \eqref{anelastic}, and we formally expect the convergence of solutions of \eqref{compeulergravity} to those of \eqref{anelastic} to hold for mildly ill-prepared data as well.
\subsection{Non-isentropic fluid}

Now we consider general baroclinic equation of state $P = \widetilde{P}(\vr,s)$. We assume that reference state has non-trivial entropy variations - that is 
\be 
\partial_z s_0 \neq 0,
\ee 
as well as that the equation of state satisfies 
\be 
\widetilde{Q}(\vr,s) := \vr \partial_\vr \widetilde{P} > 0 ,\quad  \partial_s\widetilde{P} > 0.
\ee 
\begin{prop}
	Under the stated assumptions, $\M$ consists of area-preserving rearrangements on each horizontal slice. Its tangent and normal spaces are given by
	\begin{align}
	T_X \M &= \{\xi \circ X \ | \ \xi = (\xi_{\rm h}, 0), \div_{\rm h} \xi_{\rm h} = 0\} \ \text{and}\\
	N_X \M &= \{\xi \circ X \ | \ \xi = (\nabla_{\rm h} \varphi, \xi_z), \varphi = \varphi(x,y,z), \xi_z = \xi_z(x,y,z), \int \varphi \rmd x \rmd y = 0\}
	\end{align} 
\end{prop}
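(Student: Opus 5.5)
The proof will follow the isentropic case (Proposition \ref{M-anelasticiso}). We first characterize the critical set through the force balance \eqref{anelasticforcebalance}, then differentiate the defining condition to obtain $T_X\M$, and finally take the $L^2_{\vr_0}$-orthogonal complement to get $N_X\M$.

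\emph{Step 1: the critical set.} On $\M$ we have $\nabla P = -g\vr e_z$. The horizontal components give $\nabla_{\rm h} P = 0$, so $P = P(z)$ and $\partial_z P = -g\vr$. Hence $\vr = \vr(z)$ as well. Since $\partial_s\widetilde P>0$, the equation of state can be inverted to write $s = S(\vr,P)$, so $s$ also depends on $z$ only.

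The key point is that $s = s_0\circ X^{-1}$ is a rearrangement of $s_0$. The same holds for the distribution of $(\vr, s)$, since the mass measure $\vr\,\rmd x$ is transported. We use the path-connectedness argument from the lemma on the incompressible critical set. Along a path $X^\ve\in\M$ with velocity $\xi_\ve$, the variation formulas of Lemma \ref{rhosvar} give
\be
\partial_\ve s = -\langle\nabla s,\xi\rangle = -\partial_z s\,\xi_z .
\ee
This must be a function of $z$ alone. The hydrostatic profiles are also constrained: they must solve the same ODE and carry the same mass and entropy distribution. From this we argue that the stratification is rigid, i.e.\ $(\vr,s)=(\vr_0,s_0)$ on the identity component. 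Given that, $\partial_z s_0\neq 0$ forces $\xi_z = 0$, and $\partial_\ve\vr = -\div(\vr_0\xi)=0$ then gives $\div_{\rm h}\xi_{\rm h}=0$.

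\emph{Main obstacle.} The hard part is proving this rigidity: that the hydrostatic $z$-profile cannot drift along the path. I would try the following. Positivity of $N^2$ (equivalently monotonicity of $s_0$ together with the $z$-level-set structure) means the map $z\mapsto s(z)$ is a monotone rearrangement of $s_0$ with the same mass distribution. A monotone rearrangement of a monotone function with a fixed measure is unique, so $s(z)=s_0(z)$, and then $\vr=\vr_0$ follows from the hydrostatic ODE with the mass constraint. Integrating the flow of $\xi$, with $\xi_z=0$, the diffeomorphism preserves each slice $z=\text{const}$ and preserves area on it. Conversely, any such $X$ satisfies $X_*\vr_0=\vr_0$ and $X_*s_0=s_0$, so it lies in $\M$.

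\emph{Step 2: tangent and normal spaces.} The tangent space is read off directly from Step 1: $\xi=(\xi_{\rm h},0)$ with $\div_{\rm h}\xi_{\rm h}=0$. For the normal space, let $\eta\circ X$ be normal. Orthogonality in $L^2_{\vr_0}$ with $\vr=\vr_0(z)$ reads
\be
\int \vr_0(z)\,\langle\eta_{\rm h},\xi_{\rm h}\rangle\,\rmd x\,\rmd y\,\rmd z = 0
\ee
for every such tangent $\xi$. Since $\xi_z=0$ and $\xi_{\rm h}$ is arbitrary divergence-free on each slice, $\eta_z$ is unconstrained. By the 2D Helmholtz decomposition on each slice $\Omega$ (tangency to $\partial\Omega$ makes the boundary term vanish), $\vr_0(z)\eta_{\rm h}$ is a horizontal gradient. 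Absorbing $\vr_0(z)$ into $\varphi$ slice by slice, and normalizing by $\int\varphi\,\rmd x\,\rmd y=0$ for uniqueness, gives the stated form of $N_X\M$.
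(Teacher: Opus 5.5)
There is a genuine gap in your Step 1, at the rigidity step you flagged. The sentence ``a monotone rearrangement of a monotone function with a fixed measure is unique, so $s(z)=s_0(z)$'' uses a measure that is not fixed. The function $s=s_0\circ X^{-1}$ is a rearrangement of $s_0$ only with respect to the transported mass $\vr\,\rmd x$. Lebesgue measure is not available, since volume preservation is what you are trying to prove. The $z$-marginal of the mass measure is $|\Omega|\vr(z)\,\rmd z$, and $\vr$ is still unknown. So uniqueness of monotone rearrangement only gives agreement in the mass coordinate $m(z)=\int_0^z\vr$, namely $s\circ m^{-1}=s_0\circ m_0^{-1}$, not $s=s_0$ as functions of $z$. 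Deducing $\vr=\vr_0$ afterwards from $s=s_0$ is therefore circular.

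To close the argument globally you must solve the coupled problem. In the variable $m$, hydrostatic balance reads $\partial_m P=-g$, so $P=P_b-gm$ with a single free constant $P_b$. The relation $\widetilde P(\vr,s(m))=P_b-gm$ together with $\partial_\vr\widetilde P>0$ makes $\vr$ increasing in $P_b$ at each $m$. Hence the column height $\int_0^{m(h)}\rmd m/\vr$ is strictly decreasing in $P_b$, and requiring it to equal $h$ forces the reference profile. Separately, strict monotonicity of $s$ at a general $X\in\M$ comes from $\nabla s=(\nabla X^{-1})^{T}(\nabla s_0\circ X^{-1})\neq 0$ combined with $s=s(z)$. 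It does not come from $N^2>0$, which is a statement about the reference state only.

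The paper avoids global rigidity entirely. You already obtained its first step: $\partial_\ve s=-\partial_z s\,\xi_z$ depends on $z$ only, so $\xi_z=\xi_z(z)$. The missing idea is to also linearize the force balance along the path:
$\nabla(Q\div\xi-g\vr\xi_z)+g\div(\vr\xi)e_z=0$.
\begin{enumerate}
\item The horizontal components make $Q\div\xi-g\vr\xi_z$ a function of $z$.
\item The vertical component then makes $\div(\vr\xi)$ a function of $z$, and hence also $\div_{\rm h}\xi_{\rm h}$.
\item Integrating over each slice gives $\div_{\rm h}\xi_{\rm h}=0$.
\item The vertical equation then collapses to $\partial_z(Q\partial_z\xi_z)=0$, and $\xi_z(0)=\xi_z(h)=0$ forces $\xi_z=0$.
\end{enumerate}
Consequently $\partial_\ve\vr=\partial_\ve s=0$ along every path in $\M$, so the stratification cannot drift. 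The Dirichlet conditions on $\xi_z$ play the role that the height constraint plays in the global argument. Your Step 2 and the converse inclusion are fine once Step 1 is repaired by either route.
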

\begin{proof}
Fix $X \in \M$. We have at $X$
\be \label{anelastic_constraint}
\nabla P = - g\vr e_z,
\ee 
so $P = P(z)$. This implies that $\vr = \vr(z)$ is also horizontally uniform. Now we have
\be
P(z) = \widetilde{P}(\vr(z),s(x,y,z)).
\ee
From the assumption that $\partial_s \widetilde{P} > 0$, we conclude that entropy is also stratified 
\be 
s = s(z).
\ee 
With that we claim that the only elements of $\M$ are those that are given by volume-preserving rearrangements on each horizontal slice. To see this, we consider a variation $X^\ve$ of $X$ through $\M$ with $\frac{\rmd}{\rmd \ve }\Big|_{\ve = 0} X^\ve = \xi\circ X$. We compute the corresponding variation of constraint \eqref{anelastic_constraint}:
\begin{align} \label{anis-var}
0 &= -\nabla(Q\div\xi + \langle\nabla P,\xi\rangle) - g\div(\vr\xi)e_z\\
 &= -\nabla(Q\div\xi  -\vr g \xi_z) - g\div(\vr\xi)e_z
\end{align}
This implies that $\div(\vr \xi)$ is a function of $z$ only.

We then look at the variation of $s$. To this end we compute,
\be
\frac{\rmd}{\rmd \ve}\bigg|_{\ve = 0} s_{X^\ve} = - \langle \nabla s, \xi\rangle = - \partial_z s \xi_z.
\ee 
Since we assumed that $\partial_z s \neq 0$, this implies that $\xi_z = \xi_z(z)$. Combining these two, 
\be 
\div(\vr\xi) - \partial_z(\vr\xi_z)  = \vr\div_{\rm h} \xi_{\rm h},
\ee 
so  that $\div_{\rm h} \xi_{\rm h}$ is also a function of $z$. Integrating it over horizontal slices implies 
\be 
\div_{\rm h} \xi_{\rm h} = 0.
\ee 
Consequently, the vertical component of \eqref{anis-var} becomes an ODE for $\xi_z(z)$
\be 
0 = -\partial_z(Q \partial_z \xi_z - \vr g\xi_z) - g\partial_z(\vr \xi_z) = -\partial_z (Q\partial_z \xi_z).
\ee 
It can be integrated to 
$
\xi_z = \int^{z} \frac{\rm const}{Q}\rmd z.
$
Since we assumed that $Q$ is sign-definite, the only solution satisfying boundary conditions is 
$ 
\xi_z = 0.
$
Thus all tangent vectors to $\M$ are given by divergence-free horizontal vector fields, from which characterization of $\M$ and $N_X \M$ follows.
\end{proof}
With that, it is clear that geodesic equation on $\M$ splits into decoupled 2D incompressible Euler equations on each horizontal slice:
\be 
\begin{cases}\label{inceuler2D}
	\partial_t u_{\rm h} + u_{\rm h}\cdot\nabla_{\rm h} u_{\rm h} = -  \nabla_{\rm h} p\\
	\div_{\rm h} u_{\rm h} = 0\\
	u_z = 0\\
	\vr = \vr_0\\
	s = s_0
\end{cases}
\ee
 As mentioned, entropy stratification provides buoyancy effects preventing mixing of layers. We proceed to investigate $\hess_{\vr_0} U\big|_{N_X\M}$.
\begin{prop}
	Hessian of $U$ is given by 
	\be
	\hess_{\vr_0} U_X (\xi \circ X, \cdot) = \bigg(- \frac{\nabla \big(Q\div\xi - \vr g \xi_z\big)}{\vr} - \frac{g\div(\vr\xi)e_z}{\vr}\bigg)\circ X
	\ee 
	When restricted to $N_X \M$, it is positive definite provided the square of Brunt–Väisälä frequency is positive
	\be 
	N^2 := -g\bigg(\frac{\partial_z\vr}{\vr} + \frac{\vr g}{Q}\bigg) > 0.
	\ee 
\end{prop}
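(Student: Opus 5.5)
The plan has two parts: first compute the Hessian as the $\ve$-derivative of the gradient, then test the resulting quadratic form on the normal space $N_X\M$.

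For the Hessian I follow the template of Lemma \ref{hess-fluid} and of the isentropic anelastic computation. Take a variation $X^\ve$ with derivative $\xi\circ X$ and differentiate
\be
\grad_{\vr_0} U[X^\ve] = \Big(\frac{\nabla P}{\vr} + g e_z\Big)\circ X^\ve
\ee
at $\ve=0$. The transport term $\xi\cdot\nabla(\cdot)$ should vanish, because on $\M$ the identity $\nabla P/\vr + g e_z = 0$ holds pointwise. By Lemma \ref{rhosvar}, the pressure variation is
\be
\delta P = -\partial_\vr\widetilde P\,\div(\vr\xi) - \partial_s\widetilde P\,\langle\nabla s,\xi\rangle = -Q\div\xi - \langle\nabla P,\xi\rangle = -(Q\div\xi - \vr g\xi_z).
\ee
The density variation $-\div(\vr\xi)$ multiplies $-\nabla P/\vr^2 = g e_z/\vr$. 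Adding the two contributions gives the stated formula.

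For positivity I pair the Hessian with a normal vector $\xi=(\nabla_{\rm h}\varphi,\xi_z)$ in the $\vr$-weighted $L^2$ product. Integrating the gradient term by parts (the boundary terms vanish since $\xi$ is tangent to $\partial M$) gives
\be
\hess(\xi,\xi) = \int_M (Q\div\xi - \vr g\xi_z)\div\xi \;-\; g\div(\vr\xi)\,\xi_z .
\ee
Next I expand $\div(\vr\xi) = \vr\div\xi + \partial_z\vr\,\xi_z$, using that $\vr=\vr(z)$ on $\M$. This yields
\be
\int_M Q(\div\xi)^2 - 2\vr g\,\xi_z\div\xi - g\,\partial_z\vr\,\xi_z^2 .
\ee
Completing the square in $\div\xi$ turns this into
\be
\int_M Q\Big(\div\xi - \frac{\vr g}{Q}\xi_z\Big)^2 + \vr\Big(-\frac{g\partial_z\vr}{\vr} - \frac{\vr g^2}{Q}\Big)\xi_z^2 = \int_M Q\Big(\div\xi - \frac{\vr g}{Q}\xi_z\Big)^2 + \vr N^2\xi_z^2 ,
\ee
which is exactly where the Brunt--Väisälä frequency enters. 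Both terms are nonnegative by $Q>0$ and $N^2>0$.

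The main obstacle is strict definiteness. If the form vanishes, then $\xi_z=0$ and $\div\xi = \Delta_{\rm h}\varphi = 0$ on every slice. With the boundary condition $\xi\cdot n=0$ on the lateral boundary of $\Omega$, this forces $\nabla_{\rm h}\varphi=0$ (the normalization $\int\varphi\,\rmd x\rmd y=0$ removes constants), so $\xi=0$. For an actual coercive bound, $\gtrsim\|\xi\|^2_{L^2_{\vr_0}}$, I would try to control $\|\nabla_{\rm h}\varphi\|$ by $\|\div\xi\|$ on each slice via Poincaré, once $\xi_z$ has been controlled by the $N^2$ term. I expect the lower bound to follow from compactness of the domain and the uniform positivity of $Q$ and $N^2$, though a genuine coercive estimate is more delicate and may only be needed formally.
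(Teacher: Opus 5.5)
Your proposal is correct and follows the paper's proof essentially step for step. You differentiate the gradient, using the hydrostatic constraint both to kill the transport term and to simplify $\delta P$; you integrate by parts and complete the square to isolate $\vr N^2\xi_z^2$; and you conclude strict positivity from $\xi_z=0$, $\Delta_{\rm h}\varphi=0$ and the slice-wise mean-zero normalization. Your explicit use of the lateral Neumann condition $\nabla_{\rm h}\varphi\cdot n=0$ makes the last step slightly clearer than in the paper, and the coercive bound you sketch is not needed for the statement.
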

\begin{proof}
	The computation of Hessian is again immediate using constraint \eqref{anelastic_constraint}:
	\begin{align}
	\hess_{\vr_0} U_X (\xi\circ X, \cdot) &= \bigg(- \frac{\nabla \big(Q\div \xi + \langle \nabla P, \xi\rangle\big)}{\vr} + \frac{\div(\vr\xi)\nabla P}{\vr^2}\bigg)\circ X\\
	&= \bigg(- \frac{\nabla \big(Q\div\xi - \vr g \xi_z\big)}{\vr} - \frac{g\div(\vr\xi)e_z}{\vr}\bigg)\circ X.
	\end{align}
	Thus, we have
	\begin{align}
	\hess_{\vr_0} U_X (\xi\circ X, \xi \circ X) 
	&= \int_M \bigg\langle- \nabla (Q\div\xi - \vr g \xi_z) - g\div(\vr \xi) e_z, \xi \bigg\rangle\\ 
	&=\int_M Q(\div\xi)^2 - \vr g \xi_z\div\xi - g\div(\vr \xi) \xi_z\\
	&= \int_M Q(\div \xi)^2 - 2\vr g \xi_z\div\xi - g \partial_z \vr \xi_z^2\\
	&= \int_M Q\bigg(\div\xi - \frac{\vr g}{Q}\xi_z\bigg)^2 - g\bigg(\partial_z\vr + \frac{\vr^2 g}{Q}\bigg)\xi_z^2,
\end{align}
so it is positive if
$ 
 -g\big(\frac{\partial_z\vr}{\vr} + \frac{\vr g}{Q}\big) > 0,
$
which can be recognized as a baroclinic version of Brunt–Väisälä frequency.
Finally elements of $N_X \M$ are given by $\xi \circ X = (\nabla_{\rm h} \varphi, \xi_z) \circ X$, so if the quadratic form vanishes, then $N^2>0$ implies
$\xi_z=0$. The first square then gives
\be
\Delta_{\rm h}\varphi=0
\ee
on every horizontal slice. The slice-wise mean-zero condition implies
$\varphi=0$, and hence $\xi=0$. Therefore Hessian is positive
definite on $N_X\mathcal M$.
\end{proof}

We now observe that the expression for the normal Hessian only depends on $X$ through profiles $\vr, s$ which are the same for all elements of critical set $\M$. Thus homogenized potential is constant and Takens limit system is given by \eqref{inceuler2D} as well. We conclude that we formally expect the convergence of \eqref{compeulergravity} to \eqref{inceuler2D} to hold for mildly ill-prepared data.

%

\section{Lake models as limits of strong constraining force}
In this section we realize lake and Great Lake equations as limits of Newton's equations with strong constraining force. The corresponding compressible models are given by shallow water equations and Green-Naghdi equations respectively. In both cases, the Takens correction vanishes.
\subsection{Lake equation}
Recall shallow water equations in a domain $\Omega \subset \R^2$
\be\label{SW}
\begin{cases}
	\partial_t u + u\cdot \nabla u = -\frac{1}{\ve^2}\nabla(\vr - b)\\
	\partial_t \vr + \div(\vr u ) = 0\\
	\vr|_{t = 0} = \vr_0
\end{cases}
\ee 
They arise from depth-integrating incompressible 3D Euler equations under the influence of gravity in a thin domain $\Omega \times [0, \ve]$ after assuming that the horizontal velocity is effectively constant in each vertical column of the fluid. Here $\vr_0, b : \Omega \to \R$ are given functions, representing  initial height of the fluid layer and depth of the bottom respectively, and $\ve$ is the Froude number quantifying the relative importance of inertial and gravitational forces. 

It is easy to see that shallow water equations are in fact ($1/\ve^2$ -scaled) Newton's equation on $\mathsf{Diff}_{\vr_0}(\Omega)$ with potential corresponding to the total weight of the fluid
\be 
U[X] = \int_\Omega (\tfrac{\vr^2}{2} - \vr b)\rmd x, \qquad 	\grad_{\vr_0} U[X] = \nabla(\vr-b) \circ X.
\ee  
Thus, the Eulerian form of shallow water equations is given by \eqref{SW}.

We consider the low-Froude limit $\ve \to 0$, and compute the corresponding Takens system.
\begin{theorem}
	The potential $U$ is constraining to the set
	\be 
	\M =  \{X\in \mathsf{Diff}_{\vr_0}(\Omega) \ | \ \vr = \bar{H} + b\},
	\ee
	where $\bar{H} = \frac{1}{|\Omega|}\int_{\Omega} (\vr_0 - b)\rmd x$ is the average actual depth of the fluid.
	The Eulerian form of geodesic equation on $\M$ is given by lake equation
	\be
	\begin{cases}\label{lake}
		\partial_t u + u\cdot \nabla u = -\nabla P\\
		\div((\bar{H}+b) u ) = 0.
	\end{cases}
	\ee
	Homogenized potential of $U$ is constant on $\M$, so Takens limit system agrees with \eqref{lake}.
\end{theorem}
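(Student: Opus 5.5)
The argument follows the template of the anelastic isentropic case, Proposition \ref{M-anelasticiso}, with $\widetilde{P}(\vr)=\vr^2/2$ and the gravitational term replaced by $-\vr b$.

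\emph{Critical set.} Since $\grad_{\vr_0}U[X]=\nabla(\vr-b)\circ X$, the condition $X\in\M$ is $\nabla(\vr-b)=0$, i.e. $\vr=b+C$ for a constant $C$. Every $X\in\mathsf{Diff}(\Omega)$ preserves total mass, so $\int_\Omega \vr\,\rmd x=\int_\Omega\vr_0\,\rmd x$. This forces $C|\Omega|=\int_\Omega(\vr_0-b)$, hence $C=\bar H$. We then have $\M=\{X: X_*\vr_0=\bar H+b\}$, which is nonempty provided $\vr_0$ and $\bar H+b$ have equal mass (assumed of the initial datum, as before).

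\emph{Tangent and normal spaces.} Lemma \ref{rhosvar} gives the variation of $\vr$ along $\xi\circ X$ as $-\div(\vr\xi)$. Hence
\be
T_X\M=\{\xi\circ X \ | \ \div((\bar H+b)\xi)=0\}.
\ee
For the normal space, $\eta\circ X\perp T_X\M$ in $L^2_{\vr_0}$ means $\int\langle\vr\eta,\xi\rangle=0$ for all such $\xi$. The Helmholtz decomposition then gives $\vr\eta=\vr\nabla\varphi$, so
\be
N_X\M=\{\nabla\varphi\circ X \ | \ \textstyle\int_\Omega \varphi=0\}.
\ee
By d'Alembert's principle the geodesic equation is $\ddot X=-\nabla P\circ X$ together with $\div((\bar H+b)u)=0$. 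The chain rule, exactly as in Proposition \ref{compeuler}, turns this into the lake equation \eqref{lake}.

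\emph{Hessian.} Differentiating the gradient at $X\in\M$, the term in which $\xi$ transports $\nabla(\vr-b)$ vanishes, and we get
\be
\hess_{\vr_0}U_X(\xi\circ X,\cdot)=-\nabla\div((\bar H+b)\xi)\circ X.
\ee
On normal vectors $\nabla\varphi\circ X$ the quadratic form is
\be
\int_\Omega\big(\div((\bar H+b)\nabla\varphi)\big)^2\,\rmd x .
\ee
This is positive unless $\div((\bar H+b)\nabla\varphi)=0$. Since $\bar H+b=\vr>0$ and the Neumann condition is built into tangency to $\partial\Omega$, that equation forces $\varphi$ to be constant. So $U$ is constraining; a coercive bound holds by elliptic regularity, as in the anelastic case.

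\emph{Spectrum.} The eigenproblem on $N_X\M$ reads $-\nabla\div((\bar H+b)\nabla\varphi)=\lambda\nabla\varphi$. Equivalently, $\varphi$ is an eigenfunction of
\[
\varphi\mapsto-\div((\bar H+b)\nabla\varphi)
\]
on mean-zero functions, with the Neumann boundary condition coming from the tangency condition on $\xi$. This operator depends only on the fixed functions $b$ and $\bar H$, not on $X$. Hence the $\lambda_i$ are constant on $\M$, $V$ is constant, $\grad V=0$, and the Takens limit system coincides with \eqref{lake}.

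The main obstacle is the bookkeeping in this last step. One must check that both the eigenvalues and the normal projections are pulled back consistently to Eulerian coordinates, so that the weighted inner product $(\cdot,\cdot)_{\vr_0}$ becomes $\int\langle\cdot,\cdot\rangle\vr\,\rmd x$ with $\vr=\bar H+b$ the same for every $X\in\M$. The spectral smoothness assumption of Theorem \ref{maintheorem} is then trivially satisfied, because the data defining the eigenproblem do not vary along $\M$.
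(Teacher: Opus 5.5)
Your proposal is correct and follows the paper's argument essentially step by step: the critical set from $\nabla(\vr-b)=0$ plus conservation of total mass, the tangent and normal spaces from Lemma \ref{rhosvar} and the Helmholtz decomposition, the Hessian $-\nabla\div(\vr\xi)\circ X$ with quadratic form $\int_\Omega(\div(\vr\nabla\varphi))^2\,\rmd x$ on normal vectors, and constancy of the spectrum because $\vr=\bar{H}+b$ is the same profile for every $X\in\M$. Your identification of the normal eigenproblem with the Neumann problem for $-\div((\bar{H}+b)\nabla\cdot)$ on mean-zero functions, and your positivity argument, fill in details the paper leaves implicit; your equal-mass caveat is automatic, since $\int_\Omega(\bar{H}+b)\,\rmd x=\int_\Omega\vr_0\,\rmd x$ by the definition of $\bar{H}$.
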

\begin{proof} 
We begin with the following Lemma:
\begin{lemma}
The critical set $\M$ of $U$ is given by
	\be 
	\M =  \{X\in \mathsf{Diff}_{\vr_0}(\Omega) \ | \ \vr = \bar{H} + b\},
	\ee
	where $\bar{H} = \frac{1}{|\Omega|}\int_{\Omega} (\vr_0 - b)\rmd x$ is the average  depth of the fluid.
	Its tangent and normal spaces are 
	\begin{align}
	T_X \M = \{ \xi\circ X  \ | \ \div((\bar{H} + b)\xi) = 0 \} \quad \text{and}\quad
	N_X \M &= \bigg\{ \nabla \varphi \circ X \ | \ \int_{\Omega} \varphi = 0\bigg\}.
	\end{align}
\end{lemma} 
\begin{proof}
The characterization of $\M$ is obvious; one must notice that under action of $\mathsf{Diff}_{\vr_0}(\Omega)$
\be 
\frac{1}{|\Omega|}\int_\Omega (\vr - b)\rmd x =  \frac{1}{|\Omega|}\int_\Omega (\vr_0 - b)\rmd x = \bar{H}.
\ee 
Tangent spaces to $\M$ are thus given by vector fields that preserve $\vr$ to the first order:
\be 
T_X \M = \{ \xi\circ X  \ | \ \div(\vr\xi) = 0 \} = \{ \xi\circ X  \ | \ \div((\bar{H} + b)\xi) = 0 \}. 
\ee 
Normal spaces are, as usual, given by gradients.
\end{proof}

Consequently, geodesic equation on $\M$ is given by the lake equation:
\be
\begin{cases}
	\partial_t u + u\cdot \nabla u = -\nabla P\\
	\div((\bar{H}+b) u ) = 0.
\end{cases}
\ee 
We proceed to compute the Hessian of potential $U$ on $\M$.
\begin{lemma}One has
	\be 
	\hess_{\vr_0} U_X (\xi\circ X, \cdot) = -\nabla\div(\vr\xi) \circ X.
	\ee
	It is positive definite on $N_X\M$ and its eigenvalues are constant on $\M$.
\end{lemma}
\begin{proof}
	The computation is trivial:
	\be 
	\hess_{\vr_0} U_X (\xi\circ X, \cdot) = -\nabla\div(\vr\xi) \circ X.
	\ee
	
	Consequently, restricted to normal space we have 
	\be 
	\hess_{\vr_0} U_X (\nabla \varphi \circ X, \nabla \varphi \circ X) = \int_{\Omega} \div(\vr\nabla\varphi)^2  \rmd x.
	\ee 
	Since $\vr = \bar{H} + b$ is the same profile for every element $X$ of $\M$, so are eigenvalues of the Hessian, and, consequently, homogenized potential of $U$. Thus, Takens limit system is given by \eqref{lake} as well.
\end{proof}
This completes the proof of the theorem.
\end{proof}
\subsection{Great Lake equation}
We now treat the Great Lake equations.
Let $\Omega \subset \R^2$ be a bounded domain, and consider $\mathsf{Diff}(\Omega)$ with the following metric 
\be
\bigg( \xi \circ X, \eta \circ X \bigg)_{\vr_0, \rm GN} := \int_\Omega \vr \langle \xi, \eta\rangle + \frac{1}{3}\vr^3 \div \xi \div \eta + \frac{1}{2} \vr^2 \div \xi \langle \eta, \nabla b\rangle +  \frac{1}{2} \vr^2 \div \eta \langle \xi, \nabla b\rangle + \vr \langle \xi, \nabla b\rangle\langle\eta, \nabla b\rangle.
\ee
Here $b$ is a fixed positive function representing the depth of bottom, and $\vr = X_*\vr_0(\rmd a)/\rmd a$ is transported density as before representing the height of fluid layer.
Defining $A_\vr$ by 
\be 
A_\vr \eta = \eta - \frac{1}{3}\frac{\nabla(\vr^3 \div \eta)}{\vr} - \frac{1}{2}\frac{\nabla(\vr^2 \langle \eta, \nabla b\rangle)}{\vr} +  \frac{1}{2} \vr \div \eta \nabla b + \langle\eta, \nabla b\rangle
\nabla b \ee 
the metric is compactly written as 
\be 
\bigg( \xi \circ X, \eta \circ X \bigg)_{GN, \vr} = \int_{\Omega} \vr \langle \xi, A_{\vr} \eta\rangle. 
\ee 
We remark that $A_\rho$ is invertible on the class of smooth vector fields tangent to the boundary.
\begin{remark}
We note that compared to all the previous examples this metric is not "diagonal", so the distinction between Levi-Civita connection of $\mathsf{Diff}_{\vr_0, \rm GN}(\Omega)$ and the pointwise connection on the Euclidean domain $\Omega$ has to be made. For this reason we keep the covariant notation.
\end{remark}
 
Consider, as before, the following potential corresponding to the total weight of the fluid
\be 
U[X] = \int_\Omega (\tfrac{\vr^2}{2} - \vr b)\rmd x. 
\ee

\begin{prop}
Newton's equations with potential $\tfrac{1}{\ve^2}U$ on $\mathsf{Diff}_{\vr_0, \rm GN}(\Omega)$ are given by 
\be\label{Lagrangian-GN}
\begin{cases}
	\nabla_{\dot{X}}\dot{X} = - \frac{1}{\ve^2}\eta \circ X \\
	A_{\vr} \eta = \nabla(\vr - b) 
\end{cases}
\ee
The Eulerian velocity field $u = \dot{X} \circ X^{-1}$ satisfies Green-Naghdi equations
\be 
\begin{cases}
	
	\partial_t v + \nabla_u v + (\nabla u)v + \nabla\bigg( \frac{1}{\ve^2}(\vr - b) - \frac{1}{2}|u|^2 - \frac{1}{2}(\vr\div u + \langle u, \nabla b\rangle)^2\bigg)  = 0 ,\\
	v = A_{\vr}u,\\
	\partial_t \vr + \div(\vr u) = 0.
\end{cases}
\ee 
\end{prop}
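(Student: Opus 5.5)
The plan is to treat the two assertions separately: first identify the Riemannian gradient of $U$ for the Green--Naghdi metric, which gives \eqref{Lagrangian-GN}; then pass to Eulerian variables by a variational (Euler--Poincaré) argument rather than by computing the Levi-Civita connection of $(\cdot,\cdot)_{\vr_0,\rm GN}$ directly.

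\textbf{Gradient.} For a variation $X^\ve$ with derivative $\xi\circ X$, Lemma \ref{rhosvar} gives $\delta\vr=-\div(\vr\xi)$. Integrating by parts, with $\xi$ tangent to $\partial\Omega$, I expect
\begin{equation}
\delta U=\int_\Omega(\vr-b)\,\delta\vr\,\rmd x=\int_\Omega \vr\langle \nabla(\vr-b),\xi\rangle\,\rmd x .
\end{equation}
This is the same computation as for the shallow water potential. The gradient $\eta\circ X$ is defined by $(\xi\circ X,\eta\circ X)_{\vr_0,\rm GN}=\delta U$ for all admissible $\xi$. Writing the metric as $\int_\Omega\vr\langle\xi,A_\vr\eta\rangle$, this becomes $\int_\Omega\vr\langle\xi,A_\vr\eta-\nabla(\vr-b)\rangle=0$ for all $\xi$, and hence $A_\vr\eta=\nabla(\vr-b)$. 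To check that the compact form of the metric agrees with its defining expression, I would integrate the $\div\xi$ terms by parts; the boundary terms vanish because $\xi$ is tangent to $\partial\Omega$. The stated invertibility of $A_\vr$ then makes $\eta$ well defined, and Newton's law reads $\nabla_{\dot X}\dot X=-\ve^{-2}\eta\circ X$.

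\textbf{Eulerian form.} Newton's equations are the Euler--Lagrange equations of the action $\int \tfrac12(\dot X,\dot X)_{\vr_0,\rm GN}-\ve^{-2}U[X]\,\rmd t$. The metric is right-invariant once the advected density $\vr=X_*\vr_0$ is included, since the integrand is pointwise in $(u,\vr)$. So the reduced Lagrangian is
\begin{equation}
\ell(u,\vr)=\int_\Omega \Big(\tfrac12\vr|u|^2+\tfrac16\vr^3(\div u)^2+\tfrac12\vr^2\div u\,\langle u,\nabla b\rangle+\tfrac12\vr\langle u,\nabla b\rangle^2\Big)-\tfrac1{\ve^2}\Big(\tfrac{\vr^2}2-\vr b\Big).
\end{equation}
The standard Euler--Poincaré equations with advected density then apply:
\begin{equation}
\partial_t m+\nabla_u m+(\nabla u)^T m+m\,\div u=\vr\nabla\frac{\delta\ell}{\delta\vr},\qquad \partial_t\vr+\div(\vr u)=0 .
\end{equation}
These come from the constrained variations $\delta u=\partial_t w+\nabla_u w-\nabla_w u$ and $\delta\vr=-\div(\vr w)$. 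The two derivatives are computed as follows.
\begin{itemize}
\item $\frac{\delta\ell}{\delta u}=m=\vr A_\vr u=\vr v$, by the same integrations by parts used for the gradient.
\item $\frac{\delta\ell}{\delta\vr}=\tfrac12|u|^2+\tfrac12\vr^2(\div u)^2+\vr\div u\langle u,\nabla b\rangle+\tfrac12\langle u,\nabla b\rangle^2-\ve^{-2}(\vr-b)$, which equals $\tfrac12|u|^2+\tfrac12(\vr\div u+\langle u,\nabla b\rangle)^2-\ve^{-2}(\vr-b)$.
\end{itemize}
Substituting $m=\vr v$ and using the continuity equation, the $\partial_t\vr$ terms cancel against $\div(\vr u)\,v$. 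Dividing by $\vr$ then yields exactly the stated Green--Naghdi system.

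\textbf{Main obstacle.} The delicate step is justifying that the covariant Newton equation $\nabla_{\dot X}\dot X=-\ve^{-2}\grad U$ for this non-diagonal, $\vr$-dependent weak metric coincides with the Euler--Lagrange and Euler--Poincaré equations. Formally this holds because geodesics with potential are critical points of the action for any (weak) Riemannian metric admitting a Levi-Civita connection. I would state this at the formal level, as the paper does elsewhere, and then verify the Euler--Poincaré reduction by directly pulling back $\delta u,\delta\vr$ under $X$. A secondary point is checking the boundary terms in the integrations by parts; these vanish for fields tangent to $\partial\Omega$.
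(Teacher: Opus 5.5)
Your proposal is correct. The gradient step matches the paper's; the difference is how you pass to Eulerian variables.

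\textbf{The paper's route.} The paper never invokes a variational principle. It uses metric compatibility and torsion-freeness of the Levi-Civita connection. For a time-independent field $\eta$ it writes $\frac{\rmd}{\rmd t}(\dot X,\eta\circ X)=(\nabla_{\dot X}\dot X,\eta\circ X)+\tfrac12(\eta\circ X)(\dot X,\dot X)+(\dot X,[\dot X,\eta\circ X])$. The middle term is then computed as the $\vr$-variation of the kinetic energy along $\eta$. The last term is computed as $\int\vr\langle v,\nabla_u\eta-\nabla_\eta u\rangle$.

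\textbf{How the two relate.} These two pieces are exactly the terms of your Euler--Poincar\'e equation: the kinetic part of $\vr\nabla\frac{\delta\ell}{\delta\vr}$ and the $\mathrm{ad}^*_u m$ term. Your $(\nabla u)^T m$ is the paper's $(\nabla u)v$, with components $v_j\partial_i u_j$. So the paper is in effect rederiving the reduction by hand, while you import it as a known theorem. Your route is shorter and makes the momentum $m=\vr v$ and the Bernoulli function transparent.

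\textbf{What the paper's route buys.} It gives an Eulerian formula for the covariant acceleration itself, independent of the force:
\[
\vr A_\vr(\nabla_{\dot X}\dot X\circ X^{-1})=\partial_t(\vr v)+\div(\vr u)v+\vr\nabla_u v+\vr(\nabla u)v-\tfrac12\vr\nabla\big(|u|^2+(\vr\div u+\langle u,\nabla b\rangle)^2\big).
\]
This identity is reused verbatim in the next proposition, for the geodesic equation on $\M$. There the right-hand side is a normal vector $\eta\circ X$ with $A_\vr\eta=\nabla\varphi$, not a potential gradient. Your approach would have to be redone for that case, e.g.\ via Euler--Poincar\'e with an external force, or a multiplier for the constraint $\vr=\bar H+b$.

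Both arguments are formal at the same level. Each presupposes that the weak metric has a Levi-Civita connection, which is where invertibility of $A_\vr$ enters.
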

\begin{proof}
	Consider a variation $X^\ve$ with $\frac{\rmd}{\rmd \ve}\big|_{\ve = 0} X^\ve = \xi \circ X$. The variation of potential is given by
	\be 
	\frac{\rmd}{\rmd \ve}\bigg|_{\ve = 0} U[X^\ve] = \int_{\Omega} \vr \langle\xi, \nabla(\vr - b)\rangle\rmd x
	\ee 
	so that
	\be 
	\grad_{\vr_0, \rm GN} U[X] = \eta \circ X \ \text{with} \ A_{\vr} \eta = \nabla(\vr - b).
	\ee 
	The Lagrangian form of Newton's equations follows.
	To get the Eulerian form, we compute $\nabla_{\dot{X}} \dot{X} \circ X^{-1}$. In principle, this can be done using Koszul formula but to avoid writing it out in full we use metric compatibility of Levi-Civita connection. For any time-independent field $\eta$ we must have
	\be 
	\frac{\rmd }{\rmd t} \bigg(\dot{X} , \eta \circ X \bigg)_{\vr_0, \rm GN}= \bigg(\nabla_{\dot{X}} \dot{X} , \eta \circ X \bigg)_{\vr_0, \rm GN} + \bigg(\dot{X} , \nabla_{\dot{X}}(\eta \circ X) \bigg)_{\vr_0, \rm GN} .
	\ee  
	Expanding the left-hand side we get
	\be 
	\frac{\rmd }{\rmd t} \bigg(\dot{X} , \eta \circ X \bigg)_{\vr_0, \rm GN} = \frac{\rmd }{\rmd t} \int_{\Omega} \vr \langle A_{\vr} (\dot X \circ X^{-1}), \eta\rangle = \frac{\rmd }{\rmd t} \int_{\Omega} \vr \langle v, \eta\rangle = \int_{\Omega} \langle \partial_t(\vr v), \eta\rangle.
	\ee 
	On the other hand, right-hand side is computed to be 
	\begin{align}
	\bigg(&\nabla_{\dot{X}} \dot{X} , \eta \circ X \bigg)_{\vr_0, \rm GN} + \bigg(\dot{X} , \nabla_{\dot{X}}(\eta \circ X) \bigg)_{\vr_0, \rm GN} \\
	&=\bigg(\nabla_{\dot{X}} \dot{X} , \eta \circ X \bigg)_{\vr_0, \rm GN} + \bigg(\dot{X} , \nabla_{\eta \circ X}\dot{X} \bigg)_{\vr_0, \rm GN} + \bigg(\dot{X} , [\dot{X}, \eta \circ X]\bigg)_{\vr_0, \rm GN}\\
	&=\int_{\Omega} \vr \langle A_\vr(\nabla_{\dot{X}} \dot{X} \circ X^{-1}),  \eta\rangle + \frac{1}{2}(\eta \circ X )\bigg(\dot{X} ,\dot{X} \bigg)_{\vr_0, \rm GN} +\int_{\Omega} \vr \langle A_\vr(\dot{X} \circ X^{-1}),  [\dot{X}, \eta \circ X]\circ X^{-1}\rangle \\
	&=\int_{\Omega} \vr \langle A_\vr(\nabla_{\dot{X}} \dot{X} \circ X^{-1}),  \eta\rangle +  \frac{1}{2}(\eta \circ X )\int_{\Omega}\vr\langle u, A_\vr u\rangle +\int_{\Omega} \vr \langle v,  [u, \eta]\rangle.
	\end{align}
The scalar variation term is computed for any right-invariant field (e.g. $u \circ X$) as 
\begin{align}
(\eta \circ X )\int_{\Omega}\vr\langle u, A_{\vr} u\rangle &= (\eta \circ X )\int_{\Omega}\vr |u|^2 + \frac{1}{3}\vr^3 \div u ^2 + \vr^2 \div u \langle u,\nabla b\rangle + \vr \langle u, \nabla b\rangle^2 \\
&=-\int_{\Omega}\div (\vr\eta)(|u|^2 + \vr^2 \div u ^2 + 2\vr \div u \langle u,\nabla b\rangle + \langle u, \nabla b\rangle^2) \\
&=\int_{\Omega}\bigg\langle\vr\eta,\nabla(|u|^2 + (\vr\div u + \langle u, \nabla b\rangle)^2 )\bigg\rangle.
\end{align}
Continuing on, we have  
	\begin{align}
	\int_{\Omega} &\langle \partial_t(\vr v), \eta\rangle = \frac{\rmd }{\rmd t} \bigg(\dot{X} , \eta \circ X \bigg)_{\vr_0, \rm GN} = \bigg(\nabla_{\dot{X}} \dot{X} , \eta \circ X \bigg)_{\vr_0, \rm GN} + \bigg(\dot{X} , \nabla_{\dot{X}}(\eta \circ X) \bigg)_{\vr_0, \rm GN} \\
	&=\int_{\Omega} \vr \langle A_\vr(\nabla_{\dot{X}} \dot{X} \circ X^{-1}),  \eta\rangle +  \frac{1}{2}(\eta \circ X )\int_{\Omega}\vr\langle u, v\rangle +\int_{\Omega} \vr \langle v,  [u, \eta]\rangle\\
	&=\int_{\Omega} \vr \langle A_\vr(\nabla_{\dot{X}} \dot{X} \circ X^{-1}),  \eta\rangle +  \frac{1}{2}\int_{\Omega}\bigg\langle\vr\eta,\nabla(|u|^2 + (\vr\div u + \langle u, \nabla b\rangle)^2 )\bigg\rangle +\int_{\Omega} \vr \langle v,  \nabla_u \eta - \nabla_{\eta} u \rangle\\
	&=\int_{\Omega} \bigg\langle  \vr A_\vr(\nabla_{\dot{X}} \dot{X} \circ X^{-1}) + \frac{1}{2}\vr\nabla(|u|^2 + (\vr\div u + \langle u, \nabla b\rangle)^2)  -\div(\vr u)v - \vr\nabla_u v - \vr (\nabla u)v, \eta \bigg\rangle.
\end{align}
Comparing two expressions, we deduce that 
\be 
\partial_t(\vr v) =\vr A_\vr(\nabla_{\dot{X}} \dot{X} \circ X^{-1}) + \frac{1}{2}\vr\nabla(|u|^2 + (\vr\div u + \langle u, \nabla b\rangle)^2)  -\div(\vr u)v - \vr\nabla_u v - \vr (\nabla u)v,
\ee 
or
\be \label{covariant-GN}
\vr A_\vr(\nabla_{\dot{X}} \dot{X} \circ X^{-1}) = \partial_t(\vr v) +\div(\vr u)v + \vr\nabla_u v + \vr (\nabla u)v - \frac{1}{2}\vr\nabla(|u|^2 + (\vr\div u + \langle u, \nabla b\rangle)^2).
\ee
 
Composing  Lagrangian equation \eqref{Lagrangian-GN} with $X^{-1}$, applying $\vr A_\vr$ to and using \eqref{covariant-GN}, 
\be 
\partial_t(\vr v) +\div(\vr u)v + \vr\nabla_u v + \vr (\nabla u)v - \frac{1}{2}\vr\nabla(|u|^2 + (\vr\div u + \langle u, \nabla b\rangle)^2)  = -\frac{1}{\ve^2}\vr \nabla (\vr - b).
\ee 
Since the density $\vr$ is transported, we conclude that
\be 
\partial_t v + \nabla_u v + (\nabla u)v + \nabla\bigg( \frac{1}{\ve^2}(\vr - b) - \frac{1}{2}|u|^2 - \frac{1}{2}(\vr\div u + \langle u, \nabla b\rangle)^2\bigg)  = 0.
\ee 
This completes the derivation.
\end{proof}
With that we proceed to take small $\ve$ limit, that is computing geodesic motion and the Takens system on the critical manifold $\M$ of potential $U$. As in the case of lake equation, it is given by 
\be 
\M = \{ X \ | \ \vr - b = \bar{H}\}.
\ee  
Tangent space is again given by vector fields that preserve $\vr$ to first order.
\be 
T_X \M = \{ \xi\circ X  \ | \ \div(\vr\xi) = 0 \}.
\ee 
The normal space, however, is modified from the lake case, since the metric is different. 
It is
\be 
N_X \M = \{ \eta \circ X \ | A_{\vr}\eta = \nabla \varphi\}.
\ee 
\begin{prop}
	If $X$ solves geodesic equation on $\M$, $u = \dot{X} \circ X^{-1}, \vr = \bar{H} + b$ and $v = A_{\vr} u$ solve Great Lake equations
	\be
	\begin{cases}\label{greatlake} 
		\partial_t v + \nabla_u v +  (\nabla u)v - \frac{1}{2}\nabla|u|^2 = \nabla \tilde{\varphi} \\
		v = u + \frac{1}{6}\vr^2\nabla \div u\\
		\div ( \vr u) = 0\\
		\vr = \bar{H} + b.
	\end{cases} 
	\ee
	The homogenized potential of $U$ is constant on $\M$, so the Takens limit system agrees with \eqref{greatlake}.
\end{prop}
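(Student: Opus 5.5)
The statement has two parts, and I will handle them separately: first derive the Eulerian geodesic equation on $\M$, then show that the normal Hessian spectrum is independent of $X\in\M$.

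\textbf{Geodesic equation.} By d'Alembert's principle, a geodesic on $\M$ satisfies $\nabla_{\dot X}\dot X\in N_X\M$. With the description of $N_X\M$ above, this means that after composing with $X^{-1}$ we have $A_\vr(\nabla_{\dot X}\dot X\circ X^{-1})=\nabla\varphi$ for some scalar $\varphi$. I will reuse identity \eqref{covariant-GN}, which was derived from metric compatibility alone and so holds for any curve in $\mathsf{Diff}(\Omega)$, in particular for one lying in $\M$. Multiplying by $\vr$, substituting into \eqref{covariant-GN}, and using $\partial_t\vr+\div(\vr u)=0$ gives
\[
\partial_t v+\nabla_u v+(\nabla u)v-\tfrac12\nabla|u|^2-\tfrac12\nabla(\vr\div u+\langle u,\nabla b\rangle)^2=\nabla\varphi .
\]
The last gradient is absorbed into $\tilde\varphi$.

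It remains to simplify $v=A_\vr u$ using $\div(\vr u)=0$ with $\vr=\bar H+b$. Since $\nabla\vr=\nabla b$, we have $\vr\div u=-\langle u,\nabla b\rangle$. Set $w:=\langle u,\nabla b\rangle=-\vr\div u$ and expand the four non-identity terms of $A_\vr u$:
\begin{itemize}
\item $-\tfrac13\vr^{-1}\nabla(\vr^3\div u)=\tfrac13\vr^{-1}\nabla(\vr^2 w)$;
\item $-\tfrac12\vr^{-1}\nabla(\vr^2 w)$;
\item $\tfrac12\vr\div u\,\nabla b=-\tfrac12 w\nabla b$;
\item $w\nabla b$.
\end{itemize}
Their sum is
\[
-\tfrac16\vr^{-1}\nabla(\vr^2 w)+\tfrac12 w\nabla b=-\tfrac16\vr\nabla w-\tfrac13 w\nabla b+\tfrac12 w\nabla b .
\]
I expect the remaining terms to reorganize, possibly modulo gradients that again go into $\tilde\varphi$, into $\tfrac16\vr^2\nabla\div u$. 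Concretely, $\vr^2\nabla\div u=-\vr\nabla w+w\nabla b$, so $\tfrac16\vr^2\nabla\div u=-\tfrac16\vr\nabla w+\tfrac16 w\nabla b$, which matches the sum exactly. This yields $v=u+\tfrac16\vr^2\nabla\div u$.

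\textbf{Normal Hessian.} Since $U$ is the same functional as in the lake case, its differential on $\M$ is unchanged. The Hessian with respect to the GN metric therefore satisfies $A_\vr\,\hess_{\rm GN}U(\xi,\cdot)=-\nabla\div(\vr\xi)$. Here I use that on the critical set $\nabla(\vr-b)=0$, so connection terms (including the non-diagonal Christoffel contributions of the GN metric) drop out, exactly as in Lemma \ref{hess-fluid}.

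On $N_X\M$, write $\eta=A_\vr^{-1}\nabla\varphi$. The eigenproblem then reduces to the scalar generalized problem
\[
-\nabla\div\bigl(\vr A_\vr^{-1}\nabla\varphi\bigr)=\lambda\nabla\varphi .
\]
Every object in this problem depends on $X$ only through $\vr=\bar H+b$ and $b$, and these are the same for all $X\in\M$. Hence the eigenvalues are constant on $\M$, so $V$ is constant and its gradient vanishes. Positivity holds because the quadratic form equals $\int(\div(\vr\eta))^2$, which is nonzero on nonzero normal $\eta$ since the kernel of $\div(\vr\,\cdot)$ is $T_X\M$.

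\textbf{Main obstacle.} The difficult step is justifying that the Hessian on the critical set is metric-independent up to the $A_\vr$ factor, that is, that the Levi-Civita terms of the non-diagonal GN metric drop out. I would argue this via the general fact that at a critical point the Hessian is $d(dU)$, which is independent of the connection.
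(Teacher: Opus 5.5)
Your proposal is correct and follows the paper's proof. Both arguments apply d'Alembert's principle and reuse \eqref{covariant-GN}, both simplify $A_\vr u$ using $\div(\vr u)=0$ and $\nabla\vr=\nabla b$, and both use the normal Hessian $-A_\vr^{-1}\nabla\div(\vr\xi)$, which depends on $X$ only through $\vr=\bar H+b$. (The extra term $\tfrac12\nabla(\vr\div u+\langle u,\nabla b\rangle)^2$ that you absorb into $\tilde\varphi$ in fact vanishes identically on $\M$, since the bracket equals $\div(\vr u)$.)

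The only difference is in justifying the Hessian formula. You invoke connection-independence of the second derivative at a critical point, whereas the paper differentiates $A_{\vr^\ve}\eta^\ve=\nabla(\vr^\ve-b)$ directly and uses $\eta=0$ on $\M$; these rest on the same fact.
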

\begin{proof}
Geodesic equation on $\M$ is written as usual:
\be 
\begin{cases} 
	\nabla_{\dot{X}}\dot{X} = \eta \circ X\\
	A_{\vr} \eta = \nabla \varphi\\ 
	\div(\vr u) = 0\\
	\vr = \bar{H} + b
\end{cases}.\ee 
Applying $\vr A_{\vr}$ to the first equation and reusing \eqref{covariant-GN}, we deduce that $u, v= A_{\vr} u$ and $\vr$ satisfy
\be
\begin{cases} 
\partial_t v + \nabla_u v +  (\nabla u)v - \frac{1}{2}\nabla|u|^2 = \nabla \tilde{\varphi} \\
v = A_{\vr} u\\
\div ( \vr u) = 0\\
\vr = \bar{H} + b
\end{cases} .
\ee
We note that when computed on $\M$ with a tangent vector $\xi \circ X $, $A_\vr$ simplifies to
\begin{align}
	A_\vr \xi &= \xi - \frac{1}{3}\frac{\nabla(\vr^3 \div \xi)}{\vr} - \frac{1}{2}\frac{\nabla(\vr^2 \langle \xi, \nabla b\rangle)}{\vr} +  \frac{1}{2} \vr \div \xi \nabla b + \langle\xi, \nabla b\rangle \nabla b\\
	&= \xi - \frac{1}{3}\frac{\nabla(\vr^3 \div \xi)}{\vr} + \frac{1}{2}\frac{\nabla(\vr^3 \div \xi)}{\vr} +  \frac{1}{2} \vr \div\xi \nabla \vr - \vr \div \xi\nabla \vr\\
	&= \xi + \frac{1}{6}\frac{\nabla(\vr^3 \div \xi)}{\vr} - \frac{1}{2} \vr \div\xi \nabla \vr\\
	&= \xi + \frac{1}{6}\vr^2\nabla \div\xi.
\end{align} 
Here in the second equality we used that since $X \in \M, \xi \in T_X\M$
\begin{align}
\nabla b &= \nabla \vr \qquad \text{and} \qquad
0 = \div(\vr \xi) = \vr \div \xi + \langle\xi, \nabla \vr\rangle = \vr \div \xi + \langle\xi, \nabla b\rangle.
\end{align}
Consequently, the equations can be rewritten into the form  
\be
\begin{cases} 
	\partial_t v + \nabla_u v +  (\nabla u)v - \frac{1}{2}\nabla|u|^2 = \nabla \tilde{\varphi} \\
	v = u + \frac{1}{6}\vr^2\nabla \div u\\
	\div ( \vr u) = 0\\
	\vr = \bar{H} + b.
\end{cases} 
\ee
Upon setting $\bar{H} = 0$ and eliminating $\vr$ in favor of $b$, these are Great Lake equations of \cite{CHL}.

We proceed to compute the Hessian of potential $U$ restricted to $\M$. The computation is surprisingly simple and turns out to be the same as for the lake equation.

We first compute that if $X^\ve$ is a variation of $X$ and $\eta^\ve$ is the solution of
\be 
A_{\vr^\ve} \eta^\ve = \nabla (\vr^\ve - b),
\ee
then the variation of $\eta$ is given by
\begin{align}
\frac{\rmd}{\rmd \ve} A_{\vr^\ve} \eta + A_{\vr} \frac{\rmd}{\rmd \ve} \eta^\ve &= \frac{\rmd}{\rmd \ve}\nabla(\vr - b), \qquad A_\vr \frac{\rmd}{\rmd \ve} \eta^\ve = -\nabla\div(\vr\xi) -\frac{\rmd}{\rmd \ve} A_{\vr^\ve} \eta.
\end{align}
 For $X \in \M$ $\eta$ vanishes, so 
 $ 
 \frac{\rmd}{\rmd \ve} \eta^\ve = -A_{\vr}^{-1}\nabla\div(\vr\xi).
$
 Consequently, we compute at point $X \in \M$
\begin{align}
\hess_{\vr_0, \rm GN} U_X (\xi \circ X, \cdot) &=\frac{\rmd}{\rmd \ve} \grad_{\vr_0, \rm GN} U [X^\ve]  =  \frac{\rmd}{\rmd \ve}(\eta^\ve \circ X^\ve)\\
&=  \frac{\rmd}{\rmd \ve}\eta^\ve \circ X +  \cancel{(\xi \cdot \nabla\eta) \circ X}= -A_{\vr}^{-1}\nabla\div(\vr\xi) \circ X.
\end{align}
Thus in particular it is constant on $\M$ and 
\be 
\hess_{\vr_0, \rm GN} U_X (\xi \circ X, \xi \circ X) = \bigg(-A_\vr^{-1}\nabla\div(\vr\xi) \circ X, \xi \circ X\bigg)_{\vr_0, \rm GN} = \int_{\Omega} (\div(\vr\xi))^2 \rmd x
\ee
so by the standard elliptic theory it is positive definite on normal space. 
\end{proof}  
\appendix 
\section{Hellmann-Feynman type formulas}\label{HFform}
\begin{lemma}\label{HF1}
	Suppose $A^\ve$ is a $C^1$ family of linear operators with a common dense domain on a real Hilbert space $H$. Suppose additionally that $A^\ve$ are self-adjoint with respect to (possibly $\ve$-dependent) inner product $\big(\cdot,\cdot\big)_\ve$ on $H$. Let $\lambda^\ve, \varphi^\ve$ be a $C^1$ family of eigenpairs of $A^\ve$, with $\varphi^\ve$ normalized to have $\|\varphi^\ve\|_\ve = 1$. Then 
	\be 
	\frac{\rmd}{\rmd \ve}\lambda^\ve = \bigg(\varphi^\ve,\frac{\rmd}{\rmd \ve}A^\ve \varphi^\ve\bigg)_\ve.
	\ee   
\end{lemma}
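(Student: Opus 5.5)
Differentiate the eigenvalue equation and pair it with the eigenvector itself. Write $A^\ve\varphi^\ve = \lambda^\ve\varphi^\ve$ and differentiate in $\ve$, which is legitimate by the assumed $C^1$ dependence of $A^\ve$, $\lambda^\ve$ and $\varphi^\ve$:
\be
\dot A^\ve \varphi^\ve + A^\ve \dot\varphi^\ve = \dot\lambda^\ve \varphi^\ve + \lambda^\ve\dot\varphi^\ve,
\ee
where dots denote $\ve$-derivatives. Take the $(\cdot,\cdot)_\ve$ inner product of this identity with $\varphi^\ve$. Since $\|\varphi^\ve\|_\ve=1$, the $\dot\lambda^\ve$ term gives exactly $\dot\lambda^\ve$. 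It then suffices to show that the two terms involving $\dot\varphi^\ve$ cancel.

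For this cancellation use self-adjointness of $A^\ve$ with respect to $(\cdot,\cdot)_\ve$ at the fixed value $\ve$:
\be
(\varphi^\ve, A^\ve\dot\varphi^\ve)_\ve = (A^\ve\varphi^\ve,\dot\varphi^\ve)_\ve = \lambda^\ve(\varphi^\ve,\dot\varphi^\ve)_\ve.
\ee
This equals the right-hand term $\lambda^\ve(\varphi^\ve,\dot\varphi^\ve)_\ve$, so the two cancel and we are left with $\dot\lambda^\ve = (\varphi^\ve,\dot A^\ve\varphi^\ve)_\ve$.

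The cancellation does not require $(\varphi^\ve,\dot\varphi^\ve)_\ve=0$. That identity would in fact fail for an $\ve$-dependent inner product, since differentiating the normalization gives $2(\varphi^\ve,\dot\varphi^\ve)_\ve + \dot{(\cdot,\cdot)}_\ve(\varphi^\ve,\varphi^\ve)=0$. This is why the $\dot\varphi^\ve$ terms are eliminated by self-adjointness rather than by orthogonality, and why the formula holds even when the inner product varies with $\ve$.

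The main technical point is to make sure $\dot\varphi^\ve$ lies in the common domain, so that $A^\ve\dot\varphi^\ve$ makes sense and the self-adjointness step is valid. I would handle this by interpreting the $C^1$ hypotheses so that the differentiated equation holds in $H$, with $\varphi^\ve$ differentiable in the graph norm. Alternatively, the argument can avoid $A^\ve\dot\varphi^\ve$ altogether by working with difference quotients: pair $(A^{\ve+h}-A^\ve)\varphi^{\ve+h}$ with $\varphi^\ve$ in the $(\cdot,\cdot)_\ve$ product and use the symmetry $(\varphi^\ve, A^\ve\varphi^{\ve+h})_\ve = \lambda^\ve(\varphi^\ve,\varphi^{\ve+h})_\ve$. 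Dividing by $h$ and letting $h\to0$, using $(\varphi^\ve,\varphi^{\ve+h})_\ve\to1$, recovers the formula. This second route requires only that $\varphi^{\ve+h}$ lie in the domain, which is immediate.
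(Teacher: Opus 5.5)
Your argument is the same as the paper's. You differentiate $A^\ve\varphi^\ve=\lambda^\ve\varphi^\ve$, pair with $\varphi^\ve$ in $(\cdot,\cdot)_\ve$, use the normalization for the $\dot\lambda^\ve$ term, and cancel the two $\dot\varphi^\ve$ terms by self-adjointness; like you, the paper never uses $(\varphi^\ve,\dot\varphi^\ve)_\ve=0$.

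Your difference-quotient variant is a reasonable way to handle the domain issue the paper leaves implicit, but "only membership in the domain" undersells what it needs. Passing to the limit in $h^{-1}(A^{\ve+h}-A^\ve)\varphi^{\ve+h}$ requires $\varphi^{\ve+h}\to\varphi^\ve$ in the domain (graph) norm, together with differentiability of $A^\ve$ in the corresponding operator norm.
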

\begin{proof}
	Differentiating eigenproblem with respect to $\ve$ we obtain
	\be 
	\frac{\rmd}{\rmd \ve}A^\ve \varphi^\ve + A^\ve \frac{\rmd}{\rmd \ve}\varphi^\ve = \frac{\rmd}{\rmd \ve}\lambda^\ve \varphi^\ve + \lambda^\ve \frac{\rmd}{\rmd \ve}\varphi^\ve
	\ee 
	Taking inner product with $\varphi^\ve$ and using normalization of $\varphi^\ve$, we obtain
	\be 
	\bigg(\varphi^\ve,\frac{\rmd}{\rmd \ve}A^\ve \varphi^\ve\bigg)_\ve + \bigg(\varphi^\ve, A^\ve \frac{\rmd}{\rmd \ve}\varphi^\ve\bigg)_\ve = \frac{\rmd}{\rmd \ve}\lambda^\ve + \lambda^\ve \bigg(\varphi^\ve,\frac{\rmd}{\rmd \ve}\varphi^\ve\bigg)_\ve
	\ee 
	so that
	\be 
	\frac{\rmd}{\rmd \ve}\lambda^\ve =  \bigg(\varphi^\ve,\frac{\rmd}{\rmd \ve}A^\ve \varphi^\ve\bigg)_\ve + \bigg(\varphi^\ve, A^\ve \frac{\rmd}{\rmd \ve}\varphi^\ve\bigg)_\ve - \lambda^\ve \bigg(\varphi^\ve,\frac{\rmd}{\rmd \ve}\varphi^\ve\bigg)_\ve.
	\ee 
	Since $A^\ve$ are self-adjoint, the last two terms cancel:
	\begin{align} 
		\bigg(\varphi^\ve, A^\ve \frac{\rmd}{\rmd \ve}\varphi^\ve\bigg)_\ve - \lambda^\ve \bigg(\varphi^\ve,\frac{\rmd}{\rmd \ve}\varphi^\ve\bigg)_\ve &= \bigg(A^\ve\varphi^\ve,  \frac{\rmd}{\rmd \ve}\varphi^\ve\bigg)_\ve - \lambda^\ve \bigg(\varphi^\ve,\frac{\rmd}{\rmd \ve}\varphi^\ve\bigg)_\ve\\
		&=\bigg(\lambda^\ve\varphi^\ve,  \frac{\rmd}{\rmd \ve}\varphi^\ve\bigg)_\ve - \lambda^\ve \bigg(\varphi^\ve,\frac{\rmd}{\rmd \ve}\varphi^\ve\bigg)_\ve = 0.
	\end{align}
\end{proof}
\begin{lemma}\label{HF2}
	Suppose $A^\ve$ is a $C^1$ family of self-adjoint operators with a common dense domain and compact resolvents on a real Hilbert space $H$. Let $\lambda_i^\ve, \varphi_i^\ve$ be eigenvalue-eigenvector pairs of $A^\ve$, with $\varphi_i^\ve$ normalized to have $\|\varphi_i^\ve\|_H = 1$ with $\lambda^\ve_i$ simple. Then 
	 \be 
	\frac{\rmd}{\rmd \ve}\varphi_i^\ve = \sum_{j\neq i} \frac{1}{\lambda_i^\ve - \lambda_j^\ve}\bigg(\varphi_j^\ve,\frac{\rmd}{\rmd \ve}A^\ve \varphi_i^\ve\bigg)_H \varphi_j.
	\ee 
\end{lemma}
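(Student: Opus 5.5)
Differentiate the eigenproblem in $\ve$ and expand the derivative of the eigenvector in the orthonormal eigenbasis of $A^\ve$. Compact resolvent plus self-adjointness guarantees that $\{\varphi_j^\ve\}_j$ is an orthonormal basis of $H$ for each fixed $\ve$, so the expansion is legitimate.

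Concretely, differentiating $A^\ve\varphi_i^\ve=\lambda_i^\ve\varphi_i^\ve$ gives
\be
\Big(\tfrac{\rmd}{\rmd\ve}A^\ve\Big)\varphi_i^\ve + A^\ve\tfrac{\rmd}{\rmd\ve}\varphi_i^\ve = \Big(\tfrac{\rmd}{\rmd\ve}\lambda_i^\ve\Big)\varphi_i^\ve + \lambda_i^\ve\tfrac{\rmd}{\rmd\ve}\varphi_i^\ve .
\ee
Take the $H$-inner product with $\varphi_j^\ve$ for $j\neq i$. The term with $\tfrac{\rmd}{\rmd\ve}\lambda_i^\ve$ drops out by orthogonality. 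Self-adjointness moves $A^\ve$ onto $\varphi_j^\ve$, turning $(\varphi_j^\ve,A^\ve\tfrac{\rmd}{\rmd\ve}\varphi_i^\ve)_H$ into $\lambda_j^\ve(\varphi_j^\ve,\tfrac{\rmd}{\rmd\ve}\varphi_i^\ve)_H$. This yields
\be
(\lambda_i^\ve-\lambda_j^\ve)\Big(\varphi_j^\ve,\tfrac{\rmd}{\rmd\ve}\varphi_i^\ve\Big)_H=\Big(\varphi_j^\ve,\tfrac{\rmd}{\rmd\ve}A^\ve\,\varphi_i^\ve\Big)_H .
\ee
Simplicity of $\lambda_i^\ve$ lets us divide by $\lambda_i^\ve-\lambda_j^\ve$, which determines all the off-diagonal coefficients.

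The diagonal coefficient is fixed by the normalization. Differentiating $\|\varphi_i^\ve\|_H^2=1$, with the inner product $\ve$-independent as stated, gives $(\varphi_i^\ve,\tfrac{\rmd}{\rmd\ve}\varphi_i^\ve)_H=0$. Since $H$ is real, there is no phase freedom, so the $i$-th component vanishes. Summing the expansion then gives the stated formula.

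The only delicate points are regularity assumptions that should be stated explicitly:
\begin{itemize}
\item the eigenpair $(\lambda_i^\ve,\varphi_i^\ve)$ must be $C^1$ in $\ve$, with $\varphi_i^\ve$ in the common domain;
\item the series must converge in $H$.
\end{itemize}
The first is justified by simplicity of $\lambda_i$ via Kato's analytic perturbation theory, or simply assumed as in Lemma \ref{HF1}. The second holds because the coefficients are square-summable: they are the coordinates of the vector $\tfrac{\rmd}{\rmd\ve}\varphi_i^\ve\in H$. The spectral gap $\inf_{j\neq i}|\lambda_i-\lambda_j|>0$, which follows from simplicity and discreteness of the spectrum, makes the division uniform. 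I expect this regularity justification, rather than the algebra, to be the main obstacle.
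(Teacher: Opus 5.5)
Your proposal is correct and follows the paper's proof step for step: you differentiate the eigenproblem, pair with $\varphi_j^\ve$ for $j\neq i$ and use orthogonality and self-adjointness to get the off-diagonal coefficients, use the normalization to kill the $i$-th coefficient, and expand in the eigenbasis. Your added remarks on $C^1$ dependence of the eigenpair and on convergence of the series spell out regularity assumptions the paper leaves implicit; for a merely $C^1$ family the natural tool is the Riesz projection for an isolated simple eigenvalue rather than Kato's analytic theory.
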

\begin{proof}
	 Differentiating eigenproblem with respect to $\ve$ we obtain
	 \be 
	 \frac{\rmd}{\rmd \ve}A^\ve \varphi_i^\ve + A^\ve \frac{\rmd}{\rmd \ve}\varphi_i^\ve = \frac{\rmd}{\rmd \ve}\lambda^\ve \varphi_i^\ve + \lambda_i^\ve \frac{\rmd}{\rmd \ve}\varphi_i^\ve.
	 \ee 
	 Taking inner product with any other eigenvector $\varphi_j^\ve$ with eigenvalue $\lambda_j^\ve$ we obtain
	 \be 
	 \bigg(\varphi_j^\ve,\frac{\rmd}{\rmd \ve}A^\ve \varphi_i^\ve\bigg)_H + \bigg(\varphi_j^\ve, A^\ve \frac{\rmd}{\rmd \ve}\varphi_i^\ve\bigg)_H = \frac{\rmd}{\rmd \ve}\lambda_i^\ve \bigg(\varphi_j^\ve, \varphi_i^\ve\bigg)_H + \lambda_i^\ve \bigg(\varphi_j^\ve,\frac{\rmd}{\rmd \ve}\varphi_i^\ve\bigg)_H.
	 \ee
	 Since eigenvectors with different eigenvalues are orthogonal, $(\varphi_j^\ve, \varphi_i^\ve)_H = 0 $. Thus using self-adjointness of $A^\ve$,
	 	\begin{align} 
	 	\bigg(\varphi_j^\ve,\frac{\rmd}{\rmd \ve}A^\ve \varphi_i^\ve\bigg)_H  &=  \lambda_i^\ve \bigg(\varphi_j^\ve,\frac{\rmd}{\rmd \ve}\varphi_i^\ve\bigg)_H - \bigg(\varphi_j^\ve, A^\ve \frac{\rmd}{\rmd \ve}\varphi_i^\ve\bigg)_H\\
	 	&=\lambda_i^\ve \bigg(\varphi_j^\ve,\frac{\rmd}{\rmd \ve}\varphi_i^\ve\bigg)_H - \bigg(A^\ve\varphi_j^\ve,  \frac{\rmd}{\rmd \ve}\varphi_i^\ve\bigg)_H\\
	 	&=\lambda_i^\ve \bigg(\varphi_j^\ve,\frac{\rmd}{\rmd \ve}\varphi_i^\ve\bigg)_H - \lambda_j^\ve\bigg(\varphi_j^\ve,  \frac{\rmd}{\rmd \ve}\varphi_i^\ve\bigg)_H,
	 \end{align}
 so that we obtain:
 \be 
 \bigg(\varphi_j^\ve,  \frac{\rmd}{\rmd \ve}\varphi_i^\ve\bigg)_H = \frac{1}{\lambda_i^\ve - \lambda_j^\ve}\bigg(\varphi_j^\ve,\frac{\rmd}{\rmd \ve}A^\ve \varphi_i^\ve\bigg)_H.
 \ee 
 Additionally, normalization of $\varphi_i^\ve$ implies 
$\bigg(\varphi_i^\ve,  \frac{\rmd}{\rmd \ve}\varphi_i^\ve\bigg)_H = 0.$
 By spectral theorem and the assumption of simplicity of eigenvalues, this implies that 
 \be 
  \frac{\rmd}{\rmd \ve}\varphi_i^\ve = \sum_{j\neq i} \frac{1}{\lambda_i^\ve - \lambda_j^\ve}\bigg(\varphi_j^\ve,\frac{\rmd}{\rmd \ve}A^\ve \varphi_i^\ve\bigg)_H \varphi_j^\ve.
 \ee 
 This completes the proof.
\end{proof}

\section{Proof of Takens-Bornemann Theorem away from resonance crossings}\label{adiabaticappend}
Here present a version of a proof of Takens-Bornemann theorem. Our proof will largely follow \cite{B98}, and is provided mostly for expository reasons, to showcase the main ideas that must go into the rigorous proofs of convergence to the formal Takens limit systems derived in the present paper. For that reason over this presentation we make several simplifying assumptions:
\begin{itemize}
	\item We assume that the enveloping space  is Euclidean. This simplifies formulas notably; in order to obtain the stronger version of Bornemann, one has to redo the whole proof in Riemannian-differential notation, keeping track of derivatives of the metric, curvature terms and other complications.
	\item We assume over the course of the proof that there are no non-singular forcing (so that $W$ in \eqref{strong} is zero). The inclusion of $W$ only introduces the terms that are either strongly convergent or lower order, and does not affect the substance of the proof.
	\item We will also ignore the details regarding transversal resonances, and assume that non-resonance conditions are satisfied pointwise. While a substantial simplification, the details can be carried out as done in \cite{B98}: morally, transversal resonances are invisible under weak limits.
\end{itemize} 
For a more complete treatment, see \cite{B98}. With these simplifications, the statement and proof of Takens-Bornemann theorem \ref{maintheorem} is given below.
\begin{theorem}\label{maintheorem_restricted}
	Consider potential $U$ that is constraining spectrally smooth to  a nondegenerate submanifold $\M \subset \R^d$.
	Consider a sequence $\ve \to 0$, and a mildly ill-prepared sequence of initial data $(X_0^\ve, V_0^\ve)$. Let $(X_0,X_{0N}, V_0) \in \M \times N_{X_0} \M \times T_{X_0}\R^d$ be as in definition
	\ref{initial_data}.
	
	For each $\ve$, let $X^\ve$ be a solution of \eqref{strong} with initial conditions $X_0^\ve, V_0^\ve$. Define constants $c_i$  by 
	\be \label{constantsci}
	c_i = \frac{|\mathbf{P}_i (X_0) V_0|^2 + \lambda_i(X_0)|\mathbf{P}_i (X_0) X_{0N}|^2}{2\sqrt{\lambda_i(X_0)}},
	\ee 
	where $V_0, X_0, X_{0N}$ are from the definition of mildly ill-prepared data \ref{initial_data}.
	
	Let $X$ be the solution of the {Takens limit system} with homogenized potential $V: \M \to \R$ given in \eqref{hompot} constructed from $c_i$ as above.
	If $X$ is pointwise non-resonant up to order 3, then
	\be
	X^\ve \to X \ \text{in} \  C^\alpha([0,T])
	\ee
	for all $\alpha \in(0,1)$ and any $T < \infty$.
\end{theorem}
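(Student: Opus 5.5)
We follow the strategy of Bornemann \cite{B98}, combining energy estimates, weak convergence and an averaging lemma. The plan is to obtain uniform bounds, pass to the limit in the tangential equation, and identify the weak limit of the quadratic normal terms through the adiabatic invariants.

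\emph{Step 1: uniform bounds and splitting.} Energy conservation for \eqref{strong} with $W=0$, together with the mildly ill-prepared data, gives $E^\ve=O(1)$. Hence $|\dot X^\ve|=O(1)$ and $U(X^\ve)=O(\ve^2)$. Nondegeneracy of the Hessian on $N\M$ then implies $\mathrm{dist}(X^\ve,\M)=O(\ve)$. This lets us use the splitting \eqref{splitting}, $X^\ve=X^\ve_\M+\ve X^\ve_N$ with $X^\ve_N\in N_{X^\ve_\M}\M$ bounded and $\ve\dot X_N^\ve$ bounded. By Arzel\`a--Ascoli, a subsequence $X^\ve\to X$ in $C^\alpha$ and $\dot X^\ve\rightharpoonup^*\dot X$, with $X(t)\in\M$.

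\emph{Step 2: tangential equation.} Project \eqref{strong} onto $T_{X^\ve_\M}\M$. The singular term $\ve^{-2}\nabla U$ Taylor-expands about $X^\ve_\M$. Its linear part $\ve^{-1}\hess U\,X_N^\ve$ is normal. The next-order term is $\tfrac12 D^3U(X_N^\ve,X_N^\ve,\cdot)$ projected tangentially, plus terms coming from differentiating the projector (second fundamental form) acting on $\dot X^\ve_N$. Differentiating the identity $\hess U\,P_N = \sum\lambda_i\mathbf P_i$ along $\M$ should rewrite these quadratic terms as
\[
\sum_{i}\Big(\tfrac12|\mathbf P_i X_N^\ve|^2\nabla^T\lambda_i + \text{(cross terms between distinct } i,j\text{ and between } X_N,\ \ve\dot X_N)\Big)+O(\ve).
\]
So the tangential motion is $\ddot X^\ve_\M=$ (quadratic form in $(X_N^\ve,\ve\dot X_N^\ve)$) $+O(\ve)$. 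We test against smooth tangential fields and pass to the weak limit.

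\emph{Step 3: weak limits of quadratic terms (main obstacle).} In the fast time $\tau=t/\ve$, each $X_{Ni}^\ve$ satisfies \eqref{slowlymod} with frequency $\omega_i=\sqrt{\lambda_i(X^\ve_\M)}$. We introduce modulated action-angle variables $X_{Ni}=\sqrt{2c_i^\ve/\omega_i}\cos\theta_i$ with $\dot\theta_i=\omega_i/\ve+O(1)$, and then carry out three tasks.
\begin{enumerate}
\item Show $\dot c_i^\ve$ equals a sum of oscillatory terms $e^{\mathrm i(\pm\theta_j\pm\theta_k\pm\theta_l)}$ (cubic, from $D^3U$) and $e^{\mathrm i(\pm\theta_j\pm\theta_k)}$ (from $\dot\omega_i$ and coupling).
\item Pointwise non-resonance up to order 3 means every such phase has derivative bounded below by $c/\ve$. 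Integration by parts in time (the averaging lemma) therefore makes the integrals $O(\ve)$, so $c_i^\ve\to c_i$ uniformly, with $c_i$ given by \eqref{constantsci} evaluated at $t=0$.
\item The same integration by parts shows that cross terms $i\neq j$ and the oscillating parts $\cos 2\theta_i$ converge weakly to $0$. The weak limit of the quadratic form is thus $\sum_i \tfrac{c_i}{\omega_i}\nabla^T\lambda_i=\nabla^T\sum_i 2c_i\sqrt{\lambda_i}\cdot\tfrac12=\nabla^T V$, where the kinetic and potential halves combine.
\end{enumerate}
I expect the bookkeeping here to be the hardest part. One must verify that the $O(1)$ coefficient terms in $\dot c_i$ involve only the combinations listed, in particular that no non-oscillatory (zero-phase) term survives except one that exactly cancels. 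This requires using the spectral smoothness of $\mathbf P_i$ along $\M$.

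\emph{Step 4: conclusion.} Passing to the limit in the weak form yields \eqref{limitsystem}. The initial velocity is $P_{T}V_0$ because the normal velocity component is absorbed into the oscillation. Uniqueness of solutions to the smooth ODE \eqref{limitsystem} on $\M$ implies that the whole family converges, giving $X^\ve\to X$ in $C^\alpha([0,T])$.
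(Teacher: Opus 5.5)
Your route is genuinely different from the paper's and can be made to work, but two steps fail as written.

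\textbf{How the paper argues.} Following Bornemann, the paper never introduces phases. After the bounds of Lemma~\ref{estimates} and the expanded equations of Lemma~\ref{lemeqns}, it takes weak-$*$ limits $\sigma_i,\eta_i$ of $\ve^2|\mathbf{P}_i\dot X_N^\ve|^2$ and $|\mathbf{P}_iX_N^\ve|^2$. Everything else comes from one fact: time derivatives of $O(\ve)$ quantities such as $\langle\ve\dot X_N^\ve,\ve\mathbf{P}_iX_N^\ve\rangle$, $\ve\dot X_N^\ve\otimes\ve X_N^\ve$ and $\ve\dot X_N^\ve\otimes\ve\dot X_N^\ve\otimes\ve X_N^\ve$ are weakly null. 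This gives
\begin{itemize}
\item equipartition $\sigma_i=\lambda_i\eta_i$;
\item the vanishing of off-diagonal products;
\item the vanishing of the cubic moments, via a block linear system whose determinant is the product of the order-three resonance factors.
\end{itemize}
The weak limit of $\tfrac{\rmd}{\rmd t}E_i^\ve$ then yields $\tfrac{\rmd}{\rmd t}(\sqrt{\lambda_i}\,\eta_i)=0$. Non-resonance enters only as algebraic invertibility. Vanishing actions and blocks with $\dim E_i>1$ cause no trouble, and null sets of times are invisible.

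\textbf{What your route buys.} Your averaging argument gives more: $O(\ve)$ conservation of the actions $c_i^\ve$ and an explicit two-scale description, which is the natural starting point for error estimates. The bookkeeping you worry about does close. In codimension one, write the normal equation as $\ve^2\ddot x+\omega^2x=\ve F+O(\ve^2)$ with $F=O(1)$. Then $\dot c=\tfrac{\dot\omega}{\omega}\,c\cos 2\theta+\tfrac{1}{\omega}\,\ve\dot x\,F+O(\ve)$, so the zero-phase part cancels exactly; this is the same mechanism as Lemma~\ref{equipartition}. Diagonal coupling terms vanish since $\mathbf{P}_i\dot{\mathbf{P}}_i\mathbf{P}_i=0$.

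\textbf{First failing step: the final identification in Step~3.} It is off by a factor $2$ and a sign. The tangential equation contains only the potential quadratic term $-\tfrac12\mathbf{P}_T\nabla^3U(X_N^\ve,X_N^\ve)$. Its diagonal part is $-\tfrac12\sum_i|\mathbf{P}_iX_N^\ve|^2\nabla^T\lambda_i$, and $|\mathbf{P}_iX_N^\ve|^2=\tfrac{2c_i^\ve}{\omega_i}\cos^2\theta_i\rightharpoonup\tfrac{c_i}{\omega_i}$. The limit force is therefore $-\sum_i\tfrac{c_i}{2\omega_i}\nabla^T\lambda_i=-\nabla^TV$, whereas your $\sum_i\tfrac{c_i}{\omega_i}\nabla^T\lambda_i$ equals $2\nabla^TV$. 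There is no kinetic half to combine. The term $\ve\dot X_N^\ve\otimes\ve\dot X_N^\ve$ never enters the tangential equation, because $D^2\pi$ vanishes on pairs of normal vectors. The mixed term $2D^2\pi(\dot X_\M^\ve\otimes\ve\dot X_N^\ve)$ is linear in $\ve\dot X_N^\ve$, hence weakly null.

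\textbf{Second failing step: the phase bound.} The $O(1)$ forcing (e.g.\ $\sff(\dot X_\M^\ve,\dot X_\M^\ve)$) contributes $O(1/\sqrt{c_i^\ve})$ to $\dot\theta_i$. So $\dot\theta_i=\omega_i/\ve+O(1)$ fails once $c_i^\ve\lesssim\ve^2$. These are exactly the modes with $c_i=0$, e.g.\ well-prepared data, which the theorem includes. Separately, the scalar ansatz $X_{Ni}=\sqrt{2c_i^\ve/\omega_i}\cos\theta_i$ does not cover $\dim E_i>1$.

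Use instead the block amplitudes $z_i=\omega_i\mathbf{P}_iX_N^\ve+\mathrm{i}\,\ve\mathbf{P}_i\dot X_N^\ve$. They satisfy $|z_i|^2=2E_i^\ve$ and $\ve\dot z_i=-\mathrm{i}\omega_iz_i+O(\ve)$. A monomial such as $z_j\otimes z_k\otimes\bar z_l$ then equals $\tfrac{\mathrm{i}\ve}{\omega_j+\omega_k-\omega_l}\tfrac{\rmd}{\rmd t}(z_j\otimes z_k\otimes\bar z_l)+O(\ve)$. Integrated against slowly varying coefficients it is $O(\ve)$; this is the paper's virial trick in complex form.

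\textbf{Two further points to state explicitly.}
\begin{itemize}
\item Step~2 bounds $\ddot X_\M^\ve$, so $X_\M^\ve$ is precompact in $C^1$. This is needed to pass to the limit in $\sff(\dot X_\M^\ve,\dot X_\M^\ve)$ and in the coefficients $\lambda_i(X_\M^\ve)$, $\mathbf{P}_i(X_\M^\ve)$.
\item Non-resonance is assumed along $X$, not along $X_\M^\ve$. Applying Step~3 therefore requires a continuation argument on a non-resonant neighborhood of $X([0,T])$. The paper also leaves this implicit.
\end{itemize}
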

\begin{proof}[Proof of Theorem \ref{maintheorem_restricted}] 
	
	Let $N$ be the tubular neighborhood of $\M$ on which the nearest point projection $\pi: N \to \M$ is defined. One deduces from the conservation of energy that if $\ve$ is sufficiently small then $X^\ve$ never leaves $N$. Thus the splitting
	\be
	X^\ve = X^\ve_T + \ve X^\ve_N \quad \text{with}\quad X^\ve_T = \pi(X^\ve)
	\ee
	makes sense.
	Over the course of the proof we will abuse notation and write 
	\be 
	f^\ve = f(X_T^\ve),\ f = f(X_T)
	\ee
	for any tensor field $f$ defined on a $\M$.
	We also remark for future use that for $a\in \M$,
	\be
	D\pi(a)= \mathbf{P}_{T_a \M} \qquad \text{ and}\qquad \begin{cases}\label{D^2pi}
		D^2\pi(a):(\xi\otimes\eta) = 0 \text{ for } \xi, \eta \in N_a\M\\
		D^2\pi(a):(\xi\otimes\eta) = \sff(a)(\xi,\eta) \text{ for } \xi, \eta \in T_a\M\\
		D^2\pi(a):(\xi\otimes\eta) \in T_a\M \text{ for } \xi \in N_a\M, \eta \in T_a\M\\
	\end{cases}.
	\ee 
	\begin{lemma}[Uniform energy estimates]\label{estimates}
		We have the following
		\be 
		|X_N^\ve| + |\ve\dot{X}_N^\ve| + 		|\mathbf{P}_{T_{X^\ve_T}} \dot{X}_N^\ve| +  |\dot{X}^\ve_T| + |\ddot{X}^\ve_T| = O(1).
		\ee
	\end{lemma}
	\begin{proof}[Proof of  Lemma \ref{estimates}]
		From the conservation of energy and  non-degeneracy of the minimum of $U$ on $\M$, we see that
		$
		X^\ve_N = O(1).
		$
		Since $D\pi$ is bounded on $N$ and $\dot{X}^\ve$ is bounded from the energy, we also note that 
		\be 
		\dot{X}^\ve_T = D\pi(X^\ve)\dot{X}^\ve = O(1).
		\ee 
		For the normal component we observe that 
		\be 
		\ve \dot{X}^\ve_N = \dot{X}^\ve - \dot{X}^\ve_T = (I - D\pi(X^\ve))\dot{X}^\ve = (\mathbf{P}_{N_{X^\ve_T}} + O(\ve))\dot{X}^\ve,
		\ee
		and thus 
		\be 
		|\ve\dot{X}_N^\ve|, |\mathbf{P}_{T_{X^\ve_T}} \dot{X}_N^\ve| = O(1).
		\ee 
		Finally, by Taylor expansion and the fact that $\hess_{X^\ve_T} U (X^\ve_N, \cdot)$ only has normal component we conclude that 
		\begin{align} 
			\ddot{X}^\ve_T &=  D\pi(X^\ve) \ddot{X}^\ve+ D^2 \pi(X^\ve) : (\dot{X}^\ve \otimes \dot{X}^\ve) \\
			&= -\frac{1}{\ve^2}(\mathbf{P}_{T_{X^\ve_T}} + O(\ve))(\nabla U(X^\ve_T) + \ve \hess_{X^\ve_T} U (X^\ve_N, \cdot) + O(\ve^2)) + O(1)= O(1).
		\end{align}
		This concludes the proof of the Lemma.
	\end{proof}
	With these bounds, we proceed to write down the equations satisfied by $X^\ve_T$ and $X^\ve_N$.
	\begin{lemma}[Equations of tangential and normal motion]\label{lemeqns}
		$X^\ve_T$ and $X^\ve_N$ solve
		\begin{align}
			\label{tangent}\ddot{X}^\ve_T &= -\frac{1}{2}\mathbf{P}_{T_{X^\ve_T}}\nabla^3 U(X^\ve_T):(X^\ve_N\otimes X^\ve_N) \\
			&\qquad\qquad+ \sff(X_{T}^\ve)(\dot{X}^\ve_T,\dot{X}^\ve_T) + 2D^2\pi(X_{T}^\ve):(\dot{X}^\ve_T\otimes\ve\dot{X}^\ve_N) + O(\ve),\\
			\label{normal}\ve\ddot{X}^\ve_N &= -\frac{1}{\ve}\hess_{X^\ve_T} U (X^\ve_N, \cdot) -\frac{1}{2}\mathbf{P}_{N_{X^\ve_T}}\nabla^3 U(X^\ve_T):(X^\ve_N\otimes X^\ve_N)\\
			&\qquad\qquad - \sff(X_{T}^\ve)(\dot{X}^\ve_T,\dot{X}^\ve_T) - 2D^2\pi(X_{T}^\ve):(\dot{X}^\ve_T\otimes\ve\dot{X}^\ve_N) + O(\ve).
		\end{align}
		
	\end{lemma}
	\begin{proof}[Proof of  Lemma \ref{lemeqns}]
		From Taylor expansion we have,
		\begin{align}
			\ddot{X}^\ve_T &=  D\pi(X^\ve) \ddot{X}^\ve+ D^2 \pi(X^\ve) : (\dot{X}^\ve \otimes \dot{X}^\ve) \\
			&= -\frac{1}{\ve^2}\bigg(\mathbf{P}_{T_{X^\ve_T}} + \ve D^2 \pi(X^\ve_T) : X^\ve_N\bigg)\bigg(\nabla U(X^\ve_T) + \ve \hess_{X^\ve_T} U (X^\ve_N, \cdot) + \frac{\ve^2}{2} \nabla^3 U(X^\ve_T):(X^\ve_N\otimes X^\ve_N)\bigg) \\
			&\qquad+D^2 \pi(X^\ve_T) : (\dot{X}^\ve_T + \ve\dot{X}^\ve_N)\otimes(\dot{X}^\ve_T + \ve\dot{X}^\ve_N) + O(\ve)\\
			&= -\frac{1}{2}\mathbf{P}_{T_{X^\ve_T}}\nabla^3 U(X^\ve_T):(X^\ve_N\otimes X^\ve_N) + \sff(X_{T}^\ve)(\dot{X}^\ve_T,\dot{X}^\ve_T) + 2D^2\pi(X_{T}^\ve):(\dot{X}^\ve_T\otimes\ve\dot{X}^\ve_N) + O(\ve).
		\end{align}
		
		For $X^\ve_N$ we have 
		\begin{align}
			\ve\ddot{X}^\ve_N &= \ddot{X}^\ve - \ddot{X}^\ve_T \\
			&= -\frac{1}{\ve^2}\bigg(\nabla U(X^\ve_T) + \ve \hess_{X^\ve_T} U (X^\ve_N, \cdot) + \frac{\ve^2}{2} \nabla^3 U(X^\ve_T):(X^\ve_N\otimes X^\ve_N)\bigg) -\ddot{X}_T^\ve + O(\ve)\\
			&= -\frac{1}{\ve}\hess_{X^\ve_T} U (X^\ve_N, \cdot) -\frac{1}{2}\mathbf{P}_{N_{X^\ve_T}}\nabla^3 U(X^\ve_T):(X^\ve_N\otimes X^\ve_N) \\
			&\qquad\qquad- \sff(X_{T}^\ve)(\dot{X}^\ve_T,\dot{X}^\ve_T) - 2D^2\pi(X_{T}^\ve):(\dot{X}^\ve_T\otimes\ve\dot{X}^\ve_N) + O(\ve).
		\end{align}
		The stated equations  follow.
	\end{proof}
	We would like to take the limits as $\ve \to 0$ in \eqref{tangent} and \eqref{normal}. For that we need the objects provided by the following lemma.
	\begin{lemma}[Limiting objects] There exist $X_T, \sigma_i, \eta_i$ such that up to extraction of subsequences
		\begin{align}
			X^\ve_T &\to X_T \text{ strongly in } C^1([0,T], \M),\\
			\ve^2 |\mathbf{P}_i^\ve\dot{X}^\ve_N|^2 &\to \sigma_i \text{ weakly}^* \text{ in } L^\infty([0,T]),\\
			|\mathbf{P}_i^\ve X^\ve_N|^2 &\to \eta_i \text{ weakly}^* \text{ in } L^\infty([0,T]).
		\end{align}
	\end{lemma}
	\begin{proof}
		The limits follow from using the uniform estimates of lemma \ref{estimates} and applying the theorems of Arzela-Ascoli and Banach-Alaoglu.
	\end{proof}
	Of crucial importance is the following lemma about equipartition of normal energies:
	\begin{lemma}\label{equipartition}
		Quantities $\sigma_i$ and $\eta_i$ are related by
		\be 
		\sigma_i = \lambda_i(X_T)\eta_i.
		\ee 
	\end{lemma} 
	\begin{proof}[Proof of Lemma \ref{equipartition}]
		Multiplying \eqref{normal} by an $O(\ve)$ quantity $\ve \mathbf{P}_i^\ve X_N^\ve$, we obtain 
		\begin{align}
			\ve^2 \langle \ddot{X}^\ve_N, \mathbf{P}_i^\ve X_N^\ve \rangle &= -\hess_{X^\ve_T} U (X^\ve_N, \mathbf{P}_i^\ve X_N^\ve) + O(\ve)\\
			&= -\lambda_i(X_T^\ve) \langle X^\ve_N, \mathbf{P}_i^\ve X_N^\ve\rangle + O(\ve)\\
			&= -\lambda_i(X_T^\ve) |\mathbf{P}_i^\ve X_N^\ve|^2 + O(\ve), 
		\end{align}
		so that
		\begin{align}\label{weak-energy}
			\frac{\rmd}{\rmd t}\langle \ve  \dot{X}_N^\ve, \ve \mathbf{P}_i^\ve X_N^\ve \rangle &= \ve^2 |\mathbf{P}_i^\ve \dot{X}_N^\ve|^2 + \ve^2 \langle \ddot{X}^\ve_N, \mathbf{P}_i^\ve X_N^\ve \rangle
			+ \ve^2 \langle \dot{X}^\ve_N, \dot{\mathbf{P}}_i^\ve X_N^\ve \rangle\\
			&= \ve^2 |\mathbf{P}_i^\ve \dot{X}_N^\ve|^2 - \lambda_i(X_T^\ve) |\mathbf{P}_i^\ve X_N^\ve|^2 + O(\ve) = O(1).
		\end{align}
		Since 
		$ 
		\langle \ve  \dot{X}_N^\ve, \ve \mathbf{P}_i^\ve X_N^\ve \rangle = O(\ve)
		$
		and its derivative is bounded, it follows that this derivative in fact weakly$^*$ converges to zero. Thus 
		taking the weak$^*$ limit in \eqref{weak-energy}, we conclude that 
		\be 
		0 = \sigma_i - \lambda_i(X_T)\eta_i.
		\ee 
		This concludes the proof of the Lemma.
	\end{proof}
	\begin{lemma}[Weak$^*$ convergence to zero of off-diagonal quadratic terms]\label{weakquadratic}
		\begin{align}
			\ve \dot{X}_N^\ve &\to 0 \text{ weakly}^* \text{ in } L^\infty([0,T]), \\
			\label{cross_zero} \mathbf{P}_i^\ve X_N^\ve \otimes \mathbf{P}_j^\ve X_N^\ve  &\to 0 \text{ weakly}^* \text{ in } L^\infty([0,T]) \ \text{for} \ i\neq j
		\end{align}
	\end{lemma}
	\begin{proof}[Proof of Lemma \ref{weakquadratic}]
		The first assertion follows from Lemma \ref{estimates}, since the estimates there imply that 
		$\ve X_N^\ve \to 0$ and $\ve \dot{X}_N^\ve = O(1)$
		and if derivatives of strongly convergent to zero sequence are bounded they converge weakly$^*$ to zero.  
		
		To see quadratic convergence, we mimic the proof of Lemma \ref{equipartition}. Taking tensor product of equation \eqref{normal} with $X^\ve_N$, we get 
		\begin{align}
			\frac{\rmd}{\rmd t} \ve  \dot{X}_N^\ve \otimes \ve X_N^\ve  &= \ve^2  \dot{X}_N^\ve \otimes \dot{X}_N^\ve + \ve^2  \ddot{X}^\ve_N \otimes X_N^\ve \\
			&= \ve^2 \dot{X}_N^\ve \otimes  \dot{X}_N^\ve -\sum_k\lambda_k^\ve \mathbf{P}_k^\ve X_N^\ve \otimes X_N^\ve  + O(\ve).
		\end{align}
		Since $\ve  \dot{X}_N^\ve \otimes \ve X_N^\ve = O(\ve)$
		and its derivative is bounded, in follows that this derivative in fact weakly$^*$ converges to zero. Thus 
		taking the weak$^*$ limit we conclude that 
		\begin{align}
			0 &= \wlim_{\ve \to 0}(\ve \dot{X}_N^\ve \otimes  \ve \dot{X}_N^\ve - \sum_k \lambda_k^\ve \mathbf{P}_k^\ve X_N^\ve \otimes  X_N^\ve).
		\end{align}
		Thus tensor  $\sum_{k} \lambda_k (\wlim_{\ve\to 0} \mathbf{P}_k^\ve X_N^\ve \otimes  X_N^\ve)$ is symmetric, so multiplying the equality 
		\be 
		\sum_{k} \lambda_k (\wlim_{\ve\to 0} \mathbf{P}_k^\ve X_N^\ve \otimes  X_N^\ve) = \sum_{k} \lambda_k (\wlim_{\ve\to 0} X_N^\ve \otimes  \mathbf{P}_k^\ve X_N^\ve)
		\ee 
		by $\mathbf{P}_i^\ve$ on the left and $\mathbf{P}_j^\ve$ on the right we obtain  
		\be 
		\lambda_i (\wlim_{\ve\to 0} \mathbf{P}_i^\ve X_N^\ve \otimes  \mathbf{P}_j^\ve X_N^\ve) = \lambda_j (\wlim_{\ve\to 0} \mathbf{P}_i^\ve X_N^\ve \otimes  \mathbf{P}_j^\ve X_N^\ve).
		\ee 
		From the condition of no resonance, this implies that
		$
		\wlim_{\ve\to 0} \mathbf{P}_i^\ve X_N^\ve \otimes  \mathbf{P}_j^\ve X_N^\ve = 0
		$ as desired.
	\end{proof}
	With that, we proceed by taking the weak$^*$ limit in \eqref{tangent} to obtain
	\be\label{w-limit 1}
	\wlim_{\ve \to 0} \ddot{X}^\ve_T = -\frac{1}{2} \wlim_{\ve \to 0} \mathbf{P}_{T_{X^\ve_T}}\nabla^3 U(X^\ve_T):(X^\ve_N\otimes X^\ve_N)
	+ \sff(X_{T})(\dot{X}_T,\dot{X}_T).
	\ee 
	We manipulate the third order term:
	\begin{align}
		\wlim_{\ve \to 0} \mathbf{P}_{T_{X^\ve_T}}\nabla^3 U(X^\ve_T):(X^\ve_N\otimes X^\ve_N) &=
		\sum_{i} \wlim_{\ve \to 0} \mathbf{P}_{T_{X^\ve_T}}\nabla^3 U(X_T^\ve):( \mathbf{P}_i X^\ve_N\otimes \mathbf{P}_i X^\ve_N) \\
		&\qquad + \sum_{i\neq j} \mathbf{P}_{T_{X_T}}\nabla^3 U(X_T):( \wlim_{\ve \to 0}(\mathbf{P}_i X^\ve_N\otimes \mathbf{P}_j X^\ve_N))\\
		&= \sum_{i} \wlim_{\ve \to 0} \mathbf{P}_{T_{X^\ve_T}}\nabla^3 U(X_T^\ve):( \mathbf{P}_i X^\ve_N\otimes \mathbf{P}_i X^\ve_N)
	\end{align}
	Here cross terms weakly$^*$ converge to zero due to \eqref{cross_zero}. To take the limits of diagonal terms, we note that $\mathbf{P}_i X_N^\ve$ are eigenvectors of $\hess U_{X_T^\ve}$ with eigenvalues $\lambda_i(X_T^\ve)$. Thus from the Hellmann-Feynman formula (see Appendix  \ref{HFform}), it follows that
	\be 
	\nabla \lambda_i(X_T^\ve) = \frac{\nabla \hess U_{X_T^\ve} : (\mathbf{P}_i X_N^\ve \otimes \mathbf{P}_i X_N^\ve)}{|\mathbf{P}_i X_N^\ve|^2}
	= \frac{\nabla^3 U(X):(\mathbf{P}_i X_N^\ve \otimes \mathbf{P}_i X_N^\ve)}{|\mathbf{P}_i X_N^\ve|^2},
	\ee
	and, consequently,
	\begin{align}\label{nablacubed}
		\wlim_{\ve \to 0} \mathbf{P}_{T_{X^\ve_T}}\nabla^3 U(X^\ve_T):(X^\ve_N\otimes X^\ve_N) &= \sum_{i} \wlim_{\ve \to 0} \mathbf{P}_{T_{X^\ve_T}}\nabla^3 U(X_T^\ve):( (\mathbf{P}_i X^\ve_N\otimes \mathbf{P}_i X^\ve_N))\\
		&= \sum_{i} \wlim_{\ve \to 0} |\mathbf{P}_i X_N^\ve|^2 \mathbf{P}_{T_{X^\ve_T}}\nabla \lambda_i(X_T^\ve)\\
		&= \sum_{i} \eta_i \nabla^T \lambda_i(X_T).
	\end{align}
	We plug \eqref{nablacubed} into \eqref{w-limit 1}, and note the resulting equation together with the fact that $X_T$ is $C^1$ implies that, in fact, $X_T$ is $C^2$ and we have
	\be \label{tangenteta}
	\ddot{X}_T = -\frac{1}{2}\sum_i \eta_i \nabla^T \lambda_i(X_T)
	+ \sff(X_{T})(\dot{X}_T,\dot{X}_T).
	\ee 
	
	This is almost the closed equation for $X_T$, except for the appearance of $\eta_i$. To deal with it, we use the fact that the equation of normal motion \eqref{normal} is that of a slowly modulated harmonic oscillator; it suggests looking for the \textit{adiabatic invariants}.
	\begin{lemma}[Adiabatic invariance of action]\label{adiabatic}
		
		It holds that 
		\be
		\eta_i(t) = \frac{c_i}{\sqrt{\lambda_i(X_T(t))}}.
		\ee 
		where the constants $c_i$ are given by \eqref{constantsci}.
	\end{lemma}
	\begin{proof}[Proof of Lemma \ref{adiabatic} for codimension 1 case] We consider the case when the minimum set of the potential  is a hypersurface, so that  $N \M$ is one dimensional.
		We observe from \eqref{tangenteta} that
		\be
		\frac{\rmd}{\rmd t} \frac{1}{2}|\dot{X}_T|^2 = -\frac{1}{2}\eta\frac{\rmd }{\rmd t}\lambda(X_T).
		\ee 
		On the other hand, using that $X_T^\ve \to X_T$ in $C^1$ to take uniform limit in the energy equation
		\begin{align}
			E_0 &= \frac{1}{2}|\dot{X}^\ve|^2 + \frac{1}{\ve^2}U(X^\ve)\\
			&= \frac{1}{2}|\dot{X}_T^\ve|^2 + \langle \dot{X}_T^\ve, \ve \dot{X}_{N}^\ve \rangle + \frac{1}{2}|\ve\dot{X}_{N}^\ve|^2  + \frac{1}{2}\lambda(X_T)|X_N^\ve |^2 + O(\ve)
		\end{align}
		we get 
		\be
		E_0 = \frac{1}{2} |\dot{X}_T|^2 + \frac{1}{2}\sigma + \frac{1}{2}\lambda(X_T)\eta =  \frac{1}{2} |\dot{X}_T|^2 +  \lambda(X_T)\eta.
		\ee 
		Hence, differentiating in time
		\be 
		\frac{\rmd}{\rmd t}\frac{1}{2}|\dot{X}_T|^2 = - \frac{\rmd}{\rmd t} \bigg(\eta\lambda(X_T)\bigg).
		\ee 
		Equating two expressions for $\frac{\rmd}{\rmd t}\frac{1}{2}|\dot{X}_T|^2$, we get an ODE that can be immediately solved:
		\be \label{adiabaticODE}
		\frac{1}{2}\eta\frac{\rmd }{\rmd t}\lambda(X_T) = \frac{\rmd}{\rmd t} \bigg(\eta\lambda(X_T)\bigg) \ \Rightarrow \  \eta = \frac{c}{\sqrt{\lambda(X_T)}}.
		\ee 
		This completes a proof in the case where the minimum set of the potential is a hypersurface.
	\end{proof}
	Argument above only works in the codimension 1 case. For higher codimension, the argument is more involved; one could see it coming from the fact that while the amount of unknown functions $\eta_i$ grows with codimension, the conservation of total energy still provides only one equation. Thus we need to perform a more careful analysis of \eqref{normal}; in particular the non-resonance conditions play a crucial role in that. In fact, in \cite{T80} an example is presented in which the presence of resonance breaks the validity of lemma, and moreover prevents the selection of unique limiting solution. Before we give the argument which is valid for a minimum set for the potential having arbitrary codimension, let us finish the proof assuming Lemma \ref{adiabatic}. 
	
	Inserting the expression for $\eta_i$ obtained in Lemma \ref{adiabatic} into \eqref{tangenteta}, we conclude that the $X_T$ satisfies the constrained equation with additional potential as claimed:
	\begin{align}
		\ddot{X}_T &= -\sum_i \frac{1}{2}\eta_i\nabla^T \lambda_i(X_T) + \sff(X_{T})(\dot{X}_T,\dot{X}_T)\\
		&= -\sum_i \frac{1}{2}\frac{c_i}{\sqrt{\lambda_i(X_T)}}\nabla^T \lambda_i(X_T) + \sff(X_{T})(\dot{X}_T,\dot{X}_T)\\
		&= -\nabla^T \sum_i c_i \sqrt{\lambda_i(X_T)} + \sff(X_{T})(\dot{X}_T,\dot{X}_T).
	\end{align}
	This concludes the proof of convergence to the Takens limit system.
\end{proof}
\begin{proof}[ Proof of Lemma \ref{adiabatic} for arbitrary codimension]
	We require convergence a specific cubic term.
	\begin{lemma} The following cubic interaction weakly vanishes:
		\be\label{weakcubic}
		\ve \dot{X}_N^\ve \otimes  X_N^\ve \otimes X_N^\ve \to 0 \text{ weakly}^* \text{ in } L^\infty([0,T]).
		\ee
	\end{lemma}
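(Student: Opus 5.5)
We follow the same device used in Lemmas \ref{equipartition} and \ref{weakquadratic}: find a quantity that is $O(\ve)$ uniformly, whose time derivative is $O(1)$ and contains the cubic term we want. Such a derivative must converge weakly$^*$ to zero. Our candidate is the cubic tensor
\be
Z^\ve := \ve^2\, \dot{X}_N^\ve\otimes\dot{X}_N^\ve\otimes \ve\dot{X}_N^\ve \quad\text{together with}\quad Y^\ve:=\ve X_N^\ve\otimes X_N^\ve\otimes X_N^\ve,
\ee
both of which are $O(\ve)$ by Lemma \ref{estimates}. The plan is to decompose everything into eigencomponents $\mathbf{P}_i^\ve X_N^\ve$ and argue component by component, as in Lemma \ref{weakquadratic}.

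\textbf{Step 1.} Write $A^\ve_{ijk}:=\wlim \ve\dot{X}_{N,i}\otimes X_{N,j}\otimes X_{N,k}$, where $X_{N,i}=\mathbf{P}_i^\ve X_N^\ve$. Define $B_{ijk}$ similarly with two velocity factors, $C_{ijk}$ with three velocity factors, and $D_{ijk}$ with none, with $\ve$ weights inserted so each term is $O(1)$.

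\textbf{Step 2.} Differentiate $\ve X_{N,i}\otimes X_{N,j}\otimes X_{N,k}$. Since this is $O(\ve)$ with derivative $O(1)$, the derivative tends to $0$ weakly$^*$. This gives a linear relation: the sum of the three $A$-type terms, one for each slot carrying the velocity, vanishes.

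\textbf{Step 3.} Differentiate the mixed cubics $\ve^2\dot{X}_{N,i}\otimes \dot X_{N,j}\otimes X_{N,k}$ and the pure velocity cubic in the same way. In doing so, replace $\ve^2\ddot{X}_{N,i}$ by $-\lambda_i X_{N,i}+O(\ve)$ using \eqref{normal}. Note that the $O(1)$ forcing terms of \eqref{normal} are multiplied by an extra factor of $\ve$ here and so drop out. We also need the fact that $\dot{\mathbf{P}}_i^\ve$ is bounded. The outcome is a closed linear system, for each fixed triple $(i,j,k)$, relating the weak limits of the cubic monomials in $(\ve\dot{X}_{N,\cdot},X_{N,\cdot})$. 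Concretely, pass to complex amplitudes $a_i^\pm = \ve\dot{X}_{N,i}\pm \mathrm{i}\sqrt{\lambda_i}X_{N,i}$. In these variables the relations read
\be
(\pm\sqrt{\lambda_i}\pm\sqrt{\lambda_j}\pm\sqrt{\lambda_k})\,\wlim\, a_i^\pm\otimes a_j^\pm\otimes a_k^\pm=0,
\ee
with the signs matching.

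\textbf{Step 4.} Pointwise non-resonance up to order $3$ says that no combination $\pm\sqrt{\lambda_i}\pm\sqrt{\lambda_j}\pm\sqrt{\lambda_k}$ vanishes, and in particular $3\sqrt{\lambda_i}\neq0$ and $\sqrt{\lambda_i}\neq0$. This includes the repeated-index cases $i=j$ or $j=k$, where the order-3 combinations reduce to $\pm\sqrt{\lambda_i}$ or $\pm\sqrt{\lambda_i}\pm2\sqrt{\lambda_j}$. Hence every weak limit of cubic monomials in the amplitudes vanishes. Expressing $\ve\dot{X}_N\otimes X_N\otimes X_N$ as a linear combination of these monomials, with coefficients depending continuously on $X_T^\ve\to X_T$ strongly, yields \eqref{weakcubic}.

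\textbf{Main obstacle.} The main obstacle is Step 3: ensuring the system really closes. The derivatives of the projectors $\mathbf{P}_i^\ve$ and of $\lambda_i(X_T^\ve)$ produce terms with an extra factor of $\ve$ but also an extra factor of $\dot X_N$. One must check, using Lemma \ref{estimates}, that these terms are genuinely $O(\ve)$ after weighting, rather than $O(1)$. I would handle this by working with $\mathbf{P}_i^\ve$-projected variables throughout and bounding $\dot{\mathbf{P}}_i^\ve = D\mathbf{P}_i(X_T^\ve)\dot X_T^\ve = O(1)$.
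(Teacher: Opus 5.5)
Your argument is correct, and its core is the paper's. Time derivatives of $O(\ve)$ cubic quantities are $O(1)$, hence tend weakly$^*$ to zero. This gives a closed linear system for the weak$^*$ limits of cubic monomials in $(\ve\dot X_N^\ve, X_N^\ve)$, and the system is invertible under non-resonance up to order three.

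Where you differ is in how that system is solved. The paper keeps the unprojected tensors. It differentiates $\ve X_N^\ve\otimes X_N^\ve\otimes X_N^\ve$ and the three slot-permutations of $\ve\dot X_N^\ve\otimes\ve\dot X_N^\ve\otimes\ve X_N^\ve$. This yields a real $4\times4$ system for the monomials with an odd number of velocity factors: the three one-velocity cubics and $(\ve\dot X_N^\ve)^{\otimes3}$. The entries are the commuting operators $\sum_k\lambda_k\mathbf{P}_k$ acting on single slots. Invertibility is read off from the Heron-type determinant $-\sum_{i,j,k}(\sqrt{\lambda_i}+\sqrt{\lambda_j}+\sqrt{\lambda_k})(-\sqrt{\lambda_i}+\sqrt{\lambda_j}+\sqrt{\lambda_k})(\sqrt{\lambda_i}-\sqrt{\lambda_j}+\sqrt{\lambda_k})(\sqrt{\lambda_i}+\sqrt{\lambda_j}-\sqrt{\lambda_k})\,\mathbf{P}_i\otimes\mathbf{P}_j\otimes\mathbf{P}_k$.

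Your amplitudes $a_i^\pm$ diagonalize this same system. Since $\ve\frac{\rmd}{\rmd t}a_i^\pm=\pm\mathrm{i}\sqrt{\lambda_i}\,a_i^\pm+O(\ve)$, each triple product is an eigenvector of the fast dynamics with frequency $\pm\sqrt{\lambda_i}\pm\sqrt{\lambda_j}\pm\sqrt{\lambda_k}$. What this buys you:
\begin{itemize}
\item the argument needs no determinant;
\item the repeated-index cases are explicit;
\item the monomials with an even number of velocity factors come out as a by-product;
\item the extension to degree-$m$ monomials under order-$m$ non-resonance is immediate.
\end{itemize}
The paper's real formulation avoids complexification and componentwise bookkeeping, at the price of the determinant computation.

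There are two corrections to the write-up.

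\textbf{The $\ve$-weights.} As written, $Z^\ve=\ve^2\dot X_N^\ve\otimes\dot X_N^\ve\otimes\ve\dot X_N^\ve=(\ve\dot X_N^\ve)^{\otimes3}$ is $O(1)$ by Lemma \ref{estimates}, not $O(\ve)$. The same holds for $\ve^2\dot X_{N,i}\otimes\dot X_{N,j}\otimes X_{N,k}$. Differentiating these directly produces $O(1/\ve)$ terms and no information. The quantities to differentiate carry one extra factor of $\ve$, namely $\ve\,a_i^\pm\otimes a_j^\pm\otimes a_k^\pm$. This is exactly what produces your Step 3 relation.

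\textbf{The ``main obstacle'' is not one.} The derivatives $\dot{\mathbf{P}}_i^\ve$ and $\dot\lambda_i^\ve$ involve only $\dot X_T^\ve$, which is bounded. They always enter multiplied by $\ve$ and by bounded amplitudes, so they contribute $O(\ve)$; no extra factor of $\dot X_N^\ve$ appears. Likewise, the tangential part $\ve\,\mathbf{P}_{T_{X_T^\ve}}\dot X_N^\ve$ is $O(\ve)$. So dropping it when you expand $\ve\dot X_N^\ve$ in the $a_i^\pm$ in Step 4 is harmless.
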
 
	\begin{proof}
		The proof is similar to that of Lemma \ref{weakquadratic}. We observe using equation \eqref{normal} that
		\begin{align}
			\frac{\rmd}{\rmd t} \ve  &\dot{X}_N^\ve \otimes \ve \dot{X}_N^\ve \otimes \ve X_N^\ve  = \ve  \ddot{X}_N^\ve \otimes \ve \dot{X}_N^\ve \otimes \ve X_N^\ve + \ve  \dot{X}_N^\ve \otimes \ve \ddot{X}_N^\ve \otimes \ve X_N^\ve + \ve  \dot{X}_N^\ve \otimes \ve \dot{X}_N^\ve \otimes \ve \dot{X}_N^\ve\\
			&= -\sum_k\lambda_k^\ve \mathbf{P}_k^\ve X_N^\ve \otimes \ve \dot{X}_N^\ve \otimes X_N^\ve -\sum_k\lambda_k^\ve  \dot{X}_N^\ve \otimes \ve \mathbf{P}_k^\ve X_N^\ve \otimes X_N^\ve + \ve  \dot{X}_N^\ve \otimes \ve \dot{X}_N^\ve \otimes \ve \dot{X}_N^\ve + O(\ve).
		\end{align}
		Since $\ve\dot{X}_N^\ve \otimes \ve \dot{X}_N^\ve \otimes \ve X_N^\ve = O(\ve)$
		and its derivative is bounded, in follows that this derivative in fact weakly$^*$ converges to zero. Using uniform convergence of $X^\ve_T$ and, consequently, $\lambda_k^\ve$ and $\mathbf{P}_k^\ve$, we get
		\be\label{weak1}
		-\sum_k\lambda_k \mathbf{P}_k X_N^\ve \otimes \ve \dot{X}_N^\ve \otimes X_N^\ve -\sum_k\lambda_k  \dot{X}_N^\ve \otimes \ve \mathbf{P}_k X_N^\ve \otimes X_N^\ve + \ve  \dot{X}_N^\ve \otimes \ve \dot{X}_N^\ve \otimes \ve \dot{X}_N^\ve \rightharpoonup 0 
		\ee 
		Also 
		\be\label{weak2}
		\frac{\rmd}{\rmd t} \ve  X_N^\ve \otimes X_N^\ve \otimes X_N^\ve  = \ve\dot{X}_N^\ve \otimes  X_N^\ve \otimes X_N^\ve + X_N^\ve \otimes \ve \dot{X}_N^\ve \otimes X_N^\ve + X_N^\ve \otimes  X_N^\ve \otimes \ve \dot{X}_N^\ve,
		\ee 
		so for the same reason RHS weakly$^*$ goes to zero.
		Equations \eqref{weak2}, \eqref{weak1}, and two more equations obtained similar to \eqref{weak1} by permuting factors in tensor product can be packaged into the matrix form
		\be 
		\begin{pmatrix}
			I & I& I & 0\\
			-I \otimes \sum_k \lambda_k \mathbf{P}_k \otimes I &-\sum_k \lambda_k \mathbf{P}_k \otimes I \otimes I & 0 & I\\
			-I \otimes I \otimes \sum_k \lambda_k \mathbf{P}_k & 0 & -\sum_k \lambda_k \mathbf{P}_k \otimes I \otimes I & I\\
			0 & -I \otimes I \otimes \sum_k \lambda_k \mathbf{P}_k & -I \otimes \sum_k \lambda_k \mathbf{P}_k \otimes I & I
		\end{pmatrix}\begin{pmatrix}
			\ve \dot{X}_N^\ve \otimes  X_N^\ve \otimes X_N^\ve\\
			X_N^\ve \otimes \ve \dot{X}_N^\ve \otimes X_N^\ve\\
			X_N^\ve \otimes X_N^\ve \otimes \ve \dot{X}_N^\ve\\
			\ve  \dot{X}_N^\ve \otimes \ve \dot{X}_N^\ve \otimes \ve \dot{X}_N^\ve
		\end{pmatrix} \rightharpoonup 0.
		\ee 
		A computation shows that determinant of matrix on the left is given by
		\be 
		-\sum_{i,j,k} (\sqrt{\lambda_i} + \sqrt{\lambda_j} + \sqrt{\lambda_k})
		(-\sqrt{\lambda_i} + \sqrt{\lambda_j} + \sqrt{\lambda_k})
		(\sqrt{\lambda_i} - \sqrt{\lambda_j} + \sqrt{\lambda_k})
		(\sqrt{\lambda_i} + \sqrt{\lambda_j} - \sqrt{\lambda_k})\mathbf{P}_i \otimes \mathbf{P}_j\otimes \mathbf{P}_k,
		\ee 
		so under pointwise non-resonance conditions it is invertible. This shows that 
		\be 
		\begin{pmatrix}
			\ve \dot{X}_N^\ve \otimes  X_N^\ve \otimes X_N^\ve\\
			X_N^\ve \otimes \ve \dot{X}_N^\ve \otimes X_N^\ve\\
			X_N^\ve \otimes X_N^\ve \otimes \ve \dot{X}_N^\ve\\
			\ve  \dot{X}_N^\ve \otimes \ve \dot{X}_N^\ve \otimes \ve \dot{X}_N^\ve
		\end{pmatrix} \rightharpoonup 0,
		\ee 
		which in particular implies the result of the lemma.
	\end{proof}
	
	We define normal energies by 
	\be 
	K_i^\ve = \frac{1}{2} \bigg|\ve \mathbf{P}_i^\ve \dot{X}_N^\ve\bigg|^2,  \quad U_i^\ve = \frac{1}{2}\lambda_i^\ve |\mathbf{P}_i^\ve X_N^\ve|^2, \quad E_i^\ve = K_i^\ve + U_i^\ve.
	\ee 
	
	We will take the weak$^*$ limit of 
	\begin{align}
		\frac{\rmd}{\rmd t}E_i^\ve &= \langle\ve \mathbf{P}_i^\ve \dot{X}_N^\ve, \ve \dot{\mathbf{P}}_i^\ve \dot{X}_N^\ve\rangle + \langle\ve \mathbf{P}_i^\ve \dot{X}_N^\ve,\ve \mathbf{P}_i^\ve \ddot{X}_N^\ve\rangle +\frac{1}{2} \dot{\lambda}_i^\ve |\mathbf{P}_i^\ve X_N^\ve|^2 \\
		&\qquad + \lambda_i^\ve \langle \mathbf{P}_i^\ve X_N^\ve, \dot{\mathbf{P}}_i^\ve X_N^\ve\rangle + \lambda_i^\ve \langle \mathbf{P}_i^\ve X_N^\ve,  \mathbf{P}_i^\ve \dot{X}_N^\ve\rangle\\
		&= \sum_j \langle\ve \mathbf{P}_i^\ve \dot{X}_N^\ve, \ve \dot{\mathbf{P}}_i^\ve \mathbf{P}_j^\ve\dot{X}_N^\ve\rangle +\langle\ve \mathbf{P}_i^\ve \dot{X}_N^\ve,\ve \mathbf{P}_i^\ve \ddot{X}_N^\ve\rangle +\frac{1}{2} \dot{\lambda}_i^\ve |\mathbf{P}_i^\ve X_N^\ve|^2 \\
		&\qquad + \sum_j\lambda_i^\ve \langle \mathbf{P}_i^\ve X_N^\ve, \dot{\mathbf{P}}_i^\ve \mathbf{P}_j^\ve X_N^\ve\rangle + \lambda_i^\ve \langle \mathbf{P}_i^\ve X_N^\ve,  \mathbf{P}_i^\ve \dot{X}_N^\ve\rangle\\
		&= \sum_{j\neq i} \langle\ve \mathbf{P}_i^\ve \dot{X}_N^\ve, \ve \dot{\mathbf{P}}_i^\ve \mathbf{P}_j^\ve\dot{X}_N^\ve\rangle +\frac{1}{2} \dot{\lambda}_i^\ve |\mathbf{P}_i^\ve X_N^\ve|^2 \\
		&\qquad + \sum_{j\neq i}\lambda_i^\ve \langle \mathbf{P}_i^\ve X_N^\ve, \dot{\mathbf{P}}_i^\ve \mathbf{P}_j^\ve X_N^\ve\rangle +\langle\ve \mathbf{P}_i^\ve \dot{X}_N^\ve,\ve \mathbf{P}_i^\ve \ddot{X}_N^\ve + \frac{\lambda_i^\ve}{\ve} \mathbf{P}_i^\ve X_N^\ve\rangle 
	\end{align}
	
	Observe that the diagonal terms cancel from 
	\be 
	\mathbf{P}_i^\ve \dot{\mathbf{P}}_i^\ve \mathbf{P}_i^\ve = 0,
	\ee 
	which follows from differentiating $(\mathbf{P}_i^\ve)^2 = \mathbf{P}_i^\ve$. We now take the weak$^*$ limit. First, note that since $X_T^\ve \to X_T$ in $C^1$, $\mathbf{P}_i^\ve \to \mathbf{P}_i$ and $\lambda_i^\ve \to \lambda_i$ in $C^1$ as well. Thus first and third terms weakly$^*$ go to zero from off-diagonal quadratic convergence \eqref{cross_zero}. 
	\be
	\sum_{j\neq i} \langle\ve \mathbf{P}_i^\ve \dot{X}_N^\ve, \ve \dot{\mathbf{P}}_i^\ve \mathbf{P}_j^\ve\dot{X}_N^\ve\rangle\rightharpoonup 0, \quad \sum_{j\neq i}\lambda_i^\ve \langle \mathbf{P}_i^\ve X_N^\ve, \dot{\mathbf{P}}_i^\ve \mathbf{P}_j^\ve X_N^\ve\rangle \rightharpoonup 0.
	\ee 
	Limit of the second term is also immediate:
	\be 
	\frac{1}{2} \dot{\lambda}_i^\ve |\mathbf{P}_i^\ve X_N^\ve|^2 \rightharpoonup \frac{1}{2}\dot{\lambda}_i \eta_i.
	\ee 
	We proceed to show that the last term weakly$^*$ converges to zero. Using equation \eqref{normal}, we compute 
	\begin{align}
		\langle\ve \mathbf{P}_i^\ve \dot{X}_N^\ve,\ve \mathbf{P}_i^\ve \ddot{X}_N^\ve + \frac{\lambda_i^\ve}{\ve}  \mathbf{P}_i^\ve X_N^\ve\rangle &= \bigg\langle\ve \mathbf{P}_i^\ve \dot{X}_N^\ve, -\frac{1}{\ve}\hess_{X^\ve_T} U (X^\ve_N, \cdot) -\frac{1}{2}\nabla^3 U(X^\ve_T):(X^\ve_N\otimes X^\ve_N)\\
		&\qquad- \sff(X_{T}^\ve)(\dot{X}^\ve_T,\dot{X}^\ve_T) - 2 D^2\pi(X_{T}^\ve):(\dot{X}^\ve_T\otimes\ve\dot{X}^\ve_N) + \frac{\lambda_i^\ve}{\ve}  X_N^\ve\bigg\rangle +O(\ve)\\
		&= -\frac{1}{\ve}\hess_{X^\ve_T} U (X^\ve_N, \ve \mathbf{P}_i^\ve \dot{X}_N^\ve) -\frac{1}{2}\nabla^3 U(X^\ve_T):(X^\ve_N\otimes X^\ve_N \otimes \ve \mathbf{P}_i^\ve \dot{X}_N^\ve)\\
		&\qquad- \langle\sff(X_{T}^\ve)(\dot{X}^\ve_T,\dot{X}^\ve_T), \ve \mathbf{P}_i^\ve \dot{X}_N^\ve\rangle - 2 \langle D^2\pi(X_{T}^\ve):(\dot{X}^\ve_T\otimes\ve\dot{X}^\ve_N),\ve \mathbf{P}_i^\ve \dot{X}_N^\ve\rangle \\
		&\qquad\qquad+ \frac{\lambda_i^\ve}{\ve}  \langle \mathbf{P}_i X_N^\ve,\ve \mathbf{P}_i^\ve \dot{X}_N^\ve\rangle +O(\ve)\\
		&= s_1 + s_2 + s_3 + s_4 + s_5 + O(\ve).
	\end{align}
	We have $s_1 + s_5 = 0$ identically. $s_2 \rightharpoonup 0 $ from lemma \ref{weakcubic}, while $s_3 \rightharpoonup 0$ from lemma \ref{weakquadratic}. To show convergence to zero of the last term $s_4$, we rewrite it as
	\begin{align}
		s_4 &= - 2 \langle D^2\pi(X_{T}^\ve):(\dot{X}^\ve_T\otimes\ve\dot{X}^\ve_N),\ve \mathbf{P}_i^\ve \dot{X}_N^\ve\rangle\\
		&= - 2 \langle D^2\pi(X_{T}^\ve):(\dot{X}^\ve_T\otimes\ve \mathbf{P}_{N_{X^\ve_T}\M}\dot{X}^\ve_N),\ve \mathbf{P}_i^\ve \dot{X}_N^\ve\rangle - 2 \langle D^2\pi(X_{T}^\ve):(\dot{X}^\ve_T\otimes \ve \mathbf{P}_{T_{X^\ve_T}\M} \dot{X}^\ve_N),\ve \mathbf{P}_i^\ve \dot{X}_N^\ve\rangle.
	\end{align} 
	The first term is zero by \eqref{D^2pi}, while the second term is $O(\ve)$ by estimates of lemma \ref{estimates}. Thus $s_4 \rightharpoonup 0$ as well. To summarize, we have 
	\be 
	\frac{\rmd}{\rmd t} E_i^\ve \rightharpoonup \frac{1}{2}\dot{\lambda}_i \eta_i.
	\ee 
	On the other hand, from the definition of $E_i^\ve$ we have 
	\be 
	E_i^\ve = \frac{1}{2} \bigg|\ve \mathbf{P}_i^\ve \dot{X}_N^\ve\bigg|^2 + \frac{1}{2}\lambda_i^\ve |\mathbf{P}_i^\ve X_N^\ve|^2 \rightharpoonup \frac{1}{2}\sigma_i + \frac{1}{2}\lambda_i\eta_i = \lambda_i\eta_i
	\ee 
	Since derivative of the weak$^*$ limit is the weak$^*$ limit of derivatives provided they are bounded (and they are, given the formula above), we conclude that
	\be 
	\frac{1}{2}\dot{\lambda}_i \eta_i = \dot{\lambda}_i \eta_i + \lambda_i \dot{\eta}_i \quad \Rightarrow\quad \frac{\rmd }{\rmd t}\bigg(\sqrt{\lambda_i} \eta_i\bigg) = 0.
	\ee 
	From the computation of $\dot{E_i^\ve}$, we in fact see that $E_i^\ve$ converges to $\lambda_i\eta_i$ uniformly. This allows us to deduce that the constants of integration are given by $E_i(0)$, which concludes the proof of Lemma \ref{adiabatic}.
\end{proof}

\section{Second variation of homogenized potential}\label{hessian-appendix}
In this appendix we compute the second variation of the homogenized potential appearing in Takens limit system
$ 
V(X) = \sum_i c_i \sqrt{\lambda_i(X)}.
$
Such a computation can be used to assess whether a state of the Takens system may inherit some stability, as was the case for the double spring pendulum and for the circular loop.

By linearity, it is enough to compute second variation of $\sqrt{\lambda_i}$. First recall the first variation:
\begin{lemma} The tangential gradient of a simple eigenvalue $\lambda_i$ is
	\be 
	\grad^{T} \sqrt{\lambda_i(X)}  = \frac{1}{2\sqrt{\lambda_i(X)}}\mathbf{P}_T^X \nabla ^3 U(X) : (v_i(X) \otimes v_i(X)).
	\ee
\end{lemma}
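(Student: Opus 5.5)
The plan is to differentiate the eigenvalue along a curve in $\M$ using the Hellmann--Feynman formula (Lemma \ref{HF1}), and then pass from $\lambda_i$ to $\sqrt{\lambda_i}$ by the chain rule. We work in the Euclidean setting of Appendix \ref{adiabaticappend}. Take a curve $X^\ve \in \M$ with $X^0 = X$ and $\frac{\rmd}{\rmd\ve}|_{\ve=0}X^\ve = \xi \in T_X\M$. Set
\be
A^\ve := \mathbf{P}_{N_{X^\ve}\M}\,\hess U_{X^\ve}\,\mathbf{P}_{N_{X^\ve}\M},
\ee
regarded as an operator on all of $\R^d$. It is symmetric with respect to the fixed Euclidean inner product, and its nonzero eigenpairs are exactly $(\lambda_i(X^\ve), v_i(X^\ve))$, where $v_i$ is the unit eigenvector spanning $E_i$. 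Because $\lambda_i$ is simple and $U$ is spectrally smooth, these eigenpairs depend $C^1$ on $\ve$.

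Lemma \ref{HF1} then gives
\be
\frac{\rmd}{\rmd\ve}\Big|_{\ve=0}\lambda_i(X^\ve) = \Big(v_i,\tfrac{\rmd}{\rmd\ve}A^\ve\, v_i\Big).
\ee
Expanding the derivative of $A^\ve$ by the product rule produces three kinds of terms.
\begin{itemize}
\item The term $\mathbf{P}_N\,(\nabla^3U(X):\xi)\,\mathbf{P}_N$ contributes $\nabla^3U(X):(\xi\otimes v_i\otimes v_i)$, since $\mathbf{P}_N v_i = v_i$.
\item The terms containing $\dot{\mathbf{P}}_N$ contribute $2\big(\dot{\mathbf{P}}_N v_i,\ \hess U_X\, v_i\big)$, using the symmetry of $\hess U_X$.
\end{itemize}

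The main step is showing that the projector terms vanish, and I expect this to be the only real point of the proof. On $\M$ we have $\nabla U \equiv 0$. Differentiating this along $\M$ gives $\hess U_X\,\tau = 0$ for every $\tau \in T_X\M$, so $\hess U_X$ maps into $N_X\M$ and annihilates $T_X\M$. Hence
\be
\hess U_X\, v_i = \lambda_i v_i \in N_X\M .
\ee
Differentiating $\mathbf{P}_N^2 = \mathbf{P}_N$ gives $\mathbf{P}_N\dot{\mathbf{P}}_N\mathbf{P}_N = 0$, which is the same identity used in Appendix \ref{adiabaticappend}. Therefore
\be
\big(\dot{\mathbf{P}}_N v_i,\ \lambda_i v_i\big) = \lambda_i\big(\mathbf{P}_N\dot{\mathbf{P}}_N\mathbf{P}_N v_i,\ v_i\big) = 0 .
\ee
It follows that $\frac{\rmd}{\rmd\ve}\lambda_i = \nabla^3U(X):(\xi\otimes v_i\otimes v_i)$ for all $\xi \in T_X\M$.

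By symmetry of $\nabla^3U$ this is the pairing $\big(\xi,\ \nabla^3U(X):(v_i\otimes v_i)\big)$. Since $\xi$ ranges over $T_X\M$, the tangential gradient of $\lambda_i$ is
\be
\mathbf{P}_T^X\,\nabla^3U(X):(v_i\otimes v_i),
\ee
which matches the formula used in deriving \eqref{nablacubed}. Finally, $\frac{\rmd}{\rmd\ve}\sqrt{\lambda_i} = \frac{1}{2\sqrt{\lambda_i}}\frac{\rmd}{\rmd\ve}\lambda_i$, which yields the stated formula for $\grad^T\sqrt{\lambda_i(X)}$ with $v_i$ normalized to unit length.
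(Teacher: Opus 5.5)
Your proof is correct and follows the paper's route: Hellmann--Feynman (Lemma \ref{HF1}) applied to the Hessian, then the chain rule for $\sqrt{\lambda_i}$. The paper instead applies Hellmann--Feynman directly to the full Hessian $\nabla^2 U(X)$, whose derivative along $\xi$ is simply $\nabla^3 U(X):\xi$. Since you yourself show that on $\M$ the operator $\hess U_X$ annihilates $T_X\M$ and maps into $N_X\M$, your compressed operator $A^\ve$ coincides with $\hess U_{X^\ve}$ along the curve, so your projector-cancellation step is redundant. It is harmless, though, and it usefully makes explicit why $v_i$ is an eigenvector of the full Hessian, a fact the paper uses tacitly.
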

\begin{proof}
	For a variation of $X$ with $\delta X = \xi \in T_X \M$ we have by Hellmann-Feynman formula \eqref{HF1} and chain rule
	\begin{align}
	\delta \sqrt{\lambda_i (X)} &= \tfrac{1}{2\sqrt{\lambda_i(X)} }\langle v_i(X), \delta \nabla^2 U(X) v_i(X)\rangle =  \tfrac{1}{2\sqrt{\lambda_i(X)} }\langle v_i(X),  (\nabla^3 U(X):\xi ) v_i(X)\rangle\\ &=  \tfrac{1}{2\sqrt{\lambda_i(X)} }\langle \xi,  \nabla^3 U(X):(v_i(X) \otimes v_i(X))\rangle= \bigg\langle \xi,   \tfrac{1}{2\sqrt{\lambda_i(X)} }\nabla^3 U(X):(v_i(X) \otimes v_i(X))\bigg\rangle.
	\end{align}
	This completes the derivation.
\end{proof}
We proceed to taking the second variation.
\begin{theorem}
	The Hessian of a simple eigenvalue $\sqrt{\lambda_i}$ at point $X$ is given by
	\begin{align}
 	\hess &\sqrt{\lambda_i(X)}(\xi, \xi) = -\frac{1}{4\lambda_i(X)^{3/2}} [\nabla^3 U (X) : (v_i(X) \otimes v_i(X) \otimes \xi)]^2 \\
 	&\quad-\frac{1}{2\sqrt{\lambda_i(X)}} \sum_j \frac{1}{\lambda_j(X)} \nabla^3 U(X) :(v_j(X) \otimes \xi \otimes \xi) \nabla ^3 U(X) : (v_i(X) \otimes v_i(X) \otimes v_j(X))\\
 	&\quad\quad+\frac{1}{2\sqrt{\lambda_i(X)}} \nabla^4 U(X):(v_i(X) \otimes v_i(X) \otimes \xi \otimes \xi)\\
 	&\quad\quad\quad+\frac{1}{\sqrt{\lambda_i(X)}}\sum_{j \neq i}\frac{1}{\lambda_i(X) -\lambda_j(X)} [\nabla^3 U(X) : (v_i(X) \otimes v_j(X) \otimes \xi)]^2 \\
 	&\quad\quad\quad\quad + \frac{1}{\lambda_i(X)^{3/2}} |\mathbf{P}_T^X\nabla^3 U(X) : (v_i(X) \otimes \xi)|^2.
 	 \end{align}
\end{theorem}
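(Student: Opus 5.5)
The plan is to differentiate the tangential gradient from the preceding lemma once more along a curve $X^s\subset\M$ with $\frac{\rmd}{\rmd s}X^s|_{s=0}=\xi\in T_X\M$. I take $X^s$ to be a geodesic of $\M$, so that the Riemannian Hessian equals $\frac{\rmd^2}{\rmd s^2}\sqrt{\lambda_i(X^s)}$. Equivalently, I compute the ambient derivative of the gradient paired with $\xi$ and correct by the second fundamental form. Write $g_i(X)=\tfrac{1}{2\sqrt{\lambda_i}}\nabla^3U(X):(v_i\otimes v_i)$. Then
\[
\frac{\rmd}{\rmd s}\Big\langle \mathbf P_T^{X^s} g_i(X^s),\dot X^s\Big\rangle
\]
splits into three contributions: the derivative of the prefactor $\lambda_i^{-1/2}$, the derivative of $\nabla^3U(X^s):(v_i^s\otimes v_i^s)$, and the derivative of the projector $\mathbf P_T$ (the geodesic acceleration term).

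First, the prefactor. Using the Hellmann--Feynman formula of Lemma \ref{HF1}, $\dot\lambda_i=\nabla^3U:(v_i\otimes v_i\otimes\xi)$. Hence $\frac{\rmd}{\rmd s}\tfrac1{2\sqrt{\lambda_i}}=-\tfrac14\lambda_i^{-3/2}\dot\lambda_i$. Pairing with $\nabla^3U:(v_i\otimes v_i\otimes \xi)$ gives the first term, $-\tfrac1{4\lambda_i^{3/2}}[\cdot]^2$.

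Second, the derivative of $\nabla^3U(X^s)$ itself. It produces $\tfrac{1}{2\sqrt{\lambda_i}}\nabla^4U:(v_i\otimes v_i\otimes\xi\otimes\xi)$, which is the third term.

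Third, the derivative of the eigenvectors. Here I use Lemma \ref{HF2} with $A^s=\hess U|_{N_{X^s}\M}$. The difficulty is that the normal space itself moves with $s$, so $\dot v_i$ has two parts.
\begin{itemize}
\item The spectral part $\sum_{j\ne i}\frac{1}{\lambda_i-\lambda_j}(\nabla^3U:v_i\otimes v_j\otimes\xi)\,v_j$. Since $\dot v_i$ enters twice via the symmetric product, this gives the fourth term, with the factor $2\cdot\tfrac{1}{2\sqrt{\lambda_i}}=\tfrac{1}{\sqrt{\lambda_i}}$.
\item A tangential part forced by $v_i^s\in N_{X^s}\M$. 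Differentiating $\mathbf P_T v_i=0$ gives $\mathbf P_T\dot v_i=-\dot{\mathbf P}_T v_i$. I would express $\dot{\mathbf P}_T$ through the Hessian of $U$: differentiating $\nabla^2U(X^s)\tau=0$ for tangent fields $\tau$ (valid on the critical manifold) gives $\nabla^2U\,\dot\tau=-\nabla^3U:(\xi\otimes\tau)$. The normal component of $\dot\tau$ is therefore $-(\hess U|_N)^{-1}\mathbf P_N\nabla^3U:(\xi\otimes\tau)$. This produces both the $\sum_j\lambda_j^{-1}\nabla^3U:(v_j\otimes\xi\otimes\xi)\,\nabla^3U:(v_i\otimes v_i\otimes v_j)$ term, through the geodesic acceleration $\ddot X^s=\sff(\xi,\xi)$, and the final term $\lambda_i^{-3/2}|\mathbf P_T\nabla^3U:(v_i\otimes\xi)|^2$, through the tangential component of $\dot v_i$ contracted against $\nabla^3U$.
\end{itemize}

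The main obstacle is exactly this bookkeeping of the moving normal bundle. The second fundamental form of the critical manifold must be identified in terms of $\nabla^3U$ and $(\hess U|_N)^{-1}$. I would check it first, showing $\mathbf P_N\sff(\xi,\xi)=-\sum_j\lambda_j^{-1}(\nabla^3U:v_j\otimes\xi\otimes\xi)\,v_j$ by differentiating $\nabla U=0$ twice along $\M$. I would then verify the constants on the codimension-one case, and cross-check against Proposition \ref{thread_second_var}.
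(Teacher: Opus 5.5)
Your proposal is correct and follows the paper's proof essentially term by term. You use the same four-way split: the variation of the prefactor $\tfrac{1}{2\sqrt{\lambda_i}}$, the $\nabla^4U$ term, the eigenvector variation with its spectral normal part and its tangential part $\lambda_i^{-1}\mathbf{P}_T^X(\nabla^3U:(v_i\otimes\xi))$, and the variation of the tangent projector. The only difference is cosmetic: you evaluate the projector term as $\langle g_i,\sff(\xi,\xi)\rangle$ along a geodesic of $\M$, with $\sff$ obtained by differentiating $\nabla U=0$ twice, whereas the paper writes $\mathbf{P}_T^X=I-\sum_j v_j\otimes v_j$ and reuses the tangential parts of $\delta v_j$; both rest on the fact that $\nabla^2U(X)$ annihilates $T_X\M$ on the critical manifold.
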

\begin{proof}
Consider again a variation of $X$ with $\delta X = \xi \in T_X \M$. By definition, the tangential Hessian is given by 
\begin{align}
	\hess \sqrt{\lambda_i(X)} (\xi, \xi) &= \langle \delta  \grad^{T}\sqrt{\lambda_i} (X), \xi \rangle\\
	&= \bigg\langle  \delta \bigg(\frac{1}{2\sqrt{\lambda_i(X)}}\mathbf{P}_T^X \nabla ^3 U(X) : (v_i(X) \otimes v_i(X))\bigg), \xi \bigg\rangle\\
	&= \bigg(\delta \frac{1}{2\sqrt{\lambda_i(X)}}\bigg)\langle  \mathbf{P}_T^X \nabla ^3 U(X) : (v_i(X) \otimes v_i(X)), \xi \rangle\\
	&\quad + \frac{1}{2\sqrt{\lambda_i(X)}}\langle (\delta \mathbf{P}_T^X ) \nabla ^3 U(X) : (v_i(X) \otimes v_i(X)), \xi \rangle\\
	&\quad\quad + \frac{1}{2\sqrt{\lambda_i(X)}}\langle   \mathbf{P}_T^X  (\delta \nabla ^3 U(X)) : (v_i(X) \otimes v_i(X)), \xi \rangle\\
	&\quad\quad\quad + \frac{1}{\sqrt{\lambda_i(X)}}\langle   \mathbf{P}_T^X \nabla ^3 U(X) : (\delta v_i(X) \otimes v_i(X)), \xi \rangle\\
	&= s_1 + s_2 + s_3 + s_4.
\end{align}
We proceed computing term by term. We immediately have
\begin{align}\label{s1}
s_1 &= -\tfrac{\delta \sqrt{\lambda_i(X)}}{2\lambda_i(X)}  \langle  \mathbf{P}_T^X \nabla ^3 U(X) : (v_i(X) \otimes v_i(X)), \xi \rangle= -\tfrac{1}{4\lambda_i(X)^{3/2}} [\nabla^3 U (X) : (v_i(X) \otimes v_i(X) \otimes \xi)]^2 \\
s_3 &= \tfrac{1}{2\sqrt{\lambda_i(X)}} \nabla^4 U(X):(v_i(X) \otimes v_i(X) \otimes \xi \otimes \xi). \label{s3}
\end{align}
To compute $s_4$ we first derive an expression for $\delta v_i(X)$.
\begin{lemma}\label{delta_eigenvector} The variation of the eigenfunction is given by
\begin{align}
\delta v_i(X) &= \sum_{j \neq i}\frac{1}{\lambda_i(X) - \lambda_j(X)}\nabla^3 U(X): (v_i(X) \otimes v_j(X) \otimes \xi) v_j(X) \\
&\qquad + \frac{1}{\lambda_i(X)}\mathbf{P}_T^X (\nabla^3 U(X) :(v_i(X) \otimes \xi))\\
&=: \mathbf{P}_N^X \delta v_i(X) + \mathbf{P}_T^X \delta v_i(X).
\end{align}
\end{lemma}
\begin{proof}
	We have by definition 
	\begin{align} 
	\nabla^2 U(X) v_i(X) &= \lambda_i (X) v_i(X),\\
	\label{delta_eigenvector_eq}
	\delta \nabla^2 U(X) v_i(X) + \nabla^2 U(X) \delta v_i(X) &= \delta \lambda_i (X) v_i(X) + \lambda_i (X) \delta v_i(X).
	\end{align}
	Taking inner product with $v_j(X)$ for $j \neq i$,
	\begin{align} 
	\langle v_j(X), \delta \nabla^2 U(X) v_i(X) \rangle + \langle v_j(X),\nabla^2 U(X) \delta v_i(X)\rangle &= \delta \lambda_i (X) \langle v_j(X),v_i(X)\rangle+ \lambda_i (X) \langle v_j(X), \delta v_i(X)\rangle\\
	\nabla^3 U(X): (v_i(X) \otimes v_j(X) \otimes \xi) + &\lambda_j(X) \langle v_j(X),\delta v_i(X)\rangle = \lambda_i(X) \langle v_j(X), \delta v_i(X)\rangle\\
	\langle v_j(X), \delta v_i(X)\rangle = \tfrac{1}{\lambda_i(X) - \lambda_j(X)}&\nabla^3 U(X): (v_i(X) \otimes v_j(X) \otimes \xi)
	\end{align}
Additionally,
$
\langle v_i(X), \delta v_i (X)\rangle = \frac{1}{2}\delta |v_i(X)|^2 = 0. 
$
Since $v_j(X)$ form an orthonormal basis for $N_X \M$, we conclude that
\be 
\mathbf{P}_N^X \delta v_i(X) = \sum_{j \neq i}\tfrac{1}{\lambda_i(X) - \lambda_j(X)}\nabla^3 U(X): (v_i(X) \otimes v_j(X) \otimes \xi) v_j(X).
\ee  
To compute tangential component of $\delta v_i(X)$ we project \eqref{delta_eigenvector_eq} onto tangent space at $X$ to obtain 
\be 
\mathbf{P}_T^X (\nabla^3 U(X) :(v_i(X) \otimes \xi)) = \lambda_i(X) \mathbf{P}_T^X \delta v_i(X)
\ee 
so that
\be 
\mathbf{P}_T^X \delta v_i(X) = \frac{1}{\lambda_i(X)}\mathbf{P}_T^X (\nabla^3 U(X) :(v_i(X) \otimes \xi))
\ee 
where $\lambda_i(X) \neq 0$ since $U$ is assumed to be constraining to $\M$.
\end{proof}
Continuing on with the computation of $s_4$,
\begin{align}\label{s4}
	s_4 &=  \tfrac{1}{\sqrt{\lambda_i(X)}}\langle   \nabla ^3 U(X) : (\delta v_i(X) \otimes v_i(X)), \xi \rangle\\
	&= \tfrac{1}{\sqrt{\lambda_i(X)}}\langle   \nabla ^3 U(X) : \bigg(\sum_{j \neq i}\tfrac{1}{\lambda_i(X) - \lambda_j(X)}\nabla^3 U(X): (v_i(X) \otimes v_j(X) \otimes \xi) v_j(X) \otimes v_i(X)\bigg), \xi \rangle\\
	&\quad + \tfrac{1}{\sqrt{\lambda_i(X)}}\langle   \nabla ^3 U(X) : \bigg(\tfrac{1}{\lambda_i(X)}\mathbf{P}_T^X (\nabla^3 U(X) :(v_i(X) \otimes \xi)) \otimes v_i(X)\bigg), \xi \rangle\\
	&= \tfrac{1}{\sqrt{\lambda_i(X)}}\sum_{j \neq i}\tfrac{1}{\lambda_i(X) -\lambda_j(X)} [\nabla^3 U(X) : (v_i(X) \otimes v_j(X) \otimes \xi)]^2 \\
	&\quad + \tfrac{1}{\lambda_i(X)^{3/2}} |\mathbf{P}_T^X\nabla^3 U(X) : (v_i(X) \otimes \xi)|^2.
\end{align}
The last term to compute is $s_2$, for which we need the variation of the tangent projector. To simplify the computation, we first note that since 
\begin{align}
s_2 &=  \frac{1}{2\sqrt{\lambda_i(X)}}\langle (\delta \mathbf{P}_T^X ) \nabla ^3 U(X) : (v_i(X) \otimes v_i(X)), \xi \rangle\\
&= \frac{1}{2\sqrt{\lambda_i(X)}}\langle \mathbf{P}_T^X(\delta \mathbf{P}_T^X ) \nabla ^3 U(X) : (v_i(X) \otimes v_i(X)), \xi \rangle,
\end{align}
only the composition $\mathbf{P}_T^X(\delta \mathbf{P}_T^X )$ is necessary for our purpose. Is it computed in the following lemma:
\begin{lemma} The tangential variation of the projector is
	\be 
	\mathbf{P}_T^X(\delta \mathbf{P}_T^X)= -\sum_j \frac{1}{\lambda_j(X)}\mathbf{P}_T^X (\nabla^3 U(X) :(v_j(X) \otimes \xi)) \otimes v_j(X).
	\ee 
\end{lemma}
\begin{proof}
	We note that since $v_j(X)$ form an orthonormal basis for $N_X \M$,
	\be 
	\mathbf{P}_N^X = \sum_{j} v_j(X) \otimes v_j(X), \quad \text{and} \quad \mathbf{P}_T^X = I- \mathbf{P}_N^X = I - \sum_{j} v_j(X) \otimes v_j(X).
	\ee 
	Thus the variation of $\mathbf{P}_T^X$ is given by
	\begin{align}
		\delta \mathbf{P}_T^X &= - \sum_{j} \delta v_j(X) \otimes v_j(X) + v_j(X) \otimes \delta v_j(X)\\
		&= - \sum_{j} \bigg(\mathbf{P}_N^X \delta v_j(X) \otimes v_j(X) + v_j(X) \otimes \mathbf{P}_N^X\delta v_j(X)  \\
		&\qquad + \mathbf{P}_T^X \delta v_j(X) \otimes v_j(X) +  v_j(X) \otimes \mathbf{P}_T^X\delta v_j(X)\bigg).
	\end{align}
Contracting with $\mathbf{P}_T^X$ from the left, only one term survives:
\be
\mathbf{P}_T^X \delta \mathbf{P}_T^X = -\sum_j \mathbf{P}_T^X \delta v_j(X) \otimes v_j(X).
\ee 
Hence plugging in the result of lemma \ref{delta_eigenvector} the result follows.
\end{proof}
With that, $s_2$ is given by
\begin{align}\label{s2}
	s_2 &= \frac{1}{2\sqrt{\lambda_i(X)}}\langle \mathbf{P}_T^X(\delta \mathbf{P}_T^X ) \nabla ^3 U(X) : (v_i(X) \otimes v_i(X)), \xi \rangle\\
	&= -\frac{1}{2\sqrt{\lambda_i(X)}} \sum_j \frac{1}{\lambda_j(X)} \langle \mathbf{P}_T^X (\nabla^3 U(X) :(v_j(X) \otimes \xi)) \otimes v_j(X) : \nabla ^3 U(X) : (v_i(X) \otimes v_i(X)), \xi \rangle\\
	&= -\frac{1}{2\sqrt{\lambda_i(X)}} \sum_j \frac{1}{\lambda_j(X)} (\nabla^3 U(X) :(v_j(X) \otimes \xi \otimes \xi)) (\nabla ^3 U(X) : (v_i(X) \otimes v_i(X) \otimes v_j(X))).
\end{align}
This completes the derivation.
\end{proof}

 \subsection*{Acknowledgments}  
 We would like to thank Paolo Celli, Boris Khesin and Erwin Luesink for useful discussions.
  TDD and DG were partially supported by the NSF DMS-2106233 grant and NSF CAREER award \#2235395. 
  TDD  is also grateful  for support from  a Stony Brook University
Trustee’s award,  an Alfred P. Sloan Fellowship as well as the support of the Institut Henri Poincaré (UAR 839 CNRS-Sorbonne Université), and LabEx CARMIN (ANR-10-LABX-59-01). DG is also grateful for the hospitality of University of Amsterdam where part of this work was completed.


\begin{thebibliography}{99}
\bibitem{ACG}
A. Abanov, T. Can, and S. Ganeshan,
Odd surface waves in two-dimensional incompressible fluids,
\textit{SciPost Physics} \textbf{5} (2018), no.~1, 010.

\bibitem{Al95}
S. S. Antman,
\textit{Nonlinear Problems of Elasticity},
Applied Mathematical Sciences, vol.~107,
Springer-Verlag, New York, 1995.

\bibitem{A12}
V. I. Arnol'd,
\textit{Geometrical Methods in the Theory of Ordinary Differential Equations},
vol.~250,
Springer Science \& Business Media, 2012.

\bibitem{A13}
V. I. Arnol'd,
\textit{Mathematical Methods of Classical Mechanics},
vol.~60,
Springer Science \& Business Media, 2013.

\bibitem{A14}
V. I. Arnol'd,
\textit{Mathematical Understanding of Nature: Essays on Amazing Physical
	Phenomena and Their Understanding by Mathematicians},
American Mathematical Society, 2014.

\bibitem{A66}
V. I. Arnol'd,
Sur la géométrie différentielle des groupes de Lie de dimension infinie
et ses applications à l'hydrodynamique des fluides parfaits,
\textit{Annales de l'Institut Fourier} \textbf{16} (1966), 316--361.

\bibitem{AK}
V. I. Arnol'd and B. A. Khesin,
\textit{Topological Methods in Hydrodynamics},
Springer, New York, 1998.

\bibitem{Abaro}
K. Asano,
On the incompressible limit of the compressible Euler equation,
\textit{Japan Journal of Applied Mathematics} \textbf{4} (1987),
no.~3, 455--488.

\bibitem{BLR}
D. Bao, J. Lafontaine and T. Ratiu,
On a nonlinear equation related to the geometry of the diffeomorphism Group,
\textit{Pacific Journal of Mathematics} \textbf{158} (1993), no. 2, 223--242.

\bibitem{B98}
F. Bornemann,
\textit{Homogenization in Time of Singularly Perturbed Mechanical Systems},
Springer, Berlin--Heidelberg, 1998.

\bibitem{BS97}
F. Bornemann and C. Schütte,
Homogenization of Hamiltonian systems with a strong constraining potential,
\textit{Physica D: Nonlinear Phenomena} \textbf{102} (1997),
nos.~1--2, 57--77.

\bibitem{BDG}
D. Bresch, B. Desjardins, and E. Grenier,
Oscillatory limits with changing eigenvalues: A formal study,
in \textit{New Directions in Mathematical Fluid Mechanics},
A. V. Fursikov, G. P. Galdi, and V. V. Pukhnachev, eds.,
Advances in Mathematical Fluid Mechanics,
Birkhäuser, Basel, 2009.

\bibitem{BDGL}
D. Bresch, B. Desjardins, E. Grenier, and C.-K. Lin,
Low Mach number limit of viscous polytropic flows:
Formal asymptotics in the periodic case,
\textit{Studies in Applied Mathematics} \textbf{109} (2002), 125--149.
doi:10.1111/1467-9590.01440.

\bibitem{B18}
M. Bruveris,
The L2 metric on $C^\infty(M,N)$” (2018), arXiv:1804.00577.

\bibitem{CHL}
R. Camassa, D. D. Holm, and C. Levermore,
Long-time shallow-water equations with a varying bottom,
\textit{Journal of Fluid Mechanics} \textbf{349} (1997), 173--189.

\bibitem{E77}
D. G. Ebin,
The motion of slightly compressible fluids viewed as a motion with strong
constraining force,
\textit{Annals of Mathematics} \textbf{105} (1977),
no.~1, 141--200.

\bibitem{EM}
D. G. Ebin and J. E. Marsden,
Groups of diffeomorphisms and the motion of an incompressible fluid,
\textit{Annals of Mathematics} \textbf{92} (1970), 102--163

\bibitem{GA}
S. Ganeshan and A. G. Abanov,
Odd viscosity in two-dimensional incompressible fluids,
\textit{Physical Review Fluids} \textbf{2} (2017),
no.~9, 094101.

\bibitem{HL}
D. D. Holm and E. Luesink,
Stochastic wave--current interaction in thermal shallow-water dynamics,
\textit{Journal of Nonlinear Science} \textbf{31} (2021),
no.~2, article 29.


\bibitem{KMM21}
B. Khesin, G. Misiołek, and K. Modin,
Geometric hydrodynamics and infinite-dimensional Newton's equations,
\textit{Bulletin of the American Mathematical Society}
\textbf{58} (2021), 377--442.

\bibitem{G83}
G. Gallavotti,
\textit{The Elements of Mechanics},
Texts and Monographs in Physics,
Springer-Verlag, New York, 1983.



\bibitem{M07}
N. Masmoudi,
Rigorous derivation of the anelastic approximation,
\textit{Journal de Mathématiques Pures et Appliquées}
\textbf{88} (2007), no.~3, 230--240.

\bibitem{MS03}
G. Métivier and S. Schochet,
Averaging theorems for conservative systems and the weakly compressible
Euler equations,
\textit{Journal of Differential Equations} \textbf{187} (2003),
no.~1, 106--183.


\bibitem{P11}
S. Preston,
The motion of whips and chains,
\textit{Journal of Differential Equations} \textbf{251} (2011),
no.~3, 504--550.

\bibitem{P12}
S. Preston,
The geometry of whips,
\textit{Annals of Global Analysis and Geometry}
\textbf{41} (2012), no.~3, 281--305.

\bibitem{RU57}
H. Rubin and P. Ungar,
Motion under a strong constraining force,
\textit{Communications on Pure and Applied Mathematics}
\textbf{10} (1957), 65--87.


\bibitem{S}
A. Shnirelman,
The motion of an inextensible thread and incompressible fluid
and foundations of mechanics,
in \textit{Abstracts of the International Conference
	``Mathematical Hydrodynamics''},
Moscow, Russia, June 12--17, 2006,
Steklov Mathematical Institute and Lomonosov Moscow State University,
Moscow, 2006, 72--74.

\bibitem{Sm79}
N. K. Smolentsev,
Principle of Maupertuis,
\textit{Siberian Mathematical Journal} \textbf{20} (1979), 772--776.

\bibitem{T80}
F. Takens,
Motion under the influence of a strong constraining force,
in \textit{Global Theory of Dynamical Systems},
Z. Nitecki and C. Robinson, eds.,
Lecture Notes in Mathematics, vol.~819,
Springer, Berlin--Heidelberg, 1980.




\end{thebibliography}
\end{document}